\setlist[itemize,\the\aux]{label={$\bullet$}}\ifnum\aux<10\repeat
\setlist[enumerate, 1]{label={\textup{(\roman*)}}}
\setlist[enumerate, 2]{label={\textup{(\alph*)}}}
\setlist{itemsep=0pt, topsep=\smallskipamount, leftmargin=1.5em, listparindent=1em}
\theoremstyle{plain}
\newtheorem{thm}{Theorem}[section]
\newtheorem*{bigtheorem}{Theorem}
\newtheorem{lemma}[thm]{Lemma}
\newtheorem{prop}[thm]{Proposition}
\newtheorem{cor}[thm]{Corollary}
\theoremstyle{definition}
\newtheorem{defn}[thm]{Definition}
\newtheorem{example}[thm]{Example}
\newtheorem{algo}[thm]{Algorithm}
\newtheorem*{convention}{Convention}
\theoremstyle{remark}
\newtheorem{rem}[thm]{Remark}
\newtheorem*{claim}{Claim}
\def\uv#1{``#1''}
\let\zet\Z
\newcommand{\Q}{\mathbb{Q}}\let\kve\Q
\let\er\R
\let\phi\varphi
\let\epsilon\varepsilon
\def\zav#1{\left(#1\right)}
\def\set#1{\left\{#1\right\}}
\def\inv#1{#1^{-1}}
\def\nequiv{\not\equiv}
\let\OK\Ok
\newcommand{\mtrx}[1]{\left(\begin{matrix}#1\end{matrix}\right)}
\def\quatalg#1#2{{#1\overwithdelims()#2}}
\def\II{\mathbb I}
\def\KK{\mathbb K}
\def\pe{\mathbb P}
\DeclareMathOperator{\nrd}{nrd}
\DeclareMathOperator{\trd}{trd}
\DeclareMathOperator{\discrd}{discrd}
\DeclareMathOperator{\disc}{disc}
\DeclareMathOperator{\Mat}{M}
\DeclareMathOperator{\chr}{char}
\DeclareMathOperator{\im}{Im}
\let\ker\undefined\DeclareMathOperator{\ker}{ker}
\DeclareMathOperator{\Nm}{Nm}
\DeclareMathOperator{\Tr}{Tr}
\let\injto\hookrightarrow
\def\ii{\mathbf i}  
\def\j{\mathbf j}
\def\k{\mathbf k}
\def\h{\mathbf h}
\def\lcol#1#2{{(#1:#2)_{\text{\sffamily\upshape L}}}}
\def\rcol#1#2{{(#1:#2)_{\text{\sffamily\upshape R}}}}
\def\lord{\mathcal O_{\text{\sffamily\upshape L}}}
\def\rord{\mathcal O_{\text{\sffamily\upshape R}}}
\DeclareMathOperator{\Cl}{Cl}
\def\NCl{\Cl^+}
\DeclareMathOperator{\Cls}{Cls}
\title[Generalizing Hurwitz's proof of the four-square theorems]{Generalizing Hurwitz's quaternionic proof of Lagrange's and Jacobi's four-square theorems}
\author[]{Mat\v{e}j Dole\v{z}\'{a}lek}
\address{Charles University, Faculty of Mathematics and Physics, Department of Algebra,
Sokolovsk\'{a} 83, 186 75 Praha 8, Czech Republic}
\email{matej@gimli.ms.mff.cuni.cz}
\subjclass[2020]{11E12, 11E25, 11R52, 11R80}
\keywords{Universal quadratic form, quaternion order, class number, totally real number field}
\thanks{We acknowledge support by Charles University project GAUK No. 134824.}
\def\orderlisturl{\url{https://gimli.ms.mff.cuni.cz/~matej/perceptive-orders/}}
\begin{document}

\begin{abstract}
A~proof of Lagrange's and Jacobi's four-square theorem due to Hurwitz utilizes orders in a quaternion algebra over the rationals. Seeking a generalization of this technique to orders over number fields, we identify two key components: an order with a good factorization theory and the condition that all orbits under the action of the group of elements of norm $1$ acting by multiplication intersect the suborder corresponding to the quadratic form to be studied.
We use recent results on class numbers of quaternion orders and then find all suborders satisfying the orbit condition. Subsequently, we obtain universality and formulas for the number of representations by the corresponding quadratic forms.
We also present a quaternionic proof of Götzky's four-square theorem.
\end{abstract}

\maketitle

\tableofcontents

\section*{Introduction}

The study of quadratic forms is a long thread woven into the history of number theory, from the theorems on sums of two and four squares of Fermat and Lagrange respectively, all the way to the $290$-theorem of Bhargava and Hanke and beyond. Lagrange's four-square theorem is perhaps the single most apt prototype for the history of this area: it states that the positive definite quadratic form $t^2+x^2+y^2+z^2$ is \emph{universal} over $\zet$, that is to say that it expresses all positive integers. Universal forms have enjoyed much interest throughout history -- Dickson \cite{dickson} identified all universal \emph{diagonal} forms in four variables over $\zet$, while others like Siegel \cite{siegel} broadened the scope to consider quadratic forms over the rings of integers of number fields. Over the integers at least, the study of universal forms may be seen as effectively solved by the celebrated $290$-theorem due to Bhargava and Hanke \cite{bhargava-hanke}, which remarkably states that a positive definite form over $\zet$ is universal if and only if it expresses each of $29$ so-called \emph{critical integers}, the largest of which is $290$ (hence the name).

On the broader front over number fields, mostly \emph{totally real} number fields, the topic has enjoyed much interest in recent years. Various authors have studied diverse aspect of the matter, such as providing bounds on the minimal number of variables in a universal form over a given number field, relations to continued fractions in the case of quadratic fields or considering when a quadratic form with coefficients from $\zet$ may be universal over a number field \cite{blomer-kala1, blomer-kala2, cech-etal, chan-kim-raghavan, earnest-khosravani, kala1, kala2, BMkim, MHkim, kala-svoboda, krasensky-tinkova-zemkova, yatsyna}. Overall, it appears that universal forms in a small number of variables are rare. To a reader interested in the topic, we may recommend a survey paper \cite{kala-survey} by Kala.

This article is more specifically interested in the intersection of the study of quadratic forms with that of \emph{quaternions}, which came into prominence in 19th century based on geometric and algebraic motivations, though they may be seen as having been anticipated in Euler's four-square identity. Originally, Lagrange proved his four-square theorem elementarily using a descent technique, but Hurwitz \cite{hurwitz} provided a proof using what he called the \uv{integer quaternions}, though they are more commonly known as \emph{Hurwitz quaternions} after him or the \emph{Hurwitz order}. Further, Hurwitz simultaneously used the technique to prove Jacobi's four-square theorem, an extension of Lagrange's theorem which states not only that $t^2+x^2+y^2+z^2=n$ has an integer solution for all $n>0$, but that the exact number of these solutions is
\[
    8\sum_{4\nmid d\mid n}d.
\]
(Jacobi's original proof was analytic in nature). Soon after, quaternions over (totally real) number fields were also considered, providing results on sums of four squares in these number fields \cite{kirmse}.

\bigskip

The goal of this article is to generalize Hurwitz's approach to other quadratic forms over number fields and to find all situations where it may succeed, under a somewhat conservative notion of what constitutes \uv{a Hurwitz-like approach} we introduce, called \emph{perceptivity} (see Definition~\ref{def:perceptivity}). As such, it may be useful to overview and motivate Hurwitz's proof of the four-square theorems.

Hurwitz's approach starts with the observation that the sum of four squares may be expressed by the \emph{reduced norm} of quaternions $\mathbf q\overline{\mathbf q}$, where
\[
    \mathbf q = t+x\ii+y\j+z\k \mapsto \overline{\mathbf q} = t-x\ii-y\j-z\k
\]
(see Section~\ref{chap:prelims} for the notation of quaternion algebras) and that this norm is also multiplicative. It would then be natural to hope to use the ring $\zet\oplus\zet\ii\oplus\zet\j\oplus\zet\k$, the so-called \emph{Lipschitz order}, but complications arise due to this not being a (left or right) principal ideal domain, since e.g. its left ideal generated by $1+\ii$ and $1+\j$ is not principal. To fix this problem, Hurwitz used the order $\zet\oplus\zet\ii\oplus\zet\j\oplus\zet\frac{1+\ii+\j+\k}2$ that now bears his name, in which the coordinates of $1$, $\ii$, $\j$, $\k$ are not only allowed to all be integers, but to also (simultaneously) lie in $\zet+\frac12$. It turns out that the Hurwitz order is a (left and right) principal ideal domain (in fact it admits a left- and right-sided analogue of the Euclidean algorithm). Using this, Hurwitz proved that the reduced norm in the Hurwitz order expresses every positive integer (we will prove a straightforward generalization of this in Corollary~\ref{cor:nrd-universal}).

But since the objective is to prove universality of the reduced norm in the Lipschitz order, not the Hurwitz order, it is necessary to carry over the universality result from the better behaved larger order to the smaller order of interest. For this, one realizes that for any $\mathbf q$ from the Hurwitz order, there is a quaternion $\mathbf u$ of reduced norm $1$ such that $\mathbf u\mathbf q$ lies in the Lipschitz order. In other words, the action by these $\mathbf u$ with multiplication from the left partitions the Hurwitz order into orbits, each of which intersects the Lipschitz order.

To arrive at Jacobi's four-square theorem, Hurwitz's approach is again to first work in the Hurwitz order and count representations there by building a weaker, non-commutative version of \uv{unique }factorization into irreducible elements, and then using the action described above and calculating the sizes of intersections of individual orbits with the Lipschitz order. Thus he arrives at a formula for number of representations in the Lipschitz order by essentially weighing the formula for the Hurwitz order.

\bigskip

Our plan at generalizing Hurwitz's approach will then be to start with an order $\mathcal H$ (over a number field) that is a principal ideal domain -- to play a role analogous to the Hurwitz order -- and count representation by reduced norm there. We will overview a well-known approach to this by counting factorizations in $\mathcal H$ in Section~\ref{chap:factorizations}. Next, in Section~\ref{chap:orbits} we will endeavor to find all suborders $\mathcal G\subseteq\mathcal H$ such that for any $\mathbf q\in \mathcal H$, there is a $\mathbf u\in\mathcal H$ with reduced norm $1$ such that $\mathbf u\mathbf q\in \mathcal G$. A~finite enumeration of orders that are principal ideal domains (i.e. candidates for $\mathcal H$) follows from the work of Kirschmer and Lorch \cite{kirschmer-lorch}, who enumerated all orders in definite quaternion algebras over number fields with class number $1$ (see Section~\ref{chap:prelims} and Theorem~\ref{thrm:kirschmer-list}). This was preceded by partial results on this problem, when Kirschmer and Voight \cite{kirschmer-voight} provided such an enumeration restricted to so-called \emph{Eichler orders} and when Brzezinski \cite{brzezinski} did so for orders over $\zet$.
The main result of this article, namely Theorem~\ref{thrm:our-list} combined with the theorems of Subsection~\ref{sec:jacobi}, will be as follows:
\begin{bigtheorem}
Up to isomorphism, there are 111 orders in definite quaternion algebras over totally real number fields that are perceptive in some maximal order. Consequently, the norm form of each of them is universal and admits an explicit formula for the number of representation of a chosen totally positive element given by one of Theorems~\ref{thrm:kind-max-formula}, \ref{thrm:kind-q-formula}, \ref{thrm:kind-p-formula}, \ref{thrm:kind-q2-formula}, \ref{thrm:kind-p2-formula}, \ref{thrm:kind-qq-formula}, \ref{thrm:kind-pq-formula}, \ref{thrm:kind-p3-formula}, \ref{thrm:kind-q3-formula} and \ref{thrm:kind-q4-formula}.
\end{bigtheorem}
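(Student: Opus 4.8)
The plan is to break the big theorem into its two component assertions: the enumeration (``there are 111 orders...'') and the consequences (universality and representation formulas). The enumeration is essentially a bookkeeping result on top of Theorem~\ref{thrm:our-list}, so the bulk of the work will have been done by the time we state the final theorem; what remains here is to assemble the pieces. I would first invoke Theorem~\ref{thrm:kirschmer-list}, the Kirschmer--Lorch enumeration, to obtain the finite list of candidate ``big'' orders $\mathcal H$ (the maximal orders in definite quaternion algebras over totally real fields with class number $1$, together with the other class-number-$1$ orders). Then I would apply the search carried out in Section~\ref{chap:orbits}, culminating in Theorem~\ref{thrm:our-list}, which for each such $\mathcal H$ determines all suborders $\mathcal G\subseteq\mathcal H$ that are perceptive in the relevant maximal order. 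Collecting these across all cases and reducing modulo isomorphism yields the count $111$; the deduplication step (recognizing when two orders arising from different ambient algebras or different fields are abstractly isomorphic) is the one genuinely delicate part of the bookkeeping, and I would handle it by comparing the standard invariants (base field, reduced discriminant, and the relevant Brandt-type data) and, where those collide, by an explicit isomorphism, exactly as recorded in the accompanying data at \orderlisturl.

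For the ``consequently'' clause I would argue in two stages, mirroring Hurwitz's strategy as laid out in the introduction. First, universality: if $\mathcal G$ is perceptive in a maximal order $\mathcal H$, then every totally positive element $n$ of the ring of integers that is represented by $\nrd$ on $\mathcal H$ is also represented on $\mathcal G$. The representability on $\mathcal H$ comes from Corollary~\ref{cor:nrd-universal} (the promised generalization of Hurwitz's result that the reduced norm of a principal-ideal-domain order is universal), and the transfer to $\mathcal G$ is immediate from the definition of perceptivity (Definition~\ref{def:perceptivity}): for $\mathbf q\in\mathcal H$ with $\nrd(\mathbf q)=n$ there is $\mathbf u\in\mathcal H$ with $\nrd(\mathbf u)=1$ and $\mathbf u\mathbf q\in\mathcal G$, and $\nrd(\mathbf u\mathbf q)=\nrd(\mathbf u)\nrd(\mathbf q)=n$ by multiplicativity of the reduced norm. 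Hence $\nrd$ restricted to $\mathcal G$ represents every totally positive integer, i.e. the norm form of $\mathcal G$ is universal.

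Second, the representation formulas: here the point is that the number $r_{\mathcal G}(n)$ of $\mathbf q\in\mathcal G$ with $\nrd(\mathbf q)=n$ can be computed by weighting the corresponding count $r_{\mathcal H}(n)$ on the big order. The count on $\mathcal H$ is obtained from the factorization theory developed in Section~\ref{chap:factorizations} (counting factorizations into irreducibles in the principal ideal domain $\mathcal H$, in the style of Hurwitz's treatment of the Hurwitz order). The passage from $\mathcal H$ to $\mathcal G$ is carried out by analyzing, for each orbit of the norm-$1$ group $\mathcal H^1$ acting on $\mathcal H$ by left multiplication, the size of its intersection with $\mathcal G$; perceptivity guarantees each orbit meets $\mathcal G$, and the precise intersection sizes — which depend only on the ``kind'' of the suborder $\mathcal G$ (maximal, index corresponding to a prime $\mathfrak q$ or $\mathfrak p$, to $\mathfrak q^2$, etc.) — yield the ten separate formulas cited. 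Organizing the perceptive suborders into these finitely many kinds and computing the corresponding weighting factor in each kind is exactly the content of Theorems~\ref{thrm:kind-max-formula} through~\ref{thrm:kind-q4-formula}, so the final theorem follows by quoting them case by case.

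The main obstacle, as in Hurwitz's original argument, is the orbit analysis underlying the second stage: one must control the stabilizers and the intersection patterns of the $\mathcal H^1$-orbits with each perceptive suborder uniformly enough to get clean closed formulas, and this is where the bulk of the case distinctions (and the role played by units, two-sided ideals, and local structure at the relevant primes) enters. By contrast, once Theorem~\ref{thrm:our-list} is in hand the enumeration count is routine, and the universality claim is essentially a one-line consequence of multiplicativity of $\nrd$ and the definition of perceptivity.
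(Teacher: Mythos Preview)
Your proposal is correct and follows essentially the same assembly as the paper: the big theorem is not proved separately but is precisely the combination of Theorem~\ref{thrm:our-list} (the enumeration, obtained by running Algorithm~\ref{algo} on the maximal class-number-$1$ orders of Theorem~\ref{thrm:kirschmer-list}), Proposition~\ref{prp:basically-lagrange} (universality via perceptivity and Corollary~\ref{cor:nrd-universal}), and the case-by-case formulas of Subsection~\ref{sec:jacobi} organized by the cataloguing in Proposition~\ref{prp:kind-distribution}. One small imprecision: the candidate ``big'' orders $\mathcal H$ are only the $49$ \emph{maximal} orders of class number $1$, not the full Kirschmer--Lorch list of $154$; non-maximal orders cannot be PIDs and so do not serve as starting points for the search.
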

The list of these $111$ orders along with an implementation in Magma \cite{magma} that we used is available electronically from \orderlisturl.

Another source of inspiration for this article are certain works of Deutsch, who used quaternions and related notions of geometry of numbers to prove various results on universality or representation by several specific quadratic forms over number fields \cite{deutsch-on-gotzky, deutsch2, deutsch3, deutsch4, deutsch5, deutsch6, deutsch7}. Notably, \cite{deutsch-on-gotzky} contains a proof of universality of $t^2+x^2+y^2+z^2$ over $\kve(\sqrt5)$ using quaternions, which we will improve upon in Section~\ref{chap:gotzky} by giving a quaternionic proof of Götzky's four-square theorem, which gives a precise formula for the number of representations by this form, akin to Jacobi's four-square theorem.

\section*{Acknowledgments}
I~am grateful to V\'{i}t\v{e}zslav Kala for his helpful advice. I~would also like to thank John Voight for answering a few questions about the state of research regarding quaternion orders of small class number.

\section{Preliminaries}
\label{chap:prelims}

In this section, we give a brief overview of the theoretical baseline for this article.
For further background, see \cite{voight}.

The secondary role of this section is that of a repository of miscellaneous smaller lemmata.

\subsection{Number fields, quadratic forms}
Throughout the article, we consider totally real number fields $K$ with their rings of integers $\OK$. The field $K$ is equipped with $d$ real embedding $\sigma_1,\dots,\sigma_d:K\to\er$, where $d$ is the degree of $K$, and we say an $\alpha\in K$ is \emph{totally positive} (denoted $\alpha\succ0$) if $\sigma_i(\alpha)>0$ for all $i$. The subsets of totally positive elements in $K$ and $\OK$ are $K^+$ and $\OK^+$ respectively. We denote the field \emph{norm} and field \emph{trace} of $\alpha$ as $\Nm_{K/\kve}(\alpha) =\prod_{i=1}^d\sigma_i(\alpha)$ and $\Tr_{K/\kve}(\alpha)=\sum_{i=1}^d\sigma_i(\alpha)$ respectively; we will drop the subscript $K/\kve$ when it is clear from the context.

We say that a quadratic form over $K$ is \emph{totally positive definite} if it attains totally positive values at every point aside from $0$. Lastly, a quadratic form $Q$ in $r$ variables over $\OK$ is said to \emph{represent} an $\alpha\in\OK$ if there are $\beta_1,\dots,\beta_r\in\OK$ such that $Q(\beta_1,\dots,\beta_r)=\alpha$, and a totally positive definite $Q$ is \emph{universal}, if it represents all elements of $\OK^+$.

\subsection{Algebras with involutions, quaternion algebras}
\label{sec:algebras}

We say an algebra $\mathcal A$ over a field $F$ is equipped with a \emph{standard involution} $\mathbf x\mapsto \overline{\mathbf x}$ if
\[
\overline 1=1,\qquad \overline{(\overline{\mathbf x})}=\mathbf x\qquad,\overline{(\mathbf x\mathbf y)} = \overline{\mathbf y}\,\overline{\mathbf x}\qquad\text{and}\qquad \mathbf x\overline{\mathbf x}\in F
\]
is satisfied for all $\mathbf x, \mathbf y\in\mathcal A$. Then, we define the \emph{reduced norm} and \emph{reduced trace} as
\[
    \nrd(\mathbf x) := \mathbf x\overline{\mathbf x},\qquad \trd(\mathbf x) := \mathbf x+\overline{\mathbf x}.
\]
These are a multiplicative and an additive map $\mathcal A\to F$ respectively.
We may also observe that any $\mathbf x\in\mathcal A$ satisfies
\[
    \mathbf x^2-\mathbf x\trd(\mathbf x)+\nrd(\mathbf x) = \mathbf x^2-\mathbf x(\mathbf x+\overline{\mathbf x})+\mathbf x\overline{\mathbf x} = 0.
\]
Hence every element of an $\mathcal A$ satisfies a quadratic equation over $F$. As a consequence, any subspace spanned by $1$ and $\mathbf x$ is a subalgebra of $\mathcal A$.

\begin{lemma}
\label{lem:switch-the-order}
Let $\mathcal O$ be a subring of an $F$-algebra $\mathcal A$ with a standard involution. If $R$ is a subring of $F$ such that $\trd(\mathcal O)\subseteq R$, then for any $\mathbf x,\mathbf y\in \mathcal O$, the $R$-submodule $\mathcal B:=R+R\mathbf x+R\mathbf y+R\mathbf x\mathbf y$ is a subring of $\mathcal A$.
\end{lemma}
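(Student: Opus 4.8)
The plan is to verify directly that $\mathcal B$ is closed under multiplication; since $\mathcal B$ is visibly an additive subgroup of $\mathcal A$ containing $1=1\cdot 1$, that is all that is needed for it to be a subring. Because multiplication on $\mathcal A$ is $R$-bilinear, it suffices to check that the product of any two of the four $R$-module generators $1,\mathbf x,\mathbf y,\mathbf x\mathbf y$ lies again in $\mathcal B$. The products having $1$ as a factor are trivial and $\mathbf x\cdot\mathbf y=\mathbf x\mathbf y$ is itself a generator, so exactly eight products remain to be treated: $\mathbf x^2$, $\mathbf y^2$, $\mathbf y\mathbf x$, $\mathbf x\cdot\mathbf x\mathbf y$, $\mathbf y\cdot\mathbf x\mathbf y$, $\mathbf x\mathbf y\cdot\mathbf x$, $\mathbf x\mathbf y\cdot\mathbf y$ and $(\mathbf x\mathbf y)^2$.

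Two identities, both consequences of the quadratic relation $\mathbf w^2-\trd(\mathbf w)\mathbf w+\nrd(\mathbf w)=0$ and of $\overline{\mathbf w}=\trd(\mathbf w)-\mathbf w$ recalled in the text, do all the work. First, $\mathbf w^2=\trd(\mathbf w)\mathbf w-\nrd(\mathbf w)$ for every $\mathbf w\in\mathcal A$. Second, expanding $\overline{\mathbf y}\,\overline{\mathbf x}=(\trd(\mathbf y)-\mathbf y)(\trd(\mathbf x)-\mathbf x)$ and comparing with $\overline{\mathbf x\mathbf y}=\trd(\mathbf x\mathbf y)-\mathbf x\mathbf y$ through the relation $\overline{\mathbf x\mathbf y}=\overline{\mathbf y}\,\overline{\mathbf x}$ yields the \uv{swap} identity
\[
\mathbf y\mathbf x=\trd(\mathbf x\mathbf y)-\mathbf x\mathbf y+\trd(\mathbf y)\mathbf x+\trd(\mathbf x)\mathbf y-\trd(\mathbf x)\trd(\mathbf y).
\]
Now $\mathbf x,\mathbf y,\mathbf x\mathbf y\in\mathcal O$, so the hypothesis $\trd(\mathcal O)\subseteq R$ puts $\trd(\mathbf x),\trd(\mathbf y),\trd(\mathbf x\mathbf y)$ — and all their $R$-combinations — into $R$; I will also use that $\nrd(\mathbf x),\nrd(\mathbf y)\in R$, whence $\nrd(\mathbf x\mathbf y)=\nrd(\mathbf x)\nrd(\mathbf y)\in R$ by multiplicativity.

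Granting this, the eight products come out: $\mathbf x^2=\trd(\mathbf x)\mathbf x-\nrd(\mathbf x)\in\mathcal B$, and likewise $\mathbf y^2$ and $(\mathbf x\mathbf y)^2=\trd(\mathbf x\mathbf y)\mathbf x\mathbf y-\nrd(\mathbf x\mathbf y)$ lie in $\mathcal B$ by the first identity; $\mathbf y\mathbf x\in\mathcal B$ by the swap identity; $\mathbf x\cdot\mathbf x\mathbf y=\trd(\mathbf x)\mathbf x\mathbf y-\nrd(\mathbf x)\mathbf y\in\mathcal B$ and symmetrically $\mathbf x\mathbf y\cdot\mathbf y\in\mathcal B$; and for the two three-letter words one writes $\mathbf y\cdot\mathbf x\mathbf y=(\mathbf y\mathbf x)\mathbf y$ and $\mathbf x\mathbf y\cdot\mathbf x=\mathbf x(\mathbf y\mathbf x)$, substitutes the swap identity, and then reduces the resulting occurrences of $\mathbf x^2$, $\mathbf y^2$ by the first identity; after cancellation one obtains the pleasantly clean forms $\mathbf y\mathbf x\mathbf y=\trd(\mathbf x\mathbf y)\mathbf y+\nrd(\mathbf y)\mathbf x-\trd(\mathbf x)\nrd(\mathbf y)$ and $\mathbf x\mathbf y\mathbf x=\trd(\mathbf x\mathbf y)\mathbf x+\nrd(\mathbf x)\mathbf y-\trd(\mathbf y)\nrd(\mathbf x)$, both in $\mathcal B$ (these clean answers also serve as a check on the computation). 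This part is routine, if slightly fiddly, bookkeeping.

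The one point that genuinely needs an argument, and which I expect to be the real obstacle, is the claim $\nrd(\mathbf x),\nrd(\mathbf y)\in R$: it is not a formal consequence of $\trd(\mathcal O)\subseteq R$ alone. For $\mathbf w\in\mathcal O$ all powers $\mathbf w^m$ lie in $\mathcal O$, so $\trd(\mathbf w^m)\in R$ for every $m\ge 1$; expanding $\trd(\mathbf w^{2m})$ via the quadratic relation gives $2\,\nrd(\mathbf w)^m=\trd(\mathbf w^m)^2-\trd(\mathbf w^{2m})\in R$ for all $m\ge 1$ (the case $m=1$ being $2\,\nrd(\mathbf w)=\trd(\mathbf w)^2-\trd(\mathbf w^2)$). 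In all the situations of interest $R$ is integrally closed in $F$ — typically $R=\OK$ or $R=\zet$ — and then, localizing at each height-one prime $\mathfrak p$ of $R$ and letting $m\to\infty$ in $v_{\mathfrak p}(2)+m\,v_{\mathfrak p}(\nrd(\mathbf w))\ge 0$ forces $v_{\mathfrak p}(\nrd(\mathbf w))\ge 0$, so $\nrd(\mathbf w)\in\bigcap_{\mathfrak p}R_{\mathfrak p}=R$. With this in hand the mechanical verification above completes the proof.
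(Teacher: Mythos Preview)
Your mechanical verification is essentially the paper's: reduce to products of the four generators, handle squares via $\mathbf w^2=\trd(\mathbf w)\mathbf w-\nrd(\mathbf w)$, handle $\mathbf y\mathbf x$ via the swap identity, and bootstrap the three-letter words from there. Your explicit closed forms for $\mathbf x\mathbf y\mathbf x$ and $\mathbf y\mathbf x\mathbf y$ are a nice check but not a different idea.

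Where you go beyond the paper is in isolating the claim $\nrd(\mathbf x)\in R$. You are right that it does not follow from $\trd(\mathcal O)\subseteq R$ alone, and the paper's proof silently uses it too (it underbraces $\nrd(\mathbf x)$ as ``$\in R$'' while citing only the trace hypothesis). In fact the lemma is false without an extra assumption: take $F=\Q(i)$, $\mathcal A=\Mat_2(F)$ with the adjugate involution, $R=\Z[2i]$, and $\mathbf x=\left(\begin{smallmatrix}0&1\\-i&0\end{smallmatrix}\right)$, so $\trd(\mathbf x)=0$ and $\nrd(\mathbf x)=i$. Then $\mathcal O:=\Z[\mathbf x]=\Z[i]+\Z[i]\mathbf x$ satisfies $\trd(\mathcal O)=2\Z[i]\subseteq R$, yet with $\mathbf y=\mathbf x$ one computes $\mathcal B=\Z[i]+R\mathbf x$, and $i\cdot\mathbf x\notin\mathcal B$ since $i\notin R$.

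Your repair---deducing $\nrd(\mathbf w)\in R$ from $2\nrd(\mathbf w)^m=\trd(\mathbf w^m)^2-\trd(\mathbf w^{2m})\in R$ for all $m$ via valuations when $R$ is integrally closed---is correct and is exactly the missing hypothesis. Every application in the paper has $R$ a field or $R=\OK$, so nothing downstream is harmed; but strictly speaking both you and the paper have proved the lemma only under that additional assumption (or the simpler one $\nrd(\mathcal O)\subseteq R$).
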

\begin{proof}
    The only nontrivial part is to verify that $\mathcal B$ is closed under multiplication, which may be done by simply checking that all possible products of the four generating elements $1$, $\mathbf x$, $\mathbf y$, $\mathbf x\mathbf y$ lie in $\mathcal B$. The products involving $1$ are trivial and the products of an element with itself follow from
    \[
        \mathbf x^2 = \underbrace{\trd(\mathbf x)}_{\in R}\mathbf x-\underbrace{\nrd(\mathbf x)}_{\in R} \in R+R\mathbf x,
    \]
    where we use that fact that $\trd(\mathcal O)\subseteq R$.
    Next we have $\mathbf x\cdot\mathbf y\in \mathcal B$ by construction and
    then we calculate
    \begin{align*}
        \mathbf y\mathbf x &= \trd(\mathbf y\mathbf x)-\overline{\mathbf x}\,\overline{\mathbf y} = \trd(\mathbf y\mathbf x) - (\trd(\mathbf x)-\mathbf x)(\trd(\mathbf y)-\mathbf y) = \\
        &= \underbrace{\trd(\mathbf y\mathbf x) - \trd(\mathbf x)\trd(\mathbf y)}_{\in R}+\underbrace{\trd(\mathbf x)}_{\in R}\mathbf y+\underbrace{\trd(\mathbf y)}_{\in R}\mathbf x - \mathbf x\mathbf y,
    \end{align*}
    which lies in $\mathcal B$. Then we see that
    \[
        \mathbf x\cdot\mathbf x\mathbf y \in (R+R\mathbf x)\mathbf y = R\mathbf y+R\mathbf x\mathbf y \subseteq \mathcal B
    \]
    and similarly $\mathbf x\mathbf y\cdot\mathbf y \in\mathcal B$. Lastly
    \[
        \mathbf x\mathbf y\cdot\mathbf x = \mathbf x(\mathbf y\mathbf x) \in \mathbf x(R+R\mathbf x+R\mathbf y+R\mathbf x\mathbf y),
    \]
    which lies in $\mathcal B$, because we've already checked all products where $\mathbf x$ is the left multiplicand. Analogously, we obtain $\mathbf y\cdot\mathbf x\mathbf y\in\mathcal B$.
\end{proof}

\begin{lemma}
\label{lem:threedim-subalg}
Let $B$ be a four-dimensional $k$-algebra with a standard involution and let $A$ be a three-dimensional subalgebra of $B$. Then whenever $1,\mathbf x,\mathbf y$ is a basis of $A$, we may choose $\tilde{\mathbf x}\in \mathbf x+k$, $\tilde{\mathbf y}\in \mathbf y+k$ such that $\tilde{\mathbf x}\tilde{\mathbf y} = 0$ and $\nrd(\tilde{\mathbf x})=\nrd(\tilde{\mathbf y}) = 0$.
\end{lemma}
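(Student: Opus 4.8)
The plan is to read off the two scalar shifts directly from the structure constants of $A$, and let associativity of $B$ do the rest. Since $A$ is a subalgebra containing $1,\mathbf x,\mathbf y$ as a $k$-basis, the product $\mathbf x\mathbf y$ lies in $A$, so we may write
\[
    \mathbf x\mathbf y = \alpha_0 + \alpha_1\mathbf x + \alpha_2\mathbf y
\]
with uniquely determined $\alpha_0,\alpha_1,\alpha_2\in k$. I would then set $\tilde{\mathbf x}:=\mathbf x-\alpha_2\in\mathbf x+k$ and $\tilde{\mathbf y}:=\mathbf y-\alpha_1\in\mathbf y+k$; the entire content of the proof is that this choice works.

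For the product, $\tilde{\mathbf x}\tilde{\mathbf y}=\mathbf x\mathbf y-\alpha_1\mathbf x-\alpha_2\mathbf y+\alpha_1\alpha_2=\alpha_0+\alpha_1\alpha_2$, so it is enough to know that $\alpha_0=-\alpha_1\alpha_2$. For the norms, since $B$ carries a standard involution we have $\nrd(\mathbf x-c)=\nrd(\mathbf x)-c\,\trd(\mathbf x)+c^2$ for any $c\in k$, so $\nrd(\tilde{\mathbf x})=0$ follows once we know that $\alpha_2^2-\trd(\mathbf x)\alpha_2+\nrd(\mathbf x)=0$, and symmetrically $\nrd(\tilde{\mathbf y})=0$ follows from $\alpha_1^2-\trd(\mathbf y)\alpha_1+\nrd(\mathbf y)=0$. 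All three identities drop out of associativity: expanding $\mathbf x(\mathbf x\mathbf y)=\mathbf x^2\mathbf y$ in the basis $1,\mathbf x,\mathbf y$ — using $\mathbf x^2=\trd(\mathbf x)\mathbf x-\nrd(\mathbf x)$ — and comparing coefficients yields $\alpha_2^2=\trd(\mathbf x)\alpha_2-\nrd(\mathbf x)$ from the $\mathbf y$-coefficient and $\alpha_0=-\alpha_1\alpha_2$ from the $\mathbf x$-coefficient; expanding $(\mathbf x\mathbf y)\mathbf y=\mathbf x\mathbf y^2$ likewise gives $\alpha_1^2=\trd(\mathbf y)\alpha_1-\nrd(\mathbf y)$ from its $\mathbf x$-coefficient. (The remaining coefficient comparisons are then automatic and need not be checked.) So there is no genuine obstacle here — only the bookkeeping of the coefficient comparison; the one thing to get right is the ansatz $\tilde{\mathbf x}=\mathbf x-\alpha_2$, $\tilde{\mathbf y}=\mathbf y-\alpha_1$. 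Note that this argument is valid in every characteristic.

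If one prefers a structural viewpoint: $A$ is closed under the involution of $B$, since $\overline{\mathbf a}=\trd(\mathbf a)-\mathbf a\in A$, and a three-dimensional $k$-algebra with a standard involution is, up to isomorphism, either the algebra of upper triangular $2\times2$ matrices over $k$ or a local algebra $k\oplus J$ with a square-zero radical $J$ of dimension $2$; in either model the required shifts are immediate (one lands inside $J$ in the second case, and inside a suitable Peirce piece consisting of singular matrices in the first). But the computation above needs none of this and is self-contained, so that is the route I would write up.
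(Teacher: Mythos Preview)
Your proof is correct and uses the same ansatz as the paper: write $\mathbf x\mathbf y=\alpha_0+\alpha_1\mathbf x+\alpha_2\mathbf y$ and set $\tilde{\mathbf x}=\mathbf x-\alpha_2$, $\tilde{\mathbf y}=\mathbf y-\alpha_1$. The difference lies in how the three identities $\tilde{\mathbf x}\tilde{\mathbf y}=0$, $\nrd(\tilde{\mathbf x})=0$, $\nrd(\tilde{\mathbf y})=0$ are verified. You extract them directly by expanding $\mathbf x(\mathbf x\mathbf y)=\mathbf x^2\mathbf y$ and $(\mathbf x\mathbf y)\mathbf y=\mathbf x\mathbf y^2$ in the basis and reading off coefficients; this is a clean, characteristic-free bookkeeping exercise, and your coefficient identifications check out. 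The paper instead argues by contradiction: it observes only that $\tilde{\mathbf x}\tilde{\mathbf y}=c\in k$, then notes that $c\neq0$ would make $\tilde{\mathbf x}$ invertible and force $\tilde{\mathbf y}=c\,\nrd(\tilde{\mathbf x})^{-1}\overline{\tilde{\mathbf x}}\in k+k\tilde{\mathbf x}$, contradicting linear independence; once $\tilde{\mathbf x}\tilde{\mathbf y}=0$, nonvanishing of either reduced norm would force the other factor to be zero, again impossible. Your route is more computational but entirely explicit; the paper's route is shorter and more conceptual, leaning on the standard-involution identity $\mathbf a^{-1}=\nrd(\mathbf a)^{-1}\overline{\mathbf a}$. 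Either is a perfectly good write-up; your closing structural remark about the classification of three-dimensional algebras with standard involution is a nice aside but, as you say, unnecessary for the proof.
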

\begin{proof}
Since $1,\mathbf x,\mathbf y$ is a basis of $A$, we have
\[
    \mathbf x\mathbf y = c_1+c_2\mathbf x+c_3\mathbf y
\]
for some $c_1,c_2,c_3\in k$. Denoting $\tilde{\mathbf x}:= \mathbf x-c_3$ and $\tilde{\mathbf y} := \mathbf y-c_2$, we then obtain
\[
    \tilde{\mathbf x}\tilde{\mathbf y} = c_1-c_2c_3 =: c \in k,
\]
and we wish to obtain $c=0$.
Since we only shifted in the direction of the other basis element, we see that $1,\tilde{\mathbf x}, \tilde{\mathbf y}$ is still a basis of $A$. If now $c\neq 0$, it would mean that both $\tilde{\mathbf x}$, $\tilde{\mathbf y}$ are invertible and hence
\[
    \tilde{\mathbf y} = c\inv{\tilde{\mathbf x}} = c\nrd(\tilde{\mathbf x})^{-1}\overline{\zav{\tilde{\mathbf x}}} \in k+k\tilde{\mathbf x},
\]
which is a contradiction. So $\tilde{\mathbf x}\tilde{\mathbf y}=0$ as we wanted.

Now, suppose for the sake of contradiction that $\nrd(\tilde{\mathbf x})\neq0$, then $\tilde{\mathbf x}$ is invertible, so $\tilde{\mathbf x}\tilde{\mathbf y} = 0$ implies $\tilde{\mathbf y} = 0$, which is absurd, since $\tilde{\mathbf y}$ belongs to some basis. $\nrd(\tilde{\mathbf y})=0$ follows analogously.
\end{proof}

A~\emph{quaternion algebra} over a field $F$ of $\chr F\neq2$ is a four-dimensional $F$-algebra of the form
\[
    \quatalg{a,b}F := F \oplus F\ii\oplus F\j\oplus F\k,\quad a,b\in F^\times
\]
with multiplication given by
\[
    \ii^2=a, \qquad \j^2=b, \qquad \k=\ii\j=-\j\ii.
\]
Alternatively, quaternion algebras can be characterized as four-dimensional \emph{central simple algebras}. We leave out the explicit definition of quaternion algebras in characteristic $2$, since we will work mainly with algebras over number fields.

We may equip $\mathcal A:=\quatalg{a,b}F$ with an anti-involution
\[
    \mathbf q = t+x\ii+y\j+z\k \mapsto \overline{\mathbf q}:=t-x\ii-y\j-z\k.
\]
Due to $\mathbf q\overline{\mathbf q} = t^2-ax^2-by^2+abz^2$, this is a standard involution.

A~notable case of a quaternion algebra is the algebra $\quatalg{1,1}F$, which is isomorphic to the ring $\Mat_2(F)$ of $2\times2$ matrices over $F$. A~quaternion algebra over $F$ is always either a division algebra -- then we say $\mathcal A$ is a \emph{definite} quaternion algebra -- or it is isomorphic to $\Mat_2(F)$ and we say $\mathcal A$ is an \emph{indefinite} quaternion algebra.

Right ideals in a ring of $n\times n$ matrices over $F$ correspond bijectively to subspaces $L\subset F^n$ via $L\mapsto \set{\mathbf a\in\Mat_n(F)\mid \im\mathbf a\subset L}$, so in particular for $\Mat_2(F)$, we get that apart from the two trivial ideals, all non-trivial ideal correspond to lines in $F^2$. In particular, $\Mat_2(F)$ is a (right) PID. Since $\Mat_2(F)$ is isomorphic to its opposite ring via the standard involution, we also have an analogous result for left ideals.

\subsection{Orders, ideals, completions}

When $\mathcal A$ is a quaternion algebra over a number field $K$, we may consider $\OK$-lattices in it. An \emph{$\OK$-order}, or just \emph{order} for short, is an $\OK$-lattice that is simultaneously a subring of $\mathcal A$. On the other hand, starting with an arbitrary lattice $L$, we obtain its \emph{left order}
\[
    \lord(L) := \set{\mathbf x\in\mathcal A\mid \mathbf xL\subseteq L}.
\]
This is always an order, and analogously, we define the \emph{right order} $\rord(L)$.
For two lattices $L$ and $M$, we similarly define their \emph{left colon lattice}
\[
    \lcol LM := \set{\mathbf x\in \mathcal A\mid \mathbf x M\subseteq L}.
\]
This is a lattice, and we may note $\lord(L) = \lcol LL$. Analogously, we define the \emph{right colon lattice}
\(
    \rcol LM
\).

For an order $\mathcal H$ in a quaternion algebra $\mathcal A$, we denote its set of elements of reduced norm $1$ as
\[
    \mathcal H^1 := \set{\mathbf u\in\mathcal H\mid\nrd(\mathbf u)=1}.
\]
When $\mathcal A$ is a definite quaternion algebra over a totally real number field, then $\mathcal H^1$ is guaranteed to be finite.

\begin{lemma}
\label{lem:its-an-order}
Let $\mathcal G\subseteq\mathcal H$ be orders.
\begin{enumerate}
\item If $\mathcal I$ is a two-sided ideal of $\mathcal H$, then $\mathcal G+\mathcal I$ is an order. As a special case, we may take $\mathcal I:=\mathfrak a\mathcal H$ for any ideal $\mathfrak a$ of $\OK$.
\item If $\mathbf z\in\mathcal H$, then $\mathcal G+\mathcal G\mathbf z$ is an order. Similarly for $\mathcal G+\mathbf z\mathcal G$.
\end{enumerate}
\end{lemma}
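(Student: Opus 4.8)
The plan is to check, in each case, the two defining properties of an order separately: that the proposed set is an $\OK$-lattice, and that it is a subring of $\mathcal A$. The lattice condition will be immediate throughout: every candidate is a sum of finitely many finitely generated $\OK$-submodules of $\mathcal H$ (namely $\mathcal G$ and $\mathcal I$, resp. $\mathcal G$ and $\mathcal G\mathbf z\subseteq\mathcal H$, resp. $\mathcal G$ and $\mathbf z\mathcal G$), hence a finitely generated $\OK$-submodule of $\mathcal A$, and it contains the order $\mathcal G$, so it contains $1$ and spans $\mathcal A$ over $K$. Thus the whole content is closure under multiplication. I will also use freely the standard fact that elements of an order are integral over $\OK$, so $\trd$ and $\nrd$ restrict to maps into $\OK$ on any order.

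For (i) I would expand a product $(\mathbf g_1+\mathbf i_1)(\mathbf g_2+\mathbf i_2)$ with $\mathbf g_1,\mathbf g_2\in\mathcal G$ and $\mathbf i_1,\mathbf i_2\in\mathcal I$ into its four terms and place each: $\mathbf g_1\mathbf g_2\in\mathcal G$; $\mathbf g_1\mathbf i_2\in\mathcal H\mathcal I\subseteq\mathcal I$ since $\mathcal G\subseteq\mathcal H$; $\mathbf i_1\mathbf g_2\in\mathcal I\mathcal H\subseteq\mathcal I$ for the same reason; and $\mathbf i_1\mathbf i_2\in\mathcal I\mathcal H\subseteq\mathcal I$, using $\mathcal I\subseteq\mathcal H$. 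Hence the product lies in $\mathcal G+\mathcal I$. For the stated special case it remains to note that $\mathfrak a\mathcal H$ really is a two-sided ideal of $\mathcal H$: it is a finitely generated $\OK$-submodule contained in $\mathcal H$, and since $\mathfrak a\subseteq\OK$ is central in $\mathcal A$ one has $\mathcal H(\mathfrak a\mathcal H)=\mathfrak a\mathcal H\mathcal H=\mathfrak a\mathcal H$ and likewise $(\mathfrak a\mathcal H)\mathcal H=\mathfrak a\mathcal H$; then (i) applies.

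For (ii) the main obstacle is that, unlike in (i), the set $\mathcal G+\mathcal G\mathbf z$ is not visibly closed under left multiplication by $\mathcal G$: one must show $\mathbf z\mathbf g\in\mathcal G+\mathcal G\mathbf z$ for every $\mathbf g\in\mathcal G$, and this is exactly where the standard involution must be brought in, much as in Lemma~\ref{lem:switch-the-order}. The identity I would use is
\[
    \mathbf z\mathbf g=\trd(\mathbf z\mathbf g)-\overline{\mathbf g}\,\overline{\mathbf z}=\trd(\mathbf z\mathbf g)-\trd(\mathbf z)\,\overline{\mathbf g}+\overline{\mathbf g}\,\mathbf z,
\]
where $\overline{\mathbf g}=\trd(\mathbf g)-\mathbf g\in\mathcal G$ and $\trd(\mathbf z\mathbf g),\trd(\mathbf z)\in\OK\subseteq\mathcal G$, so the right-hand side lies in $\mathcal G+\mathcal G\mathbf z$. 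Granting this, expanding $(\mathbf g_1+\mathbf g_1'\mathbf z)(\mathbf g_2+\mathbf g_2'\mathbf z)$ finishes the job: $\mathbf g_1\mathbf g_2$ and $\mathbf g_1\mathbf g_2'\mathbf z$ lie in $\mathcal G+\mathcal G\mathbf z$ outright; $\mathbf g_1'\mathbf z\mathbf g_2\in\mathcal G(\mathcal G+\mathcal G\mathbf z)\subseteq\mathcal G+\mathcal G\mathbf z$; and $\mathbf g_1'\mathbf z\mathbf g_2'\mathbf z\in(\mathcal G+\mathcal G\mathbf z)\mathbf z=\mathcal G\mathbf z+\mathcal G\mathbf z^2\subseteq\mathcal G+\mathcal G\mathbf z$, using $\mathbf z^2=\trd(\mathbf z)\mathbf z-\nrd(\mathbf z)\in\OK+\OK\mathbf z$. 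Finally, the statement for $\mathcal G+\mathbf z\mathcal G$ follows by applying the anti-automorphism $\mathbf x\mapsto\overline{\mathbf x}$, which fixes $\mathcal G$ and carries $\mathcal G+\mathbf z\mathcal G$ onto $\mathcal G+\mathcal G\overline{\mathbf z}$ (with $\overline{\mathbf z}\in\mathcal H$ in place of $\mathbf z$); since the image of a subring under an anti-automorphism is again a subring, this is the case already treated.
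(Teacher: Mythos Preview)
Your proof is correct and follows essentially the same route as the paper's: for (i) you expand the product of two generic elements, and for (ii) you reduce everything to showing $\mathbf z\mathbf g\in\mathcal G+\mathcal G\mathbf z$ via the standard-involution identity (the paper phrases this as $\mathbf y\mathbf x\in\OK+\OK\mathbf x+\OK\mathbf y+\OK\mathbf x\mathbf y$ and cites Lemma~\ref{lem:switch-the-order}, but the content is the same), then handles $\mathbf z^2$ via its quadratic relation. Your use of the anti-involution to deduce the $\mathcal G+\mathbf z\mathcal G$ case from the $\mathcal G+\mathcal G\overline{\mathbf z}$ case is a nice touch where the paper simply says ``analogously''.
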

\begin{proof}
\begin{enumerate}
\item The only non-trivial part is to verify that $\mathcal G+\mathcal I$ is closed under multiplication. Letting $\mathbf q_1,\mathbf q_2\in \mathcal G$, $\mathbf a_1, \mathbf a_2\in \mathcal I$, we obtain
\[
(\mathbf q_1+\mathbf a_1)(\mathbf q_2+\mathbf a_2) = \underbrace{\mathbf q_1\mathbf q_2}_{\in\mathcal G} + \underbrace{\mathbf a_1\mathbf q_2}_{\in\mathcal I}+\underbrace{\mathbf q_1\mathbf a_2}_{\in\mathcal I}+\underbrace{\mathbf a_1\mathbf a_2}_{\in\mathcal I} \in \mathcal G+\mathcal I.
\]
\item Again, we only have to prove $\mathcal G+\mathcal G\mathbf z$ is closed under multiplication. Since it is generated as an $\OK$-module by elements of the forms $\mathbf q$ and $\mathbf q\mathbf z$ for $\mathbf q\in\mathcal G$, it suffices to verify for that the product of any two such elements lies in $\mathcal G+\mathcal G\mathbf z$ again. Clearly $\mathbf q_1\mathbf q_2\in\mathcal G$ for $\mathbf q_1,\mathbf q_2\in\mathcal G$ and
\[
\mathbf q_1(\mathbf q_2\mathbf z) = \underbrace{(\mathbf q_1\mathbf q_2)}_{\in\mathcal G}\mathbf z~\in \mathcal G\mathbf z.
\]
Next, similarly to the proof of Lemma~\ref{lem:switch-the-order}, we use the fact that
\[
    \mathbf y\mathbf x \in \OK+\OK\mathbf x+\OK\mathbf y+\OK\mathbf x\mathbf y
\]
for any $\mathbf x,\mathbf y\in\mathcal H$ to get
\begin{align*}
    (\mathbf q_1\mathbf z)\mathbf q_2 &\in \mathbf q_1(\OK+\OK\mathbf q_2 +\OK\mathbf z+\OK\mathbf q_2\mathbf z) =\\&= \underbrace{(\OK\mathbf q_1+\OK\mathbf q_1\mathbf q_2)}_{\subseteq\mathcal G} + \underbrace{(\OK\mathbf q_1+\OK\mathbf q_1\mathbf q_2)}_{\subseteq\mathcal G}\mathbf z~\subseteq \mathcal G+\mathcal G\mathbf z,\\
    (\mathbf q_1\mathbf z)(\mathbf q_2\mathbf z) &= (\mathbf q_1\mathbf z\mathbf q_2)\mathbf z~\in (\mathcal G +\mathcal G\mathbf z)\mathbf z~= \mathcal G\mathbf z+\mathcal G\mathbf z^2 \subseteq\\&\subseteq \mathcal G\mathbf z+\mathcal G(\OK+\OK\mathbf z) \subseteq\mathcal G+\mathcal G\mathbf z.
\end{align*}

The result for $\mathcal G+\mathbf z\mathcal G$ is proved analogously.
\qedhere
\end{enumerate}
\end{proof}

\emph{Maximal orders} (meaning inclusion-maximal orders) always exist in a quaternion algebra over a number field, playing a role analogous to the ring of integers of a number field. Unlike in the commutative setting however, there may be many distinct maximal orders.

\begin{prop}[{\cite[Corollary 10.5.4]{voight}}]
\label{prp:maximals-in-matrix-ring}
If $R$ is a PID, $F$ its fraction field and $\mathcal A = \Mat_2(F)$, then any maximal $R$-order $\mathcal H\subset\mathcal A$ is isomorphic to $\Mat_2(R)$.
\end{prop}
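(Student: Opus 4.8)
The plan is to realize $\mathcal{H}$ as the endomorphism ring of an $R$-lattice inside the natural module $F^2$ of $\mathcal{A} = \Mat_2(F)$, and then to read off the matrix structure from the PID hypothesis. Regard $F^2$ as a left $\mathcal{A}$-module in the usual way and set $L_0 := R^2 \subset F^2$. Since $\mathcal{H}$ is an order it is finitely generated as an $R$-module, so $L := \mathcal{H}L_0$ is a finitely generated $R$-submodule of $F^2$; it is torsion-free (being contained in an $F$-vector space) and it contains $L_0$ because $1 \in \mathcal{H}$, hence spans $F^2$ over $F$, so $L$ is an $R$-lattice in $F^2$. Moreover $\mathbf{h}L = (\mathbf{h}\mathcal{H})L_0 \subseteq \mathcal{H}L_0 = L$ for every $\mathbf{h} \in \mathcal{H}$, using that $\mathcal{H}$ is a subring, so $L$ is $\mathcal{H}$-stable; that is, $\mathcal{H} \subseteq \mathcal{H}' := \set{\mathbf{a} \in \mathcal{A} \mid \mathbf{a}L \subseteq L}$.

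Next I would identify $\mathcal{H}'$ with $\mathrm{End}_R(L)$: since $L$ spans $F^2$ over $F$, every $R$-linear endomorphism of $L$ extends uniquely to an $F$-linear endomorphism of $F^2$, and this gives a ring isomorphism between $\mathrm{End}_R(L)$ and the subring $\mathcal{H}'$ of $\mathcal{A} = \mathrm{End}_F(F^2)$. Now the PID hypothesis enters: $L$ is a finitely generated torsion-free $R$-module of rank $2$, hence free, $L \cong R^2$. Fixing an $R$-basis of $L$ yields a ring isomorphism $\mathcal{H}' = \mathrm{End}_R(L) \cong \Mat_2(R)$, and the same basis, regarded as an $F$-basis of $F^2$, carries $\mathcal{A}$ to $\Mat_2(F)$ and $\mathcal{H}'$ to the copy of $\Mat_2(R)$ sitting inside it; in particular $\mathcal{H}'$ is itself an $R$-order in $\mathcal{A}$. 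Since $\mathcal{H} \subseteq \mathcal{H}'$ and $\mathcal{H}$ is maximal, we conclude $\mathcal{H} = \mathcal{H}' \cong \Mat_2(R)$.

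The only points requiring care are that $L$ is a genuine full $R$-lattice — finite generation coming from that of $\mathcal{H}$ over $R$, full rank from $1 \in \mathcal{H}$ — and that $\mathcal{H}'$ is an order rather than merely a ring; both come down to the structure theorem for finitely generated modules over a PID, which is exactly what forces $L$ to be free of rank $2$. Beyond this bookkeeping I do not anticipate any real obstacle.
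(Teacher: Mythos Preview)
Your argument is correct and is essentially the standard proof of this fact. Note, however, that the paper does not supply its own proof of this proposition: it is stated with a citation to \cite[Corollary 10.5.4]{voight} and used as a black box. Your lattice argument --- pushing $R^2$ forward under $\mathcal H$ to obtain an $\mathcal H$-stable lattice $L$, invoking the structure theorem over a PID to get $L\cong R^2$, and then using maximality to force $\mathcal H=\mathrm{End}_R(L)$ --- is exactly the approach taken in Voight's book, so there is nothing to compare.
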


\subsection{Localizations, completions}
\label{subsec:loc-comp}

For any prime ideal $\mathfrak p$ of $\OK$, we may consider the localization $\mathcal O_{K,(\mathfrak p)}$ and its $\mathfrak p$-adic completion $\mathcal O_{K,\mathfrak p}$. Further, we may take $K_{\mathfrak p}:=K\otimes_{\OK}\mathcal O_{K,\mathfrak p}$. We may also apply $-\otimes_{K}K_{\mathfrak p}$ to a quaternion algebra $\mathcal A$ over $K$ to obtain $\mathcal A_{\mathfrak p}$, a quaternion algebra over $K_{\mathfrak p}$, and then apply $-\otimes_{\OK}\mathcal O_{K,\mathfrak p}$ to any $\OK$-lattice $L\subset\mathcal A$ to obtain $L_{\mathfrak p}\subset\mathcal A_{\mathfrak p}$.

The usefulness of localizations and completions stems from the ability to only check certain properties locally -- notably, being an order and being an ideal of an order may be checked locally. Here, let us summarize the contents of Corollary 9.4.4, Lemma 9.4.6 and Theorem 9.5.1 of \cite[Chapter 9]{voight}:
\begin{thm}[Local-global dictionary]
Let $\mathcal A$ be a quaternion algebra over $K$ and let $M,N\subseteq V$ be $\OK$-lattices.
\begin{enumerate}
    \item $M\subseteq N$, if and only if $M_{\mathfrak p}\subseteq N_{\mathfrak p}$ for all $\mathfrak p$. In particular, $M=N$ if and only if $M_{\mathfrak p}=N_{\mathfrak p}$ for all $\mathfrak p$.
    \item If $M$ is fixed, then $N\mapsto (N_{\mathfrak p}\mid \text{prime $\mathfrak p$ of $\OK$})$ yields a bijection between $\OK$-lattices in $\mathcal A$ and collections of $\mathcal O_{K,\mathfrak p}$-lattices in $\mathcal A_{\mathfrak p}$ that only differ from $(M_{\mathfrak p}\mid \text{prime $\mathfrak p$ of $\OK$})$ in finitely many positions.
\end{enumerate}
\end{thm}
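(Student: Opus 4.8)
The plan is to observe that this is purely a statement about $\OK$-lattices in a finite-dimensional $K$-vector space (in our case the space $V=\mathcal A$) and uses nothing beyond $\OK$ being a Dedekind domain, so the quaternion structure is irrelevant; throughout, \emph{lattice} means a finitely generated $\OK$-submodule of full rank. First I would record two routine tools. \emph{Flatness:} the maps $\OK\to\mathcal O_{K,(\mathfrak p)}\to\mathcal O_{K,\mathfrak p}$ are flat and $\mathcal O_{K,\mathfrak p}$ is faithfully flat over $\mathcal O_{K,(\mathfrak p)}$, so $L\mapsto L_{\mathfrak p}$ is exact and preserves inclusions and finite intersections taken inside $V_{\mathfrak p}$, and a finitely generated $\OK$-module $T$ with $T_{\mathfrak p}=0$ for every maximal $\mathfrak p$ must vanish (otherwise it is nonzero at a maximal ideal in its support). \emph{Recovery:} $L=\bigcap_{\mathfrak p}(L_{(\mathfrak p)}\cap V)$ for any lattice $L$, and any two lattices have equal localizations at all but finitely many primes, since the relevant linking quotients are finitely generated torsion modules with finite support.

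For part (1), the forward implication is immediate from flatness. For the converse I would apply the flatness tool to $T:=(M+N)/N$, which is finitely generated and torsion because $M$ and $N$ both have full rank, and which satisfies $T_{\mathfrak p}=(M_{\mathfrak p}+N_{\mathfrak p})/N_{\mathfrak p}=0$ for every $\mathfrak p$; hence $T=0$, i.e.\ $M\subseteq N$, and the equality statement follows by symmetry. For part (2), injectivity of $N\mapsto(N_{\mathfrak p})_{\mathfrak p}$ is exactly part (1), and the assertion that its image lands in collections differing from $(M_{\mathfrak p})_{\mathfrak p}$ in only finitely many positions is the recovery tool applied to $M$ and $N$, together with faithful flatness to move between localizations and completions.

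The real content is surjectivity in part (2), and here the plan is to build the desired global lattice by perturbing $M$ one prime at a time. Given data $(L_{\mathfrak p})_{\mathfrak p}$ with $L_{\mathfrak p}=M_{\mathfrak p}$ for $\mathfrak p$ outside a finite set $S$, fix $\mathfrak q\in S$: since $L_{\mathfrak q}$ and $M_{\mathfrak q}$ are commensurable lattices in $V_{\mathfrak q}$, the lattice $P_{\mathfrak q}:=\mathfrak q^{m}L_{\mathfrak q}\subseteq M_{\mathfrak q}$ has finite colength for suitable $m\geq 0$, and $\mathfrak q^{n}M_{\mathfrak q}\subseteq P_{\mathfrak q}$ for large $n$. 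Using $M/\mathfrak q^{n}M\cong M_{\mathfrak q}/\mathfrak q^{n}M_{\mathfrak q}$, I would pull $P_{\mathfrak q}/\mathfrak q^{n}M_{\mathfrak q}$ back to a lattice $P$ with $\mathfrak q^{n}M\subseteq P\subseteq M$; then $P$ has completion $P_{\mathfrak q}$ at $\mathfrak q$, while the squeeze forces $P_{\mathfrak p}=M_{\mathfrak p}$ for all $\mathfrak p\neq\mathfrak q$, so $\mathfrak q^{-m}P$ has completion $L_{\mathfrak q}$ at $\mathfrak q$ and is unchanged at the remaining primes. Iterating over the finitely many primes of $S$ yields a lattice realizing all the prescribed completions, which is unique by part (1); alternatively one verifies directly that $N:=\{x\in V:x\in L_{\mathfrak p}\text{ for all }\mathfrak p\}$ does the job, its being a full-rank lattice following from a sandwich $cM\subseteq N\subseteq(c')^{-1}M$ for $c,c'\in\OK\setminus\{0\}$ with sufficiently large valuation at each prime of $S$, via the recovery tool. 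I expect this surjectivity — producing a genuine finitely generated global lattice out of infinitely many local ones and then identifying all of its completions — to be the main obstacle; the rest is standard Dedekind-domain bookkeeping, which is presumably why the paper simply imports the statement from \cite[Chapter~9]{voight}.
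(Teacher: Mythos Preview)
Your proof sketch is correct and complete; the argument via flatness for part~(i) and the one-prime-at-a-time perturbation (or equivalently the intersection $N=\bigcap_{\mathfrak p}(L_{\mathfrak p}\cap V)$) for surjectivity in part~(ii) are exactly the standard route. As you already suspected in your final sentence, the paper does not prove this theorem at all: it merely records it as a summary of Corollary~9.4.4, Lemma~9.4.6 and Theorem~9.5.1 of \cite{voight}, so there is no proof in the paper to compare against.
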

A~more informal wording of part (ii) is that if we start with a lattice $M$ and change $M_{\mathfrak p}$ at finitely many $\mathfrak p$, the result uniquely determines an $\OK$-lattice again.

A~further strength of localizations and completions comes from the fact that over a completion $K_{\mathfrak p}$ of a number field, there is only one division quaternion algebra up to isomorphism. This allows us to say that a quaternion algebra over $K_{\mathfrak p}$ is either the matrix ring, or \uv{the} division algebra.

This division quaternion algebra $\mathcal A_{\mathfrak p}$ may be given explicitly: if $\chr(\OK/\pi)\neq2$, the (principal) ideal $\mathfrak p \mathcal O_{K,\mathfrak p}$ of $\mathcal O_{K,\mathfrak p}$ is generated by some $\pi$ and $d\in \mathcal O_{K,\mathfrak p}$ is chosen such that it becomes a quadratic non-residue in $\mathcal O_{K,\mathfrak p}/\mathfrak p\mathcal O_{K,\mathfrak p}\simeq\OK/\pi$, then $\quatalg{d,\pi}{K_{\mathfrak p}}\simeq\mathcal A_{\mathfrak p}$ (see \cite[Chapter 13]{voight} for further details). As with the definition of quaternion algebras themselves, characteristic $2$ brings technical complications, but the result about uniqueness still holds.

Moreover, the division quaternion algebra over $K_{\mathfrak p}$ has a single, unique maximal order $\mathcal H_{\mathfrak p}$. This order has only one maximal (left or right) ideal $P$, which is a two-sided ideal and satisfies $P^2 = \pi\mathcal H_{\mathfrak p}$ \cite[Theorem 13.3.10]{voight}.

\subsection{Indices, discriminants}
\label{sec:inddisc}

When $L$ and $M$ are two $\OK$-lattices in an $K$-vector space $V$, we define their \emph{$\OK$-index} $[L:M]_{\OK}$ as the fractional ideal generated by $\det(\delta)\in K$ for $K$-linear endomorphisms $\delta:V\to V$ such that $\delta(L)\subseteq M$. Of note is the case when $M\subseteq L$, then we may consider that $L/M$ is a torsion $\OK$-module. Appealing to the structure theorem for finitely generated modules over a Dedekind ring (\cite[Theorem 10.3.10]{broue}), we then decompose $L/M$ as some direct sum of cyclic modules, i.e.
\[
    L/M \simeq \OK/I_1\oplus\cdots\oplus \OK/I_n.
\]
Then it holds that $[L:M]_{\OK} = I_1\cdots I_n$, so in particular if $M\subseteq L$, then $M=L$ if and only if $[L:M]_{\OK} = \OK$. Another consequence of this is that $[L:M]_{\OK}\cdot L \subseteq M$. Further, we have $\Nm_{K/\kve}([L:M]_{\OK}) = [L:M]_{\zet}$.
Indices also commute with completions, i.e. $([M:N]_{\OK})_{\mathfrak p} = [M_{\mathfrak p}:N_{\mathfrak p}]_{\mathcal O_{K,\mathfrak p}}$

For any $\OK$-lattice $L$ in a quaternion algebra $\mathcal A$ over $K$, its \emph{discriminant} $\disc(L)$ is the fractional ideal of $\OK$ generated by all $\det(\trd(\mathbf x_i\mathbf x_j))_{i,j=1,\dots,4}$ as $(\mathbf x_1,\dots,\mathbf x_4)$ runs through all quadruplets of elements of $L$. It turns out that $\disc(L)$ is always a square of an ideal of $\OK$, and this ideal is called the \emph{reduced discriminant} $\discrd(L)$ (this might not hold for all lattices over more general rings). It may be explicitly constructed as the ideal generated by all
\[
    \trd((\mathbf x_1\mathbf x_2-\mathbf x_2\mathbf x_1)\overline{\mathbf x_3})
\]
as $(\mathbf x_1,\mathbf x_2,\mathbf x_3)$ runs through all triplets of elements of $L$. See \cite[Chapter 15]{voight} for more details.

\begin{prop}
\label{prp:discs-and-indices}
Let $\mathcal A$ be a definite quaternion algebra  over $K$.
\begin{enumerate}
\item For $\OK$-lattices $L,M\subset\mathcal A$, it holds that $\discrd(M) = [L:M]_{\OK}\discrd(L)$.
\item For an $\OK$-lattice $L\subseteq \mathcal A$ and $\mathbf q\in\mathcal A$, it holds that $[L:\mathbf q L]_{\OK} = {[L:L\mathbf q]}_{\OK} = \nrd(\mathbf q)^2$.
\end{enumerate}
\end{prop}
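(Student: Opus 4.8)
The plan is to express each index as the determinant of a suitable $K$-linear endomorphism of $\mathcal A$ and then read off the claimed values: part~(i) I would handle by localizing to a discrete valuation ring, part~(ii) via the (left and right) regular representation. For part~(i), observe that both $\discrd(M)$ and $[L:M]_{\OK}\,\discrd(L)$ are fractional ideals of $\OK$, so by unique factorization of fractional ideals it suffices to prove the identity after completing at each prime $\mathfrak p$; the reduced discriminant commutes with completion because it is generated by a trilinear expression in the lattice, and the index does so by the remark preceding the proposition. Over the complete DVR $\mathcal O_{K,\mathfrak p}$ the lattices $L_\mathfrak p, M_\mathfrak p$ are free, say with bases $(e_i)_{i=1}^4$ and $(f_i)_{i=1}^4$ related by $f_i=\sum_j a_{ij}e_j$ with $A:=(a_{ij})\in\mathrm{GL}_4(K_\mathfrak p)$; then $[L_\mathfrak p:M_\mathfrak p]_{\mathcal O_{K,\mathfrak p}}=(\det A)$, and for a free lattice the discriminant ideal is generated by $\det\big((\trd(b_ib_j))_{i,j}\big)$ for any basis $(b_i)$, since any other quadruple is obtained from a basis by applying an integral matrix. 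From $(\trd(f_if_j))_{i,j}=A\,(\trd(e_ie_j))_{i,j}\,A^{\mathsf T}$ one gets $\disc(M_\mathfrak p)=(\det A)^2\,\disc(L_\mathfrak p)$, and since $\disc=\discrd^2$ and fractional ideals of a DVR have unique square roots, this yields $\discrd(M_\mathfrak p)=(\det A)\,\discrd(L_\mathfrak p)=[L_\mathfrak p:M_\mathfrak p]_{\mathcal O_{K,\mathfrak p}}\,\discrd(L_\mathfrak p)$; reassembling over all $\mathfrak p$ gives~(i).

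For part~(ii), the key point is that whenever $\delta\colon\mathcal A\to\mathcal A$ is an invertible $K$-linear map, $[L:\delta(L)]_{\OK}=(\det\delta)$: the element $\det\delta$ is one of the defining generators of $[L:\delta(L)]_{\OK}$, and conversely for any $\delta'$ with $\delta'(L)\subseteq\delta(L)$ the composite $\delta^{-1}\delta'$ sends $L$ into $L$, so $\det(\delta^{-1}\delta')\in[L:L]_{\OK}=\OK$ and hence $\det\delta'\in(\det\delta)$. Taking $\delta$ to be left multiplication $L_\mathbf q\colon\mathbf x\mapsto\mathbf q\mathbf x$ — invertible since $\mathcal A$ is a division algebra, the case $\mathbf q=0$ being trivial — reduces the first equality to $\det(L_\mathbf q)=\nrd(\mathbf q)^2$. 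This last identity I would obtain by extending scalars to a splitting field $F$, over which $\mathcal A\otimes_K F\cong\Mat_2(F)$ with the reduced norm corresponding to the determinant: as a module over itself $\Mat_2(F)$ is the direct sum of its two columns, on each of which $L_\mathbf q$ acts through the $2\times2$ matrix representing $\mathbf q$, so $\det(L_\mathbf q)=\nrd(\mathbf q)^2$. For right multiplication $R_\mathbf q$, conjugating by the standard involution $\iota$ gives $\iota\circ L_\mathbf q\circ\iota=R_{\overline{\mathbf q}}$, hence $\det(R_\mathbf q)=\det(L_{\overline{\mathbf q}})=\nrd(\overline{\mathbf q})^2=\nrd(\mathbf q)^2$; applying the index computation to $\delta=R_\mathbf q$ completes the proof.

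The only genuinely fiddly part is the bookkeeping in~(i) — checking that the discriminant ideal of a free lattice is computed from a single basis, and that passing to square roots of ideals is legitimate — all of which becomes routine once one has reduced to a DVR. The identity $\det(L_\mathbf q)=\nrd(\mathbf q)^2$ in~(ii) is standard, reflecting that the characteristic polynomial of left multiplication by $\mathbf q$ is the square of its reduced characteristic polynomial $x^2-\trd(\mathbf q)x+\nrd(\mathbf q)$.
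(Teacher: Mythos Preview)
The paper states this proposition without proof in its preliminaries section, treating it as standard background (the surrounding discussion points to \cite{voight}, Chapter~15). Your argument is correct and is exactly the standard one: for~(i), localizing to a DVR to get free lattices and reading off the change-of-basis formula $\det(\trd(f_if_j))=(\det A)^2\det(\trd(e_ie_j))$, then taking square roots of ideals; for~(ii), identifying $[L:\delta(L)]_{\OK}$ with $(\det\delta)$ and computing $\det(L_{\mathbf q})=\nrd(\mathbf q)^2$ via the splitting field (equivalently, via the reduced characteristic polynomial). The small points you flag --- that $\disc$ of a free lattice is computed from a single basis, and that $[L:L]_{\OK}=\OK$ --- are handled correctly.
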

Further, the discriminant $\disc\mathcal A$ of the quaternion algebra $\mathcal A$ is defined as the product of all those primes $\mathfrak p$ of $\OK$ for which $\mathcal A_{\mathfrak p}$ is a division algebra (or, rather, the division algebra); there always only finitely many such primes.
Notably, whenever $\mathcal H$ is a maximal $\OK$-order in $\mathcal A$, then $\discrd\mathcal H = \disc \mathcal A$ (\cite[Theorem 23.2.9]{voight}).

\subsection{Ideal class theory}
\label{sec:ideal-classes}

We use $\Cl K$ to refer to the \emph{ideal class group} of $K$ and $\NCl K$ to the \emph{narrow class group}. Both are always finite and $\#\Cl K=1$ occurs if and only if $\OK$ is a PID, while $\#\NCl K=1$ occurs if and only if $\OK$ is a PID in which all totally positive units are squares (of units).

In the non-commutative setting of quaternion orders, we lose the group structure, leaving only an \emph{ideal class set}.

\begin{defn}
\label{def:cls}
Let $\mathcal A$ be a definite quaternion algebra over $K$. Amongst $\OK$-lattices in $\mathcal A$, we say $L$ and $M$ are \emph{(right) equivalent}, which we denote $L\sim_{\text{\sffamily R}}M$, if $L = \mathbf x M$ for some $\mathbf x\in \mathcal A^\times$; let us denote the equivalence classes of this relation as $[L]_{\text{\sffamily R}}$. For an $\OK$-order $\mathcal H\subset \mathcal A$, we then define its \emph{(right) class set} as
\[
    \Cls\mathcal H := \set{[L]_{\text{\sffamily R}}\mid \text{$L$ is a lattice with $\rord(L)=\mathcal H$}}.
\]
The cardinality of $\Cls\mathcal H$ is called the \emph{(right) class number} of $\mathcal H$.
\end{defn}
The left class set is defined analogously, though they are always in bijection due to the isomorphism $\mathcal H\simeq\mathcal H^{\text{op}}$ given by the standard involution, hence we will usually omit the designation of right or left.
Note that
when $\mathcal H$ is maximal, $\#\Cls\mathcal H=1$ is equivalent to $\mathcal H$ being a right PID (and thus also a left PID).
Similarly to the class group of number fields, the class set of a quaternion order is always guaranteed to be finite. This is due to an analogue of Minkowski's bound (see \cite[Section 17.7]{voight} for more details).

Kirschmer and Lorch \cite{kirschmer-lorch} obtained an enumeration of all orders in definite quaternion algebras with class number $1$, up to isomorphism, which we will use in Section~\ref{chap:orbits}. In this context, it will be useful to note some implications this has on any superorders as well as the base number field $K$.

\begin{prop}[{\cite[Exercise 17.3]{voight}, \cite[Remark 6.3]{kirschmer-lorch}}]
\label{prp:cls-implications}
Let $\mathcal G\subseteq\mathcal H$ be $\OK$-orders in a definite quaternion algebra over $K$. Then  $\#\Cls \mathcal G\geq\#\Cls\mathcal H\geq\#\NCl K$.
\end{prop}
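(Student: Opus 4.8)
The plan is to establish the two inequalities $\#\Cls\mathcal G\ge\#\Cls\mathcal H$ and $\#\Cls\mathcal H\ge\#\NCl K$ separately, the second being reduced to the first. Indeed, every $\OK$-order in our algebra is contained in a maximal order: a proper enlargement $\mathcal O\subsetneq\mathcal O'$ forces $\discrd(\mathcal O')\supsetneq\discrd(\mathcal O)$ by Proposition~\ref{prp:discs-and-indices}(i), and only finitely many ideals contain $\discrd(\mathcal O)$, so repeated enlargement terminates (see also \cite[Chapter 10]{voight}). Thus, given arbitrary $\mathcal H$, choosing a maximal order $\mathcal M\supseteq\mathcal H$ yields $\#\Cls\mathcal H\ge\#\Cls\mathcal M\ge\#\NCl K$ once we know the first inequality in general and the second for maximal orders.

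For $\#\Cls\mathcal G\ge\#\Cls\mathcal H$ with $\mathcal G\subseteq\mathcal H$, I would construct a surjection $\Cls\mathcal G\twoheadrightarrow\Cls\mathcal H$ sending the class of a lattice $L$ with $\rord(L)=\mathcal G$ to the class of $L\mathcal H$. It is well defined because replacing $L$ by $\mathbf xL$ with $\mathbf x\in\mathcal A^\times$ replaces $L\mathcal H$ by $\mathbf x(L\mathcal H)$. For it to land in $\Cls\mathcal H$ one needs $\rord(L\mathcal H)=\mathcal H$; the inclusion $\supseteq$ is clear from $(L\mathcal H)\mathcal H=L\mathcal H$, and the reverse is checked locally — at each prime $\mathfrak p$ with $\mathcal G_\mathfrak p=\mathcal H_\mathfrak p$ there is nothing to prove, and at the finitely many others one writes $L_\mathfrak p\mathcal H_\mathfrak p=\mathbf c_\mathfrak p\mathcal H_\mathfrak p$, whence its right order is $\mathcal H_\mathfrak p$. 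Surjectivity: for any $J$ with $\rord(J)=\mathcal H$, the local–global dictionary lets one assemble a lattice $L$ equal to $J$ at the primes where $\mathcal G_\mathfrak p=\mathcal H_\mathfrak p$ and equal to $\mathbf c_\mathfrak p\mathcal G_\mathfrak p$ elsewhere (with $\mathbf c_\mathfrak p\mathcal H_\mathfrak p=J_\mathfrak p$), so that $\rord(L)=\mathcal G$ and $L\mathcal H=J$.

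For $\#\Cls\mathcal M\ge\#\NCl K$ with $\mathcal M$ maximal, I would use the reduced norm: send the class of $L$ with $\rord(L)=\mathcal M$ to the narrow ideal class of the fractional ideal $\nrd(L)$ generated by $\{\nrd(\mathbf x):\mathbf x\in L\}$. This is well defined, since rescaling $L$ by $\mathbf x\in\mathcal A^\times$ rescales $\nrd(L)$ by $\nrd(\mathbf x)=\mathbf x\overline{\mathbf x}$, which is totally positive (as $\mathcal A$ is definite, so its norm form is positive definite at every real place) and hence trivial in $\NCl K$. The substantive point is surjectivity: the reduced norm $\mathcal A_\mathfrak p^\times\to K_\mathfrak p^\times$ is onto at every finite place — it is $\det$ when $\mathcal A_\mathfrak p$ splits, while for ramified $\mathfrak p$ the maximal ideal $P$ of the local maximal order has $\nrd(P)=\mathfrak p$ (Subsection~\ref{subsec:loc-comp}) — so given any fractional ideal $\mathfrak a$ one selects right $\mathcal M_\mathfrak p$-ideals of reduced norm $\mathfrak a_\mathfrak p$ at each $\mathfrak p$ and glues them to a lattice $L$ with $\rord(L)=\mathcal M$ and $\nrd(L)=\mathfrak a$; this is the familiar surjectivity of the reduced norm from the class set of a maximal order onto the narrow class group.

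The step I expect to be the main obstacle is making the two local arguments in the first inequality airtight for arbitrary, not necessarily maximal, orders, where a one-sided ideal need not be locally principal and so the factorizations $L_\mathfrak p\mathcal H_\mathfrak p=\mathbf c_\mathfrak p\mathcal H_\mathfrak p$ and $\mathbf c_\mathfrak p\mathcal H_\mathfrak p=J_\mathfrak p$ need not be available verbatim. The standard way around this is to restrict throughout to invertible (equivalently, locally principal) one-sided ideals in the definition of the class set, which changes none of the class numbers relevant to our applications. In the case we actually use — $\mathcal H$ maximal — no such care is needed: $\rord(L\mathcal H)$ contains the maximal order $\mathcal H$, hence equals it, and the argument goes through directly.
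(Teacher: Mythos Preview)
The paper does not prove this proposition; it records the statement with citations to \cite[Exercise 17.4]{voight} and \cite[Remark 6.3]{kirschmer-lorch} and moves on. Your argument is the standard one those references have in mind: a surjection $\Cls\mathcal G\twoheadrightarrow\Cls\mathcal H$ via $[L]\mapsto[L\mathcal H]$ for the first inequality, and the reduced-norm map $\Cls\mathcal M\to\NCl K$ for a maximal $\mathcal M\supseteq\mathcal H$ for the second, after reducing to that case via the first inequality. Your identification of the one genuine subtlety---that a lattice with prescribed right order need not be locally principal over a non-hereditary order, so the local factorizations $L_{\mathfrak p}=\mathbf c_{\mathfrak p}\mathcal G_{\mathfrak p}$ and $J_{\mathfrak p}=\mathbf c_{\mathfrak p}\mathcal H_{\mathfrak p}$ are not automatic---is correct, and your proposed fix (read $\Cls$ as classes of invertible, i.e.\ locally principal, right ideals, which is the convention in \cite{kirschmer-lorch} and in the relevant chapters of \cite{voight}) is exactly what is done in the literature. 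For every application of Proposition~\ref{prp:cls-implications} in this paper the superorder in question is either maximal or already known to have class number $1$, and in those cases your argument goes through without the caveat.
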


\section{Factorization in principal ideal quaternion orders}
\label{chap:factorizations}

In this section, we will examine factorizations into irreducible elements in quaternion orders. These are not unique, but we will show that when the order is a principal ideal domain, uniqueness up to certain manipulations holds, which allows us to count elements of a given reduced norm based on these factorizations.

Note that even though a quaternion order $\mathcal H$ is a non-commutative ring, we know that the standard involution preserves orders and maps their left ideal to right ideals and vice versa. Hence the notions of a left PID and a right PID coincide, which is why we will call them just PIDs.

\begin{convention}
Throughout this section, let us fix a totally real number field $K$ and let $\OK$ be its ring of integers; further, let us presume that $K$ has narrow class number $1$, i.e. that it is a PID and that all of its totally positive units are squares.
We will be considering a maximal $\OK$-order $\mathcal H$ that is a PID, in a definite quaternion algebra $\mathcal A$ over $K$.
\end{convention}

As discussed in Subsection~\ref{sec:ideal-classes}, a maximal order $\mathcal H$ is a PID if and only if it has class number $1$, and by Proposition~\ref{prp:cls-implications}, this forces $K$ to have narrow class number $1$. Further, a non-maximal order cannot be a PID, e.g. since it will always contain some principal ideals of a large order, which then cannot be principal in this smaller order. Hence the above presumptions are justified.

\subsection{Irreducible elements}

Let us start by identifying the irreducible elements of $\mathcal H$, that is those that cannot be written as a product of two non-invertible elements -- we will show they are exactly the quaternions of prime reduced norm, and for every totally positive prime element of $\OK$, there exists a quaternion of that reduced norm in $\mathcal H$.

The elementary proofs we provide here will be later somewhat superseded by the local counting arguments of Subsection~\ref{sec:irredcount}

\begin{lemma}
\label{lem:finite-field-isotropy}
Over a finite field $k$, any quadratic form $Q$ in three or more variables is isotropic, i.e. it represents $0$.
\end{lemma}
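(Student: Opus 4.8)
The plan is to use the standard Chevalley–Warning style counting argument, reduced to the case of exactly three variables (a form in more variables is certainly isotropic if the ternary subform obtained by setting the extra variables to zero already is). First I would diagonalize: over a finite field $k$ of odd characteristic, every quadratic form is equivalent to a diagonal one, so after scaling we may assume $Q(x,y,z) = a x^2 + b y^2 + c z^2$ with $a,b,c \in k^\times$. Dividing through by $a$, it suffices to find a nontrivial solution of $x^2 = \alpha y^2 + \beta z^2$ for $\alpha,\beta \in k^\times$, equivalently to show the quadric $x^2 - \alpha y^2 = \beta z^2$ has a point with $(x,y,z)\neq(0,0,0)$.

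The key counting step: with $q = \#k$, the set $S_1 = \{\alpha y^2 : y \in k\}$ has $\tfrac{q+1}{2}$ elements (the squares, scaled, plus $0$), and likewise $S_2 = \{x^2 - \beta' \,(\text{something})\}$—more cleanly, consider the two sets $A = \{x^2 : x\in k\}$ and $B = \{\alpha y^2 + \beta z^2 \text{-type expression}\}$. Concretely: fix $z=1$ and count. The set $\{x^2 : x \in k\}$ has size $\tfrac{q+1}{2}$, and the set $\{\alpha y^2 + \beta : y \in k\}$ also has size $\tfrac{q+1}{2}$; since $\tfrac{q+1}{2} + \tfrac{q+1}{2} = q+1 > q = \#k$, these two subsets of $k$ must intersect, giving $x_0,y_0$ with $x_0^2 = \alpha y_0^2 + \beta$, hence $(x_0, y_0, 1)$ is a nontrivial zero of $x^2 - \alpha y^2 - \beta z^2$.

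The characteristic $2$ case must be handled separately, since diagonalization fails there. In characteristic $2$ a nondegenerate ternary quadratic form can be brought to the shape $x^2 + a y^2 + b z^2$ or has a nontrivial radical; in the latter case any radical vector is an isotropic vector, and in the former $x^2 + ay^2 + bz^2 = (x + \sqrt a\, y + \sqrt b\, z)^2$ is already a square of a linear form (Frobenius being surjective on the finite field $k$), so it vanishes on a whole hyperplane. Either way isotropy is immediate. I expect the main obstacle to be stating the characteristic $2$ normal form cleanly rather than any real difficulty; the odd-characteristic case is the pigeonhole argument above and is routine.
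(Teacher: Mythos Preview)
Your odd-characteristic argument is correct and essentially identical to the paper's: reduce to the ternary diagonal case and use the pigeonhole on two sets of size $\tfrac{q+1}{2}$.

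The characteristic $2$ sketch, however, contains a genuine gap. The claim that a nondegenerate ternary form in characteristic $2$ can be brought to the shape $x^2 + ay^2 + bz^2$ is false. In characteristic $2$ any diagonal form has identically zero polar form (since all cross terms carry a factor $2$), so a form with nonzero polar form---for example $Q(x,y,z)=xy+z^2$---can never be diagonalized. This form has trivial quadratic radical (the radical of the polar form is spanned by $(0,0,1)$, on which $Q$ is nonzero), so your dichotomy ``diagonal or nontrivial radical'' does not cover it. Your Frobenius-square-root idea is the right endgame, but the setup does not get you to a diagonal form to apply it to.

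The paper's fix is to find not a diagonal \emph{form} but a diagonal \emph{binary subform}. It picks any nonzero $\mathbf v$, observes that the linear functional $B(-,\mathbf v)$ has kernel of dimension at least $2$, and chooses $\mathbf u$ in that kernel independent of $\mathbf v$. Then $Q(x\mathbf u+y\mathbf v)=ax^2+cy^2$ with no cross term, and one finishes exactly as you intended: either $a=0$ or $c=0$ gives an isotropic vector immediately, and otherwise $\sqrt{c}\,\mathbf u+\sqrt{a}\,\mathbf v$ works since $ac+ca=0$ in characteristic $2$. So the repair is small, but as written your normal-form assertion is incorrect and would need to be replaced by this two-dimensional reduction.
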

\begin{proof}
    In odd characteristic, this is proved in \cite[62:1b]{omeara}. For the statement including characteristic 2, see \cite[Exercise 12.6]{voight}.´
\end{proof}

\begin{prop}
    \label{prp:prime-norms}
    Let $\pi$ be a totally positive prime element of $\OK$. Then there exists a quaternion $\mathbf p\in \mathcal H$ of reduced norm $\pi$.
\end{prop}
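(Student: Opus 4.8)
The plan is to exhibit $\mathbf p \in \mathcal H$ of reduced norm $\pi$ by reducing the question modulo $\pi$ and using the isotropy of quadratic forms over the finite field $\OK/\pi$ (Lemma~\ref{lem:finite-field-isotropy}), together with the fact that $\mathcal H$ is a maximal order and a PID. First I would note that since $K$ has narrow class number $1$, every totally positive element — in particular $\pi$ — generates an ideal, and it suffices to produce a $\mathbf p$ with $\nrd(\mathbf p)$ equal to $\pi$ times a totally positive unit, because all such units are squares (so we may absorb the square root of the unit into $\mathbf p$).

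The core argument splits on whether $\pi$ divides $\disc \mathcal A = \discrd \mathcal H$. If $\pi \mid \disc\mathcal A$, then $\mathcal A_\pi$ is the division algebra over $K_\pi$, whose unique maximal order $\mathcal H_\pi$ has a unique maximal two-sided ideal $P$ with $P^2 = \pi \mathcal H_\pi$ (Subsection~\ref{subsec:loc-comp}); a uniformizer $\varpi \in P \setminus P^2$ then has $\nrd(\varpi)\mathcal O_{K,\pi} = \pi \mathcal O_{K,\pi}$, and since $\mathcal H$ is a PID we can realize this local element globally — concretely, the left ideal $\mathcal H \cap (P \cap \mathcal H_\mathfrak q = \mathcal H_\mathfrak q$ for $\mathfrak q \neq \pi)$ is principal, generated by some $\mathbf p$, and comparing reduced discriminants via Proposition~\ref{prp:discs-and-indices}(i) together with $\discrd(\mathbf p\mathcal H) = \nrd(\mathbf p)^2 \discrd(\mathcal H)$ (from part (ii), since $[\mathcal H : \mathbf p\mathcal H]_{\OK} = \nrd(\mathbf p)^2$) pins down $\nrd(\mathbf p) = \pi$ up to a totally positive unit, which we fix to be a square as above. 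If instead $\pi \nmid \disc\mathcal A$, then $\mathcal A_\pi \cong \Mat_2(K_\pi)$ and $\mathcal H_\pi \cong \Mat_2(\mathcal O_{K,\pi})$; here I would pick the left ideal corresponding to a line in $(\OK/\pi)^2$ — equivalently, use Lemma~\ref{lem:finite-field-isotropy} applied to the reduced norm form on $\mathcal H/\pi\mathcal H$ (a quaternary form over the finite field $\OK/\pi$, hence certainly isotropic) to find $\mathbf q \in \mathcal H$, $\mathbf q \notin \pi\mathcal H$, with $\pi \mid \nrd(\mathbf q)$ — and again use principality: the left ideal $\mathcal H\mathbf q + \pi\mathcal H$ has reduced norm exactly $\pi$, so its principal generator $\mathbf p$ satisfies $\nrd(\mathbf p) = \pi$ up to a totally positive unit square.

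Alternatively, and perhaps more cleanly, I would avoid the case split by working uniformly with the two-sided or one-sided ideal $\mathfrak P$ of $\mathcal H$ of reduced norm $\pi$: such an ideal exists because locally at $\pi$ one can always find an $\mathcal O_{K,\pi}$-ideal of $\mathcal H_\pi$ of reduced norm $\pi \mathcal O_{K,\pi}$ (the matrix case: lines; the division case: the ideal $P$), and at all other primes take $\mathcal H_\mathfrak q$ itself; the local-global dictionary then assembles these into a genuine left ideal $\mathfrak P \subseteq \mathcal H$ with $\discrd(\mathfrak P) = \pi\, \discrd(\mathcal H)$ by Proposition~\ref{prp:discs-and-indices}(i) and the index computation. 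Since $\mathcal H$ is a PID, $\mathfrak P = \mathcal H\mathbf p$, and then Proposition~\ref{prp:discs-and-indices}(ii) gives $\nrd(\mathbf p)^2 \OK = [\mathcal H : \mathcal H\mathbf p]_{\OK} = \Nm(\discrd(\mathfrak P)/\discrd(\mathcal H)) \cdot (\text{stuff})$... more directly $\discrd(\mathcal H\mathbf p) = \nrd(\mathbf p)^2\discrd(\mathcal H)$ forces $\nrd(\mathbf p)\OK = \pi\OK$, and narrow class number $1$ lets me adjust $\mathbf p$ by a unit so that $\nrd(\mathbf p) = \pi$ exactly.

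The main obstacle I anticipate is the bookkeeping at the ramified prime (the case $\pi \mid \disc\mathcal A$): one must be careful that the local ideal of reduced norm $\pi$ is the two-sided maximal ideal $P$ and correctly track that $\nrd(P) = \pi \mathcal O_{K,\pi}$ rather than $\pi^2$, using $P^2 = \pi\mathcal H_\pi$ and multiplicativity of reduced norms of ideals. A secondary subtlety is the passage from "an element with $\pi \mid \nrd$" to "an element with $\nrd = \pi$": a random isotropic vector mod $\pi$ only gives $\pi \mid \nrd(\mathbf q)$, possibly with higher multiplicity, so the honest route really is through the ideal-theoretic argument and principality rather than trying to control $\nrd(\mathbf q)$ directly — this is exactly where maximality of $\mathcal H$ (so that $\discrd\mathcal H = \disc\mathcal A$ is squarefree away from... well, is the exact invariant) and the PID hypothesis do the real work.
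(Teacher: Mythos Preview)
Your proposal is correct, and in the unramified case it is exactly the paper's argument: find $\mathbf q\in\mathcal H\setminus\pi\mathcal H$ with $\pi\mid\nrd(\mathbf q)$ via Lemma~\ref{lem:finite-field-isotropy}, take a principal generator $\mathbf p$ of $\mathcal H\mathbf q+\mathcal H\pi$, show $\nrd(\mathbf p)$ is associated to $\pi$ (it divides $\nrd(\pi)=\pi^2$, it is divisible by $\pi$, and it cannot be $\pi^2$ since that would force $\mathbf q\in\pi\mathcal H$), and then absorb the totally positive unit as a square.

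The difference is that the paper does \emph{not} split cases: this same argument works verbatim whether or not $\pi\mid\disc\mathcal A$. Lemma~\ref{lem:finite-field-isotropy} applies to the quaternary form $(\mathcal H/\pi\mathcal H,\nrd)$ over the finite field $\OK/\pi$ regardless of the local structure of $\mathcal A_\pi$, and nothing in the subsequent ideal manipulation cares about ramification either. So the ``main obstacle'' you anticipate at the ramified prime simply does not arise, and the local-global assembly in your alternative route, while valid, is heavier than needed. Your unramified-case sketch \emph{is} the full proof; you can delete the case distinction and the appeal to the structure of $\mathcal H_\pi$ entirely.
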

\begin{proof}
    Reducing the quadratic form $(\mathcal H, \nrd)$ modulo $\pi$, we obtain a quaternary quadratic form over the finite field $\OK/\pi\OK$. By the Lemma~\ref{lem:finite-field-isotropy}, this has an isotropic vector, which corresponds to a quaternion $\mathbf q\in\mathcal H$ with $\nrd(\mathbf q)\in \pi\OK$ but $\mathbf q\notin \pi\mathcal H$.

    Since $\mathcal H$ is a PID, there exists a generator $\mathbf p$ of the left ideal $\mathcal H\mathbf p=\mathcal H\mathbf q+\mathcal H\pi$. On one hand, calculating in $\mathcal H/\pi\mathcal H$ immediately yields $\nrd(\mathbf p)\equiv 0\pmod \pi$. On the other hand, $\nrd(\mathbf p)\mid \nrd(\pi) = \pi^2$, so $\nrd(\mathbf p)$ must be either $\pi$ or $\pi^2$ up to multiplication by units of $\OK$.

    If $\nrd(\mathbf p)$ were associated to $\pi^2$, it would mean $\pi = \mathbf a\mathbf p$ for some $\mathbf a\in\mathcal H$ with $\nrd(\mathbf a)\in\OK^\times$, hence $\mathbf p\in\mathcal H\pi$. This would in turn imply $\mathbf q\in\mathcal H\mathbf p\subseteq \mathcal H\pi$, a contradiction.

    So $\nrd(\mathbf p)=\epsilon\pi$ for some $\epsilon\in\OK^\times$. Clearly, $\epsilon$ must be totally positive, so because $\OK$ has narrow class number $1$, we may express it as $\epsilon=\epsilon_0^2$. Then $\nrd(\epsilon_0^{-1}\mathbf p) = \pi$ proves the proposition.
\end{proof}

\begin{cor}
    \label{cor:nrd-universal}
    $(\mathcal H,\nrd)$ is a universal quadratic form over $\OK$.
\end{cor}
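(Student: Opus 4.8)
The plan is to deduce this directly from Proposition~\ref{prp:prime-norms} together with the multiplicativity of the reduced norm and the arithmetic of $\OK$. Given any $\alpha\in\OK^+$, I first want to write it as a product of totally positive prime elements times a square of a unit. Since $\OK$ is a PID with narrow class number $1$, every prime ideal of $\OK$ admits a totally positive generator, so factoring the ideal $\alpha\OK$ into primes and choosing totally positive generators $\pi_1,\dots,\pi_k$ yields $\alpha=\epsilon\pi_1\cdots\pi_k$ for some unit $\epsilon\in\OK^\times$. Comparing images under each real embedding shows $\epsilon$ is totally positive, and the narrow class number $1$ hypothesis then gives $\epsilon=\epsilon_0^2$ for a unit $\epsilon_0\in\OK^\times$.

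Next, apply Proposition~\ref{prp:prime-norms} to each $\pi_i$ to obtain $\mathbf p_i\in\mathcal H$ with $\nrd(\mathbf p_i)=\pi_i$, and observe that the scalar $\epsilon_0\in\OK\subseteq\mathcal H$ has $\nrd(\epsilon_0)=\epsilon_0^2$. Since $\nrd$ is multiplicative, the element $\mathbf q:=\epsilon_0\mathbf p_1\cdots\mathbf p_k\in\mathcal H$ satisfies $\nrd(\mathbf q)=\epsilon_0^2\prod_i\nrd(\mathbf p_i)=\epsilon_0^2\pi_1\cdots\pi_k=\alpha$. The degenerate case in which $\alpha$ is itself a (totally positive) unit is simply the empty product $k=0$, giving $\alpha=\nrd(\epsilon_0)$. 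Hence $(\mathcal H,\nrd)$ represents every element of $\OK^+$; it is moreover totally positive definite because $\mathcal A$ is a definite quaternion algebra over the totally real field $K$, so it is a universal quadratic form.

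There is no real obstacle here: the statement is essentially a bookkeeping consequence of Proposition~\ref{prp:prime-norms}. The only point requiring a little care is the reduction to totally positive primes, which is exactly where the standing assumption (from the Convention of this section) that $K$ has narrow class number $1$ enters — without it one could still factor $\alpha$ into primes, but not necessarily into totally positive ones, and the leftover unit need not be a square.
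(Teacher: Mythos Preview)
Your proof is correct and follows essentially the same approach as the paper: factor $\alpha$ into totally positive primes times a totally positive unit (using narrow class number $1$), represent each prime via Proposition~\ref{prp:prime-norms}, represent the unit as the reduced norm of its square root, and multiply using the multiplicativity of $\nrd$. Your write-up is slightly more detailed (e.g.\ explicitly noting $\epsilon\succ0$ and handling the unit case), but the argument is the same.
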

\begin{proof}
    Any totally positive element of $\OK$ can be expressed as a product of totally positive primes and totally positive units. Each of the primes is expressed by the Proposition. Further, totally positive units are squares, which are the reduced norms of elements of $\OK$, hence totally positive units are also expressed. The Corollary then follows by the multiplicativity of $\nrd$.
\end{proof}

\begin{prop}
\label{prp:irreducibles}
A~quaternion $\mathbf p\in\mathcal H$ is irreducible in $\mathcal H$, if and only if $\nrd(\mathbf p)$ is a (totally positive) prime element of $\OK$.
\end{prop}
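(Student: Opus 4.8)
The plan is to prove both implications by relating factorizations of $\mathbf p$ in $\mathcal H$ to factorizations of $\nrd(\mathbf p)$ in $\OK$, using multiplicativity of the reduced norm together with the fact that $\OK$ has narrow class number $1$ (so that totally positive units are squares, hence reduced norms of scalars).

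For the easy direction, suppose $\nrd(\mathbf p)$ is \emph{not} a totally positive prime. Since $\mathcal A$ is definite, $\nrd(\mathbf p) \succ 0$, and $\nrd(\mathbf p) = \OK^\times$ would make $\mathbf p$ invertible in $\mathcal H$ (as $\mathbf p^{-1} = \nrd(\mathbf p)^{-1}\overline{\mathbf p} \in \mathcal H$ once $\nrd(\mathbf p)$ is a unit), so assume $\nrd(\mathbf p)$ is a totally positive non-unit that is not prime. Write $\nrd(\mathbf p) = \alpha\beta$ with $\alpha,\beta$ totally positive non-units (adjusting by squares of units as in Proposition~\ref{prp:prime-norms} so both factors are totally positive). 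By Proposition~\ref{prp:prime-norms} applied to the prime factors, or more directly by Corollary~\ref{cor:nrd-universal}, there is $\mathbf a\in\mathcal H$ with $\nrd(\mathbf a)=\alpha$; since $\mathcal H$ is a PID, take a generator $\mathbf d$ of the left ideal $\mathcal H\mathbf a + \mathcal H\mathbf p$. Then $\mathbf p = \mathbf d\mathbf c$ and $\mathbf a = \mathbf d\mathbf b$ for some $\mathbf b,\mathbf c\in\mathcal H$, so $\nrd(\mathbf d)\mid\gcd(\alpha,\alpha\beta)$-type divisibility in $\OK$ forces $\nrd(\mathbf d) \mid \alpha$ and $\nrd(\mathbf c)\mid\beta$ up to units; a short case analysis (mirroring the argument in Proposition~\ref{prp:prime-norms} that rules out the reduced norm being associated to $\pi^2$) shows neither $\mathbf d$ nor $\mathbf c$ can be a unit, so $\mathbf p = \mathbf d\mathbf c$ is a nontrivial factorization and $\mathbf p$ is reducible.

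For the main direction, suppose $\nrd(\mathbf p) = \pi$ is a totally positive prime but $\mathbf p = \mathbf a\mathbf b$ with $\mathbf a,\mathbf b\in\mathcal H$ both non-units. Taking reduced norms gives $\pi = \nrd(\mathbf a)\nrd(\mathbf b)$ in $\OK$. Since $\pi$ is prime, one factor, say $\nrd(\mathbf b)$, is a unit of $\OK$; being totally positive (definiteness) and a unit, it equals $\epsilon_0^2$ for some unit $\epsilon_0$, and then $\epsilon_0^{-1}\mathbf b$ has reduced norm $1$, so $\mathbf b$ is invertible in $\mathcal H$ with inverse $\nrd(\mathbf b)^{-1}\overline{\mathbf b}$. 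This contradicts $\mathbf b$ being a non-unit, so $\mathbf p$ is irreducible.

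I expect the only genuinely delicate point is the easy direction: extracting an honest nontrivial factorization $\mathbf p = \mathbf d\mathbf c$ in $\mathcal H$ from a nontrivial factorization $\nrd(\mathbf p)=\alpha\beta$ in $\OK$ — one must be careful that the ideal-theoretic generator $\mathbf d$ genuinely has $\nrd(\mathbf d)$ a proper non-unit divisor of $\pi\cdots$, which is where the PID hypothesis and the unit/square argument from Proposition~\ref{prp:prime-norms} are reused. The main direction is comparatively immediate given multiplicativity of $\nrd$ and the narrow-class-number-$1$ hypothesis; alternatively, one could observe that the author flags a cleaner local counting proof in Subsection~\ref{sec:irredcount}, but the elementary argument above suffices here.
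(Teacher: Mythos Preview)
Your two directions are swapped in difficulty. The implication ``$\nrd(\mathbf p)$ prime $\Rightarrow$ $\mathbf p$ irreducible'' really is the immediate one: if $\mathbf p=\mathbf a\mathbf b$ then $\pi=\nrd(\mathbf a)\nrd(\mathbf b)$, so one factor has unit norm and is therefore a unit of $\mathcal H$. No narrow-class-number argument or square-of-unit manipulation is needed here; you have over-engineered it, but it is correct.

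The genuine gap is in what you call the ``easy direction''. You write $\nrd(\mathbf p)=\alpha\beta$, choose an \emph{arbitrary} $\mathbf a\in\mathcal H$ with $\nrd(\mathbf a)=\alpha$, and form $\mathcal H\mathbf a+\mathcal H\mathbf p=\mathcal H\mathbf d$. You then assert that ``a short case analysis'' forces $\mathbf d$ to be a non-unit. This is false in general: nothing ties your chosen $\mathbf a$ to $\mathbf p$, and the left ideal they generate can easily be all of $\mathcal H$. Concretely, in the Hurwitz order take $\mathbf p=2+2\i+\j$ with $\nrd(\mathbf p)=9=3\cdot3$, and pick $\mathbf a=1+\i+\j$ with $\nrd(\mathbf a)=3$. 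One checks $\mathbf p\,\overline{\mathbf a}=5-\j-\k\notin 3\mathcal H$, so $\mathbf p\notin\mathcal H\mathbf a$; since $\mathcal H\mathbf a$ is a maximal left ideal, $\mathcal H\mathbf a+\mathcal H\mathbf p=\mathcal H$ and your $\mathbf d$ is a unit. The ``case analysis mirroring Proposition~\ref{prp:prime-norms}'' you invoke does not transfer: that argument works because one generator is the \emph{central} element $\pi$, which lets you compute $\nrd(\mathbf d)\equiv 0\pmod\pi$ in the quotient $\mathcal H/\pi\mathcal H$. With a non-central $\mathbf a$ there is no such congruence.

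The paper's fix is exactly this: given $\mathbf p$ irreducible with $\nrd(\mathbf p)$ divisible by a prime $\pi$, form $\mathcal H\mathbf p+\mathcal H\pi=\mathcal H\mathbf q$. The centrality of $\pi$ forces $\nrd(\mathbf q)\equiv 0\pmod\pi$ exactly as in Proposition~\ref{prp:prime-norms}, and $\nrd(\mathbf q)\mid\pi^2$ together with $\mathbf p$ being irreducible (so the factorization $\mathbf p=\mathbf a\mathbf q$ must have $\mathbf a$ a unit) pins down $\nrd(\mathbf p)$ as an associate of $\pi$. Your argument can be repaired by replacing $\mathbf a$ with the scalar $\pi$.
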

\begin{proof}
If $\nrd(\mathbf p)$ is a prime, the irreducibility of $\mathbf p$ is immediate by considering the reduced norm, since an element is invertible in $\mathcal H$ if and only if its reduced norm is invertible in $\OK$.

On the other hand, suppose that $\mathbf p$ is irreducible. Its reduced norm then cannot be invertible, hence it is divisible by some prime $\pi\in\OK$. Just as in the proof of Proposition~\ref{prp:prime-norms}, we may then take $\mathcal H\mathbf p +\mathcal H\pi =: \mathcal H\mathbf q$ with $\nrd(\mathbf q)=\pi$. This will then imply that $\mathbf q$ is irreducible and $\mathbf p = \mathbf a\mathbf q$ for some $\mathbf a\in\mathcal H$, forcing $\mathbf a$ to be invertible. Hence $\epsilon:=\nrd(\mathbf a)$ is invertible in $\OK$, so $\nrd(\mathbf p) = \epsilon\pi$ is a prime in $\OK$.
\end{proof}

\subsection{Factorizing into irreducibles}
\label{sec:factorizations}

Here, we present several results on factorization into irreducible elements in $\mathcal H$. Because of non-commutativity, we cannot hope for an entirely unique factorization. We can however prove some weaker analogues which will later suffice for counting elements of a given reduced norm.

While we work over number fields, the proofs do not differ significantly from the situation in the Hurwitz order over $\kve$ (cf. \cite[Chapter 5]{conway-smith} or \cite[Section 11.5]{voight}).

\begin{defn}
\label{def:unit-migration}
For a given quaternion $\mathbf q\in\mathcal H$, let us call a finite sequence $\mathbf a_1,\dots,\mathbf a_n$ a \emph{factorization} of $\mathbf q$ if $\mathbf q = \mathbf a_1\cdots \mathbf a_n$. Let us further say that two factorizations $\mathbf a_1,\dots,\mathbf a_n$ and $\mathbf b_1,\dots,\mathbf b_m$ differ \emph{by unit migration}, if $n=m$ and there exist quaternions $\mathbf u_1,\dots,\mathbf u_{n-1}\in\mathcal H^1$ such that
\[
    \mathbf b_1 = \mathbf a_1\inv{\mathbf u_1}, \quad \mathbf b_2 = \mathbf u_1\mathbf a_2\inv{\mathbf u_2}, \quad\dots,\quad \mathbf b_{n-1} = \mathbf u_{n-2}\mathbf a_{n-1}\inv{\mathbf u_{n-1}}, \quad \mathbf b_n = \mathbf u_{n-1}\mathbf a_n.
\]
\end{defn}

We will always consider factorizations where reduced norms of the multiplicands follow a given factorization in $\OK$. Before we do this with a factorization into a product of irreducible elements, i.e. elements with prime reduced norms, let us start with a factorization whose reduced norms are prime powers, or, slightly more generally, pairwise coprime.

\begin{prop}
\label{prp:coprime-factorization}
Let $\mathbf q\in\mathcal H$ have $\nrd(\mathbf q) = \alpha\in\OK^+$ and let $\alpha = \alpha_1\cdots\alpha_n$ be a factorization into a product of pairwise coprime elements of $\OK^+$. Then:
\begin{enumerate}
    \item There is a factorization $\mathbf q = \mathbf a_1\cdots\mathbf a_n$ with $\nrd(\mathbf a_i)=\alpha_i$.
    \item This factorization is unique up to unit migration.
\end{enumerate}
\end{prop}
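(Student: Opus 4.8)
The plan is to prove both parts by induction on $n$, with the case $n=1$ being trivial, and to reduce the general step to the case $n=2$ via the usual regrouping $\alpha = \alpha_1 \cdot (\alpha_2\cdots\alpha_n)$, noting that $\alpha_1$ is coprime to the product $\alpha_2\cdots\alpha_n$. So the heart of the matter is: given $\mathbf q\in\mathcal H$ with $\nrd(\mathbf q) = \beta\gamma$ where $\beta,\gamma\in\OK^+$ are coprime, produce $\mathbf a,\mathbf b\in\mathcal H$ with $\mathbf q=\mathbf a\mathbf b$, $\nrd(\mathbf a)=\beta$, $\nrd(\mathbf b)=\gamma$, and show this is unique up to replacing $(\mathbf a,\mathbf b)$ by $(\mathbf a\inv{\mathbf u},\mathbf u\mathbf b)$ for $\mathbf u\in\mathcal H^1$.

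For existence, I would exploit that $\mathcal H$ is a PID: form the left ideal $\mathcal H\mathbf q + \mathcal H\gamma$, which is principal, say equal to $\mathcal H\mathbf b$. Then $\mathbf q = \mathbf a\mathbf b$ for some $\mathbf a\in\mathcal H$ (since $\mathbf q\in\mathcal H\mathbf b$) and $\gamma = \mathbf c\mathbf b$ for some $\mathbf c\in\mathcal H$. Taking reduced norms, $\nrd(\mathbf b)\mid \nrd(\gamma) = \gamma^2$ and $\nrd(\mathbf b)\mid\nrd(\mathbf q)=\beta\gamma$; since $\beta$ is coprime to $\gamma$, $\gcd(\gamma^2,\beta\gamma) = \gamma$ (up to units), so $\nrd(\mathbf b)\mid\gamma$ up to units. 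For the reverse divisibility I would argue that $\mathbf q\in\mathcal H\mathbf b$ forces, locally at each prime $\mathfrak p\mid\gamma$, the valuation of $\nrd(\mathbf b)$ to be at least that of $\nrd(\mathbf q)$, i.e. at least $v_{\mathfrak p}(\gamma)$ (using that $\beta$ is a $\mathfrak p$-adic unit there); summing over such $\mathfrak p$ gives $\gamma\mid\nrd(\mathbf b)$. Hence $\nrd(\mathbf b) = \gamma$ up to a totally positive unit, which is a square since $K$ has narrow class number $1$, so after rescaling $\mathbf b$ by the corresponding unit of $\OK$ we get $\nrd(\mathbf b)=\gamma$ exactly; then $\nrd(\mathbf a) = \nrd(\mathbf q)/\nrd(\mathbf b) = \beta$, completing existence.

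For uniqueness, suppose $\mathbf q = \mathbf a\mathbf b = \mathbf a'\mathbf b'$ with the same reduced norms. I claim $\mathcal H\mathbf b = \mathcal H\mathbf b'$; granting this, $\mathbf b' = \mathbf u\mathbf b$ for some $\mathbf u\in\mathcal H$, and comparing reduced norms gives $\nrd(\mathbf u)=1$, i.e. $\mathbf u\in\mathcal H^1$, and then $\mathbf a\mathbf b = \mathbf a'\mathbf u\mathbf b$ with $\mathbf b$ a non-zero-divisor yields $\mathbf a' = \mathbf a\inv{\mathbf u}$, which is exactly unit migration. To prove $\mathcal H\mathbf b = \mathcal H\mathbf b'$, I would show both equal $\mathcal H\mathbf q + \mathcal H\gamma$: clearly $\mathbf q\in\mathcal H\mathbf b$ and $\gamma = \nrd(\mathbf b)\inv{\mathbf b}\cdot\nrd(\mathbf b)^{?}$... more carefully, $\gamma = \overline{\mathbf b}\mathbf b$ up to the scalar $\nrd(\mathbf b)=\gamma$, so actually $\gamma\cdot 1 = \mathbf b^{-1}\gamma\mathbf b$ isn't quite it — instead use $\overline{\mathbf b}\mathbf b = \gamma$, so $\gamma\in\mathcal H\mathbf b$, giving $\mathcal H\mathbf q+\mathcal H\gamma\subseteq\mathcal H\mathbf b$. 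For the reverse inclusion, work locally: at primes not dividing $\gamma$, $\mathbf b$ is a unit locally (its norm is a local unit) so $\mathcal H\mathbf b$ is everything there, matching $\mathcal H\mathbf q+\mathcal H\gamma\supseteq\mathcal H\gamma$ which is also everything; at primes $\mathfrak p\mid\gamma$, $\gamma$ is a local unit times $\beta$... hmm, rather $\beta$ is the local unit, and one checks $\mathbf q$ and $\mathbf b$ generate the same local left ideal since $\nrd(\mathbf b)=\nrd(\mathbf q)$ locally up to a unit and $\mathbf b\mid\mathbf q$ on the left. So $\mathcal H\mathbf b = \mathcal H\mathbf q + \mathcal H\gamma = \mathcal H\mathbf b'$.

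The main obstacle is the local analysis pinning down $\nrd(\mathbf b)$ exactly and identifying the left ideals — essentially showing that the "$\gamma$-part" of $\mathbf q$ is well-defined as a left ideal. I would handle this by localizing at each prime $\mathfrak p$: at $\mathfrak p\nmid\gamma$ everything is trivial, and at $\mathfrak p\mid\gamma$ one uses that $\mathcal H_{\mathfrak p}$ is either $\Mat_2(\OK_{\mathfrak p})$ (a local PID, where left ideals are governed by elementary divisors) or the unique local division order (where the left ideals form a chain), and in both cases a left ideal is determined by the reduced norm of a generator together with containing $\mathbf q$; the coprimality of $\beta$ and $\gamma$ ensures the $\beta$-part contributes nothing at $\mathfrak p$. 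Then the global statements follow from the local-global dictionary for lattices and the fact that reduced discriminants (equivalently $\OK$-indices) can be read off locally.
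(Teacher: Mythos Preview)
Your overall strategy matches the paper's: reduce by induction to $n=2$, set $\mathcal H\mathbf b := \mathcal H\mathbf q + \mathcal H\gamma$ using that $\mathcal H$ is a PID, pin down $\nrd(\mathbf b)$, and for uniqueness show any valid rightmost factor generates this same left ideal. However, your argument for the reverse divisibility $\gamma\mid\nrd(\mathbf b)$ is backwards. You write that ``$\mathbf q\in\mathcal H\mathbf b$ forces, locally at each prime $\mathfrak p\mid\gamma$, the valuation of $\nrd(\mathbf b)$ to be at least that of $\nrd(\mathbf q)$'', but $\mathbf q = \mathbf a\mathbf b$ gives $\nrd(\mathbf q)=\nrd(\mathbf a)\nrd(\mathbf b)$, hence $v_{\mathfrak p}(\nrd(\mathbf b))\leq v_{\mathfrak p}(\nrd(\mathbf q))$ --- the opposite inequality. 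The correct (and simpler) argument, which is what the paper does, uses the \emph{other} membership: since $\mathbf b\in\mathcal H\mathbf q+\mathcal H\gamma$, reducing modulo $\gamma$ gives $\mathbf b\equiv\mathbf x\mathbf q$ for some $\mathbf x\in\mathcal H$, and then $\nrd(\mathbf b)\equiv\nrd(\mathbf x)\cdot\beta\gamma\equiv 0\pmod\gamma$. No localization is needed.

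Your uniqueness argument is essentially correct but more laborious than necessary. Once you know a generator of $\mathcal H\mathbf q+\mathcal H\gamma$ has reduced norm $\gamma$, observe that any valid $\mathbf b'$ satisfies $\mathbf q=\mathbf a'\mathbf b'\in\mathcal H\mathbf b'$ and $\gamma=\overline{\mathbf b'}\mathbf b'\in\mathcal H\mathbf b'$, so $\mathcal H\mathbf q+\mathcal H\gamma\subseteq\mathcal H\mathbf b'$; both sides are principal with generators of the same reduced norm, which forces equality. This is how the paper handles it, without any local analysis.
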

\begin{proof}
    To first prove existence, let us consider $\mathcal H\mathbf q + \mathcal H\alpha_n =: \mathcal H\mathbf a_n$. Computing in $\mathcal H/\alpha_n\mathcal H$, we see that $\nrd(\mathbf a_n)\in\alpha_n\OK$. On the other hand, $\nrd(\mathbf a_n)$ must divide both $\nrd(\alpha_n)=\alpha_n^2$ and $\nrd(\mathbf q)=\alpha$. Since $\alpha_1,\dots,\alpha_n$ are pairwise coprime, $\alpha_n$ is the greatest common divisor of $\alpha$ and $\alpha_n^2$. Hence $\nrd(\mathbf a_n)$ must be associated to $\alpha_n$ and so after possibly changing $\mathbf a_n$ by a suitable unit from $\OK^\times$, we may take $\nrd(\mathbf a_n)=\alpha_n$. The proof of existence of the desired factorization then proceeds by induction on $n$.

    For the uniqueness, suppose $\mathbf q = \mathbf a_1\cdots\mathbf a_n = \mathbf b_1\cdots\mathbf b_n$ are two such factorizations. The previous paragraph implies that $\mathcal H\mathbf a_n = \mathcal H\mathbf q + \mathcal H\alpha_n = \mathcal H\mathbf b_n$, hence $\mathbf b_n = \mathbf u \mathbf a_n$ for some $\mathbf u\in\mathcal H$ and considering the reduced norms shows $\nrd(\mathbf u)=1$. We may then denote $\mathbf u_{n-1}:={\mathbf u}$, simplify the equality $\mathbf a_1\cdots\mathbf a_n = \mathbf b_1\cdots\mathbf b_n$ to
    \[
        \mathbf a_1\cdots\mathbf a_{n-2}(\mathbf a_{n-1}\inv{\mathbf u_{n-1}}) = \mathbf b_1\cdots\mathbf b_{n-2}\mathbf b_{n-1}
    \]
    and proceed by induction.
\end{proof}

Now we factorize a quaternion $\mathbf q$ whose reduced norm is a power of some prime, say $\pi\in\OK^+$. It may happen that $\mathbf q\in\pi\mathcal H$. Since $\pi$ commutes with everything in $\mathcal H$, such a occurrence would introduce an overcount when we count the factorizations later, so it is useful to distinguish and separate this.

\begin{defn}
    Let us say a quaternion $\mathbf q\in\mathcal H$ is \emph{primitive} if $\mathbf q\notin\alpha\mathcal H$ for all non-units $\alpha\in\OK\setminus\OK^\times$.
\end{defn}

First we prepare a weaker, non-commutative analogue of Euclid's lemma:

\begin{lemma}
    \label{lem:pseudo-euclid}
    Let $\pi\in\OK^+$ be a prime, $\mathbf p\in\mathcal H$ a quaternion of reduced norm $\pi$ and $\mathbf a,\mathbf b\in\mathcal H$ arbitrary. Then $\mathbf a\mathbf p\mathbf b\in\pi\mathcal H$ implies $\mathbf a\mathbf p\in\pi\mathcal H$ or $\mathbf p\mathbf b\in\mathcal H$.
\end{lemma}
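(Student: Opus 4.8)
The plan is to reduce the statement to a purely local computation at the single prime $\mathfrak p:=\pi\OK$. At every other prime $\mathfrak q$ the element $\pi$ is a unit, so $\pi\mathcal H_{\mathfrak q}=\mathcal H_{\mathfrak q}$ and the three relevant containments $\mathbf a\mathbf p\mathbf b\in\pi\mathcal H$, $\mathbf a\mathbf p\in\pi\mathcal H$ and $\mathbf p\mathbf b\in\pi\mathcal H$ all hold automatically at $\mathfrak q$. (Here I read the conclusion of the lemma as $\mathbf p\mathbf b\in\pi\mathcal H$; the printed $\mathbf p\mathbf b\in\mathcal H$ makes the statement vacuous.) By the local--global dictionary it therefore suffices to prove the implication inside $\mathcal H_{\mathfrak p}$, equivalently inside the finite ring $\overline{\mathcal H}:=\mathcal H/\pi\mathcal H\cong\mathcal H_{\mathfrak p}/\pi\mathcal H_{\mathfrak p}$. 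Two preliminary observations: the cases $\mathbf a=0$ and $\mathbf b=0$ are trivial, and $\mathbf p\notin\pi\mathcal H$ (otherwise $\pi^2\mid\nrd(\mathbf p)=\pi$), so the image $\overline{\mathbf p}\in\overline{\mathcal H}$ is nonzero.

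Next I would split according to whether $\mathfrak p\mid\disc\mathcal A$. In the unramified case $\mathcal A_{\mathfrak p}\cong\Mat_2(K_{\mathfrak p})$, hence $\mathcal H_{\mathfrak p}\cong\Mat_2(\mathcal O_{K,\mathfrak p})$ by Proposition~\ref{prp:maximals-in-matrix-ring} and $\overline{\mathcal H}\cong\Mat_2(k)$ where $k:=\OK/\pi\OK$, with $\nrd$ becoming the determinant. Then $\det\overline{\mathbf p}=\overline{\nrd(\mathbf p)}=\overline\pi=0$ while $\overline{\mathbf p}\neq0$, so $\overline{\mathbf p}$ has rank $1$ and can be written as an outer product $\overline{\mathbf p}=\mathbf v\mathbf w^{t}$ with $\mathbf v\in k^2$ a nonzero column and $\mathbf w^{t}$ a nonzero row. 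The hypothesis $\overline{\mathbf a}\,\overline{\mathbf p}\,\overline{\mathbf b}=0$ then reads $(\overline{\mathbf a}\mathbf v)(\mathbf w^{t}\overline{\mathbf b})=0$, the product of a column vector with a row vector; as a nonzero column times a nonzero row is a nonzero matrix, one of the two factors vanishes. If $\overline{\mathbf a}\mathbf v=0$ then $\overline{\mathbf a}\,\overline{\mathbf p}=(\overline{\mathbf a}\mathbf v)\mathbf w^{t}=0$, i.e. $\mathbf a\mathbf p\in\pi\mathcal H$; if $\mathbf w^{t}\overline{\mathbf b}=0$ then $\overline{\mathbf p}\,\overline{\mathbf b}=\mathbf v(\mathbf w^{t}\overline{\mathbf b})=0$, i.e. $\mathbf p\mathbf b\in\pi\mathcal H$.

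In the ramified case $\mathcal A_{\mathfrak p}$ is the local division algebra, with unique maximal order $\mathcal H_{\mathfrak p}$ and unique two-sided maximal ideal $P$ satisfying $P^2=\pi\mathcal H_{\mathfrak p}$. Here I would argue with the $P$-adic valuation $w$ on $\mathcal A_{\mathfrak p}$, normalized by $w(P)=1$; then $\mathcal H_{\mathfrak p}=\{x:w(x)\geq0\}$, $\pi\mathcal H_{\mathfrak p}=P^2=\{x:w(x)\geq2\}$, and $w(x)=v(\nrd(x))$ for all $x\in\mathcal A_{\mathfrak p}^\times$, where $v$ denotes the normalized valuation of $K_{\mathfrak p}$; in particular $\nrd(\mathbf p)=\pi$ gives $w(\mathbf p)=1$. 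From $\mathbf a\mathbf p\mathbf b\in\pi\mathcal H_{\mathfrak p}$ we obtain $w(\mathbf a)+1+w(\mathbf b)\geq2$, i.e. $w(\mathbf a)+w(\mathbf b)\geq1$; since $w(\mathbf a),w(\mathbf b)\in\Z_{\geq0}$, either $w(\mathbf a)\geq1$, whence $w(\mathbf a\mathbf p)=w(\mathbf a)+1\geq2$ and $\mathbf a\mathbf p\in\pi\mathcal H$, or $w(\mathbf b)\geq1$, whence $w(\mathbf p\mathbf b)=1+w(\mathbf b)\geq2$ and $\mathbf p\mathbf b\in\pi\mathcal H$.

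I expect the only genuinely delicate point to be the ramified case, where one must correctly invoke the local structure theory of the quaternion division algebra — the valuation $w$, the identity $\pi\mathcal H_{\mathfrak p}=P^2$ of \cite[Theorem 13.3.10]{voight}, and the compatibility $w=v\circ\nrd$ — though all of this is standard. Once that is set up the argument is a one-line valuation inequality; the unramified case is the elementary rank-$1$ computation above, and everything else is bookkeeping with the local--global dictionary.
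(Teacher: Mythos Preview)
Your proof is correct, including the correction of the obvious typo in the statement. The route, however, is genuinely different from the paper's.

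The paper argues globally, using directly the standing assumption that $\mathcal H$ is a PID: assuming $\mathbf p\mathbf b\notin\pi\mathcal H$, it forms the left ideal $I=\{\mathbf x\in\mathcal H\mid\mathbf x\mathbf p\mathbf b\in\pi\mathcal H\}$, writes $I=\mathcal H\mathbf q$, observes that $\overline{\mathbf p}\in I$ and $1\notin I$ force $\nrd(\mathbf q)$ to be associated to $\pi$, and concludes $I=\mathcal H\overline{\mathbf p}$; then $\mathbf a\in I$ gives $\mathbf a\mathbf p\in\mathcal H\overline{\mathbf p}\mathbf p=\pi\mathcal H$. This is short and entirely elementary, staying within the factorization-by-hand spirit of the section and never invoking completions or the local structure of $\mathcal A$.

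Your argument instead localizes and splits on whether $\mathfrak p$ is ramified in $\mathcal A$, reducing the unramified case to a rank-$1$ matrix computation in $\Mat_2(k)$ and the ramified case to a valuation inequality in the local division algebra. This uses the local structure theory (essentially Lemmas~\ref{lem:goodquotient} and~\ref{lem:badquotient} and the material cited from \cite[Chapter 13]{voight}) rather than the class-number-$1$ hypothesis; in fact your proof goes through for \emph{any} maximal order $\mathcal H$, with no PID assumption needed. So your approach is slightly more general and more structural, at the cost of importing heavier machinery than the paper wants at this point; the paper's approach is more self-contained and better matched to the section's goal of developing the factorization theory from first principles.
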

\begin{proof}
    It suffices to prove that if $\mathbf a\mathbf p\mathbf b\in\pi\mathcal H$ but $\mathbf p\mathbf b\notin\pi\mathcal H$, then $\mathbf a\mathbf p\in\pi\mathcal H$. Consider
    \[
        I~:= \lcol{\pi\mathcal H}{\mathbf p\mathbf b\mathcal H}\cap\mathcal H = \set{\mathbf x\in\mathcal H\mid \mathbf x\mathbf p\mathbf b\in\pi\mathcal H}.
    \]
    This is a left ideal of $\mathcal H$, so by $\mathcal H$ being a PID, we get $I = \mathcal H\mathbf q$ for some $\mathbf q$.

    Now on one hand, $1\notin I$ since $\mathbf p\mathbf b\notin\pi\mathcal H$, so $\nrd(\mathbf q)\notin\OK^\times$. On the other hand, clearly $\overline{\mathbf p}\in I$, so $\nrd(\mathbf q)\mid\nrd(\overline{\mathbf p}) = \pi$. This forces $\nrd(\mathbf q)$ to be associated to $\pi$, and hence $\overline{\mathbf p} = \mathbf u{\mathbf q}$ for some $\mathbf u\in\mathcal H$ with $\nrd(\mathbf u)\in\OK^\times$, meaning $\mathbf u\in\mathcal H^\times$. Thus $\mathbf q\in\mathcal H\overline{\mathbf p}$ and $\mathbf a\in I$ implies $\mathbf a\mathbf p\in \mathcal H \overline{\mathbf p}\mathbf p = \mathcal H\pi$ as we wished to prove.
\end{proof}

\begin{prop}
    \label{prp:prime-power-factorization}
    Let $\pi\in\OK^+$ be a prime and $\mathbf q\in\mathcal H$ a primitive quaternion of norm $\pi^r$. Then:
    \begin{enumerate}
        \item There is a factorization $\mathbf q = \mathbf p_1\cdots\mathbf p_r$ with $\nrd(\mathbf p_i) = \pi$.
        \item This factorization is unique up to unit migration.
    \end{enumerate}
\end{prop}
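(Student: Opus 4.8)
The plan is to mirror the structure of Proposition~\ref{prp:coprime-factorization}, but since all the reduced norms are now equal to the single prime $\pi$, coprimality is unavailable and we must instead exploit primitivity together with the pseudo-Euclid lemma (Lemma~\ref{lem:pseudo-euclid}). For existence, I would argue by induction on $r$. Given a primitive $\mathbf q$ with $\nrd(\mathbf q)=\pi^r$, form the left ideal $\mathcal H\mathbf q+\mathcal H\pi=:\mathcal H\mathbf p_1$ (using that $\mathcal H$ is a PID). Computing in $\mathcal H/\pi\mathcal H$ shows $\nrd(\mathbf p_1)\in\pi\OK$, and $\nrd(\mathbf p_1)\mid\nrd(\pi)=\pi^2$, so $\nrd(\mathbf p_1)$ is associated to $\pi$ or to $\pi^2$; the latter would force $\mathbf p_1\in\mathcal H\pi$, hence (since $\mathbf q\in\mathcal H\mathbf p_1$) $\mathbf q\in\pi\mathcal H$, contradicting primitivity. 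So after adjusting by a totally positive unit (which is a square, by the standing narrow-class-number-one assumption), we may take $\nrd(\mathbf p_1)=\pi$. Write $\mathbf q=\mathbf p_1\mathbf q'$; then $\nrd(\mathbf q')=\pi^{r-1}$, and I need to check $\mathbf q'$ is again primitive. If $\mathbf q'=\alpha\mathbf q''$ for a non-unit $\alpha$, then since $\nrd(\mathbf q')$ is a prime power we may assume $\alpha=\pi$ (up to units), so $\mathbf q=\mathbf p_1\pi\mathbf q''=\pi\mathbf p_1\mathbf q''\in\pi\mathcal H$ — again contradicting primitivity of $\mathbf q$. Induction then finishes existence.

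For uniqueness up to unit migration, suppose $\mathbf q=\mathbf p_1\cdots\mathbf p_r=\mathbf p_1'\cdots\mathbf p_r'$ with all factors of reduced norm $\pi$. The key claim is that $\mathcal H\mathbf q+\mathcal H\pi=\mathcal H\mathbf p_1$ again, i.e.\ that the leftmost factor is determined up to a right unit. One inclusion is clear since $\mathbf p_1\mid_{\text{left}}\mathbf q$ and $\mathbf p_1\mid_{\text{left}}\pi\overline{\mathbf p_1}\,\mathbf p_1\cdot(\text{stuff})$... more carefully: $\mathcal H\mathbf q\subseteq\mathcal H\mathbf p_1$ and $\pi=\mathbf p_1\overline{\mathbf p_1}\in\mathcal H\mathbf p_1$, giving $\mathcal H\mathbf q+\mathcal H\pi\subseteq\mathcal H\mathbf p_1$. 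For the reverse, note $\mathcal H\mathbf q+\mathcal H\pi=\mathcal H\mathbf c$ for some $\mathbf c$ with $\nrd(\mathbf c)$ associated to $\pi$ (by the same norm argument as above, using primitivity of $\mathbf q$ to exclude $\pi^2$). Since $\mathbf p_1\in\mathcal H\mathbf c$ and $\nrd(\mathbf p_1)$ and $\nrd(\mathbf c)$ are both associated to $\pi$, the cofactor is a unit, so $\mathcal H\mathbf c=\mathcal H\mathbf p_1$. The same reasoning gives $\mathcal H\mathbf p_1'=\mathcal H\mathbf q+\mathcal H\pi=\mathcal H\mathbf p_1$, hence $\mathbf p_1'=\mathbf u_1^{-1}\mathbf p_1^{\text{-wait, orientation}}$; concretely $\mathbf p_1=\mathbf u\mathbf p_1'$ for a unit $\mathbf u\in\mathcal H^1$ (reduced norm $1$ after the totally-positive-unit adjustment). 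Cancelling $\mathbf p_1'$ on the left from $\mathbf p_1\mathbf p_2\cdots\mathbf p_r=\mathbf p_1'\mathbf p_2'\cdots\mathbf p_r'$ — legitimate since $\mathbf p_1,\mathbf p_1'$ are non-zero-divisors in the division algebra $\mathcal A$ — yields $\mathbf u\,\mathbf p_2\cdots\mathbf p_r=\mathbf p_2'\cdots\mathbf p_r'$. Setting $\mathbf b_2:=\mathbf u\mathbf p_2$ (reduced norm $\pi$), the tails $\mathbf b_2,\mathbf p_3,\dots,\mathbf p_r$ and $\mathbf p_2',\dots,\mathbf p_r'$ are two factorizations of the same primitive quaternion of norm $\pi^{r-1}$, so by the induction hypothesis they differ by unit migration, and prepending the relation $\mathbf p_1=\mathbf p_1'\cdot$ wait, $\mathbf p_1=\mathbf u\mathbf p_1'$ rearranges to the unit-migration pattern with $\mathbf u_1:=\mathbf u$. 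Assembling these gives unit migration for the original factorizations.

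The main obstacle is verifying that $\mathbf q'$ (or the relevant tail) stays primitive at each inductive step and, relatedly, justifying that $\mathcal H\mathbf q+\mathcal H\pi$ really has a generator of reduced norm exactly $\pi$ rather than $\pi^2$ — both hinge on converting "$\mathbf q\in\pi\mathcal H$" into a contradiction, which uses primitivity crucially and is the one place where this proof genuinely departs from the coprime case. A secondary subtlety is bookkeeping the unit migration: one must track that the unit $\mathbf u$ produced when matching leftmost factors has reduced norm $1$ (not merely a unit norm in $\OK$), which follows because we normalized each $\mathbf p_i,\mathbf p_i'$ to have reduced norm exactly $\pi$, so the cofactor has reduced norm exactly $1$. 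I do not expect Lemma~\ref{lem:pseudo-euclid} to be needed for this particular statement — it is more likely aimed at the subsequent counting of factorizations — but if the direct PID argument for the leftmost-factor claim runs into trouble, pseudo-Euclid provides a fallback route to the same cancellation.
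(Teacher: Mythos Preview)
Your overall strategy is sound, but there is a systematic left/right slip that makes several steps literally false as written. If $\mathbf q=\mathbf p_1\cdots\mathbf p_r$, then $\mathbf q$ lies in the \emph{right} ideal $\mathbf p_1\mathcal H$, not in the left ideal $\mathcal H\mathbf p_1$; so your claimed inclusion $\mathcal H\mathbf q\subseteq\mathcal H\mathbf p_1$ fails in general, and the left ideal $\mathcal H\mathbf q+\mathcal H\pi$ actually captures the \emph{rightmost} factor $\mathbf p_r$ (via $\mathbf q\in\mathcal H\mathbf p_r$ and $\pi=\overline{\mathbf p_r}\mathbf p_r\in\mathcal H\mathbf p_r$), not the leftmost one. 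The fix is either to use right ideals $\mathbf q\mathcal H+\pi\mathcal H=\mathbf p_1\mathcal H$ throughout if you insist on peeling from the left, or to keep left ideals and peel from the right. Once that is repaired, your argument goes through; in particular your primitivity checks (that the tail stays primitive, and that $\nrd(\mathbf c)\sim\pi^2$ is excluded) are correct.

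Your route differs from the paper's in two places. For existence the paper simply invokes the existence of a factorization into irreducibles and then Proposition~\ref{prp:irreducibles} (irreducibles have prime reduced norm), which is shorter than your constructive induction. For uniqueness the paper \emph{does} use Lemma~\ref{lem:pseudo-euclid}: it multiplies $\mathbf p_1\cdots\mathbf p_r=\mathbf q_1\cdots\mathbf q_r$ on the right by $\overline{\mathbf q_r}$ to land in $\pi\mathcal H$, and then pseudo-Euclid plus primitivity forces $\mathbf p_r\overline{\mathbf q_r}\in\pi\mathcal H$, whence $\mathbf p_r\in\mathcal H\mathbf q_r$. Your direct PID argument --- showing (after the orientation fix) that $\mathcal H\mathbf q+\mathcal H\pi=\mathcal H\mathbf p_r$ --- is a legitimate alternative; it is essentially the ideal computation inside the proof of pseudo-Euclid, inlined. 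So your closing remark that the lemma is not strictly needed for this proposition is accurate; the paper packages it as a separate lemma because it is reused in Proposition~\ref{prp:primitivity-in-factorization}.
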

\begin{proof}
    For existence, $\mathbf q$ must factorize into some product $\mathbf p_1\cdots \mathbf p_n$ of irreducible quaternions. These each have prime reduced norms by Proposition~\ref{prp:irreducibles}, so due to $\nrd(\mathbf q) = \pi^r$, we must have $n=r$ and each $\nrd(\mathbf p_i)$ must be associated to $\pi$. Since all totally positive units in $\OK^\times$ are squares, we just multiply each $\mathbf p_i$ by a suitable unit to achieve $\nrd(\mathbf p_i)=\pi$, thus obtaining the desired factorization.

    For the uniqueness, let $\mathbf q = \mathbf p_1\cdots \mathbf p_r = \mathbf q_1\cdots\mathbf q_r$ be two such factorizations. Multiplying by $\overline{\mathbf q_r}$ from the right, we get
    \[
        (\mathbf p_1\cdots\mathbf p_{r-1})\mathbf p_r\overline{\mathbf q_r} = \mathbf q_1\cdots(\mathbf q_r\overline{\mathbf q_r}) \in \pi\mathcal H,
    \]
    so applying Lemma~\ref{lem:pseudo-euclid} with $\mathbf p:= \mathbf p_r$ and $\mathbf a:= \mathbf p_1\cdots\mathbf p_{r-1}$, $\mathbf b:=\overline{\mathbf q_r}$ we get that at least one of
    $(\mathbf p_1\cdots\mathbf p_{r-1})\mathbf p_r=\mathbf q$ and $\mathbf p_r\overline{\mathbf q_r}$ is divisible by $\pi$. But have presumed $\mathbf q$ to be primitive, hence we conclude that $\mathbf p\overline{\mathbf q_r} \in \mathcal H\pi = \mathcal H\mathbf q_r\overline{\mathbf q_r}$. Thus $\mathbf p_r \in\mathcal H\mathbf q_r$, considering reduced norms forces $\mathbf p_r = \mathbf u_{r-1} \mathbf q_r$ for some $\mathbf u_{r-1}\in\mathcal H^1$ and we proceed by induction as in the proof of Proposition~\ref{prp:coprime-factorization}.
\end{proof}

\begin{prop}
    \label{prp:primitivity-in-factorization}
    If $\mathbf q = \mathbf p_1\cdots \mathbf p_r$ is a factorization into a product of irreducible quaternions of reduced norm $\pi$, then $\mathbf q\in\pi\mathcal H$ if and only if $\mathbf p_i \in \mathcal H^1\overline{\mathbf p_{i+1}}$ for some $i=1,\dots,r-1$.
\end{prop}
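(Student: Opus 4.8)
The plan is to first rewrite each side of the claimed equivalence in a more workable form, and then prove the resulting statement by induction on $r$, the key tool being the non-commutative analogue of Euclid's lemma, Lemma~\ref{lem:pseudo-euclid}. The preliminary observation is that, for each $i$, the condition $\mathbf p_i\in\mathcal H^1\overline{\mathbf p_{i+1}}$ is equivalent to $\mathbf p_i\mathbf p_{i+1}\in\pi\mathcal H$. Indeed, if $\mathbf p_i=\mathbf u\overline{\mathbf p_{i+1}}$ with $\nrd(\mathbf u)=1$, then $\mathbf p_i\mathbf p_{i+1}=\mathbf u\overline{\mathbf p_{i+1}}\mathbf p_{i+1}=\mathbf u\,\nrd(\mathbf p_{i+1})=\pi\mathbf u$, using that $\pi$ is central; conversely, writing $\mathbf p_i\mathbf p_{i+1}=\pi\mathbf w$ with $\mathbf w\in\mathcal H$ and comparing reduced norms forces $\nrd(\mathbf w)=1$, whence $\mathbf p_i=\pi\mathbf w\inv{\mathbf p_{i+1}}=\mathbf w\overline{\mathbf p_{i+1}}$ because $\inv{\mathbf p_{i+1}}=\pi^{-1}\overline{\mathbf p_{i+1}}$ and $\pi$ is central. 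Hence it suffices to show that $\mathbf q\in\pi\mathcal H$ if and only if $\mathbf p_i\mathbf p_{i+1}\in\pi\mathcal H$ for some $i\in\{1,\dots,r-1\}$.

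The \uv{if} direction of this is immediate: from $\mathbf p_i\mathbf p_{i+1}=\pi\mathbf w$ and the centrality of $\pi$ one gets $\mathbf q=\mathbf p_1\cdots\mathbf p_{i-1}(\mathbf p_i\mathbf p_{i+1})\mathbf p_{i+2}\cdots\mathbf p_r=\pi(\mathbf p_1\cdots\mathbf p_{i-1}\mathbf w\mathbf p_{i+2}\cdots\mathbf p_r)\in\pi\mathcal H$. For the \uv{only if} direction I would assume $\mathbf q\in\pi\mathcal H$; comparing reduced norms gives $\pi^2\mid\nrd(\mathbf q)=\pi^r$, hence $r\geq2$, and I would induct on $r$. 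The base case $r=2$ is trivial, since we may take $i=1$. For $r\geq3$, I would apply Lemma~\ref{lem:pseudo-euclid} with $\mathbf a:=\mathbf p_1$, $\mathbf p:=\mathbf p_2$ and $\mathbf b:=\mathbf p_3\cdots\mathbf p_r$: as $\mathbf a\mathbf p\mathbf b=\mathbf q\in\pi\mathcal H$, the lemma yields either $\mathbf p_1\mathbf p_2=\mathbf a\mathbf p\in\pi\mathcal H$, in which case we take $i=1$, or $\mathbf p_2\mathbf p_3\cdots\mathbf p_r=\mathbf p\mathbf b\in\pi\mathcal H$, in which case the induction hypothesis applied to this product of $r-1$ irreducible quaternions of reduced norm $\pi$ produces an $i\in\{2,\dots,r-1\}$ with $\mathbf p_i\mathbf p_{i+1}\in\pi\mathcal H$.

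The one point that needs care is the choice of how to break up $\mathbf q$ when applying Lemma~\ref{lem:pseudo-euclid}: putting the distinguished factor $\mathbf p$ at either end of the product (with $\mathbf a=1$ or $\mathbf b=1$) merely reproduces the hypothesis $\mathbf q\in\pi\mathcal H$ and makes no progress, so one genuinely has to peel a factor off from an interior position in order to shorten the product and invoke the induction hypothesis. Apart from that, the argument is just bookkeeping with reduced norms and with the centrality of $\pi$.
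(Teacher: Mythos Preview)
Your proof is correct and follows essentially the same approach as the paper: both reduce the claim to finding adjacent factors with $\mathbf p_i\mathbf p_{i+1}\in\pi\mathcal H$ and both use Lemma~\ref{lem:pseudo-euclid} as the key shortening step. The only cosmetic difference is that the paper phrases the descent as choosing an inclusion-minimal interval of consecutive factors whose product lies in $\pi\mathcal H$ (and showing its length is $2$), whereas you organize it as an explicit induction on $r$; the paper also defers the equivalence $\mathbf p_i\in\mathcal H^1\overline{\mathbf p_{i+1}}\Leftrightarrow\mathbf p_i\mathbf p_{i+1}\in\pi\mathcal H$ to the final line rather than stating it up front.
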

\begin{proof}
    Sufficiency is obvious, so let us prove necessity. Let us choose an inclusion-minimal interval $\set{i,i+1,\dots,i+\ell-1}\subseteq\set{1,2,\dots,r}$ such that $\mathbf p_i\cdots \mathbf p_{i+\ell-1}\in\pi\mathcal H$. By considering reduced norms, clearly $\ell\geq2$. On the other hand, if it were the case that $\ell\geq 3$, then we could use Lemma~\ref{lem:pseudo-euclid} with $\mathbf a := \mathbf p_i$, $\mathbf p:=\mathbf p_{i+1}$, $\mathbf b:= \mathbf p_{i+2}\cdots\mathbf p_{i+\ell-1}$ to find a strictly smaller interval which still gives a product divisible by $\pi$.

    This forces $\ell=2$, i.e. $\mathbf p_i\mathbf p_{i+1}\in\pi\mathcal H = \mathcal H \overline{\mathbf p_{i+1}}\mathbf p_{i+1}$ for some $i$, hence $\mathbf p_i\in\mathcal H\overline{\mathbf p_{i+1}}$. Considering reduced norms then forces $\mathbf p_i$ to be a left multiple of $\overline{\mathbf p_{i+1}}$ by a quaternion of reduced norm $1$.
\end{proof}

\subsection{Counting irreducible elements}
\label{sec:irredcount}

\begin{lemma}
\label{lem:goodquotient}
    For any prime ideal $\mathfrak p$ of $\OK$ such that $\mathfrak p\nmid \discrd\mathcal H$, the quotient ring $\mathcal H/\mathfrak p\mathcal H$ is isomorphic to $\Mat_2(\OK/\mathfrak p)$.
\end{lemma}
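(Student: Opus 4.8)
The plan is to verify the isomorphism locally at $\mathfrak p$ and then transfer it back. First I would observe that $\mathcal H/\mathfrak p\mathcal H$ depends only on the completion: since $\mathfrak p\mathcal H$ is an ideal supported at $\mathfrak p$, the natural map $\mathcal H/\mathfrak p\mathcal H \to \mathcal H_{\mathfrak p}/\mathfrak p\mathcal H_{\mathfrak p}$ is an isomorphism of rings (both sides are $\OK/\mathfrak p$-algebras, and this is the standard fact that reduction mod $\mathfrak p$ commutes with $\mathfrak p$-adic completion). So it suffices to prove $\mathcal H_{\mathfrak p}/\mathfrak p\mathcal H_{\mathfrak p}\simeq\Mat_2(\OK/\mathfrak p)$.

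Next I would use the hypothesis $\mathfrak p\nmid\discrd\mathcal H$. Since $\mathcal H$ is maximal, $\discrd\mathcal H=\disc\mathcal A$, so $\mathfrak p\nmid\disc\mathcal A$ means precisely that $\mathcal A_{\mathfrak p}$ is not the division algebra, i.e. $\mathcal A_{\mathfrak p}\simeq\Mat_2(K_{\mathfrak p})$. Now $\mathcal H_{\mathfrak p}$ is a maximal $\mathcal O_{K,\mathfrak p}$-order in $\Mat_2(K_{\mathfrak p})$ (maximality is a local property), and $\mathcal O_{K,\mathfrak p}$ is a PID (it is a DVR) with fraction field $K_{\mathfrak p}$, so Proposition~\ref{prp:maximals-in-matrix-ring} gives $\mathcal H_{\mathfrak p}\simeq\Mat_2(\mathcal O_{K,\mathfrak p})$. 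Reducing modulo the maximal ideal $\mathfrak p\mathcal O_{K,\mathfrak p}$ yields
\[
    \mathcal H_{\mathfrak p}/\mathfrak p\mathcal H_{\mathfrak p} \simeq \Mat_2(\mathcal O_{K,\mathfrak p})/\mathfrak p\Mat_2(\mathcal O_{K,\mathfrak p}) \simeq \Mat_2(\mathcal O_{K,\mathfrak p}/\mathfrak p\mathcal O_{K,\mathfrak p}) \simeq \Mat_2(\OK/\mathfrak p),
\]
using $\mathcal O_{K,\mathfrak p}/\mathfrak p\mathcal O_{K,\mathfrak p}\simeq\OK/\mathfrak p$. Combining with the first paragraph finishes the proof.

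I do not expect a serious obstacle here; the only point requiring a little care is the identification $\mathcal H/\mathfrak p\mathcal H\simeq\mathcal H_{\mathfrak p}/\mathfrak p\mathcal H_{\mathfrak p}$, which one can justify either by the local-global dictionary (the lattices $\mathcal H$ and the preimage of $\mathfrak p\mathcal H_{\mathfrak p}$ agree away from $\mathfrak p$ and the quotient is $\mathfrak p$-torsion) or, more concretely, by noting that $\OK/\mathfrak p^n\simeq\mathcal O_{K,\mathfrak p}/\mathfrak p^n$ for all $n$ and $\mathcal H$ is a free $\OK$-module of rank $4$, so $\mathcal H/\mathfrak p\mathcal H\simeq(\OK/\mathfrak p)^4$ as a module and the ring structure is inherited compatibly. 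Alternatively one could bypass completions entirely and argue with the localization $\mathcal O_{K,(\mathfrak p)}$, which is already a PID with fraction field $K$, so that $\mathcal H_{(\mathfrak p)}$ is a maximal order in $\Mat_2(K)$ and Proposition~\ref{prp:maximals-in-matrix-ring} applies directly, then reduce mod $\mathfrak p$; this is perhaps the cleanest route.
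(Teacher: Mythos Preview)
Your proof is correct and follows essentially the same route as the paper: pass to the completion at $\mathfrak p$, use $\mathfrak p\nmid\discrd\mathcal H=\disc\mathcal A$ to identify $\mathcal A_{\mathfrak p}$ with the matrix ring, invoke Proposition~\ref{prp:maximals-in-matrix-ring} to get $\mathcal H_{\mathfrak p}\simeq\Mat_2(\mathcal O_{K,\mathfrak p})$, and reduce modulo $\mathfrak p$. Your additional remarks on justifying $\mathcal H/\mathfrak p\mathcal H\simeq\mathcal H_{\mathfrak p}/\mathfrak p\mathcal H_{\mathfrak p}$ and the alternative via localization are fine but not needed beyond what the paper does.
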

\begin{proof}
    $\mathcal H/\mathfrak p\mathcal H$ is unchanged by taking the completion at $\mathfrak p$, and then the Lemma follows from \cite[23.2.3]{voight}.
\end{proof}
\begin{lemma}
\label{lem:badquotient}
For a prime ideal $\mathfrak p$ of $\OK$ such that $\mathfrak p \mid \discrd\mathcal H$, the quotient ring $\mathcal H/\mathfrak p\mathcal H$ has a unique non-trivial left ideal.
\end{lemma}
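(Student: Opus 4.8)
The plan is to pass to the $\mathfrak p$-adic completion and invoke the structure of the maximal order in the local division quaternion algebra. First, exactly as in the proof of Lemma~\ref{lem:goodquotient}, I would observe that $\mathcal H/\mathfrak p\mathcal H$ is a finite $\OK$-module annihilated by $\mathfrak p$, so tensoring with $\mathcal O_{K,\mathfrak p}$ (equivalently, completing at $\mathfrak p$) leaves it unchanged; hence $\mathcal H/\mathfrak p\mathcal H\simeq \mathcal H_{\mathfrak p}/\mathfrak p\mathcal H_{\mathfrak p}$ as rings, and in particular their posets of left ideals are identified. Since $\mathcal H$ is maximal we have $\mathfrak p\mid\discrd\mathcal H=\disc\mathcal A$, so $\mathcal A_{\mathfrak p}$ is the division quaternion algebra over $K_{\mathfrak p}$; by the local--global dictionary $\mathcal H_{\mathfrak p}$ is its unique maximal order, and by \cite[Theorem 13.3.10]{voight} this order is a noncommutative local ring with unique maximal (left, right, and two-sided) ideal $P$, which is principal and satisfies $P^2=\pi\mathcal H_{\mathfrak p}=\mathfrak p\mathcal H_{\mathfrak p}$ for a uniformizer $\pi$ of $\mathcal O_{K,\mathfrak p}$. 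Thus $\mathcal H/\mathfrak p\mathcal H\simeq \mathcal H_{\mathfrak p}/P^2$, and a left ideal of this quotient is the same as a left ideal $J$ of $\mathcal H_{\mathfrak p}$ with $P^2\subseteq J\subseteq\mathcal H_{\mathfrak p}$.

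Next I would enumerate such $J$. Because $\mathcal H_{\mathfrak p}$ is local, every element outside $P$ is a unit, so either $J=\mathcal H_{\mathfrak p}$ or $J\subseteq P$. In the latter case $J/P^2$ is a left submodule of $P/P^2$ over the residue division ring $\mathcal H_{\mathfrak p}/P$. Writing $P=\Pi\mathcal H_{\mathfrak p}$, the module $P/P^2$ is generated over $\mathcal H_{\mathfrak p}/P$ by the single element $\overline\Pi$, hence is a quotient of $\mathcal H_{\mathfrak p}/P$; being a module over a division ring it is therefore either $0$ or isomorphic to $\mathcal H_{\mathfrak p}/P$, and it is nonzero since $\mathcal H_{\mathfrak p}$ is a domain with $P\neq 0$. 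So $P/P^2$ is a simple left module, which forces $J=P^2$ or $J=P$. Consequently $\mathcal H/\mathfrak p\mathcal H$ has exactly one nontrivial left ideal, namely the image of $P$, which is what we wanted.

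The only slightly delicate point is the simplicity of $P/P^2$ as a one-sided module; I rely on $P$ being a principal left ideal so that a one-element generation (Nakayama-type) argument applies. Alternatively one can argue by counting: $\mathcal H_{\mathfrak p}$ is free of rank $4$ over $\mathcal O_{K,\mathfrak p}$, so $\#(\mathcal H_{\mathfrak p}/P^2)=\#(\mathcal H_{\mathfrak p}/\pi\mathcal H_{\mathfrak p})=q^4$ with $q=\#(\OK/\mathfrak p)$, while $\#(\mathcal H_{\mathfrak p}/P)=q^2$ (the residue ring being the quadratic extension of $\OK/\mathfrak p$, or via Proposition~\ref{prp:discs-and-indices} applied to the principal lattice $P$), whence $\#(P/P^2)=q^2=\#(\mathcal H_{\mathfrak p}/P)$ and the surjection $\mathcal H_{\mathfrak p}/P\twoheadrightarrow P/P^2$ from the previous paragraph must be an isomorphism. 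Either way, no nontrivial left ideal other than $P/P^2$ can exist.
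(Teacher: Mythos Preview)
Your proof is correct and follows essentially the same approach as the paper: both pass to the completion, identify $\mathcal H/\mathfrak p\mathcal H$ with $\mathcal H_{\mathfrak p}/P^2$ via $P^2=\mathfrak p\mathcal H_{\mathfrak p}$, and then argue that the only left ideal strictly between $P^2$ and $\mathcal H_{\mathfrak p}$ is $P$. The only minor difference is in this last step: the paper observes that the index $[\mathcal H_{\mathfrak p}:I]_{\mathcal O_{K,\mathfrak p}}=\nrd(I)^2$ must be a square, forcing it to equal $\mathfrak p^2$ and hence $I=P$, whereas you instead show that $P/P^2$ is a simple left $\mathcal H_{\mathfrak p}/P$-module (by cyclicity or by counting); both finishes are equally short and valid.
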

\begin{proof}
    Again, it suffices to argue for $\mathcal H_{\mathfrak p}/\mathfrak p\mathcal H_{\mathfrak p}$. Then the Lemma is essentially contained in \cite[13.3.7 and 13.3.10]{voight}: all ideals of $\mathcal H_p$ are two-sided and powers of the unique maximal ideal $P$ which satisfies $P^2=\mathfrak p\mathcal H_p$. Thus only $P$ remains as a non-trivial ideal in $\mathcal H_{\mathfrak p}/\mathfrak p\mathcal H_{\mathfrak p}$.
\end{proof}

These two lemmata hold even without our conditions on the class number of $\mathcal H$ (and the narrow class number of $K$). With these conditions however, we will leverage them to count irreducible quaternions of prime reduced norms.

\begin{prop}
\label{prp:irredcounts}
Let $\pi$ be a totally positive prime element in $\OK$.
\begin{enumerate}
\item If $\pi\OK\nmid \discrd\mathcal H$, there are $\#\mathcal H^1\cdot(1+\Nm_{K/\kve}(\pi))$ quaternions of reduced norm $\pi$ in $\mathcal H$.
\item If $\pi\OK\mid \discrd\mathcal H$, there are $\#\mathcal H^1$ quaternions of reduced norm $\pi$ in $\mathcal H$.
\end{enumerate}
\end{prop}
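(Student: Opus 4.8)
The plan is to count, for a fixed totally positive prime $\pi\in\OK$, the elements of reduced norm $\pi$ by grouping them according to the left ideal they generate. Write $\mathfrak p:=\pi\OK$, which is a prime ideal of $\OK$ since $\OK$ is a PID. First I would show that $\mathbf p\mapsto\mathcal H\mathbf p$ is an $(\#\mathcal H^1)$-to-one surjection from $\set{\mathbf p\in\mathcal H:\nrd(\mathbf p)=\pi}$ onto the set of left ideals $I$ of $\mathcal H$ with $\pi\mathcal H\subseteq I\subseteq\mathcal H$ and $[\mathcal H:I]_{\OK}=\pi^2\OK$. Surjectivity uses that $\mathcal H$ is a PID (so $I=\mathcal H\mathbf g$, with $\mathbf g\in I\subseteq\mathcal H$), together with the fact that all totally positive units of $\OK$ are squares (to rescale $\mathbf g$ by a unit of $\OK$ so that $\nrd(\mathbf g)=\pi$ exactly) and Proposition~\ref{prp:discs-and-indices}(ii) to identify $[\mathcal H:\mathcal H\mathbf g]_{\OK}=\nrd(\mathbf g)^2\OK$; also $\pi\mathcal H=\mathcal H\mathbf g\overline{\mathbf g}\subseteq\mathcal H\mathbf g$ since $\overline{\mathbf g}\in\mathcal H$. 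That the fibres have exactly $\#\mathcal H^1$ elements follows because $\mathcal A$ is a division algebra: $\mathcal H\mathbf p=\mathcal H\mathbf p'$ forces $\mathbf p'=\mathbf u\mathbf p$ with $\mathbf u\in\mathcal H^\times$ by cancellation, and comparing reduced norms gives $\nrd(\mathbf u)=1$, while conversely $\mathbf u\mathbf p$ runs over the whole fibre as $\mathbf u$ runs over $\mathcal H^1$.

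Next I would reduce the ideal count to a count inside the ring $\mathcal H/\pi\mathcal H$. The key point is that any left ideal $I$ with $I\subseteq\mathcal H$ and $[\mathcal H:I]_{\OK}=\pi^2\OK$ automatically satisfies $\pi\mathcal H\subseteq I$: this can be checked locally, and at $\mathfrak p$ it follows from the two possible shapes of $\mathcal H_{\mathfrak p}$ recalled in Subsections~\ref{sec:algebras}--\ref{subsec:loc-comp} --- either $\mathcal H_{\mathfrak p}\simeq\Mat_2(\mathcal O_{K,\mathfrak p})$, where such an ideal is $\Mat_2(\mathcal O_{K,\mathfrak p})\cdot\mathrm{diag}(1,\pi)$ up to isomorphism, or $\mathcal H_{\mathfrak p}$ is the maximal order in the division algebra with maximal ideal $P$ satisfying $P^2=\pi\mathcal H_{\mathfrak p}$, so that the unique ideal of the right index is $P\supseteq\pi\mathcal H_{\mathfrak p}$ --- in both cases $\pi\mathcal H_{\mathfrak p}\subseteq I_{\mathfrak p}$. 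Hence $I\mapsto I/\pi\mathcal H$ is a bijection onto the set of left ideals of $\mathcal H/\pi\mathcal H$ of cardinality $\Nm_{K/\kve}(\pi)^2$, using $\#(\mathcal H/\pi\mathcal H)=\Nm_{K/\kve}(\pi)^4$ and $\#(\mathcal H/I)=\Nm_{K/\kve}([\mathcal H:I]_{\OK})=\Nm_{K/\kve}(\pi)^2$.

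Finally I would count these quotient ideals separately in the two cases. In case (i), $\mathfrak p\nmid\discrd\mathcal H$ gives $\mathcal H/\pi\mathcal H\simeq\Mat_2(\OK/\mathfrak p)$ by Lemma~\ref{lem:goodquotient}, and by the correspondence between (left) ideals of a $2\times2$ matrix ring over a field and subspaces recalled in Subsection~\ref{sec:algebras}, the left ideals of cardinality $\Nm_{K/\kve}(\pi)^2$ are exactly those attached to the $1+\Nm_{K/\kve}(\pi)$ lines of $(\OK/\mathfrak p)^2$; multiplying by $\#\mathcal H^1$ yields (i). In case (ii), $\mathfrak p\mid\discrd\mathcal H$ and Lemma~\ref{lem:badquotient} says $\mathcal H/\pi\mathcal H$ has a unique non-trivial left ideal; it does have cardinality $\Nm_{K/\kve}(\pi)^2$ because at least one ideal of that cardinality exists, namely $\mathcal H\mathbf p/\pi\mathcal H$ for a quaternion $\mathbf p$ of reduced norm $\pi$ furnished by Proposition~\ref{prp:prime-norms}, so there is exactly one, giving (ii). I expect the only delicate part to be the bookkeeping between $\OK$-indices, module cardinalities, and reduced norms, and the local verification that every admissible $I$ contains $\pi\mathcal H$; the rest is formal.
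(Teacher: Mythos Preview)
Your proposal is correct and follows essentially the same approach as the paper: reduce to counting left ideals of reduced norm $\pi\OK$ (each contributing $\#\mathcal H^1$ elements), pass to the quotient $\mathcal H/\pi\mathcal H$, and invoke Lemmas~\ref{lem:goodquotient} and~\ref{lem:badquotient} to count there. Your local verification that $\pi\mathcal H\subseteq I$ is more detailed than the paper's (which simply asserts it), but note that it is already implicit in your surjectivity step, since once $I=\mathcal H\mathbf g$ with $\nrd(\mathbf g)=\pi$ you have $\pi\mathcal H=\mathcal H\overline{\mathbf g}\mathbf g\subseteq\mathcal H\mathbf g$ directly.
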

\begin{proof}
First, let us argue that it suffices to count left ideals of reduced norm $\pi\OK$. Firstly, any quaternion of reduced norm $\pi$ generates such a left ideal. On the other hand, any such ideal is principal and we may multiply any generator by a suitable unit (utilizing the narrow class number being $1$ again) to get a generator of reduced norm $\pi$. Any two such generators differ only by multiplication by a unit of reduced norm $1$, hence any ideal of reduced norm $\pi\OK$ yields exactly $\#\mathcal H^1$ quaternions of reduced norm $\pi$.

Then, a left ideal of reduced norm $\pi$ must be contained in $\pi\mathcal H$, so we may count these ideals in $\mathcal H/\pi\mathcal H$. Now (ii) is immediate, since by Lemma~\ref{lem:badquotient}, the quotient ring has a unique non-trivial (left) ideal.

For (i), let us denote the finite field $k:=\OK/\pi\OK$ and $q:=\# k = \Nm_{K/\kve}(\pi)$. Thanks to Lemma~\ref{lem:goodquotient}, we may count the non-zero left ideals of reduced norm $0\in k$ in $\Mat_2(k)$. As we noted in Section~\ref{chap:prelims}, this is a (left) principal ideal ring and there is a bijection
\[
    \set{\text{non-trivial left ideals of $\Mat_2(k)$}} \longleftrightarrow \set{\text{one-dimensional subspaces of $k^2$}} = \pe^1 k,
\]
where the last set is the projective line over $k$, which has $q+1$ elements, so we are done.
\end{proof}

\subsection{Counting factorizations}
\label{sec:counting-factorizations}

All that remains now is to use the results of Subsection~\ref{sec:factorizations} to count quaternions of a given reduced norm. As it turns out, expressions akin to sums of divisors appear in these calculations, so to simplify them, let us introduce the following notation: if $\mathfrak D$ is an ideal of $\OK$ which factorizes into a product of prime ideals $\mathfrak p_1\cdots \mathfrak p_r$, let us denote
\[
    \sigma_{\mathfrak D}(\alpha) := \sum_{\mathfrak p_1,\dots,\mathfrak p_r \nmid \delta\OK \mid\alpha\OK} \Nm(\delta\OK)
\]
where $\alpha\in\OK$ and the sum runs over all the (principal) ideals of $\OK$ which divide $\alpha\OK$ but are not divided by any $\mathfrak p_i$. The condition $\mathfrak p_1,\dots,\mathfrak p_r \nmid \delta\OK$ can also be equivalently stated as $\mathfrak D+\delta\OK=\OK$, so informally speaking, this is a \uv{sum of divisors of $\alpha$ coprime to $\mathfrak D$}.

By the Chinese remainder theorem, if $\alpha,\beta\in\OK$ are coprime, ideal divisors of $\alpha\beta\OK$ are exactly products of a divisor of $\alpha\OK$ and a divisor of $\beta\OK$, whence it follows that $\sigma_{\mathfrak D}(\alpha\beta) = \sigma_{\mathfrak D}(\alpha)\sigma_{\mathfrak D}(\beta)$. We say that $\sigma_{\mathfrak D}$ is \emph{multiplicative}. Lastly, we may notice that $\sigma_{\mathfrak D}(\epsilon\alpha)=\sigma_{\mathfrak D}(\alpha)$ for any unit $\epsilon\in\OK^\times$, since $\alpha\OK=\epsilon\alpha\OK$.

\begin{defn}
For $\alpha\in \OK^+$ and an order $\mathcal H$, let us denote
\[
    r_{\mathcal H}(\alpha) := \#\set{\mathbf q\in\mathcal H\mid \nrd(\mathbf q)=\alpha}.
\]
\end{defn}

\begin{thm}
\label{thrm:kind-max-formula}
Let $\mathcal H$ be a maximal order of class number $1$ in a definite quaternion algebra $\mathcal A$ of discriminant $\mathfrak D$ over a totally real number field $K$ of narrow class number $1$. Then for every $\alpha\in\OK^+$, there exist exactly
\[
    r_{\mathcal H}(\alpha)=\#\mathcal H^1\cdot \sigma_{\mathfrak D}(\alpha)
\]
quaternions in $\mathcal H$ of reduced norm $\alpha$.
\end{thm}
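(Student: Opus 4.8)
The plan is to prove the formula by reducing to the prime-power case via multiplicativity and then counting primitive factorizations using the structural results of Subsection~\ref{sec:factorizations}. First I would fix $\alpha \in \OK^+$ and, since $\OK$ has narrow class number $1$, write $\alpha = \epsilon \pi_1^{r_1}\cdots\pi_k^{r_k}$ as a product of a totally positive unit and totally positive primes; the unit can be absorbed because both $r_{\mathcal H}$ and $\sigma_{\mathfrak D}$ are invariant under multiplication by units. By Proposition~\ref{prp:coprime-factorization}, every quaternion of reduced norm $\alpha$ decomposes uniquely up to unit migration as a product $\mathbf a_1\cdots\mathbf a_k$ with $\nrd(\mathbf a_i) = \pi_i^{r_i}$; the unit-migration ambiguity between adjacent factors contributes exactly a factor of $(\#\mathcal H^1)^{k-1}$ of overcounting when one counts tuples instead. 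Combined with the multiplicativity $\sigma_{\mathfrak D}(\alpha\beta) = \sigma_{\mathfrak D}(\alpha)\sigma_{\mathfrak D}(\beta)$, this reduces the theorem to establishing $r_{\mathcal H}(\pi^r) = \#\mathcal H^1 \cdot \sigma_{\mathfrak D}(\pi^r)$ for a single totally positive prime $\pi$, where $\sigma_{\mathfrak D}(\pi^r)$ is $\sum_{i=0}^{r}\Nm(\pi)^i$ when $\pi\OK\nmid\mathfrak D$ and is $1$ when $\pi\OK\mid\mathfrak D$ (only the trivial divisor survives the coprimality condition; here I should double-check the degenerate case where $\pi\OK \mid \mathfrak D$ and $r \geq 1$, noting $\sigma_\mathfrak{D}(\pi^r)=1$).

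For the prime-power case I would stratify the quaternions of reduced norm $\pi^r$ by their maximal power of $\pi$ dividing them: any $\mathbf q$ with $\nrd(\mathbf q) = \pi^r$ can be written $\mathbf q = \pi^j \mathbf q'$ with $\mathbf q'$ primitive and $\nrd(\mathbf q') = \pi^{r-2j}$, and this decomposition is unique. Hence $r_{\mathcal H}(\pi^r) = \sum_{0 \le j \le r/2} p_{r-2j}$, where $p_s$ denotes the number of primitive quaternions of reduced norm $\pi^s$ (with $p_0 = \#\mathcal H^1$ counting the units, which are all primitive). So it remains to compute $p_s$ for $s\geq 1$. By Proposition~\ref{prp:prime-power-factorization}, primitive quaternions of reduced norm $\pi^s$ correspond, up to unit migration, to factorizations $\mathbf p_1\cdots\mathbf p_s$ into irreducibles of reduced norm $\pi$; counting ordered tuples of irreducibles and dividing by the migration ambiguity, the number of primitive $\mathbf q$ equals $(\#\mathcal H^1)^{-(s-1)}$ times the number of $s$-tuples $(\mathbf p_1,\dots,\mathbf p_s)$ of norm-$\pi$ irreducibles whose product is primitive.

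The combinatorial heart is then counting those tuples, for which I would use Proposition~\ref{prp:primitivity-in-factorization}: the product $\mathbf p_1\cdots\mathbf p_s$ is \emph{non}-primitive precisely when $\mathbf p_i \in \mathcal H^1\overline{\mathbf p_{i+1}}$ for some consecutive pair. By Proposition~\ref{prp:irredcounts} there are $N := \#\mathcal H^1(1+\Nm\pi)$ quaternions of reduced norm $\pi$ when $\pi\OK\nmid\mathfrak D$ (resp. $\#\mathcal H^1$ when $\pi\OK\mid\mathfrak D$), and for a fixed $\mathbf p_{i+1}$ there are exactly $\#\mathcal H^1$ choices of $\mathbf p_i$ lying in $\mathcal H^1\overline{\mathbf p_{i+1}}$, so at each step one must avoid one "forbidden" left-ideal-class worth of predecessors. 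Thus the number of valid tuples satisfies a transfer-matrix/recursion: letting $T_s$ count length-$s$ tuples giving a primitive product, $T_1 = N$ and $T_{s+1} = (\text{number of }\mathbf p_{s+1}) \cdot T_s - (\#\mathcal H^1)\cdot T_{s-1}\cdot(\dots)$ — more cleanly, I would observe $T_{s+1} = N\,T_s - \#\mathcal H^1\, T_{s-1}\,(q)$ style relation, or better, directly count: the number of tuples where no forbidden adjacency occurs is $N\cdot(N - \#\mathcal H^1)^{s-1}$ in the coprime-to-$\mathfrak D$ case, which after dividing by $(\#\mathcal H^1)^{s-1}$ gives $p_s = \#\mathcal H^1(1+\Nm\pi)(\Nm\pi)^{s-1} = \#\mathcal H^1(\Nm\pi^s + \Nm\pi^{s-1})$ for $s\geq 1$. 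Summing $\sum_{j} p_{r-2j}$ telescopes to $\#\mathcal H^1\sum_{i=0}^{r}\Nm\pi^i = \#\mathcal H^1\sigma_{\mathfrak D}(\pi^r)$ as desired; in the ramified case $N = \#\mathcal H^1$, every norm-$\pi$ quaternion is a unit times $\overline{\mathbf p}$ for the essentially unique irreducible, so no primitive product of length $\geq 2$ exists, $p_s = 0$ for $s\geq 2$, $p_1 = \#\mathcal H^1$, and the sum collapses to $\#\mathcal H^1 = \#\mathcal H^1\sigma_{\mathfrak D}(\pi^r)$. The main obstacle I anticipate is making the division-by-unit-migration bookkeeping rigorous — precisely, showing that the group $(\mathcal H^1)^{s-1}$ acts freely on the set of irreducible factorizations of a fixed primitive quaternion so that orbit-counting is exact — and correctly handling the "forbidden predecessor" count of exactly $\#\mathcal H^1$ (rather than more) elements in $\mathcal H^1\overline{\mathbf p_{i+1}}$, which relies on $\overline{\mathbf p_{i+1}}$ generating a left ideal whose unit stabilizer is exactly $\mathcal H^1$.
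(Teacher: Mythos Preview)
Your proposal is correct and follows essentially the same route as the paper's own proof: reduce to prime powers via Proposition~\ref{prp:coprime-factorization}, stratify by the power of $\pi$ dividing, and count primitive quaternions of norm $\pi^s$ by enumerating length-$s$ tuples of norm-$\pi$ irreducibles with no forbidden adjacency (Propositions~\ref{prp:prime-power-factorization}, \ref{prp:primitivity-in-factorization}, \ref{prp:irredcounts}) and dividing by $(\#\mathcal H^1)^{s-1}$. The two obstacles you anticipate are non-issues: unit migration acts freely because $\mathbf p_1\mathbf u_1^{-1}=\mathbf p_1$ forces $\mathbf u_1=1$ in the ambient division algebra, and $\#(\mathcal H^1\overline{\mathbf p})=\#\mathcal H^1$ simply because left multiplication by a nonzero element of a division algebra is injective --- no stabilizer argument is needed.
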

\begin{proof}
First, notice that if we multiply $\alpha$ by a totally positive unit $\epsilon$, such a totally positive unit is a square of some $\epsilon_0\in\OK^\times$. Hence this change can be realized on any quaternion $\mathbf q$ of reduced norm $\alpha$ by considering $\epsilon_0\mathbf q$ instead. Thus, both $r_{\mathcal H}(\alpha)$ and the quantity $\#\mathcal H^1\cdot\sigma_{\mathfrak D}(\alpha)$ are unchanged when replacing $\alpha$ by $\epsilon\alpha$ for some $\epsilon\in\OK^{\times,+}$, so we may change $\alpha$ by totally positive units without loss of generality.

With this in mind, we have $\sigma_{\mathfrak D}(1)=1$, so the theorem clearly holds when $\alpha$ is a unit. By Proposition~\ref{prp:irredcounts}, it also holds for a prime element $\alpha$.

Next, let $\alpha=\pi^k$ be a prime power. Suppose first that $\pi\OK\nmid\discrd \mathcal H$. Let us denote $q:=\Nm_{K/\kve}(\pi)$. We claim that there are $\#\mathcal H^1\cdot (q^k+q^{k-1})$ \emph{primitive} quaternions of reduced norm $\alpha$. For this, we use Propositions \ref{prp:prime-power-factorization} and \ref{prp:primitivity-in-factorization}. This tells us that we should count factorizations using irreducible quaternions of reduced norm $\pi$ where conjugates (up to multiplication by units) do not appear in adjacent positions, all the while managing the overcounting stemming from unit migration.

Choosing the irreducible $\mathbf p_i$, $i=1,\dots,k$ sequentially, the first one offers $\#\mathcal H^1\cdot(1+q)$ options, but for all subsequent positions, the last chosen factor forbids $\#\mathcal H^1$ of these. So we have $\#\mathcal H^1\cdot(1+q) \cdot \zav{\#\mathcal H^1\cdot q}^{k-1}$ suitable factorizations. From any such factorization, we perform $(\# \mathcal H^1)^{k-1}$ unit migrations and clearly, each results in a different factorization. So we have overcounted by a factor of $(\# \mathcal H^1)^{k-1}$, which yields
\[
    \frac{(\# \mathcal H^1)^{k}(q+1)q^{k-1}}{(\# \mathcal H^1)^{k-1}} = \#\mathcal H^1\cdot q^{k-1}(q+1)
\]
primitive quaternions of reduced norm $m$.

Dropping the primitivity condition, we need to additionally count quaternions divisible by $\pi$, $\pi^2$, etc., leading to a total count of
\[
r_{\mathcal H}(\alpha)=\#\mathcal H^1\cdot \zav{ (q^k+q^{k-1})+ (q^{k-2}+q^{k-3})+\cdots+1} = \#\mathcal H^1\cdot\sigma_{\mathfrak D}(\pi^k).
\]

If on the other hand $\pi\OK\mid \discrd \mathcal H$, let us prove that there only $\#\mathcal H^1$ quaternions of reduced norm $\pi^k$ by induction on $k\geq0$. We already have this claim for $k=0$ and $k=1$. Since there are only $\#\mathcal H^1$ quaternions of reduced norm $\pi$, if we let $\mathbf q$ be any of them, then $\mathcal H^1\overline{\mathbf q}$ must recover all $\#\mathcal H^1$ of them again. Hence in view of Proposition~\ref{prp:primitivity-in-factorization}, there can be no primitive quaternions of reduced norm $\pi^k$ for $k\geq2$. So their count is the same as for $\pi^{k-2}$ and our claim follows by induction.

Finally, we consider an arbitrary $\alpha\in\OK^+$ and we fix a factorization $\alpha=\pi_1^{e_1}\cdots \pi_k^{e_k}$ into powers of totally positive primes (as before, multiplying by a totally positive units does not affect the count). By Proposition~\ref{prp:coprime-factorization}, we multiply the quantities of quaternions of reduced norms $\pi_i^{e_i}$ and then correct for the overcount stemming from unit migrations by a factor of $(\#\mathcal H^1)^{k-1}$. Since we have already counted quaternions with prime power reduced norms, we thus obtain
\begin{multline*}
    r_{\mathcal H}(\alpha)=\frac{r_{\mathcal H}(\pi_1^{e_1})\cdots r_{\mathcal H}(\pi_k^{e_k})}{(\#\mathcal H^1)^{k-1}}=\frac{(\#\mathcal H^1 \sigma_{\mathfrak D}(\pi_1^{e_1}))\cdots(\#\mathcal H^1 \sigma_{\mathfrak D}(\pi_k^{e_k}))}{(\#\mathcal H^1)^{k-1}} =\\= \#\mathcal H^1\cdot (\sigma_{\mathfrak D}(\pi_1^{e_1})\cdots \sigma_{\mathfrak D}(\pi_k^{e_k})) = \#\mathcal H^1\cdot \sigma_{\mathfrak D}(\pi_1^{e_1}\cdots\pi_k^{e_k}) = \#\mathcal H^1 \cdot \sigma_{\mathfrak D}(\alpha)
\end{multline*}
quaternions of reduced norm $\alpha$, finishing the proof.
\end{proof}

\section{Intersecting unit orbits with suborders}
\label{chap:orbits}

Mimicking Hurwitz's proofs of Lagrange's and Jacobi's four-square theorem, we will concern ourselves with the following situation in this section: we are given an $\OK$-order $\mathcal H$ in a definite quaternion algebra $\mathcal A$ over a number field $K$ (mostly, $\mathcal H$ will be a maximal order of class number $1$). We let the (finite) group $\mathcal H^1$ of units of reduced norm $1$ act on $\mathcal H$ by multiplication from the left. Then we wish to examine suborders $\mathcal G\subseteq \mathcal H$ which intersect every orbit of this action -- in the classical case of Hurwitz, this corresponds to the Lipschitz order $\mathcal G$ intersecting all orbits in the Hurwitz order $\mathcal H$ with respect to the (left) action of the group $\mathcal H^1$.

\begin{defn}
\label{def:perceptivity}
Let $\mathcal G\subseteq\mathcal H$ be orders in a definite quaternion algebra. We say that $\mathcal G$ is \emph{(left) $\mathcal H$-perceptive} (or that the pair $\mathcal G$, $\mathcal H$ is \emph{(left) perceptive}, or that $\mathcal G$ is \emph{(left) perceptive in $\mathcal H$}), if every orbit $\mathcal H^1\mathbf q$ for $\mathbf q\in\mathcal H$ has a non-empty intersection with $\mathcal G$.
\end{defn}

We may notice that since the standard involution preserves orders and maps left orbits to right orbits and vice versa,
the notion of a right perceptive suborder would be equivalent to that of a left perceptive one, so we drop the directional distinction.

As in the Hurwitz case, once $\mathcal H$ is definite, maximal and of class number $1$ and $\mathcal G$ is $\mathcal H$-perceptive, the universality of the quadratic form $(\mathcal H,\nrd)$ may immediately be carried over to obtain universality of $(\mathcal G,\nrd)$. Further, with an examination of the exacts sizes of the intersections $\mathcal H^1\mathbf q\cap \mathcal G$, one may deduce precise formulas for the number of quaternions of a given norm in $\mathcal G$ from those in $\mathcal H$.

Throughout this section, \uv{orders} are always $\OK$-orders in a definite quaternion algebra.

\subsection{Chains of orders}

\begin{prop}
\label{prp:composing-perceptivity}
Let $\mathcal F\subseteq \mathcal G\subseteq\mathcal H$ be orders.
\begin{enumerate}
    \item If both $\mathcal G$, $\mathcal H$ and $\mathcal F$, $\mathcal G$ are perceptive pairs, then so is $\mathcal F$, $\mathcal H$.
    \item If $\mathcal F$ is $\mathcal H$-perceptive, then $\mathcal G$ is $\mathcal H$-perceptive.
    \item If $\mathcal F$ is $\mathcal H$-perceptive and additionally $[\mathcal H:\mathcal G]_{\OK}$, $[\mathcal G:\mathcal F]_{\OK}$ are comaximal ideals of $\OK$, then $\mathcal F$ is $\mathcal G$-perceptive.
\end{enumerate}
\end{prop}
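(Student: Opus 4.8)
The plan is to prove the three parts in order, since (i) is essentially immediate and the later parts build on the same localization ideas. For part (i): given $\mathbf q \in \mathcal H$, perceptivity of the pair $\mathcal G$, $\mathcal H$ yields $\mathbf u \in \mathcal H^1$ with $\mathbf u \mathbf q \in \mathcal G$; then perceptivity of $\mathcal F$, $\mathcal G$ yields $\mathbf v \in \mathcal G^1 \subseteq \mathcal H^1$ with $\mathbf v (\mathbf u \mathbf q) \in \mathcal F$, so $(\mathbf v \mathbf u) \mathbf q \in \mathcal F$ with $\mathbf v \mathbf u \in \mathcal H^1$. (The one small point to note is $\mathcal G^1 \subseteq \mathcal H^1$, which is clear from $\mathcal G \subseteq \mathcal H$.) Part (ii) is likewise trivial: if $\mathbf u \mathbf q \in \mathcal F$ for some $\mathbf u \in \mathcal H^1$, then since $\mathcal F \subseteq \mathcal G$ we already have $\mathbf u \mathbf q \in \mathcal G$, so every orbit meets $\mathcal G$.

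The substance is in part (iii). Write $\mathfrak a := [\mathcal H : \mathcal G]_{\OK}$ and $\mathfrak b := [\mathcal G : \mathcal F]_{\OK}$, assumed comaximal. The key observation is that comaximality of $\mathfrak a$ and $\mathfrak b$ means that at each prime $\mathfrak p$, at most one of $\mathcal G_{\mathfrak p} \subsetneq \mathcal H_{\mathfrak p}$ and $\mathcal F_{\mathfrak p} \subsetneq \mathcal G_{\mathfrak p}$ is proper: if $\mathfrak p \mid \mathfrak a$ then $\mathfrak p \nmid \mathfrak b$, so $\mathcal F_{\mathfrak p} = \mathcal G_{\mathfrak p}$, and if $\mathfrak p \mid \mathfrak b$ then $\mathcal G_{\mathfrak p} = \mathcal H_{\mathfrak p}$. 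Here I use that indices commute with completions (Subsection~\ref{sec:inddisc}) and the local-global dictionary, together with the fact that $\discrd$ scales by the index (Proposition~\ref{prp:discs-and-indices}(i)), so that $\mathfrak p \nmid [\mathcal H_{\mathfrak p}:\mathcal G_{\mathfrak p}]$ forces equality locally. Consequently, for every prime $\mathfrak p$ we have either $\mathcal G_{\mathfrak p} = \mathcal H_{\mathfrak p}$ or $\mathcal F_{\mathfrak p} = \mathcal G_{\mathfrak p}$.

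Now take $\mathbf q \in \mathcal G$; I must produce $\mathbf u \in \mathcal G^1$ with $\mathbf u \mathbf q \in \mathcal F$. By $\mathcal H$-perceptivity of $\mathcal F$ there is $\mathbf u \in \mathcal H^1$ with $\mathbf u \mathbf q \in \mathcal F$. I claim $\mathbf u$ already lies in $\mathcal G$. Check locally: at a prime $\mathfrak p$ with $\mathcal G_{\mathfrak p} = \mathcal H_{\mathfrak p}$ we have $\mathbf u \in \mathcal H^1 \subseteq \mathcal H_{\mathfrak p} = \mathcal G_{\mathfrak p}$ automatically; at a prime $\mathfrak p$ with $\mathcal F_{\mathfrak p} = \mathcal G_{\mathfrak p}$, note $\mathbf u \mathbf q \in \mathcal F \subseteq \mathcal F_{\mathfrak p} = \mathcal G_{\mathfrak p}$ and $\mathbf q \in \mathcal G \subseteq \mathcal G_{\mathfrak p}$, and $\mathbf q$ is a unit in $\mathcal A$ (being a nonzero element of a division algebra — this uses that $\mathcal A$ is definite; if $\mathbf q = 0$ the statement is trivial), with $\mathbf q^{-1} = \nrd(\mathbf q)^{-1}\overline{\mathbf q}$, so $\mathbf u = (\mathbf u \mathbf q)\mathbf q^{-1}$; to see $\mathbf u \in \mathcal G_{\mathfrak p}$ I use that $\mathbf u \in \mathcal H_{\mathfrak p}$ anyway and that $\mathcal G_{\mathfrak p}$ is saturated under the relevant operation — more carefully, since $\nrd(\mathbf u) = 1$ the element $\mathbf u$ generates a left ideal $\mathcal H_{\mathfrak p}\mathbf u = \mathcal H_{\mathfrak p}$, and $(\mathbf u \mathbf q)\mathcal G_{\mathfrak p} = \mathbf u(\mathbf q \mathcal G_{\mathfrak p}) \subseteq \mathbf u \mathcal G_{\mathfrak p}$ combined with a dimension/index count forces $\mathbf u \mathcal G_{\mathfrak p} = \mathcal G_{\mathfrak p}$, hence $\mathbf u \in \rord(\mathcal G_{\mathfrak p})\cdot(\text{unit})$... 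The cleanest route: since $\mathcal G_{\mathfrak p} = \mathcal F_{\mathfrak p}$ here, both $\mathbf u \mathbf q$ and $\mathbf q$ lie in $\mathcal G_{\mathfrak p}$, and as $\mathbf q \in \mathcal G_{\mathfrak p}^{\times}$ would give the result directly, while in general one reduces to this by the local structure of $\mathcal G_{\mathfrak p}$ as a local order — this is the one step needing care. Granting $\mathbf u \in \mathcal G_{\mathfrak p}$ at every $\mathfrak p$, the local-global dictionary gives $\mathbf u \in \mathcal G$, hence $\mathbf u \in \mathcal G^1$, and $\mathbf u \mathbf q \in \mathcal F$ exhibits the orbit of $\mathbf q$ under $\mathcal G^1$ meeting $\mathcal F$, completing part (iii).

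The main obstacle is precisely the local claim "$\mathbf u \mathbf q, \mathbf q \in \mathcal G_{\mathfrak p}$ and $\mathbf u \in \mathcal H_{\mathfrak p}$ with $\nrd(\mathbf u) \in \OK_{\mathfrak p}^{\times}$ imply $\mathbf u \in \mathcal G_{\mathfrak p}$" at primes where $\mathcal G$ and $\mathcal F$ agree but $\mathcal G$ may be smaller than $\mathcal H$. I expect to handle it by observing that $\mathbf u \mathcal G_{\mathfrak p} \subseteq \mathcal G_{\mathfrak p}$: indeed $\mathbf u \mathbf q \in \mathcal G_{\mathfrak p}$ for the specific $\mathbf q$ extends, via right-multiplying by $\mathcal G_{\mathfrak p}$ and using that $\mathbf q$ generates a right ideal of $\mathcal G_{\mathfrak p}$ of reduced norm $\nrd(\mathbf q)\mathcal O_{K,\mathfrak p}$ which is locally principal, to $\mathbf u \cdot (\mathbf q \mathcal G_{\mathfrak p}) \subseteq \mathcal G_{\mathfrak p}$; then comparing $\OK_{\mathfrak p}$-indices (both sides have the same index in $\mathcal A_{\mathfrak p}$ since $\nrd(\mathbf u)$ is a unit) forces $\mathbf u \mathbf q \mathcal G_{\mathfrak p} = \mathbf q \mathcal G_{\mathfrak p}$, i.e. $\mathbf u \in \mathcal O_{\text{\sffamily\upshape L}}(\mathbf q\mathcal G_{\mathfrak p})$, and one checks $\mathcal O_{\text{\sffamily\upshape L}}(\mathbf q \mathcal G_{\mathfrak p}) = \mathcal G_{\mathfrak p}$ by conjugating the left order of $\mathcal G_{\mathfrak p}$ (which is $\mathcal G_{\mathfrak p}$ itself) by $\mathbf q$. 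If this index bookkeeping proves fiddly, an alternative is to avoid localization entirely in part (iii) and instead argue that the comaximality lets one split $\mathbf q$ and $\mathbf u$ into "$\mathfrak a$-part" and "$\mathfrak b$-part" components via a CRT decomposition of $\mathcal H/\mathcal F$, but the local argument above is conceptually cleaner.
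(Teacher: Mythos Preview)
Parts (i) and (ii) are fine and match the paper. The issue is in (iii): your core local claim is false, and the attempted fixes via left orders do not salvage it.

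Concretely, take $\mathcal H$ the Hurwitz order, $\mathcal G=\mathcal F$ the Lipschitz order, $\mathbf q=1+\i\in\mathcal G$, and $\mathbf u=\tfrac{1+\i+\j+\k}{2}\in\mathcal H^1$. Then $\mathbf u\mathbf q=\i+\j\in\mathcal G$, yet $\mathbf u\notin\mathcal G$. So ``$\mathbf u\mathbf q,\mathbf q\in\mathcal G_{\mathfrak p}$ and $\nrd(\mathbf u)\in\mathcal O_{K,\mathfrak p}^\times$ force $\mathbf u\in\mathcal G_{\mathfrak p}$'' fails already here. Your left-order computation also goes wrong: $\lord(\mathbf q\mathcal G_{\mathfrak p})=\mathbf q\,\lord(\mathcal G_{\mathfrak p})\,\mathbf q^{-1}=\mathbf q\mathcal G_{\mathfrak p}\mathbf q^{-1}$, which is a \emph{conjugate} of $\mathcal G_{\mathfrak p}$, not $\mathcal G_{\mathfrak p}$ itself. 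And even getting $\mathbf u\mathbf q\mathcal G_{\mathfrak p}\subseteq\mathcal G_{\mathfrak p}$ with the same index as $\mathbf q\mathcal G_{\mathfrak p}$ does not give $\mathbf u\mathbf q\mathcal G_{\mathfrak p}=\mathbf q\mathcal G_{\mathfrak p}$: two right ideals of equal index need not coincide.

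The paper's proof shares your localization setup but inserts the missing idea: it argues by contradiction, and before picking $\mathbf u$, it first \emph{replaces} $\mathbf q$ by $\mathbf q+\alpha(t+\mathbf q)$ for a suitable $t\in\OK$ and $\alpha\in[\mathcal G:\mathcal F]_{\OK}$ with $\alpha\equiv1\pmod{[\mathcal H:\mathcal G]_{\OK}}$. Since $\alpha\mathcal G\subseteq\mathcal F$, this shift preserves the property ``$\mathcal G^1\mathbf q\cap\mathcal F=\emptyset$'', while a CRT choice of $t$ makes $\nrd(\mathbf q)$ coprime to $[\mathcal H:\mathcal G]_{\OK}$. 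Now at every prime $\mathfrak p$ with $\mathcal G_{\mathfrak p}\subsetneq\mathcal H_{\mathfrak p}$ the element $\mathbf q$ is a \emph{unit} of $\mathcal G_{\mathfrak p}$, and then your intended step $\mathbf u=(\mathbf u\mathbf q)\mathbf q^{-1}\in\mathcal G_{\mathfrak p}$ is legitimate. (The paper packages this as $\mathcal L N\subseteq\mathcal G$ with $\mathcal L=\mathcal F+\mathcal F\mathbf u$ and $N=\mathbf q\mathcal F$, but the content is the same.) Without that norm-adjustment step your argument cannot close, because nothing prevents $\mathbf q$ from lying deep in $\mathfrak p\mathcal H_{\mathfrak p}$, where $\mathbf u\mathbf q\in\mathcal G_{\mathfrak p}$ carries no information about $\mathbf u$.
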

\begin{proof}
\begin{enumerate}
\item Starting with a $\mathbf q\in \mathcal H$, we first find an $\mathbf r\in \mathcal H^1\mathbf q\cap \mathcal G$ and then an $\mathbf s\in \mathcal G^1\mathbf r \cap\mathcal F \subseteq \mathcal H^1\mathbf q\cap\mathcal F$.
\item Clearly $\mathcal H^1\mathbf q\cap\mathcal F \subseteq \mathcal H^1\mathbf q\cap\mathcal G$.

\item
Let us denote $\mathfrak a:=[\mathcal G:\mathcal F]_{\OK}$ and $\mathfrak b:=[\mathcal H:\mathcal G]_{\OK}$.
Suppose for the sake of contradiction that $\mathcal G^1\mathbf q\cap\mathcal F$ is empty for some $\mathbf q\in\mathcal G$.

\begin{claim}[A]
We can choose this $\mathbf q$ with the added condition that $\nrd(\mathbf q)\OK$ is comaximal to $\mathfrak b$.
\end{claim}

Since $\mathfrak a$, $\mathfrak b$ are comaximal, we may find an $\alpha\in\OK$ satisfying $\alpha\equiv0\pmod{\mathfrak a}$, $\alpha\equiv1\pmod{\mathfrak b}$ by the Chinese remainder theorem. Let us then start our search with an arbitrary $\mathbf r\in\mathcal G$ such that $\mathcal G^1\mathbf r\cap\mathcal F$ is empty, and then look for a suitable $\mathbf q$ in the form $t\alpha + (\alpha+1)\mathbf r$ for some $t\in\OK$ yet to be determined. Since $\mathfrak a \mathcal G\subseteq \mathcal F$ and $\alpha\in\mathfrak a$, we have $\alpha\mathcal G\subseteq\mathcal F$. Thus because $\mathbf q = \mathbf r+\alpha(t+\mathbf r)$, any such $\mathbf q$ will still satisfy $\mathcal G^1\mathbf q\cap\mathcal F=\emptyset$. So we only need to deal with the desired condition on $\nrd(\mathbf q)$.

Let $\mathfrak p_1,\dots,\mathfrak p_r$ be the prime ideals dividing $\mathfrak b$. Then it suffices to determine $t$ modulo each $\mathfrak p_i$ so that $\nrd(\mathbf q)\nequiv0\pmod{\mathfrak p_i}$ and then compose this data using the Chinese remainder theorem. So let us investigate $\nrd(\mathbf q)\pmod{\mathfrak p_i}$. We have
\begin{align}
\nonumber
    \nrd(t\alpha+(\alpha+1)\mathbf r) &= \alpha^2 t^2+\alpha(\alpha+1)\trd(\mathbf r)t + \nrd(\mathbf r) \equiv\\
    \tag{$\ast$}&\equiv t^2+2\trd(\mathbf r)t+\nrd(\mathbf r) \pmod{\mathfrak p_i}
\end{align}
since $\alpha\equiv1\pmod{\mathfrak b}$.
Thus this is a polynomial of degree $2$ over the finite field $\OK/\mathfrak p_i$. The only way for it to never attain a non-zero value is for all elements of $\OK/\mathfrak p_i$ to be its roots. It can have no more than two roots, so this may only happen if $\OK/\mathfrak p_i$ is the field with two elements. Hence in all other cases we find a suitable $t\pmod{\mathfrak p_i}$.

In the case when $\OK/\mathfrak p_i=\set{0,1}$ is the two-element field, ($\ast$) simplifies to
\[
    \nrd(\mathbf q) \equiv t^2 + \nrd(\mathbf r) \pmod{\mathfrak p_i}.
\]
Since $t^2\equiv t$ holds in the two-element field, we then choose $t\equiv \nrd(\mathbf r)+1$ and achieve $\nrd(\mathbf q)\nequiv 0\pmod{\mathfrak p_i}$ in this case as well.

Choosing a $t\in\OK$ that satisfies each of the chosen residues $t\pmod{\mathfrak p_i}$ via the Chinese remainder theorem, we achieve $\nrd(\mathbf q)\OK$ comaximal to $\mathfrak b$. This proves Claim (A).

Now with this $\mathbf q$, we know that there is some $\mathbf u\in\mathcal H^1$ such that $\mathbf u\mathbf q\in\mathcal F$; this ensures $\mathbf u\in\mathcal H^1\setminus\mathcal G^1$. Let us denote the order $\mathcal L:=\mathcal F+\mathcal F\mathbf u$ and the lattice $N:= \mathbf q\mathcal F$, then we observe that $\mathcal L N \subseteq \mathcal G$

\begin{claim}[B]
The ideal $[\mathcal L:\mathcal F]_{\OK}$ is divisible by some prime ideal $\mathfrak p$ of $\OK$ comaximal to $\mathfrak a$.
\end{claim}

Notice that $\mathbf u \in \mathcal L$ but $\mathbf u\notin\mathcal G$, hence $\mathcal L\nsubseteq\mathcal G$. By the local-global dictionary, this means there is some prime $\mathfrak p$ such that $\mathcal L_{\mathfrak p}\nsubseteq\mathcal G_{\mathfrak p}$. On one hand, it must be a prime $\mathfrak p\mid[\mathcal L:\mathcal F]_{\OK}$, since otherwise we would get $\mathcal L_{\mathfrak p}=\mathcal F_{\mathfrak p} \subseteq\mathcal G_{\mathfrak p}$. On the other hand, it must also be a prime $\mathfrak p\mid\mathfrak b = [\mathcal H:\mathcal G]_{\OK}$, since otherwise $\mathcal G_{\mathfrak p}=\mathcal H_{\mathfrak p} \supseteq\mathcal L_{\mathfrak p}$. But then $\mathfrak p$ is comaximal to $\mathfrak a$ because $\mathfrak b$ is, proving Claim (B).

Using this $\mathfrak p$, let us now pass to the local situation over $\mathcal O_{K,\mathfrak p}$. We obtain $\mathcal F_{\mathfrak p}\subseteq \mathcal G_{\mathfrak p}$ but also
\[
    [\mathcal G_{\mathfrak p}:\mathcal F_{\mathfrak p}]_{\mathcal O_{K,\mathfrak p}} = ([\mathcal G:\mathcal F]_{\OK})_{\mathfrak p} = \mathcal O_{K,\mathfrak p},
\]
hence $\mathcal F_{\mathfrak p} = \mathcal G_{\mathfrak p}$. Similarly, since $N_{\mathfrak p}\subseteq \mathcal G_{\mathfrak p}= \mathcal F_{\mathfrak p}$ and
\[
    [\mathcal F:N]_{\OK} = \nrd(\mathbf q)^2\OK,
\]
which is comaximal to $\mathfrak b$ by Claim (A), we get
\[
[\mathcal F_{\mathfrak p}:N_{\mathfrak p}]_{\mathcal O_{K,\mathfrak p}}  = (\nrd(\mathbf q)^2\OK)_{\mathfrak p} = \mathcal O_{K,\mathfrak p}
\]
and thus $N_{\mathfrak p} = \mathcal F_{\mathfrak p}$.

On the other hand, $\mathfrak p\mid [\mathcal L:\mathcal F]_{\OK}$ means $[\mathcal L_{\mathfrak p}:\mathcal F_{\mathfrak p}]_{\mathcal O_{K,\mathfrak p}}$ remains non-trivial and thus $\mathcal F_{\mathfrak p}\subsetneq \mathcal L_{\mathfrak p}$. But the inclusion $\mathcal LN\subseteq \mathcal G$ now turns into
\[
    \mathcal L_{\mathfrak p} N_{\mathfrak p}\subseteq \mathcal G_{\mathfrak p} = \mathcal F_{\mathfrak p},
\]
which is false due to $N_{\mathfrak p}=\mathcal F_{\mathfrak p}\ni 1$ and $\mathcal L_{\mathfrak p}\cdot 1\nsubseteq \mathcal F_{\mathfrak p}$.
Thus we have reached a contradiction, proving the proposition.
\qedhere
\end{enumerate}
\end{proof}

Once we've built up some tools to aid in the verification in the next subsection, we shall illustrate in Example~\ref{ex:perceptive-composing} that without the condition that $[\mathcal H:\mathcal G]_{\OK}$, $[\mathcal G:\mathcal F]_{\OK}$ be comaximal, the conclusion of part (iii) need not hold.

For now, the last proposition suggests it may be beneficial, when studying a suborder $\mathcal F\subseteq \mathcal H$, to insert intermediate orders such that the two resulting indices are comaximal. In its most severe form, this looks as follows:

\begin{prop}
\label{prp:prime-power-chain}
Let orders $\mathcal F\subseteq \mathcal H$ be given and let a factorization of $[\mathcal H:\mathcal F]_{\OK}$ into powers of distinct prime ideals $\mathfrak p_1^{e_1}\cdots \mathfrak p_r^{e_r}$ be given. Then there exists a chain of orders
\[
    \mathcal F =: \mathcal G_0\subsetneq \cdots \subsetneq\mathcal G_r := \mathcal H
\]
such that $[\mathcal G_i:\mathcal G_{i-1}]_{\OK}=\mathfrak p_i^{e_i}$ for each $i=1,\dots,r$.
\end{prop}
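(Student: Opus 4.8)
The plan is to construct each $\mathcal{G}_i$ purely locally, using the local--global dictionary. Since $[\mathcal{H}:\mathcal{F}]_{\OK}=\mathfrak{p}_1^{e_1}\cdots\mathfrak{p}_r^{e_r}$, we know that $\mathcal{F}_{\mathfrak{p}}=\mathcal{H}_{\mathfrak{p}}$ for every prime $\mathfrak{p}\notin\{\mathfrak{p}_1,\dots,\mathfrak{p}_r\}$, while $[\mathcal{H}_{\mathfrak{p}_j}:\mathcal{F}_{\mathfrak{p}_j}]_{\mathcal{O}_{K,\mathfrak{p}_j}}=\mathfrak{p}_j^{e_j}\mathcal{O}_{K,\mathfrak{p}_j}$ for each $j$. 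For $0\le i\le r$ I would define a collection of local lattices by declaring $(\mathcal{G}_i)_{\mathfrak{p}_j}:=\mathcal{H}_{\mathfrak{p}_j}$ for $j\le i$ and $(\mathcal{G}_i)_{\mathfrak{p}}:=\mathcal{F}_{\mathfrak{p}}$ for all other primes $\mathfrak{p}$. This collection differs from $(\mathcal{F}_{\mathfrak{p}}\mid\mathfrak{p})$ only at the finitely many primes $\mathfrak{p}_1,\dots,\mathfrak{p}_i$, so by part (ii) of the local--global dictionary it is the family of localizations of a unique $\OK$-lattice, which I call $\mathcal{G}_i$.

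Next I would verify that $\mathcal{G}_i$ is genuinely an order: being an order is a local condition, and at each prime $(\mathcal{G}_i)_{\mathfrak{p}}$ equals either $\mathcal{H}_{\mathfrak{p}}$ or $\mathcal{F}_{\mathfrak{p}}$, both localizations of orders and hence orders. Similarly $\mathcal{F}_{\mathfrak{p}}\subseteq(\mathcal{G}_i)_{\mathfrak{p}}\subseteq\mathcal{H}_{\mathfrak{p}}$ holds at every prime, so by part (i) of the dictionary $\mathcal{F}\subseteq\mathcal{G}_i\subseteq\mathcal{H}$; comparing localizations prime by prime shows $\mathcal{G}_{i-1}\subseteq\mathcal{G}_i$, and $\mathcal{G}_0=\mathcal{F}$, $\mathcal{G}_r=\mathcal{H}$ are immediate from the definitions.

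Finally I would compute the indices. Since $\OK$-indices commute with localization, $([\mathcal{G}_i:\mathcal{G}_{i-1}]_{\OK})_{\mathfrak{p}}=[(\mathcal{G}_i)_{\mathfrak{p}}:(\mathcal{G}_{i-1})_{\mathfrak{p}}]_{\mathcal{O}_{K,\mathfrak{p}}}$. At $\mathfrak{p}=\mathfrak{p}_i$ the two local orders are $\mathcal{H}_{\mathfrak{p}_i}$ and $\mathcal{F}_{\mathfrak{p}_i}$, giving index $\mathfrak{p}_i^{e_i}\mathcal{O}_{K,\mathfrak{p}_i}$; at every other prime the two localizations coincide, giving the trivial index. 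A fractional ideal is determined by its localizations, so $[\mathcal{G}_i:\mathcal{G}_{i-1}]_{\OK}=\mathfrak{p}_i^{e_i}$, which is a proper ideal since $e_i\ge1$, so each inclusion $\mathcal{G}_{i-1}\subsetneq\mathcal{G}_i$ is strict. The proof is essentially bookkeeping with the dictionary; the only step requiring a moment of care is invoking that ``being an order'' is a local property, which guarantees the locally-defined lattices $\mathcal{G}_i$ are actually subrings.
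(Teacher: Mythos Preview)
Your proposal is correct and follows essentially the same approach as the paper: both construct $\mathcal G_i$ via the local--global dictionary by prescribing $(\mathcal G_i)_{\mathfrak p}$ to be $\mathcal H_{\mathfrak p}$ at $\mathfrak p_1,\dots,\mathfrak p_i$ and $\mathcal F_{\mathfrak p}$ elsewhere, then verify the order property and indices locally. The only cosmetic difference is that the paper writes out the three-case definition of $\mathcal G_{i,\mathfrak p}$ explicitly, whereas you fold the case $\mathfrak p\notin\{\mathfrak p_1,\dots,\mathfrak p_r\}$ into ``all other primes''; the content is identical.
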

\begin{proof}
    Let us use the local-global dictionary to construct each $\mathcal G_i$ by prescribing
    \[
        \mathcal G_{i,\mathfrak p} := \begin{cases}
            \mathcal H_{\mathfrak p}, & \text{if $\mathfrak p = \mathfrak p_j$ for some $j\leq i$,}\\
            \mathcal F_{\mathfrak p}, & \text{if $\mathfrak p = \mathfrak p_j$ for some $j> i$,}\\
            \mathcal F_{\mathfrak p}=\mathcal H_{\mathfrak p}, & \text{if $\mathfrak p\notin\set{\mathfrak p_1,\dots,\mathfrak p_r}$;}
        \end{cases}
    \]
    the equality $\mathcal F_{\mathfrak p}=\mathcal H_{\mathfrak p}$ for $\mathfrak p\notin\set{\mathfrak p_1,\dots,\mathfrak p_r}$ is due to $\mathcal F\subseteq \mathcal H$ and $[\mathcal H_{\mathfrak p}:\mathcal F_{\mathfrak p}]_{\mathcal O_{K,\mathfrak p}} = ([\mathcal H:\mathcal F]_{\OK})_{\mathfrak p} = \mathcal O_{K,\mathfrak p}$.
    By construction, $\mathcal G_i$ is an order at all $\mathfrak p$, hence it is an order globally as well.

    For any $i=1,\dots,r$, we also see that $\mathcal G_i$ and $\mathcal G_{i-1}$ only differ at $\mathfrak p_i$, where
    \[
        \mathcal G_{i-1,\mathfrak p_i} = \mathcal F_{\mathfrak p_i}\subseteq \mathcal H_{\mathfrak p_i} = \mathcal G_{i,\mathfrak p_i},
    \]
    so $\mathcal G_{i-1}\subseteq \mathcal G_i$. It also follows that $([\mathcal G_i:\mathcal G_{i-1}]_{\OK})_{\mathfrak p} = \mathcal O_{K,\mathfrak p}$ for all $\mathfrak p\neq\mathfrak p_i$ and $([\mathcal G_i:\mathcal G_{i-1}]_{\OK})_{\mathfrak p_i} = ([\mathcal H:\mathcal F]_{\OK})_{\mathfrak p_i} = (\mathfrak p_i\mathcal O_{K,\mathfrak p_i})^{e_i}$ implies $[\mathcal G_i:\mathcal G_{i-1}]_{\OK} = \mathfrak p_i^{e_i}$.
\end{proof}

Naturally, we may ask for further refinements of a chain orders constructed like this. Assuming for notational ease that $[\mathcal H:\mathcal F]_{\OK}$ was a prime power already, whenever there exists an intermediate order $\mathcal G$, we may insert it to get $\mathcal F\subsetneq\mathcal G\subsetneq\mathcal H$. Then $[\mathcal H:\mathcal G]_{\OK}$ and $[\mathcal G:\mathcal F]_{\OK}$ will both be strictly smaller powers of the same prime, so this process of inserting intermediate orders must stop eventually. We now characterize maximal suborders of $\mathcal H$.

\begin{prop}
\label{prp:smallest-steps}
Let $\mathcal F\subsetneq \mathcal H$ be orders with $[\mathcal H:\mathcal F]_{\OK}=\mathfrak p^e$ for some prime $\mathfrak p$ of $\OK$. No order $\mathcal G$ with $\mathcal F\subsetneq\mathcal G\subsetneq\mathcal H$ exists if and only if either:
\begin{enumerate}
\item $e=1$ and thus $\mathcal H/\mathcal F\simeq\OK/\mathfrak p$ as $\OK$-modules; or
\item $e=2$, $\mathcal H/\mathcal F\simeq(\OK/\mathfrak p)^2$ as $\OK$-modules and within the $\OK/\mathfrak p$-algebra $\mathcal H/\mathfrak p\mathcal H$, the subalgebra $\mathcal F/\mathfrak p\mathcal H$ is a quadratic field extension of $\OK/\mathfrak p$.
\end{enumerate}
\end{prop}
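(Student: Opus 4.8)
The plan is to reduce everything to the local algebra $B := \mathcal H/\mathfrak p\mathcal H$ over the finite field $k := \OK/\mathfrak p$, in which the image $\overline{\mathcal F} := (\mathcal F + \mathfrak p\mathcal H)/\mathfrak p\mathcal H$ is a $k$-subalgebra. First I would use the local-global dictionary to pass to completions: an intermediate order $\mathcal G$ with $\mathcal F\subsetneq\mathcal G\subsetneq\mathcal H$ exists globally if and only if one exists locally at $\mathfrak p$, since the only index in play is a power of $\mathfrak p$ and we can glue $\mathcal G_{\mathfrak p}$ back with $\mathcal H_{\mathfrak q}=\mathcal F_{\mathfrak q}$ at all other primes $\mathfrak q$. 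So work over $\mathcal O := \mathcal O_{K,\mathfrak p}$ with $\mathcal F_{\mathfrak p}\subsetneq\mathcal H_{\mathfrak p}$, $[\mathcal H_{\mathfrak p}:\mathcal F_{\mathfrak p}]=\mathfrak p^e$.

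The key structural claim is that passing to $\mathcal G\mapsto (\mathcal G+\mathfrak p\mathcal H_{\mathfrak p})/\mathfrak p\mathcal H_{\mathfrak p}$ sets up, for orders $\mathcal F_{\mathfrak p}\subseteq\mathcal G\subseteq\mathcal H_{\mathfrak p}$ with $\mathfrak p\mathcal H_{\mathfrak p}\subseteq\mathcal F_{\mathfrak p}$, an inclusion-preserving correspondence between such orders and $k$-subalgebras of $B$ containing $\overline{\mathcal F}$; one checks $\mathfrak p\mathcal H_{\mathfrak p}\subseteq\mathcal F_{\mathfrak p}$ holds precisely when $e\le 4$ and in the relevant cases $e\le 2$, using $[\mathcal H_{\mathfrak p}:\mathfrak p\mathcal H_{\mathfrak p}]=\mathfrak p^4$ (when $e\ge 3$ one must first argue no intermediate-free configuration can have $\mathfrak p\mathcal H\not\subseteq\mathcal F$, which follows because inserting $\mathcal F+\mathfrak p\mathcal H$ via Lemma~\ref{lem:its-an-order}(i) would give a proper intermediate order unless $\mathfrak p\mathcal H\subseteq\mathcal F$ already, and then $e=\dim_k(B/\overline{\mathcal F})$ with $\dim_k B=4$, $1\in\overline{\mathcal F}$, so $e\le 3$; the case $e=3$ is excluded since a $1$-dimensional-codimension subalgebra, i.e. a $3$-dimensional subalgebra of the quaternion algebra $B$, always contains a further $2$-dimensional subalgebra by the quadratic-equation remark after Lemma~\ref{lem:switch-the-order}, namely $k+k\mathbf x$ for any $\mathbf x\notin k$ — actually Lemma~\ref{lem:threedim-subalg} will help pin down the structure here). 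So "no intermediate order" translates to "$\overline{\mathcal F}$ is a maximal proper $k$-subalgebra of $B$ with $\dim_k(B/\overline{\mathcal F})=e$", and the task becomes: classify maximal proper subalgebras $C\subsetneq B$ and read off their codimension, distinguishing the cases $B\simeq\Mat_2(k)$ and $B$ a quadratic-field-by-nilpotent local ring (the split and ramified cases at $\mathfrak p$).

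Now I enumerate: a maximal proper unital $k$-subalgebra $C$ of $B$ that contains no strictly larger proper subalgebra. Any $\mathbf x\in C\setminus k$ generates a commutative subalgebra $k[\mathbf x] = k + k\mathbf x$ of dimension $2$ (it is $2$-dimensional since $\mathbf x$ satisfies its reduced characteristic polynomial and $\mathbf x\notin k$), so $\dim_k C\ge 2$, giving $e\le 2$. If $\dim_k C = 1$ then $C=k$, forcing $e=3$, which I will rule out: a codimension-$1$ subalgebra of the $4$-dimensional $B$ would be $3$-dimensional (contradiction with $C=k$), or rather — the point is $\dim_k C\in\{1,2,3\}$ but $\dim_k C=1$ gives $C=k$ while any $3$-dimensional subalgebra $A$ is not maximal because, by Lemma~\ref{lem:threedim-subalg} applied to $A\subset B$, it contains orthogonal zero-norm elements and in particular a proper $2$-dimensional subalgebra lies strictly inside it, yet $A$ itself is strictly inside $B$, so $3$-dimensional subalgebras are never maximal-with-no-intermediate unless... — cleanest is: a $3$-dimensional subalgebra always sits in the chain $k+k\mathbf x \subsetneq A \subsetneq B$ so it is never the bottom of a "no smaller order" configuration with $\mathcal F$ playing the role of $A$; and $C=k$ gives $\mathcal H/\mathcal F\cong k^3$ which likewise admits the intermediate order corresponding to $k+k\mathbf x$. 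Hence $\dim_k C=2$ or the trivial $e=1$ case. When $e=1$: $\mathcal H/\mathcal F\cong\OK/\mathfrak p$ and no intermediate order can exist for index reasons — this is case (i). When $e=2$: $\dim_k C=2$, so $C=k[\mathbf x]$ is commutative; $C$ is maximal iff it is not contained in a $3$-dimensional subalgebra, and by the above every $3$-dimensional subalgebra is non-maximal, so actually every $2$-dimensional subalgebra lies in a $3$-dimensional one UNLESS... I need to check when a $2$-dimensional subalgebra is contained in no proper subalgebra other than itself and $B$: this forces $C$ to not be contained in any $3$-dimensional subalgebra. Over $k$, a $2$-dimensional commutative $C$ is either a field $k_2/k$, or $k\times k$, or $k[\epsilon]/(\epsilon^2)$; one shows $k\times k$ and $k[\epsilon]$ always extend to a $3$-dimensional subalgebra (e.g. in $\Mat_2(k)$ take upper-triangular matrices), whereas $C=k_2$ a quadratic field extension is contained in no $3$-dimensional subalgebra — this is precisely because the centralizer of $k_2$ in $B$ is $k_2$ itself (double centralizer in a quaternion algebra), and a subalgebra properly containing $k_2$ but of dimension $3$ would be commutative (dimension $3 <4$ in a quaternion algebra forces... no — need care) ... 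The honest statement to prove is: a $2$-dimensional subalgebra $C$ of $B$ is maximal iff $C$ is a field; equivalently iff $\mathbf x^2 - \trd(\mathbf x)\mathbf x + \nrd(\mathbf x)$ is irreducible over $k$. This yields case (ii).

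The main obstacle I anticipate is the reduction step establishing the subalgebra correspondence and, within it, cleanly excluding $e=3$ and $e\ge 3$ with $\mathfrak p\mathcal H\not\subseteq\mathcal F$ — i.e. showing that any "terminal" pair $\mathcal F\subsetneq\mathcal H$ must already satisfy $\mathfrak p\mathcal H\subseteq\mathcal F$ (so that the clean $k$-algebra picture applies) and that a $3$-dimensional subalgebra of a quaternion algebra over a finite field always contains a proper intermediate subalgebra. The latter is where Lemma~\ref{lem:threedim-subalg} does the real work: it exhibits $\tilde{\mathbf x},\tilde{\mathbf y}$ with $\tilde{\mathbf x}\tilde{\mathbf y}=0$ and $\nrd(\tilde{\mathbf x})=\nrd(\tilde{\mathbf y})=0$ inside the $3$-dimensional $A$, from which $k + k\tilde{\mathbf x}$ (or $k+k(\tilde{\mathbf x}+\tilde{\mathbf y})$) is a proper $2$-dimensional subalgebra of $A$, certifying $A$ is not terminal. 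Everything else is routine finite-field and completion bookkeeping.
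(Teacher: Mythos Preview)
Your overall strategy matches the paper's: first force $\mathfrak p\mathcal H\subseteq\mathcal F$ by observing that $\mathcal F+\mathfrak p\mathcal H$ is an order (Lemma~\ref{lem:its-an-order}(i)) which would otherwise be a proper intermediate, then classify by the $k$-dimension of $C:=\mathcal F/\mathfrak p\mathcal H$ inside $B:=\mathcal H/\mathfrak p\mathcal H$. (The detour through completions is harmless but unnecessary --- everything already happens modulo $\mathfrak p\mathcal H$.) However, two of your key case-analysis steps have genuine gaps.

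\textbf{The ``if'' direction for (ii).} You reach for centralizers and then hedge (``need care''). The paper's argument is the one-liner you are circling: if $A:=\mathcal F/\mathfrak p\mathcal H$ is a field and $D$ were a $3$-dimensional intermediate subalgebra, then left multiplication makes $D$ an $A$-vector space, so $3=\dim_k D=(\dim_k A)(\dim_A D)=2\dim_A D$, which is absurd. No double-centralizer theorem is needed, and this works regardless of whether $B\cong\Mat_2(k)$ or $B$ is the ramified residue algebra.

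\textbf{The ``only if'' direction when $\dim_k C=2$ and $C$ is not a field.} Your ``e.g.\ in $\Mat_2(k)$ take upper-triangular matrices'' is not a proof: it addresses only the split case, and even there you have not shown that an \emph{arbitrary} copy of $k\times k$ or $k[\epsilon]$ lies inside some conjugate of the upper-triangular subalgebra. The paper gives a uniform argument using only the standard involution on $B$: take a nonzero non-invertible $\mathbf q\in C$ (so $\nrd(\mathbf q)=0$), consider the $k$-linear map $\mu:B\to B$, $\mathbf c\mapsto\mathbf c\mathbf q$, and split on whether $\dim\ker\mu\geq 2$ or $\dim\ker\mu=1$. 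In each subcase one explicitly produces an $\mathbf r$ so that, via Lemma~\ref{lem:switch-the-order}, the span $k+k\mathbf q+k\mathbf r$ is a $3$-dimensional subalgebra containing $C$. This handles the split and ramified situations simultaneously, which your approach would have to treat separately.

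Finally, your invocation of Lemma~\ref{lem:threedim-subalg} is misplaced. Ruling out $\dim_k C=1$ (i.e.\ $e=3$) only needs the trivial observation that $k+k\mathbf x$ is a subalgebra for any $\mathbf x\notin k$; and $\dim_k C=3$ is precisely case~(i), not something to exclude. The real work in the proof is the $\dim_k C=2$ non-field case above, and Lemma~\ref{lem:threedim-subalg} plays no role there.
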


\begin{proof}
Let us denote $k:=\OK/\mathfrak p$.
First, we verify that in both (i) and (ii), no intermediate order $\mathcal G$ exists. For (i), this is obvious. For (ii), we descend to $\mathcal H/\mathfrak p\mathcal H$. Any intermediate $\mathcal G$ would correspond to a three-dimensional subalgebra $\mathcal G/\mathfrak p\mathcal H$ containing $\mathcal F/\mathfrak p\mathcal H$. This would also be a vector space over $\mathcal F/\mathfrak p\mathcal H$, which is two-dimensional over $k$. Hence $\mathcal G/\mathfrak p\mathcal H$ would need to have an even dimension over $k$, a contradiction.

To prove that one of (i) or (ii) is necessary, we first note that by \cite[Corollary 1.11]{brzezinski83}, $[\mathcal H:\mathcal F]_{\OK}$ is either $\mathfrak p$ or $\mathfrak p^2$ and $\mathfrak p\mathcal F\subseteq\mathcal H$. Thus it only remains to prove that in the latter case, $\mathcal F/\mathfrak p\mathcal H$ is a field.

Let us work in $\mathcal H/\mathfrak p\mathcal H$ again. By its dimension, $\mathcal F/\mathfrak p\mathcal H$ is spanned by $1$ and some $\mathbf a$. Since $\mathbf a$ commutes with itself and $1$ commutes with everything, $\mathcal F/\mathfrak p\mathcal H$ is commutative. Let us suppose for the sake of contradiction it is not a field, then it contains a non-invertible element. Without loss of generality, we may assume $\mathbf a$ is this non-invertible element; this means $\nrd(\mathbf a)=0\in k$. Let us consider the multiplication map
\begin{align*}
    \mu:\mathcal H/\mathfrak p\mathcal H&\to \mathcal H/\mathfrak p\mathcal H,\\
    \mathbf q&\mapsto \mathbf q\mathbf a.
\end{align*}
Note that $\dim \ker\mu+\dim\im\mu=4$ (over $k$) and that $1$ does not belong to either $\ker\mu$ or $\im\mu$. Hence at least one of $\ker\mu\setminus(\mathcal F/\mathfrak p\mathcal H)$ and $\im\mu\setminus(\mathcal F/\mathfrak p\mathcal H)$ is non-empty.

If we can choose a $\mathbf q \in \ker\mu\setminus(\mathcal F/\mathfrak p\mathcal H)$, let us consider the subalgebra (by Lemma \ref{lem:switch-the-order})
\[
    k+k\mathbf q+k\mathbf a+k\mathbf q\mathbf a.
\]
Since $\mathbf q\mathbf a=0$, this is three-dimensional, so its preimage in $\mathcal H$ is an intermediate order.

If on the other hand we can choose a $\mathbf q\mathbf a \in \im\mu\setminus(\mathcal F/\mathfrak p\mathcal H)$, let us consider the subalgebra
\[
    k+k\mathbf q\mathbf a+k\mathbf a+k\mathbf q\mathbf a^2.
\]
Observe that $\mathbf a^2 = \trd(\mathbf a)\mathbf a$ due to $\nrd(\mathbf a)=0$, hence $k\mathbf q\mathbf a^2\subseteq k\mathbf q\mathbf a$, making the subalgebra three-dimensional, so its preimage in $\mathcal H$ is an intermediate order.

Overall, we have found a contradiction, so $\mathcal F/\mathfrak p\mathcal H$ must have been a field.
\end{proof}

\subsection{Module conditions for perceptivity}
\label{sec:module-conditions}

Orders are very specific lattices, so for orders $\mathcal G\subseteq\mathcal H$, we can look at $\mathcal H/\mathcal G$ as a quotient of $\OK$-modules. It must be a torsion module, so by the structure theorem for finitely generated modules over Dedekind domains (combined with the Chinese remainder theorem), it decomposes as a direct sum of several cyclic modules isomorphic to some $\OK/\mathfrak p^a$, $\mathfrak p$ being a prime ideal of $\OK$. In this subsection, we examine some implications this decomposition might have for whether $\mathcal G$ is $\mathcal H$-perceptive. The most fruitful of these will, in specific situations, yield equivalent conditions for this perceptivity using cardinalities of $\mathcal G^1$ and $\mathcal H^1$.

In view of Propositions~\ref{prp:composing-perceptivity} and \ref{prp:prime-power-chain}, we will restrict our attention to the case when the index of the two orders is a prime power. Hence, throughout this entire subsection, let $\mathcal G\subseteq\mathcal H$ be orders in a definite quaternion algebra $\mathcal A$ such that $[\mathcal H:\mathcal G]_{\OK}=\mathfrak p^e$.

\begin{prop}
\label{prp:at-most-two-dims}
Suppose that $\mathcal H/\mathcal G\simeq \OK/\mathfrak p^{a_1}\times\cdots\times \OK/\mathfrak p^{a_r}$ with $a_1,\dots,a_r\geq1$. If $\mathcal G$ is $\mathcal H$-perceptive, then $r\leq 2$.
\end{prop}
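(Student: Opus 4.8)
The plan is to work modulo $\mathfrak{p}\mathcal{H}$ and reduce the perceptivity hypothesis to an assertion about the action of a finite group on the four-dimensional $k$-algebra $C := \mathcal{H}/\mathfrak{p}\mathcal{H}$, where $k := \OK/\mathfrak{p}$. First I would observe that if $r \geq 3$, then $\mathcal{H}/\mathcal{G}$ surjects onto $(\OK/\mathfrak{p})^3$ (take one copy of $\OK/\mathfrak{p}$ from each of three distinct cyclic summands), so the image $\bar{\mathcal{G}}$ of $\mathcal{G}$ in $C$ is a subalgebra of $k$-dimension at most $1$, i.e. $\bar{\mathcal{G}} = k$. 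Indeed, since $\mathfrak{p}\mathcal{H} \subseteq \mathcal{G}$ would already force $r \le 2$ actually we do not need that; what we need is just $\dim_k \bar{\mathcal{G}} = \dim_k \mathcal{G}/(\mathcal{G}\cap\mathfrak{p}\mathcal{H}) = \dim_k (\mathcal{G}+\mathfrak{p}\mathcal{H})/\mathfrak{p}\mathcal{H} = 4 - (\text{number of summands of } \mathcal{H}/(\mathcal{G}+\mathfrak{p}\mathcal{H})) \le 4 - r \le 1$, using that $\mathcal{H}/(\mathcal{G}+\mathfrak{p}\mathcal{H})$ is a quotient of $\mathcal{H}/\mathcal{G}$ annihilated by $\mathfrak{p}$, hence isomorphic to $(\OK/\mathfrak{p})^{r'}$ with $r' = r$ when all $a_i\ge 1$... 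I should be careful here and just say $r' \ge r$ is false; rather $r' = $ the number of $a_i$, which is $r$. So $\bar{\mathcal{G}} = k \cdot 1$.

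The core step is then to derive a contradiction from $\bar{\mathcal{G}} = k$ together with perceptivity. Perceptivity says that for every $\mathbf{q} \in \mathcal{H}$ there is $\mathbf{u} \in \mathcal{H}^1$ with $\mathbf{u}\mathbf{q} \in \mathcal{G}$; reducing mod $\mathfrak{p}\mathcal{H}$, every element $\bar{\mathbf{q}} \in C$ can be moved by left multiplication by (the image of) some element of $\mathcal{H}^1$ into $\bar{\mathcal{G}} = k$. But left multiplication by a unit is a bijection on $C$, and the images of $\mathcal{H}^1$ form a finite subset $U \subseteq C^\times$, so $C = \bigcup_{\bar{\mathbf{u}} \in U} \bar{\mathbf{u}}^{-1} k$. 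This exhibits the four-dimensional $k$-vector space $C$ as a finite union of one-dimensional (affine, in fact linear) subspaces $\bar{\mathbf{u}}^{-1}k = k\bar{\mathbf{u}}^{-1}$. When $k$ is infinite this is immediately absurd; when $k$ is finite one counts: a finite union of lines through the origin in $k^4$ has at most $1 + |U|(|k|-1)$ elements, and one must argue this cannot reach $|k|^4$. The bound $|U| = \#\mathcal{H}^1$ is a fixed finite number (at most $24$, or in any case bounded independently of $\mathfrak{p}$), while $|k| = \Nm(\mathfrak{p})$ can be arbitrarily large — but $\mathfrak{p}$ is fixed here, so instead I would argue directly: the $k$-lines $k\bar{\mathbf{u}}^{-1}$ are in number at most $\#\mathcal{H}^1$, and for $C = k^4$ to be covered by $N$ lines through $0$ we need $N(|k|-1) \ge |k|^4 - 1$, i.e. $N \ge |k|^3 + |k|^2 + |k| + 1$; so $\#\mathcal{H}^1 \ge |k|^3 + |k|^2 + |k| + 1 \ge 15$. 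This does not yet give a contradiction by itself.

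Hence I expect the main obstacle to be upgrading this crude covering bound into an actual contradiction — the cardinality count alone is too weak when $\#\mathcal{H}^1$ is large. The cleaner route, which I would take instead, uses the \emph{group} structure: $U$ is (the image of) the finite group $\mathcal{H}^1$ acting on $C$ by left translation, and the sets $\bar{\mathbf{u}}^{-1}k$ are all \emph{right} cosets' — no, one should note $k\bar{\mathbf u}^{-1}$ need not be a subring, so instead argue as follows. Pass to $\bar{\mathcal{G}} + k$-module structure: since $\bar{\mathcal{G}} = k$ and $C$ is covered by the translates $g \cdot k$ for $g$ ranging over the finite group $G := $ image of $\mathcal{H}^1$, and since $G$ is closed under multiplication, the subset $\bigcup_{g \in G} gk$ is closed under left multiplication by $G$; iterating, the $k$-span $V$ of $Gk = G$ is a $k$-subspace of $C$ stable under left multiplication by $G$, and it contains $G$ hence $1$. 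If $V \subsetneq C$ we'd fail to cover $C$; if $V = C$ then $G$ spans $C$ over $k$, so the $k$-subalgebra generated by $G$ is all of $C$, which is fine, but then $C$ being covered by $\#G$ lines forces the counting inequality above, and now one invokes the classification of finite groups of units in definite quaternion orders (the $\#\mathcal{H}^1 \le 24$ bound from the theory, valid because the algebra is definite) to get $\#\mathcal H^1 \le 24 < |k|^3 + \cdots + 1$ as soon as $|k| \ge 3$, while for $|k| = 2$ a direct inspection of the $16$ elements of $\Mat_2(\mathbb{F}_2)$ versus the at most $24$ lines... Actually the honest thing, and what I would ultimately write, is: the $k$-lines $k g^{-1}$ as $g$ runs over $\mathcal{H}^1/(\mathcal{H}^1 \cap \mathcal{G})$ — distinct cosets can give the same line, but at most $\#\mathcal{H}^1$ lines total — cover $C \cong k^4$; two distinct lines through the origin meet only at $0$; hence $\#\mathcal{H}^1 (|k| - 1) + 1 \ge |k|^4$, so $\#\mathcal{H}^1 \ge |k|^3 + |k|^2 + |k| + 1 > \#\mathcal{H}^1$ once one knows $\#\mathcal{H}^1$ is small — and the resolution is that this already contradicts $\#\mathcal{H}^1 \le |k|^2$ say, which holds because... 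I will need to check the precise finite bound the paper has available; if none is directly cited, the robust argument is the submodule/stability one, concluding that $\bigcup_{g\in\mathcal H^1} g\cdot\bar{\mathcal G}$ is too small to be all of $C$ whenever $\dim_k\bar{\mathcal G}\le 1$, because it is contained in a proper sub-union of lines — the clean contradiction being simply that a $4$-dimensional space over any field is never a union of fewer than $|k|^3$ lines, combined with the finiteness bound $\#\mathcal H^1 < \infty$ made quantitative via $\#\mathcal H^1 \mid$ (bounded) or via the explicit list. I would therefore structure the final writeup as: (1) reduce to $\dim_k\bar{\mathcal G}\le 1$; (2) reduce perceptivity to "$C$ is covered by $\le\#\mathcal H^1$ lines through the origin"; (3) conclude by the counting inequality together with the standard bound on $\#\mathcal H^1$ for definite orders, handling the small residue fields by hand if necessary.
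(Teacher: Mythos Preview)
Your reduction modulo $\mathfrak p\mathcal H$ is correct and matches the paper: one replaces $\mathcal G$ by $\mathcal G+\mathfrak p\mathcal H$ (still $\mathcal H$-perceptive by Proposition~\ref{prp:composing-perceptivity}(ii)), computes that its image in $C:=\mathcal H/\mathfrak p\mathcal H$ is the $(4-r)$-dimensional subalgebra $k\cdot 1$ once $r=3$, and then seeks a contradiction.

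The genuine gap is in your contradiction step. Your counting argument shows only that $\#\mathcal H^1\geq |k|^3+|k|^2+|k|+1$, and you then try to close by invoking a universal bound like $\#\mathcal H^1\leq 24$. That bound is false over number fields other than $\kve$: the icosian order over $\kve(\sqrt5)$ has $\#\II^1=120$, and with $\mathfrak p=2\OK$ (inert, so $|k|=4$) your inequality reads $120\geq 85$, which is no contradiction at all. The various fallback attempts you sketch (span of $G$, coset arguments, handling small fields by hand) do not repair this, because the obstruction is real: line-counting alone cannot rule out $r=3$.

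The paper's argument avoids counting entirely. From perceptivity with image $k$, for every nonzero $\mathbf q\in C$ there is a unit $\mathbf u$ (image of some element of $\mathcal H^1$, hence invertible in $C$) with $\mathbf u\mathbf q\in k$; since $\mathbf q\neq 0$ and $\mathbf u$ is invertible, $\mathbf u\mathbf q\in k^\times$, so $\mathbf q$ itself is invertible. Thus every nonzero element of $C$ has $\nrd\neq 0$, i.e.\ the quaternary form $(C,\nrd)$ over the finite field $k$ is anisotropic. This contradicts Lemma~\ref{lem:finite-field-isotropy} (any quadratic form in $\geq 3$ variables over a finite field is isotropic). That is the missing idea: use invertibility and the reduced norm, not a cardinality bound on $\mathcal H^1$.
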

\begin{proof}
    Consider $\mathcal F := \mathcal G+\mathfrak p\mathcal H$. This is an order and it satisfies $\mathcal G\subseteq \mathcal F\subseteq H$ and $\mathcal H/\mathcal F\simeq (\OK/\mathfrak p)^r$. By Proposition~\ref{prp:composing-perceptivity}(ii), we see that $\mathcal F$ is $\mathcal H$-perceptive. We may quotient by $\mathfrak p\mathcal H$ to view $A:=\mathcal F/\mathfrak p\mathcal H$ as a subalgebra of the four-dimensional algebra $B:=\mathcal H/\mathfrak p\mathcal H$ over $k:=\OK/\mathfrak p$. Then $A$ is $(4-r)$-dimensional, which immediately forces $r\leq 3$ since at least $1\in A$.

    Let us suppose for the sake of contradiction that $r=3$, then $A=k$ is one-dimensional. Yet, because $\mathcal F$ is $\mathcal H$-perceptive, any element of $B$ may be multiplied by an element of $B^{\times}$ (in fact more strongly, by a residue class of some $\mathbf u\in\mathcal H^1$) with the result falling into $A$. This implies that every line (one-dimensional subspace) in $B$ contains an invertible element. Invertible elements must have non-zero reduced norms, so this would imply that the quadratic form $(B,\nrd)$ is anisotropic, contradicting Lemma~\ref{lem:finite-field-isotropy}. Hence it must have been the case that $r\leq 2$.
\end{proof}

Now we know that if the pair of orders is to be perceptive, it suffices to consider the case when $\mathcal H/\mathcal G\simeq (\OK/\mathfrak p ^a) \times (\OK/\mathfrak p^b)$. Our strategy will be to quotient the situation by such an $\OK$-submodule that the emptiness or non-emptiness of $\mathcal H^1\mathbf q\cap \mathcal G$ only depends on the class of $\mathbf q$ in the quotient. It turns out that
\[
    \rcol{\mathcal G}{\mathcal H} = \set{\mathbf x\in\mathcal A\mid \mathcal H\mathbf x\subseteq\mathcal G} = \set{\mathbf x\in\mathcal G\mid \mathcal H\mathbf x\subseteq\mathcal G},
\]
the so-called \emph{(right) conductor of $\mathcal H$ into $\mathcal G$}, is the suitable choice for such a submodule. We may notice that this is a right ideal of $\mathcal G$ and a left ideal of $\mathcal H$, since for $\mathbf x\in\rcol{\mathcal G}{\mathcal H}$ we obtain
$\mathcal H\mathbf x\mathcal G\subseteq\mathcal G\mathcal G\subseteq\mathcal G$, hence $\mathbf x\mathcal G\subseteq\rcol{\mathcal G}{\mathcal H}$,
and $\mathcal H\mathcal H\mathbf x\subseteq\mathcal H\mathbf x\subseteq\mathcal G$, hence $\mathcal H\mathbf x\subseteq\rcol{\mathcal G}{\mathcal H}$.

Due to $\mathcal H_0:=\rcol{\mathcal G}{\mathcal H}$ being a left ideal of $\mathcal H$, we specifically know that the left action of $\mathcal H^1$ by multiplication on $\mathcal H$ preserves $\mathcal H_0$, hence it makes sense and is well-defined to consider the action on the quotient $\mathcal H/\mathcal H_0$. Written explicitly, this is
\[
    \mathbf u(\mathbf q+\mathcal H_0) = \mathbf u\mathbf q + \mathcal H_0.
\]
Further, since $\mathbf u\mathcal H_0\subseteq\mathcal G$, if we have $\mathbf q_1+\mathcal H_0=\mathbf q_2+\mathcal H_0$, then $\mathbf u\mathbf q_1 \in\mathcal G$ if and only if $\mathbf u\mathbf q_2 \in\mathcal G$. This means that to decide whether $\mathcal G$ is $\mathcal H$-perceptive, it is enough to examine the action of $\mathcal H^1$ on $\mathcal H/\mathcal H_0$ -- namely to see whether $\bigcup_{\mathbf u\in\mathcal H^1}\mathbf u\mathcal G/\mathcal H_0 = \mathcal H/\mathcal H_0$.

\begin{prop}
\label{prp:perceptive-one-dim-step}
Suppose that $\mathcal G\subseteq\mathcal H$ are orders such that $\mathcal H/\mathcal G$ is a cyclic module, i.e. that $\mathcal H/\mathcal G\simeq \OK/\mathfrak p^e$ for some prime ideal $\mathfrak p$ of $\OK$ with $q:=\Nm(\mathfrak p)$ and $e\geq1$. Then $\frac{\#\mathcal H^1}{\#\mathcal G^1} \leq q^e+q^{e-1}$ and $\mathcal G$ is $\mathcal H$-perceptive if and only if equality occurs.
\end{prop}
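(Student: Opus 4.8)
The plan is to recast the statement as a question about the action of $\mathcal H^1$ on lattices and then reduce it to counting lines in $(\OK/\mathfrak p^e)^2$, of which there are exactly $q^e+q^{e-1}$. First I would let $\mathcal H^1$ act on the full $\OK$-lattices of $\mathcal A$ by left multiplication. The stabilizer of $\mathcal G$ is exactly $\mathcal G^1$: if $\mathbf u\mathcal G=\mathcal G$ then $\mathbf u=\mathbf u\cdot 1\in\mathcal G$ has reduced norm $1$, while conversely $\mathbf u\in\mathcal G^1$ gives $\mathbf u\mathcal G\subseteq\mathcal G$ together with $\overline{\mathbf u}=\mathbf u^{-1}\in\mathcal G$, hence $\mathbf u\mathcal G=\mathcal G$. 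So $\#\mathcal H^1/\#\mathcal G^1$ is the size of the orbit $\{\mathbf u\mathcal G:\mathbf u\in\mathcal H^1\}$, and, as in the discussion preceding the statement, $\mathcal G$ is $\mathcal H$-perceptive iff $\bigcup_{\mathbf u\in\mathcal H^1}\mathbf u\mathcal G=\mathcal H$. Next I would pass to the conductor $\mathcal H_0:=\rcol{\mathcal G}{\mathcal H}$. Since $[\mathcal H:\mathcal G]_{\OK}\mathcal H=\mathfrak p^e\mathcal H$ is a two-sided $\mathcal H$-ideal inside $\mathcal G$ it lies in $\mathcal H_0$, and since $\mathcal H_0$ is a left $\mathcal H$-ideal contained in $\mathcal G$ every translate satisfies $\mathbf u^{-1}\mathcal H_0\subseteq\mathcal H\mathcal H_0\subseteq\mathcal H_0\subseteq\mathcal G$, i.e. $\mathcal H_0\subseteq\mathbf u\mathcal G$. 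Thus the whole orbit lies between $\mathcal H_0$ and $\mathcal H$, and reduction modulo $\mathcal H_0$ loses nothing: $\mathbf u\mathcal G\mapsto\mathbf u\mathcal G/\mathcal H_0$ is injective on the orbit, and $\bigcup_{\mathbf u}\mathbf u\mathcal G=\mathcal H$ iff $\bigcup_{\mathbf u}(\mathbf u\mathcal G/\mathcal H_0)=M$, where $M:=\mathcal H/\mathcal H_0$.

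The crucial step is to determine $M$ as an $\OK$-module. It is killed by $\mathfrak p^e$ and sits in an exact sequence $0\to\mathcal G/\mathcal H_0\to M\to\mathcal H/\mathcal G\to 0$ with $\mathcal H/\mathcal G\cong\OK/\mathfrak p^e$; moreover $\mathcal G/\mathcal H_0$ is cyclic, because $x\mapsto(\overline y\mapsto\overline{yx})$ embeds $\mathcal G/\mathcal H_0$ into the endomorphism ring of $\mathcal H/\mathcal G$ viewed as a left $\mathcal G$-module, and that ring is just $\OK/\mathfrak p^e$ since $\OK\subseteq\mathcal G$ already acts on $\mathcal H/\mathcal G\cong\OK/\mathfrak p^e$ through all of $\OK/\mathfrak p^e$. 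Hence $M$ is $2$-generated of exponent exactly $\mathfrak p^e$, so $M\cong\OK/\mathfrak p^e\oplus\OK/\mathfrak p^j$ for a unique $j$ with $0\le j\le e$. Each member $\mathbf u\mathcal G/\mathcal H_0$ of the orbit is a submodule $N\subseteq M$ with $M/N\cong\mathcal H/\mathbf u\mathcal G\cong\mathcal H/\mathcal G\cong\OK/\mathfrak p^e$, and a direct count of surjections $M\twoheadrightarrow\OK/\mathfrak p^e$ up to $(\OK/\mathfrak p^e)^\times$ shows there are exactly $q^e+q^{e-1}$ such $N$ when $j=e$ — these are the free rank-one summands of $M\cong(\OK/\mathfrak p^e)^2$, i.e. the points of $\pe^1(\OK/\mathfrak p^e)$ — and only $q^j\le q^{e-1}$ of them when $j<e$. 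Either way the orbit has at most $q^e+q^{e-1}$ elements, giving the asserted inequality.

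It remains to settle when equality forces perceptivity. If $j<e$, each $\mathbf u\mathcal G/\mathcal H_0$ has $\#M/q^e=q^j$ elements, so a union of at most $q^{e-1}$ of them has fewer than $q^e\cdot q^j=\#M$ elements and cannot be all of $M$; thus $\mathcal G$ is not perceptive and the inequality is strict, so the biconditional holds vacuously. If $j=e$, I would prove the elementary lemma that over the finite quotient $R=\OK/\mathfrak p^e$ of the discrete valuation ring $\OK_{\mathfrak p}$ a collection of lines (rank-one free $R$-summands) in $R^2$ covers $R^2$ if and only if it is all of $\pe^1(R)$: every vector is a power of a uniformizer times a primitive vector and so lies on some line, whereas a primitive vector generates, hence lies on, a unique line, so omitting any single line leaves that line's primitive vectors uncovered. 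Applied to the orbit $\{\mathbf u\mathcal G/\mathcal H_0\}\subseteq\pe^1(R)$ this yields: $\mathcal G$ is $\mathcal H$-perceptive $\iff$ the orbit is all of $\pe^1(R)$ $\iff$ the orbit has size $q^e+q^{e-1}$, completing the proof. The step I expect to cost the most effort is pinning down $M=\mathcal H/\mathcal H_0$ (especially the cyclicity of $\mathcal G/\mathcal H_0$ via the endomorphism-ring argument) and the case bookkeeping isolating $j=e$; everything else is formal manipulation of the action or elementary linear algebra over a finite local ring.
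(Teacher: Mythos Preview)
Your approach is correct and mirrors the paper's: pass to the conductor $\mathcal H_0=\rcol{\mathcal G}{\mathcal H}$, identify the translates $\mathbf u\mathcal G/\mathcal H_0$ as lines in the $\OK/\mathfrak p^e$-module $\mathcal H/\mathcal H_0$, and count. However, you work harder than necessary by leaving $j$ undetermined and treating $j<e$ as a genuine case. In fact $j=e$ always, and your own endomorphism argument already proves it: the map $\mathcal G/\mathcal H_0\hookrightarrow\mathrm{End}_{\mathcal G}(\mathcal H/\mathcal G)\subseteq\mathrm{End}_{\OK}(\mathcal H/\mathcal G)=\OK/\mathfrak p^e$ is $\OK$-linear and sends $1\mapsto\mathrm{id}$, which is a generator of $\OK/\mathfrak p^e$ as an $\OK$-module, so the embedding is surjective and $\mathcal G/\mathcal H_0\cong\OK/\mathfrak p^e$. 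The paper reaches this even more directly: picking $\mathbf z\in\mathcal H$ with $\mathcal H=\OK\mathbf z+\mathcal G$, the $\OK$-linear map $\mathcal G\to\mathcal H/\mathcal G$, $\mathbf x\mapsto\mathbf z\mathbf x+\mathcal G$, has kernel exactly $\mathcal H_0$ (since $\mathcal H\mathbf x\subseteq\mathcal G\iff\mathbf z\mathbf x\in\mathcal G$) and is surjective because $1\mapsto\mathbf z+\mathcal G$ is a generator; hence $\mathcal G/\mathcal H_0\cong\OK/\mathfrak p^e$ and $\mathcal H/\mathcal H_0\cong(\OK/\mathfrak p^e)^2$ immediately. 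With that established, your case $j<e$ is vacuous and the argument collapses to the line-counting over $(\OK/\mathfrak p^e)^2$ that both you and the paper carry out.
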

\begin{proof}
First we investigate the right conductor $\mathcal H_0:= \rcol{\mathcal G}{\mathcal H}$. Since $\mathcal H/\mathcal G$ is a cyclic module, we may take an arbitrary lift $\mathbf z\in\mathcal H$ of some generator to get that $\mathcal H = \OK\mathbf z + \mathcal G$ as $\OK$-modules for some $\mathbf z\in\mathcal H$. Then for $\mathbf x\in\mathcal G$, we have $\mathcal H\mathbf x \subseteq \mathcal G$ if and only if $\mathbf z\mathbf x\in\mathcal G$, so we interpret $\mathcal H_0$ as the kernel of the $\OK$-linear map
\begin{align*}
    \mathcal G &\to \mathcal H/\mathcal G,\\
    \mathbf x &\mapsto \mathbf z\mathbf x+\mathcal G.
\end{align*}
Clearly, $1$ maps to $\mathbf z+\mathcal G$ which is a generator of $\mathcal H/\mathcal G$, so the map is surjective and hence we get
\[
    \mathcal G/\mathcal H_0 \simeq \mathcal H/\mathcal G \simeq \OK/\mathfrak p^e.
\]
Obviously we have $\mathfrak p^e\mathcal H\subseteq\mathcal G$ and thus also $\mathfrak p^e\mathcal H\subseteq \mathcal H_0$. This taken together with $\mathcal G/\mathcal H_0 \simeq \mathcal H/\mathcal G \simeq \OK/\mathfrak p^e$ means $\mathcal H/\mathcal H_0\simeq (\OK/\mathfrak p^e)^2$.

Now we view $\mathcal H/\mathcal H_0$ as a \uv{plane} (a rank $2$ free module) over the ring $R:=\OK/\mathfrak p^e$. Further, $\mathcal G/\mathcal H_0$ sits inside it as a \uv{line}, by which we mean it is a free cyclic $R$-submodule. Then for any $\mathbf u\in\mathcal H^1$, the set $\mathbf u\mathcal G/\mathcal H_0$ is again some line in the plane $\mathcal H/\mathcal H_0$, and $\mathcal G$ is $\mathcal H$-perceptive if and only if these lines collectively cover the entire plane. Finally, $\mathbf u_1\mathcal G/\mathcal H_0 = \mathbf u_2\mathcal G/\mathcal H_0$ if and only if $(\inv{\mathbf u_2}\mathbf u_1)\mathcal G/\mathcal H_0 = \mathcal G/\mathcal H_0$, which surely happens if and only if $\inv{\mathbf u_2}\mathbf u_1\in\mathcal G$. Thus we have constructed an injective map
\begin{align*}
    \set{\text{left cosets of $\mathcal G^1$ in $\mathcal H^1$}} & \injto \set{\text{lines in $\mathcal H/\mathcal H_0$}}\\
    \mathbf u\mathcal G^1 &\mapsto \mathbf u\mathcal G/\mathcal H_0.
\end{align*}

Next we notice that every element in $\mathcal H/\mathcal H_0$ lies on some line (possibly on multiple lines) and every line has a generator, which must lie only on the one line it generates. This means that $\mathcal G$ will be $\mathcal H$-perceptive if and only if the injective map above is surjective. We know there are $\frac{\#\mathcal H^1}{\#\mathcal G^1}$ cosets, so it remains to count the lines in $\mathcal H/\mathcal H_0$.

For that, we just notice that $(\alpha,\beta)\in R^2$ generates a free cyclic submodule if and only if at least of one $\alpha$, $\beta$ lies in $R^\times = R\setminus\mathfrak p R$. Thus after multiplying the generator by a suitable scalar from $R^\times$, we see that we may enumerate all the lines as those generated by $(1,\beta)$ for $\beta\in R$ and by $(\alpha,1)$ for $\alpha\in\mathfrak pR$, and that these are all distinct. Thus we have $\#R+\#(\mathfrak pR) = q^e+q^{e-1}$ lines in $\mathcal H/\mathcal H_0$. In view of the injective map established above, this gives us the inequality $\frac{\#\mathcal H^1}{\#\mathcal G^1}\leq q^e+q^{e-1}$ unconditionally and perceptivity if and only if equality holds.
\end{proof}

\begin{example}
\label{ex:perceptive-composing}
Let us provide an example illustrating the necessity of $[\mathcal H:\mathcal G]_{\OK}$ and $[\mathcal G:\mathcal F]_{\OK}$ being comaximal in Proposition~\ref{prp:composing-perceptivity}. In the quaternion algebra $\quatalg{-3,-1}{\kve} = \kve\oplus\kve\ii\oplus\kve\j\oplus\kve\k$ let us consider orders
\begin{align*}
    \mathcal H&:=\zet\oplus\zet\frac{1+\ii}2\oplus\zet\j\oplus\zet\frac{\j+\k}2, \\
    \mathcal G&:=\zet\oplus\zet\ii\oplus\zet\zav{\frac{1-\ii}2+\j}\oplus\zet\frac{1+\ii+\j+\k}2, \\
    \mathcal F&:=\zet\oplus\zet2\ii\oplus\zet\zav{\frac{1-\ii}2+\j}\oplus\zet\frac{1+\ii+\j+\k}2.
\end{align*}
One may verify that these are indeed $\zet$-orders and that $\mathcal H/\mathcal F\simeq\zet/4\zet$ and $\mathcal H/\mathcal G\simeq\mathcal G/\mathcal F\simeq\zet/2\zet$ as $\zet$-modules. Next, we have the following groups of units of reduced norm $1$:
\begin{align*}
    \mathcal H^1 &= \set{\pm1,\ \frac{\pm1\pm\ii}2,\ \pm\j,\ \frac{\pm\j\pm\k}2},  & \mathcal F^1&= \set{\pm1}.
\end{align*}
Applying Proposition~\ref{prp:perceptive-one-dim-step}, this immediately tells us that $\mathcal F$ is $\mathcal H$-perceptive. Even without computing $\mathcal G^1$ explicitly, we may then see that $\mathcal G$ is $\mathcal H$-perceptive (Proposition~\ref{prp:composing-perceptivity}(ii)) and thus $\#\mathcal G^1 = 4$ (Proposition~\ref{prp:perceptive-one-dim-step} again). Finally, since $\frac{\#\mathcal G^1}{\#\mathcal F^1} = 2 \lneq 2+1$, this means that $\mathcal F$ is \emph{not} $\mathcal G$-perceptive.
\end{example}

To close out this subsection, let us provide a partial analogue to Proposition~\ref{prp:perceptive-one-dim-step} in the following sense: if we consider specifically the case when $\mathcal H/\mathcal G\simeq\OK/\mathfrak p$ as $\OK$-modules, the Proposition gives a bound $\frac{\#\mathcal H^1}{\#\mathcal G^1}\leq \Nm(\mathfrak p)+1$ and says perceptivity happens if and only if equality occurs. Hence this gives a concise way to recognize a perceptive submodule when it is a \uv{maximal suborder} as described in Proposition~\ref{prp:smallest-steps}(i). Let us provide a similar answer for a maximal suborder described by Proposition~\ref{prp:smallest-steps}(ii).

\begin{prop}
\label{prp:perceptive-two-dim-step}
Suppose that $\mathcal G\subseteq\mathcal H$ are orders such that $\mathcal H/\mathcal G\simeq (\OK/\mathfrak p)^2$ as $\OK$-modules for some prime ideal $\mathfrak p$ of $\OK$ and $\mathcal G/\mathfrak p\mathcal H$ is a field. Then $\frac{\#\mathcal H^1}{\#\mathcal G^1} \leq \Nm(\mathfrak p)^2+1$ and $\mathcal G$ is $\mathcal H$-perceptive if and only if equality holds.
\end{prop}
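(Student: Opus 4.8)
The plan is to follow the template of the proof of Proposition~\ref{prp:perceptive-one-dim-step}, the essential new feature being that here the conductor $\rcol{\mathcal G}{\mathcal H}$ turns out to be as small as possible, so that $\mathcal H$ modulo the conductor becomes a rank-$2$ free module over the \emph{quadratic field} $\mathcal G/\mathfrak p\mathcal H$ rather than over $\OK/\mathfrak p$; the count of lines in a plane over $\mathbb F_{q^2}$ then replaces the count over $\mathbb F_q$.

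First I would record that $\mathcal H/\mathcal G\simeq(\OK/\mathfrak p)^2$ forces $\mathfrak p\mathcal H\subseteq\mathcal G$, so that we may reduce modulo the two-sided ideal $\mathfrak p\mathcal H$ and work inside the four-dimensional $k$-algebra $C:=\mathcal H/\mathfrak p\mathcal H$, where $k:=\OK/\mathfrak p$ and $q:=\Nm(\mathfrak p)=\#k$; by hypothesis $A:=\mathcal G/\mathfrak p\mathcal H$ is a field, hence $A\simeq\mathbb F_{q^2}$, with $\#A=q^2$ and $\dim_A C=2$. Next I would compute the right conductor $\mathcal H_0:=\rcol{\mathcal G}{\mathcal H}$: since $\mathfrak p\mathcal H$ is a two-sided ideal contained in $\mathcal G$ we have $\mathfrak p\mathcal H\subseteq\mathcal H_0\subseteq\mathcal G$, and modulo $\mathfrak p\mathcal H$ the conductor corresponds to $\{y\in A:Cy\subseteq A\}$. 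I claim this set is $\{0\}$: a nonzero $y\in A$ is invertible in the field $A$, hence invertible in $C$ with inverse lying in $A$, so $Cy\subseteq A$ would give $C=(Cy)y^{-1}\subseteq A$, contradicting $\dim_k C=4>2=\dim_k A$. Therefore $\mathcal H_0=\mathfrak p\mathcal H$, and consequently $\mathcal H/\mathcal H_0=C$ while $\mathcal G/\mathcal H_0=A$.

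From here the argument runs parallel to Proposition~\ref{prp:perceptive-one-dim-step}. View $C$ as a \uv{plane}, i.e. a free right $A$-module of rank $2$, inside which $\mathcal G/\mathcal H_0=A$ sits as a \uv{line} (a free rank-$1$ right $A$-submodule). For $\mathbf u\in\mathcal H^1$ the relation $\mathbf u\overline{\mathbf u}=1$ shows the image of $\mathbf u$ in $C$ is a unit, so $\mathbf u\mathcal G/\mathcal H_0$ is again a line; and since every nonzero element of $C$ has trivial right $A$-annihilator, it generates a unique line, so by the discussion preceding Proposition~\ref{prp:perceptive-one-dim-step}, $\mathcal G$ is $\mathcal H$-perceptive exactly when the lines $\mathbf u\mathcal G/\mathcal H_0$ ($\mathbf u\in\mathcal H^1$) exhaust all lines of $C$. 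As before, $\mathbf u_1\mathcal G/\mathcal H_0=\mathbf u_2\mathcal G/\mathcal H_0$ iff $\inv{\mathbf u_2}\mathbf u_1\in\mathcal G$ iff $\inv{\mathbf u_2}\mathbf u_1\in\mathcal G^1$, which yields an injection from the set of left cosets of $\mathcal G^1$ in $\mathcal H^1$ into the set of lines of $C$. Counting the lines gives $\frac{\#A^2-1}{\#A-1}=\frac{q^4-1}{q^2-1}=q^2+1=\Nm(\mathfrak p)^2+1$, whence $\frac{\#\mathcal H^1}{\#\mathcal G^1}\leq\Nm(\mathfrak p)^2+1$ unconditionally, with equality if and only if the injection is surjective, i.e. if and only if $\mathcal G$ is $\mathcal H$-perceptive.

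The only genuinely new step beyond Proposition~\ref{prp:perceptive-one-dim-step} is the conductor computation together with the observation that $C$ becomes a two-dimensional vector space over the field $A$; this is where I expect the care to be needed. I would therefore be explicit about the three places the field hypothesis on $\mathcal G/\mathfrak p\mathcal H$ enters --- invertibility of nonzero elements of $A$ in the conductor computation, freeness of $C$ as a right $A$-module, and uniqueness of the line through a nonzero vector --- since without it the module-theoretic picture (and the projective-line count) would break down.
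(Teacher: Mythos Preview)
Your proof is correct and follows essentially the same approach as the paper's own proof: both identify the conductor as $\mathfrak p\mathcal H$ using that $A=\mathcal G/\mathfrak p\mathcal H$ is a field, then view $\mathcal H/\mathfrak p\mathcal H$ as a two-dimensional right $A$-vector space in which $\mathbf u\mathcal G/\mathfrak p\mathcal H$ are lines, and count $\Nm(\mathfrak p)^2+1$ lines. Your conductor computation (via invertibility of a nonzero $y\in A$ forcing $C\subseteq A$) is a slight rephrasing of the paper's argument (that $\mathcal H_0/\mathfrak p\mathcal H$ is a proper right $A$-ideal of the field $A$, hence zero), but the content is the same.
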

\begin{proof}
We proceed similarly to the proof of Proposition~\ref{prp:perceptive-one-dim-step}.
Let us start by proving that the right conductor
\[
    \mathcal H_0 := \rcol{\mathcal G}{\mathcal H} = \set{\mathbf x\in\mathcal G\mid \mathcal H\mathbf x\subseteq\mathcal G}
\]
is in fact equal to $\mathfrak p\mathcal H$. We have $\mathfrak p\mathcal H\subseteq \mathcal H_0$, so we may quotient everything by this two-sided ideal. Then we are in the four-dimensional algebra $B := \mathcal H/\mathfrak p\mathcal H$ and have a two-dimensional subalgebra $A := \mathcal G/\mathfrak p\mathcal H \subset B$ that is in fact a field. Notice that $\mathcal H_0$ was a right ideal  of $\mathcal G$, so $\mathcal H_0/\mathfrak p\mathcal H$ is right ideal of $A$. So it is an $A$-vector subspace of the one-dimensional space $A$; additionally, we clearly have $1\notin \mathcal H_0$, so $\mathcal H_0\subsetneq \mathcal G$, and thus $\mathcal H_0/\mathfrak p\mathcal H$ must be a proper subspace of $A$, which forces $\mathcal H_0/\mathfrak p\mathcal H=0$.

Still viewing $B$ as a two-dimensional vector space over $A$ (the vector space structure given by multiplication from the right), we may see that for every $\mathbf u\in\mathcal H^1$, the set $\mathbf u \mathcal G/\mathfrak p\mathcal H = \mathbf uA$ is again a one-dimensional $A$-vector subspace. Hence, similar to the proof of Proposition~\ref{prp:perceptive-one-dim-step} working over $A$, we obtain an injective map
\begin{align*}
    \set{\text{left cosets of $\mathcal G^1$ in $\mathcal H^1$}} & \injto \set{\text{one-dimensional $A$-vector subspaces of $B$}}\\
    \mathbf u\mathcal G^1 &\mapsto \mathbf uA,
\end{align*}
since again $\mathbf u_1 A = \mathbf u_2 A$ if and only if $(\inv{\mathbf u_2}\mathbf u_2)A = A$ if and only if $(\inv{\mathbf u_2}\mathbf u_2)\in \mathcal G$. Perceptivity occurs if and only if the map is surjective, and since $A$ is a quadratic extension of $\OK/\mathfrak p$ and thus has $\Nm(\mathfrak p)^2$ elements, the two-dimensional $A$-vector space $B$ has $\Nm(\mathfrak p)^2+1$ one-dimensional subspaces. The conclusion of the Proposition then follows.
\end{proof}

\subsection{The case of a linear poset of orders}

In this subsection, we will provide a slight generalization to the results of Propositions~\ref{prp:perceptive-one-dim-step} and \ref{prp:perceptive-two-dim-step}, which will later coincidentally cover most of the perceptive suborders of maximal orders of class number $1$ that we find in the following subsection.

\begin{defn}
Let $\mathcal G\subseteq\mathcal H$ be orders. Let us say the pair $\mathcal G$, $\mathcal H$ \emph{has a linear poset of orders}, if the partially ordered set (poset) of intermediate orders $\mathcal M$ satisfying $\mathcal G\subseteq\mathcal M\subseteq\mathcal H$ ordered by inclusion is linear.
\end{defn}

In other words, $\mathcal G$, $\mathcal H$ has a linear poset of orders if there is only one chain
\[
    \mathcal G =: \mathcal M_1\subsetneq\mathcal M_2\subsetneq\cdots\subsetneq\mathcal M_{\ell-1}\subsetneq \mathcal M_{\ell}:=\mathcal H
\]
of orders between $\mathcal G$ and $\mathcal H$ that cannot be further refined. Note that the situation of Proposition~\ref{prp:perceptive-two-dim-step} satisfies this trivially and the situation of Proposition~\ref{prp:perceptive-one-dim-step} satisfies this because already the poset of intermediate $\OK$-modules $M$, $\mathcal G\subseteq M\subseteq\mathcal H$, being isomorphic to the poset of submodules of $\mathcal H/\mathcal G \simeq \OK/\mathfrak p^e$, is linear.

\begin{prop}
\label{prp:conductors}
Suppose that the pair $\mathcal G$, $\mathcal H$ has a linear poset of orders, that poset being \[\mathcal G=: \mathcal M_1\subsetneq\cdots\subsetneq \mathcal M_{\ell}:=\mathcal H.\]
Then
\begin{enumerate}
\item The collection of right conductors $\set{\rcol{\mathcal G}{\mathcal M_i}}$ forms an opposite poset whilst also satisfying $[\rcol{\mathcal G}{\mathcal M_i}:\rcol{\mathcal G}{\mathcal M_j}]_{\OK} = [\mathcal M_j:\mathcal M_i]_{\OK}$.
\item Let $\mathcal H_0 := \rcol{\mathcal G}{\mathcal H}$. Then for any $\mathcal M =\mathcal M_i$ and $\mathbf u_1,\mathbf u_2\in\mathcal H^1$, we have $\mathbf u_1\rcol{\mathcal G}{\mathcal M}/\mathcal H_0 = \mathbf u_2\rcol{\mathcal G}{\mathcal M}/\mathcal H_0$ if and only if $\mathbf u_1\mathcal M^1=\mathbf u_2\mathcal M^1$.
\item Let $\mathbf u_1,\mathbf u_2\in\mathcal H^1$ be given and let $\mathbf u:=\inv{\mathbf u_2}\mathbf u_1$ and $\mathcal M:=\mathcal G+\mathcal G\mathbf u$. Then \[(\mathbf u_1\mathcal G/\mathcal H_0) \cap (\mathbf u_2\mathcal G/\mathcal H_0) = \mathbf u_1\rcol{\mathcal G}{\mathcal M}/\mathcal H_0 = \mathbf u_2\rcol{\mathcal G}{\mathcal M}/\mathcal H_0.\]
\end{enumerate}
\end{prop}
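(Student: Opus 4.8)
The plan is to prove (i) first — reducing it to a purely local length computation — and then to deduce (ii) and (iii) from (i) together with routine manipulations of right conductors; the only nontrivial input to (ii) and (iii) is the injectivity of $\mathcal M\mapsto\rcol{\mathcal G}{\mathcal M}$ that is part of (i).

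\textbf{Part (i).} Since $\mathcal M_1\subsetneq\cdots\subsetneq\mathcal M_\ell$ is a chain, $\mathcal M\mapsto\rcol{\mathcal G}{\mathcal M}$ is automatically inclusion-reversing, and as the $\OK$-index is multiplicative in towers of lattices it suffices to prove the index formula for \emph{consecutive} orders, $[\rcol{\mathcal G}{\mathcal M_{i-1}}:\rcol{\mathcal G}{\mathcal M_i}]_\OK=[\mathcal M_i:\mathcal M_{i-1}]_\OK$; the general formula then follows by telescoping, and injectivity (hence the ``opposite poset'' claim) follows because a coincidence $\rcol{\mathcal G}{\mathcal M_i}=\rcol{\mathcal G}{\mathcal M_j}$ would sandwich all intermediate conductors, forcing a trivial index step and thus $\mathcal M_{k-1}=\mathcal M_k$. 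For the consecutive case, $[\mathcal M_i:\mathcal M_{i-1}]_\OK$ must be a prime power $\mathfrak p^e$, since otherwise Proposition~\ref{prp:prime-power-chain} would insert an order strictly between $\mathcal M_{i-1}$ and $\mathcal M_i$, contradicting linearity; Proposition~\ref{prp:smallest-steps} then leaves exactly two cases, $e=1$ with $\mathcal M_i/\mathcal M_{i-1}\simeq\OK/\mathfrak p$, or $e=2$ with $\mathcal M_i/\mathcal M_{i-1}\simeq(\OK/\mathfrak p)^2$ and $\mathcal M_{i-1}/\mathfrak p\mathcal M_i$ a quadratic field extension of $\OK/\mathfrak p$. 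In both, $\mathfrak p\mathcal M_i\subseteq\mathcal M_{i-1}$. Since right conductors and $\OK$-indices commute with completion, and both sides of the desired identity are trivial away from $\mathfrak p$, everything reduces to showing that the $\mathcal O_{K,\mathfrak p}$-lengths satisfy $\ell\bigl(\rcol{\mathcal G_\mathfrak p}{\mathcal M_{i-1,\mathfrak p}}/\rcol{\mathcal G_\mathfrak p}{\mathcal M_{i,\mathfrak p}}\bigr)=e$.

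\textbf{The local computation (the main obstacle).} Working over the DVR $\mathcal O:=\mathcal O_{K,\mathfrak p}$ with uniformizer $\pi$, abbreviate $\mathcal F:=\mathcal M_{i-1,\mathfrak p}$, $\mathcal N:=\mathcal M_{i,\mathfrak p}$, so $\mathcal G_\mathfrak p\subseteq\mathcal F\subsetneq\mathcal N$ with $\pi\mathcal N\subseteq\mathcal F$, and set $\mathcal F_0:=\rcol{\mathcal G_\mathfrak p}{\mathcal F}\supseteq\mathcal N_0:=\rcol{\mathcal G_\mathfrak p}{\mathcal N}$. First I would observe that $\pi\mathcal F_0\subseteq\mathcal N_0$ — if $\mathcal F\mathbf x\subseteq\mathcal G_\mathfrak p$ then $\mathcal N(\pi\mathbf x)=(\pi\mathcal N)\mathbf x\subseteq\mathcal F\mathbf x\subseteq\mathcal G_\mathfrak p$ — so that $\mathcal F_0/\mathcal N_0$ is a finite-dimensional vector space over $k:=\OK/\mathfrak p$. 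Then, choosing a generating set $\mathbf m$ of $\mathcal N$ modulo $\mathcal F$ ($e$ elements, in the basis afforded by Proposition~\ref{prp:smallest-steps}), one identifies $\mathcal N_0$ with the kernel of the explicit $k$-linear map $\mathcal F_0\to(\mathcal N/\mathcal G_\mathfrak p)[\pi]$ recording the classes $\mathbf m\mathbf x+\mathcal G_\mathfrak p$, and the task becomes to show that its image has $k$-dimension exactly $e$. For $e=2$ the hypothesis that $\mathcal F/\mathfrak p\mathcal N$ is a field is what forces this, exactly as in the proof of Proposition~\ref{prp:perceptive-two-dim-step}, and for $e=1$ with $\mathcal G_\mathfrak p=\mathcal F$ one recovers the computation underlying Proposition~\ref{prp:perceptive-one-dim-step}; carrying the dimension count through in the general case, where $\mathcal G_\mathfrak p$ may sit several steps below $\mathcal F$, is the most technical point of the proposition.

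\textbf{Part (ii).} Set $\mathbf w:=\inv{\mathbf u_2}\mathbf u_1\in\mathcal H^1$. Since $\mathcal H_0=\rcol{\mathcal G}{\mathcal H}$ is a left $\mathcal H$-ideal contained in $\rcol{\mathcal G}{\mathcal M}$ and each $\mathbf u_k$ is a unit of $\mathcal H$, we get $\mathbf u_k\mathcal H_0=\mathcal H_0\subseteq\mathbf u_k\rcol{\mathcal G}{\mathcal M}$, so the claimed equality of images in $\mathcal H/\mathcal H_0$ is equivalent to $\mathbf w\rcol{\mathcal G}{\mathcal M}=\rcol{\mathcal G}{\mathcal M}$, while $\mathbf u_1\mathcal M^1=\mathbf u_2\mathcal M^1$ is equivalent to $\mathbf w\in\mathcal M$ (as $\nrd(\mathbf w)=1$ automatically). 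If $\mathbf w\in\mathcal M$ then $\mathbf w\in\mathcal M^1$, and since $\rcol{\mathcal G}{\mathcal M}$ is a left $\mathcal M$-ideal and $\inv{\mathbf w}=\overline{\mathbf w}\in\mathcal M^1$ too, $\mathbf w$ stabilizes $\rcol{\mathcal G}{\mathcal M}$. Conversely, if $\mathbf w$ stabilizes $\rcol{\mathcal G}{\mathcal M}$, then $\mathcal N:=\mathcal M+\mathcal M\mathbf w$ is an order by Lemma~\ref{lem:its-an-order}(2), lies between $\mathcal M$ and $\mathcal H$, hence equals some $\mathcal M_{i'}$ by linearity, and satisfies $\mathcal N\rcol{\mathcal G}{\mathcal M}=\mathcal M\rcol{\mathcal G}{\mathcal M}+\mathcal M\mathbf w\rcol{\mathcal G}{\mathcal M}\subseteq\rcol{\mathcal G}{\mathcal M}\subseteq\mathcal G$, whence $\rcol{\mathcal G}{\mathcal M}\subseteq\rcol{\mathcal G}{\mathcal N}$, so $\rcol{\mathcal G}{\mathcal N}=\rcol{\mathcal G}{\mathcal M}$; by the injectivity from (i), $\mathcal N=\mathcal M$, i.e. $\mathbf w\in\mathcal M$.

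\textbf{Part (iii).} With $\mathbf u:=\inv{\mathbf u_2}\mathbf u_1$ and $\mathcal M:=\mathcal G+\mathcal G\mathbf u$, Lemma~\ref{lem:its-an-order}(2) shows $\mathcal M$ is an order with $\mathcal G\subseteq\mathcal M\subseteq\mathcal H$, hence some $\mathcal M_i$. Using $\rord(\mathcal G)=\mathcal G$ one computes directly $\rcol{\mathcal G}{\mathcal M}=\set{\mathbf x\in\mathcal A\mid\mathcal G\mathbf x\subseteq\mathcal G,\ \mathcal G(\mathbf u\mathbf x)\subseteq\mathcal G}=\set{\mathbf x\in\mathcal G\mid\mathbf u\mathbf x\in\mathcal G}$. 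On the other hand $\mathbf v\in\mathbf u_1\mathcal G\cap\mathbf u_2\mathcal G$ iff $\inv{\mathbf u_1}\mathbf v\in\mathcal G$ and $\inv{\mathbf u_2}\mathbf v\in\mathcal G$; writing $\mathbf v=\mathbf u_1\mathbf g$ these become $\mathbf g\in\mathcal G$ and $\mathbf u\mathbf g\in\mathcal G$, so $\mathbf u_1\mathcal G\cap\mathbf u_2\mathcal G=\mathbf u_1\set{\mathbf g\in\mathcal G\mid\mathbf u\mathbf g\in\mathcal G}=\mathbf u_1\rcol{\mathcal G}{\mathcal M}$. Since $\mathcal H_0\subseteq\mathcal G$ is contained in each of $\mathbf u_1\mathcal G$, $\mathbf u_2\mathcal G$ and $\mathbf u_1\rcol{\mathcal G}{\mathcal M}$, this identity descends to $\mathcal H/\mathcal H_0$; and $\mathbf u_1\rcol{\mathcal G}{\mathcal M}/\mathcal H_0=\mathbf u_2\rcol{\mathcal G}{\mathcal M}/\mathcal H_0$ by part (ii), since $\mathbf u\in\mathcal G\mathbf u\subseteq\mathcal M$.
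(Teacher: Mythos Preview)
Your arguments for (ii) and (iii) are correct and essentially coincide with the paper's; the only cosmetic difference is that in (ii) you enlarge $\mathcal M$ to $\mathcal N=\mathcal M+\mathcal M\mathbf w$ whereas the paper enlarges $\mathcal G$ to $\mathcal L=\mathcal G+\mathcal G\mathbf w$, but both routes conclude via the injectivity from (i).

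The gap is in (i). You reduce to consecutive steps and localize, but then openly leave ``carrying the dimension count through in the general case, where $\mathcal G_\mathfrak p$ may sit several steps below $\mathcal F$'' as unfinished. This is not a minor loose end: your map $\mathcal F_0\to(\mathcal H_\mathfrak p/\mathcal G_\mathfrak p)^e$, $\mathbf x\mapsto(\mathbf m_j\mathbf x+\mathcal G_\mathfrak p)_j$, has kernel $\mathcal N_0$ as you say, but its image is $(\mathbf m\,\mathcal F_0+\mathcal G_\mathfrak p)/\mathcal G_\mathfrak p$ (for $e=1$, say), and nothing in your setup tells you the $k$-dimension of that. The information you have about $\mathbf m$ is only that it generates $\mathcal N$ over $\mathcal F$; you have no control over $\mathbf m\,\mathcal F_0$ relative to $\mathcal G_\mathfrak p$. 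In effect, by reducing to consecutive steps you have decoupled the step $\mathcal M_{i-1}\subsetneq\mathcal M_i$ from the base order $\mathcal G$ into which you are taking conductors, and that is precisely the information you now lack.

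The paper sidesteps this entirely with a short global argument that you should adopt: rather than telescoping over consecutive steps, prove directly $[\mathcal G:\rcol{\mathcal G}{\mathcal M}]_{\OK}=[\mathcal M:\mathcal G]_{\OK}$ for each $\mathcal M=\mathcal M_i$. The linearity of the poset guarantees that $\mathcal M$ is not covered by the union of its proper suborders containing $\mathcal G$, so one may choose $\mathbf z\in\mathcal M$ lying in none of them; since $\mathcal G+\mathcal G\mathbf z$ and $\mathcal G+\mathbf z\mathcal G$ are orders (Lemma~\ref{lem:its-an-order}) containing $\mathbf z$, both equal $\mathcal M$. Then $\rcol{\mathcal G}{\mathcal M}$ is the kernel of the $\OK$-linear map $\mathcal G\to\mathcal M/\mathcal G$, $\mathbf x\mapsto\mathbf z\mathbf x+\mathcal G$, whose image is $(\mathbf z\mathcal G+\mathcal G)/\mathcal G=\mathcal M/\mathcal G$; hence $\mathcal G/\rcol{\mathcal G}{\mathcal M}\simeq\mathcal M/\mathcal G$ and the index identity drops out. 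The general index formula and strict inclusion-reversal then follow by multiplicativity, exactly as you argued. No localization and no case split on Proposition~\ref{prp:smallest-steps} is needed.
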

\begin{proof}
\begin{enumerate}
\item First we show that the map $\mathcal M\mapsto \rcol{\mathcal G}{\mathcal M}$ reverses (non-strict) inclusions.
For any $\mathbf x\in\rcol{\mathcal G}{\mathcal M}$ must have $1\mathbf x\in \mathcal G$ due to $1\in\mathcal M$, so $\rcol{\mathcal G}{\mathcal M}\subseteq \mathcal G$. Next, if $\mathcal L\subseteq \mathcal M$ and $\mathbf x\in \rcol{\mathcal G}{\mathcal M}$, then
\[
    \mathcal L\mathbf x\subseteq\mathcal M\mathbf x \subseteq \mathcal G,
\]
meaning $\mathbf x \in \rcol{\mathcal G}{\mathcal L}$ and $\rcol{\mathcal G}{\mathcal M}\subseteq \rcol{\mathcal G}{\mathcal L}$.

Next, we prove the statement on indices. Since indices behave multiplicatively on a chain of lattices and we obviously have $\rcol{\mathcal G}{\mathcal G}=\mathcal G$, it suffices to show $[\mathcal G:\rcol{\mathcal G}{\mathcal M}]_{\OK} = [\mathcal M:\mathcal G]_{\OK}$ for all $\mathcal M$. Here we use the condition that the poset of orders is linear: since this is the case, none of the orders may be covered by the union of its proper suborders. Thus we may choose an element $\mathbf z\in\mathcal M$ that does not belong to any proper suborder of $\mathcal M$ (that also contains $\mathcal G$). Then, since $\mathcal G+\mathcal G\mathbf z$ and $\mathcal G+\mathbf z\mathcal G$ are both orders by Lemma~\ref{lem:its-an-order} and contain $\mathbf z$, we must have $\mathcal G+\mathcal G\mathbf z=\mathcal G+\mathbf z\mathcal G=\mathcal M$. With this, we may proceed similarly to the proof of Proposition~\ref{prp:perceptive-one-dim-step}: we see that for $\mathbf x\in\mathcal G$, if $\mathcal M\mathbf x\subseteq\mathcal G$, then surely $\mathbf z\mathbf x\in\mathcal G$, and conversely, $\mathbf z\mathbf x\in\mathcal G$ implies
\[
    \mathcal M\mathbf x = (\mathcal G+\mathcal G\mathbf z)\mathbf x = \mathcal G\mathbf x+\mathcal G\mathbf z\mathbf x\subseteq\mathcal G\mathcal G+\mathcal G\mathcal G = \mathcal G.
\]
So we interpret $\rcol{\mathcal G}{\mathcal M}$ as the kernel of the $\OK$-linear map
\begin{align*}
    \mathcal G &\to \mathcal M/\mathcal G,\\
    \mathbf x&\mapsto\mathbf z\mathbf x+\mathcal G.
\end{align*}
The image must then be $(\mathbf z\mathcal G+\mathcal G)/\mathcal G = \mathcal M/\mathcal G$, so we obtain
\[
    \mathcal G/\rcol{\mathcal G}{\mathcal M} \simeq \mathcal M/\mathcal G,
\]
which implies $[\mathcal G:\rcol{\mathcal G}{\mathcal M}]_{\OK} = [\mathcal M:\mathcal G]_{\OK}$.

Because of this relation of indices, we can improve the reversal of non-strict inclusions to strict inclusions as well: if $\mathcal M_i\subsetneq\mathcal M_j$, then also $\rcol{\mathcal G}{\mathcal M_i}\supsetneq\rcol{\mathcal G}{\mathcal M_j}$. Since the original poset of orders was linear, any two $\mathcal M_i$ and $\mathcal M_j$ were comparable, which then yields an opposite comparison of $\rcol{\mathcal G}{\mathcal M_i}$ and $\rcol{\mathcal G}{\mathcal M_j}$, so we see that everything in $\set{\rcol{\mathcal G}{\mathcal M_i}}$ is also comparable, in reversed order.

\item Letting $\mathbf u:=\inv{\mathbf u_2}\mathbf u_1$, we equivalently want to prove
\[
\mathbf u\rcol{\mathcal G}{\mathcal M}/\mathcal H_0 = \rcol{\mathcal G}{\mathcal M}/\mathcal H_0 \qquad\text{if and only if}\qquad \mathbf u\in\mathcal M^1.
\]

First, suppose $\mathbf u\in\mathcal M^1$. We know that $\rcol{\mathcal G}{\mathcal M}$ is a left ideal of $\mathcal M$, so $\mathbf u\rcol{\mathcal G}{\mathcal M}\subseteq \rcol{\mathcal G}{\mathcal M}$. Similarly we get $\inv{\mathbf u}\rcol{\mathcal G}{\mathcal M}\subseteq \rcol{\mathcal G}{\mathcal M}$, so the equality $\mathbf u\rcol{\mathcal G}{\mathcal M}=\rcol{\mathcal G}{\mathcal M}$ follows. We finish proving this implication by just descending to the quotient by $\mathcal H_0$.

Second, suppose that $\mathbf u\rcol{\mathcal G}{\mathcal M}/\mathcal H_0=\rcol{\mathcal G}{\mathcal M}/\mathcal H_0$. Lifting from $\mathcal H/\mathcal H_0$ to $\mathcal H$, this means $\mathbf u\rcol{\mathcal G}{\mathcal M} = \rcol{\mathcal G}{\mathcal M}\subseteq\mathcal G$. Let us denote $\mathcal L := \mathcal G+\mathcal G\mathbf u$, by Lemma~\ref{lem:its-an-order}, this is an order. We also see
\[
    \mathcal L\rcol{\mathcal G}{\mathcal M} = \mathcal G\rcol{\mathcal G}{\mathcal M}+\mathcal G\mathbf u\rcol{\mathcal G}{\mathcal M} \subseteq \mathcal G\mathcal G+\mathcal G\mathcal G = \mathcal G,
\]
and thus $\rcol{\mathcal G}{\mathcal M}\subseteq \rcol{\mathcal G}{\mathcal L}$. By part (i), we know these right conductors form an opposite poset to the orders, with $\mathcal M\mapsto \rcol{\mathcal G}{\mathcal M}$ being the antiisomorphism, so $\rcol{\mathcal G}{\mathcal M}\subseteq \rcol{\mathcal G}{\mathcal L}$ implies $\mathcal M\supseteq\mathcal L\ni\mathbf u$.

\item Multiplying by $\inv{\mathbf u_2}$ from the left, we are equivalently proving
\[
    (\mathbf u\mathcal G/\mathcal H_0)\cap (\mathcal G/\mathcal H_0) = \mathbf u\rcol{\mathcal G}{\mathcal M}/\mathcal H_0 = \rcol{\mathcal G}{\mathcal M}/\mathcal H_0,
\]
where the last equality was already the content of part (ii). We have trivial inclusions $\rcol{\mathcal G}{\mathcal M}/\mathcal H_0\subseteq\mathcal G/\mathcal H_0$ and $\mathbf u\rcol{\mathcal G}{\mathcal M}/\mathcal H_0\subseteq\mathbf u\mathcal G/\mathcal H_0$, making one direction clear. For the other, if $\mathbf u\mathbf x+\mathcal H_0\in \mathcal G/\mathcal H_0$, it means $\mathbf u\mathbf x \in \mathcal G$, and through $\mathcal M = \mathcal G+\mathcal G\mathbf u$ it follows that $\mathcal M\mathbf x\subseteq\mathcal G$, so $\mathbf x\in\rcol{\mathcal G}{\mathcal M}$.
\qedhere
\end{enumerate}
\end{proof}

With this Proposition, we are sufficiently equipped to account for the overcounting stemming from the intersections of various $\mathbf u\mathcal G/\mathcal H_0$ in estimating the size of $\bigcup_{\mathbf u\in\mathcal H^1}\mathbf u\mathcal G/\mathcal H_0$.

\begin{prop}
\label{prp:perceptivity-count}
Suppose that the pair $\mathcal G$, $\mathcal H$ has a linear poset of orders, that poset being \[\mathcal G=: \mathcal M_1\subsetneq\mathcal M_2\subsetneq\cdots\subsetneq \mathcal M_{\ell}:=\mathcal H.\]
If $\mathcal M_2$ is $\mathcal H$-perceptive, then
\[
    \frac{\#\mathcal H^1}{\#\mathcal G^1} \leq \Nm([\mathcal H:\mathcal G]_{\OK})+\Nm([\mathcal H:{\mathcal M_2}]_{\OK})
\]
and equality holds if and only if $\mathcal G$ is $\mathcal H$-perceptive.
\end{prop}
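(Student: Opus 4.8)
\section*{Proof proposal}

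The plan is to reduce the statement to a combinatorial count of the ``lines'' $\mathbf u\mathcal G/\mathcal H_0$ inside $\mathcal H/\mathcal H_0$, where $\mathcal H_0:=\rcol{\mathcal G}{\mathcal H}$ is the right conductor, in the spirit of the proofs of Propositions~\ref{prp:perceptive-one-dim-step} and \ref{prp:perceptive-two-dim-step}. Recall from Subsection~\ref{sec:module-conditions} that $\mathcal G$ is $\mathcal H$-perceptive if and only if $\bigcup_{\mathbf u\in\mathcal H^1}\mathbf u\mathcal G/\mathcal H_0=\mathcal H/\mathcal H_0$, and that by Proposition~\ref{prp:conductors}(i) (with $\mathcal M_1=\mathcal G$, $\mathcal M_\ell=\mathcal H$) we get $[\mathcal G:\mathcal H_0]_{\OK}=[\mathcal H:\mathcal G]_{\OK}$, hence $\#(\mathcal H/\mathcal H_0)=\Nm([\mathcal H:\mathcal G]_{\OK})^2$. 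I abbreviate $m_i:=\Nm([\mathcal H:\mathcal M_i]_{\OK})$ and $N_i:=\#\mathcal H^1/\#\mathcal M_i^1$, so $m_1=\Nm([\mathcal H:\mathcal G]_{\OK})$, $m_\ell=1$, $N_1=\#\mathcal H^1/\#\mathcal G^1$. The goal is to compute $\#\bigl(\bigcup_{\mathbf u\in\mathcal H^1}\mathbf u\mathcal G/\mathcal H_0\bigr)$ in terms of the $m_i$ and $N_i$ and compare it with $m_1^2$.

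The heart of the argument is a telescoping identity. For $i=1,\dots,\ell$ put $W^{(i)}:=\rcol{\mathcal G}{\mathcal M_i}/\mathcal H_0$, so $W^{(1)}=\mathcal G/\mathcal H_0$, $W^{(\ell)}=0$, and by Proposition~\ref{prp:conductors}(i) these form a decreasing chain with $\#W^{(i)}=\Nm([\mathcal H:\mathcal M_i]_{\OK})=m_i$, while by Proposition~\ref{prp:conductors}(ii) the number of distinct translates $\mathbf uW^{(i)}$, $\mathbf u\in\mathcal H^1$, equals $N_i$. I claim that for $1\leq i\leq\ell-1$,
\[
\#\Bigl(\bigcup_{\mathbf u\in\mathcal H^1}\mathbf uW^{(i)}\Bigr)=N_i(m_i-m_{i+1})+\#\Bigl(\bigcup_{\mathbf u\in\mathcal H^1}\mathbf uW^{(i+1)}\Bigr).
\]
The key observation is: if $\mathbf u_1W^{(i)}\neq\mathbf u_2W^{(i)}$ then $\inv{\mathbf u_2}\mathbf u_1\notin\mathcal M_i$ (it has reduced norm $1$, so lies in $\mathcal M_i$ iff in $\mathcal M_i^1$), so $\mathcal M:=\mathcal G+\mathcal G\inv{\mathbf u_2}\mathbf u_1$ is an order (Lemma~\ref{lem:its-an-order}) lying strictly above $\mathcal M_i$ in the (linear) poset, i.e.\ $\mathcal M\supseteq\mathcal M_{i+1}$; then Proposition~\ref{prp:conductors}(iii) together with the inclusion-reversal of conductors (Proposition~\ref{prp:conductors}(i)) gives $\mathbf u_1W^{(i)}\cap\mathbf u_2W^{(i)}\subseteq(\mathbf u_1\mathcal G/\mathcal H_0)\cap(\mathbf u_2\mathcal G/\mathcal H_0)=\mathbf u_1\rcol{\mathcal G}{\mathcal M}/\mathcal H_0\subseteq\mathbf u_1W^{(i+1)}$. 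A short verification (also using $W^{(i+1)}\subseteq W^{(i)}$ and $\mathcal M_i^1\subseteq\mathcal M_{i+1}^1$) then shows that the ``shells'' $\mathbf uW^{(i)}\setminus\mathbf uW^{(i+1)}$, over the $N_i$ distinct translates, are pairwise disjoint and disjoint from $\bigcup_{\mathbf u}\mathbf uW^{(i+1)}$, and together with the latter exhaust $\bigcup_{\mathbf u}\mathbf uW^{(i)}$; this yields the displayed identity. Unrolling from $i=1$ to $i=\ell-1$ and using $W^{(\ell)}=0$ gives
\[
\#\Bigl(\bigcup_{\mathbf u\in\mathcal H^1}\mathbf u\mathcal G/\mathcal H_0\Bigr)=1+\sum_{i=1}^{\ell-1}N_i(m_i-m_{i+1}).
\]

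This identity holds verbatim for any pair of orders with a linear poset of orders, since its derivation only used Proposition~\ref{prp:conductors}. I therefore apply it a second time to the pair $\mathcal M_2\subseteq\mathcal H$, whose poset of orders is the sub-chain $\mathcal M_2\subsetneq\cdots\subsetneq\mathcal M_\ell$; the corresponding quantities $\Nm([\mathcal H:\mathcal M_{i+1}]_{\OK})$ and $\#\mathcal H^1/\#\mathcal M_{i+1}^1$ are just $m_{i+1}$ and $N_{i+1}$, so the identity for this pair reads $\#\bigl(\bigcup_{\mathbf u\in\mathcal H^1}\mathbf u\mathcal M_2/\rcol{\mathcal M_2}{\mathcal H}\bigr)=1+\sum_{i=2}^{\ell-1}N_i(m_i-m_{i+1})$, which is exactly the tail of the sum above. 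Hence
\[
\#\Bigl(\bigcup_{\mathbf u\in\mathcal H^1}\mathbf u\mathcal G/\mathcal H_0\Bigr)=N_1(m_1-m_2)+\#\Bigl(\bigcup_{\mathbf u\in\mathcal H^1}\mathbf u\mathcal M_2/\rcol{\mathcal M_2}{\mathcal H}\Bigr).
\]
Now the hypothesis that $\mathcal M_2$ is $\mathcal H$-perceptive means the last union equals $\mathcal H/\rcol{\mathcal M_2}{\mathcal H}$, which (again by Proposition~\ref{prp:conductors}(i) applied to $\mathcal M_2\subseteq\mathcal H$) has cardinality $\Nm([\mathcal H:\mathcal M_2]_{\OK})^2=m_2^2$. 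So $\#\bigl(\bigcup_{\mathbf u}\mathbf u\mathcal G/\mathcal H_0\bigr)=N_1(m_1-m_2)+m_2^2$, and since this union lies in $\mathcal H/\mathcal H_0$ it is at most $m_1^2$, with equality if and only if $\mathcal G$ is $\mathcal H$-perceptive. As $m_1>m_2$ (because $\mathcal G\subsetneq\mathcal M_2$, so $\Nm([\mathcal M_2:\mathcal G]_{\OK})>1$), dividing the inequality $N_1(m_1-m_2)+m_2^2\leq m_1^2=(m_1-m_2)(m_1+m_2)+m_2^2$ by $m_1-m_2$ yields $N_1\leq m_1+m_2$, with equality precisely when $\mathcal G$ is $\mathcal H$-perceptive; this is the claim. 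The main obstacle is the bookkeeping in the telescoping step --- specifically checking that the shells are genuinely disjoint from one another and from $\bigcup_{\mathbf u}\mathbf uW^{(i+1)}$ --- and it is exactly there that the \emph{linearity} of the poset of orders (through Proposition~\ref{prp:conductors}) is essential.
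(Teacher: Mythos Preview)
Your proof is correct and follows essentially the same approach as the paper's: both arguments partition $\mathcal G/\mathcal H_0$ into the ``shells'' $W^{(i)}\setminus W^{(i+1)}$, use Proposition~\ref{prp:conductors} to show that the $\mathcal H^1$-translates of these shells are pairwise disjoint and occur with multiplicity $N_i$, and then apply the resulting identity $\#\bigl(\bigcup_{\mathbf u}\mathbf u\mathcal G/\mathcal H_0\bigr)=1+\sum_{i=1}^{\ell-1}N_i(m_i-m_{i+1})$ once for $\mathcal G$ and once for $\mathcal M_2$ to isolate the $i=1$ term. Your presentation via the telescoping recursion and the explicit disjointness check (using Proposition~\ref{prp:conductors}(iii) and linearity to force $\mathcal M\supseteq\mathcal M_{i+1}$) is in fact slightly more detailed than the paper's at that step, but the substance is the same.
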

\begin{proof}
We calculate $\#\zav{\bigcup_{\mathbf u\in\mathcal H^1}\mathbf u\mathcal G/\mathcal H_0}$. We split the set $\mathcal G/\mathcal H_0$ into \[\rcol{\mathcal G}{\mathcal M_i}/\mathcal H_0\setminus\rcol{\mathcal G}{\mathcal M_{i+1}}/\mathcal H_0\qquad \text{for $i=1,\dots,\ell-1$}\] and the singleton $\mathcal H_0/\mathcal H_0$. By Proposition~\ref{prp:conductors}, we then see that there are $\frac{\#\mathcal H^1}{\#\mathcal M_i^1}$ possible results of
\[
    \mathbf u\zav{\rcol{\mathcal G}{\mathcal M_i}/\mathcal H_0\setminus\rcol{\mathcal G}{\mathcal M_{i+1}}/\mathcal H_0}
\]
as $\mathbf u\in\mathcal H^1$ varies and that for distinct $i$, these sets are disjoint. Since
\begin{multline*}
    \#\Bigl({\rcol{\mathcal G}{\mathcal M_i}/\mathcal H_0\setminus\rcol{\mathcal G}{\mathcal M_{i+1}}/\mathcal H_0}\Bigr) = \#\rcol{\mathcal G}{\mathcal M_i}/\mathcal H_0 - \#\rcol{\mathcal G}{\mathcal M_{i+1}}/\mathcal H_0 =\\
    = \Nm([\rcol{\mathcal G}{\mathcal M_i}:\rcol{\mathcal G}{\mathcal H}]_{\OK}) - \Nm([\rcol{\mathcal G}{\mathcal M_{i+1}}:\rcol{\mathcal G}{\mathcal H}]_{\OK}) \stackrel{\text{\ref{prp:conductors}(i)}}{=}\\\stackrel{\text{\ref{prp:conductors}(i)}}{=}
    \Nm([\mathcal H:\mathcal M_i]_{\OK}) - \Nm([\mathcal H:\mathcal M_{i+1}]_{\OK}),
\end{multline*}
we then count
\begin{align*}
    \#\zav{\bigcup_{\mathbf u\in\mathcal H^1}\mathbf u\mathcal G/\mathcal H_0} = \sum_{i=1}^\ell \frac{\#\mathcal H^1}{\#\mathcal M_{i}^1}\Bigl({\Nm([\mathcal H:\mathcal M_i]_{\OK}) - \Nm([\mathcal H:\mathcal M_{i+1}]_{\OK})}\Bigr)+1.
\end{align*}
The left-hand side is bounded above by $\#(\mathcal H/\mathcal H_0) = \Nm([\mathcal H:\mathcal G]_{\OK})^2$ and equality is equivalent to perceptivity of $\mathcal G$, $\mathcal H$.

But we know that $\mathcal M_2$ is $\mathcal H$-perceptive, so we may take the right-hand side sum starting for $\mathcal M_2$ in place of $\mathcal G$. Since the terms of the sum individually do not depend on $\mathcal G$, this corresponds to omitting the first term, so altogether, we obtain the statement
\begin{multline*}
    \Nm([\mathcal H:\mathcal G]_{\OK})^2 \geq \#\zav{\bigcup_{\mathbf u\in\mathcal H^1}\mathbf u\mathcal G/\mathcal H_0} =\\= \frac{\#\mathcal H^1}{\#\mathcal G^1}\Bigl({\Nm([\mathcal H:\mathcal G]_{\OK}) - \Nm([\mathcal H:\mathcal M_{2}]_{\OK})}\Bigr) + \Nm([\mathcal H:\mathcal M_2]_{\OK})^2.
\end{multline*}
Moving the term $\Nm([\mathcal H:\mathcal M_2]_{\OK})^2$ to the left-hand side and dividing both sides by
\[
    {\Nm([\mathcal H:\mathcal G]_{\OK})} - \Nm([\mathcal H:\mathcal M_{2}]_{\OK}),
\]
we obtain the desired inequality and we see that through the manipulations performed, equality is still equivalent to perceptivity of $\mathcal G$, $\mathcal H$.
\end{proof}

\begin{rem}
\label{rmrk:nonlinear-poset}
The methods of this subsection might, perhaps, be used in some further situations other than the case of a linear poset of orders. Essentially, everything we derived here stemmed from Proposition~\ref{prp:conductors}(i). Anytime its analogue -- that is, the collection of right conductors $\set{\rcol{\mathcal G}{\mathcal M}}$ forming an opposite poset to $\set{\text{orders $\mathcal M$}\mid \mathcal G\subseteq\mathcal M\subseteq\mathcal H}$ along with the property about indices -- could be established, the rest of the subsection could follow with the only change being a potentially more involved combinatorial calculation (and a different inequality) in the analogue of Proposition~\ref{prp:perceptivity-count}, depending on the poset of orders.
\end{rem}

\subsection{Searching for perceptive suborders}

In this subsection, we will use the results of this section so far to find all perceptive suborders $\mathcal G$ of maximal orders $\mathcal H$ of class number $1$. These are (left and right) PIDs, and we saw in Section~\ref{chap:factorizations} that this implies that $(\mathcal H,\nrd)$ is a universal quadratic form. Because the action of $\mathcal H^1$ used to define perceptivity preserves reduced norms, such a perceptive suborder gives us automatically:

\begin{prop}
\label{prp:basically-lagrange}
If $\mathcal H$ is a maximal order of class number $1$ and $\mathcal G\subseteq\mathcal H$ is an $\mathcal H$-perceptive suborder, then $(\mathcal G,\nrd)$ is a universal quadratic form.
\end{prop}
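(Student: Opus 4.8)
The plan is to combine the universality of $(\mathcal H,\nrd)$ established in Section~\ref{chap:factorizations} with the defining property of perceptivity, using only that the reduced norm is multiplicative and that elements of $\mathcal H^1$ have reduced norm $1$. The proof should be very short.

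First I would check that $(\mathcal G,\nrd)$ is totally positive definite: since $\mathcal A$ is a definite quaternion algebra over the totally real field $K$, the form $(\mathcal A,\nrd)$ (equivalently $t^2-ax^2-by^2+abz^2$ after a suitable presentation, or simply the fact that a definite algebra tensored with each real place is the Hamilton quaternions) is totally positive definite, and $(\mathcal G,\nrd)$ is just its restriction to a full $\OK$-sublattice, hence also totally positive definite. Next I would record that the standing hypotheses of this proposition are exactly those under which Corollary~\ref{cor:nrd-universal} applies: a maximal order $\mathcal H$ of class number $1$ is a (left and right) PID by Subsection~\ref{sec:ideal-classes}, and by Proposition~\ref{prp:cls-implications} this forces $\#\NCl K=1$, so the Convention preceding Corollary~\ref{cor:nrd-universal} is met.

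The core of the argument: fix $\alpha\in\OK^+$. By Corollary~\ref{cor:nrd-universal}, the form $(\mathcal H,\nrd)$ is universal, so there exists $\mathbf q\in\mathcal H$ with $\nrd(\mathbf q)=\alpha$. Since $\mathcal G$ is $\mathcal H$-perceptive, the orbit $\mathcal H^1\mathbf q$ meets $\mathcal G$, i.e.\ there is $\mathbf u\in\mathcal H^1$ with $\mathbf u\mathbf q\in\mathcal G$. Because $\nrd$ is multiplicative and $\nrd(\mathbf u)=1$, we get
\[
    \nrd(\mathbf u\mathbf q)=\nrd(\mathbf u)\nrd(\mathbf q)=\alpha,
\]
so $\alpha$ is represented by $(\mathcal G,\nrd)$. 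As $\alpha\in\OK^+$ was arbitrary, $(\mathcal G,\nrd)$ represents every totally positive element of $\OK$, hence is universal.

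I do not anticipate any real obstacle here; the only points requiring a word of justification are that the definiteness of $\mathcal A$ passes to the sublattice $\mathcal G$ and that the hypotheses of Corollary~\ref{cor:nrd-universal} are indeed in force, both of which are immediate from the cited results. Everything else is a one-line computation with the reduced norm.
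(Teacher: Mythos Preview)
Your proposal is correct and follows exactly the approach the paper indicates: the paper does not give a formal proof but remarks in the sentence preceding the proposition that universality of $(\mathcal H,\nrd)$ from Section~\ref{chap:factorizations} combined with the fact that the $\mathcal H^1$-action preserves reduced norms yields the result ``automatically.'' Your write-up fleshes this out with the explicit verification that the Convention preceding Corollary~\ref{cor:nrd-universal} is satisfied and that the form is totally positive definite, which is entirely appropriate.
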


For these starting orders $\mathcal H$, we appeal to a result of Kirschmer and Lorch:

\begin{thm}[{\cite[Theorems II and III]{kirschmer-lorch}}]
\label{thrm:kirschmer-list}
Up to isomorphism, there are $154$ orders of class number $1$ in definite quaternion algebras over totally real number fields. Of them, $49$ are maximal orders.
\end{thm}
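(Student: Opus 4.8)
The statement is the classification theorem of Kirschmer and Lorch; below I outline the method by which one proves it (and which underlies the predecessor references). The plan is to combine the \emph{Eichler mass formula} with effective lower bounds on discriminants of totally real fields so as to reduce the classification to a finite computation. For a definite quaternion algebra $\mathcal A$ over a totally real field $K$ of degree $n$ and a maximal order $\mathcal H\subset\mathcal A$, the mass
\[
\mathrm{mass}(\mathcal H) := \sum_{i}\frac{1}{\#\mathcal O_i^1},
\]
where $\mathcal O_1,\dots,\mathcal O_h$ are the left orders of a set of representatives of $\Cls\mathcal H$, is given by an explicit formula whose dominant factor is $|d_K|^{3/2}$ (via the functional equation relating $\zeta_K(-1)$ to $\zeta_K(2)>1$) divided by $2^{n-1}(2\pi^2)^n$ and multiplied by $\prod_{\mathfrak p\mid\disc\mathcal A}(\Nm_{K/\kve}(\mathfrak p)-1)$. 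Since $\pm1\in\mathcal O_i^1$, every term of the sum is at most $\tfrac12$, so $\#\Cls\mathcal H=1$ forces $\mathrm{mass}(\mathcal H)\leq\tfrac12$, hence an upper bound of the shape $|d_K|^{3/2}\leq 2^{n-2}(2\pi^2)^n$.

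First I would combine this inequality with an Odlyzko-type lower bound $|d_K|^{1/n}\geq C_n$, with $C_n\to\infty$, to bound the degree $n$, and then bound $|d_K|$ for each $n$; this leaves only finitely many totally real fields $K$, which can be enumerated. Next, for each admissible $K$ the same mass inequality — now with $\disc\mathcal A$ as the free parameter — bounds $\Nm_{K/\kve}(\disc\mathcal A)$, leaving finitely many definite quaternion algebras $\mathcal A/K$; for each one enumerates the maximal orders up to conjugacy (finitely many, controlled by the type number, a quotient of $\Cls\mathcal H$) and computes $\#\Cls\mathcal H$ for each, e.g.\ by producing a complete set of right-ideal class representatives via the Minkowski-type bound for quaternion orders together with a neighbour/theta-series method, keeping those with class number $1$; this is the count $49$. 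Finally, by Proposition~\ref{prp:cls-implications} any order of class number $1$ is contained in a maximal order of class number $1$, and passing to a suborder $\mathcal G\subseteq\mathcal H$ multiplies the mass by a factor $\geq 1$ governed by the local index, which bounds $[\mathcal H:\mathcal G]_{\OK}$; one then enumerates all $\OK$-orders inside each such $\mathcal H$ within this index bound, discards those conjugate to one already found, and recomputes class numbers, arriving at the total $154$.

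The main obstacle is entirely computational: making the a priori bounds on $|d_K|$, on $\disc\mathcal A$, and on the suborder index small enough that the enumeration is not merely finite but genuinely feasible; carrying out isomorphism (conjugacy) testing of orders correctly; and handling the delicate fields of small discriminant, where the mass is small and many orders survive, as well as the complications at residue characteristic $2$. The resulting numerical output — $154$ orders, of which $49$ are maximal — is exactly the content of \cite[Theorem II, Theorem III]{kirschmer-lorch}, which we take as given here.
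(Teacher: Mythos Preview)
The paper does not prove this theorem at all; it is simply quoted from \cite{kirschmer-lorch} and used as a black box input to Algorithm~\ref{algo}. Your outline of the Kirschmer--Lorch method (mass formula plus Odlyzko bounds to bound the degree and discriminant, then enumeration of algebras, maximal orders, and suborders) is a faithful sketch of how such classifications are obtained, and you correctly conclude by taking the result as given, which matches the paper's treatment.
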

We may note that these maximal orders of class number $1$ occur over $15$ different number fields, of degree up to $5$. Some of these orders may only differ by a map induced from an automorphism of the number field though.

Now we need an algorithm to find all perceptive suborders of a given order, which we will then perform on all $49$ maximal orders of class number $1$. Though we will still rely on brute force to some extent, let us sum up results that can help in this search:

\begin{lemma}
\label{lem:search-tools}
Let $\mathcal G\subsetneq\mathcal H$ be an $\mathcal H$-perceptive suborder. Then:
\begin{enumerate}
\item For any prime ideal factor $\mathfrak p$ of $[\mathcal H:\mathcal G]_{\OK}$, one of $\Nm(\mathfrak p)+1$ or $\Nm(\mathfrak p)^2+1$ divides $\frac{\#\mathcal H^1}{\#\mathcal G^1}$, which in turn divides $\frac{\#\mathcal H^1}2$.
\item $\Nm([\mathcal H:\mathcal G]_{\OK}) =[\mathcal H:\mathcal G]_{\zet}< \frac{\#\mathcal H^1}{\#\mathcal G^1} \leq \frac{\#\mathcal H^1}2$.
\item If $\mathfrak a$ is an ideal of $\OK$ such that $\mathfrak a\mathcal H\subseteq\mathcal G$, then for verifying that $\mathcal G$ is $\mathcal H$-perceptive, it suffices to check $\bigcup_{\mathbf u\in\mathcal H^1}\mathbf u\mathcal G/\mathfrak a\mathcal H = \mathcal H/\mathfrak a\mathcal H$.
\item To determine whether the pair $\mathcal G$, $\mathcal H$ is perceptive, it suffices to determine whether each pair $\mathcal G_i$, $\mathcal G_{i+1}$ is perceptive in a chain
\[
    \mathcal G=:\mathcal G_0\subsetneq \cdots \subsetneq\mathcal G_r := \mathcal H
\]
where $[\mathcal G_{i+1}:\mathcal G_i]_{\OK}=\mathfrak p_i^{e_i}$ are the individual powers of prime ideals from the factorization of $[\mathcal H:\mathcal G]_{\OK}$.
\end{enumerate}
\end{lemma}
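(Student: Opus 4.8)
I would prove the four parts in the order (iii), (iv), (i), (ii), since (iii) feeds into (ii) and the bookkeeping of (iv) recurs in (i). \emph{Part (iii)} is a reduction modulo $\mathfrak a\mathcal H$. The one point to check is that $\mathfrak a\mathcal H\subseteq\mathbf u\mathcal G$ for every $\mathbf u\in\mathcal H^1$: as $\nrd(\mathbf u)=1$ makes $\overline{\mathbf u}=\trd(\mathbf u)-\mathbf u\in\mathcal H$ a two-sided inverse of $\mathbf u$, we get $\mathbf u\mathcal H=\mathcal H$, and since $\mathfrak a\subseteq\OK$ is central, $\mathbf u\mathfrak a\mathcal H=\mathfrak a\mathbf u\mathcal H=\mathfrak a\mathcal H\subseteq\mathbf u\mathcal G$. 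Consequently $\mathbf u\mathcal G/\mathfrak a\mathcal H$ is a genuine subset of $\mathcal H/\mathfrak a\mathcal H$, with $x+\mathfrak a\mathcal H$ lying in it exactly when $x\in\mathbf u\mathcal G$. Since being $\mathcal H$-perceptive is verbatim the statement $\bigcup_{\mathbf u\in\mathcal H^1}\mathbf u\mathcal G=\mathcal H$ (because $\mathbf u\mathbf q\in\mathcal G$ means $\mathbf q\in\mathbf u^{-1}\mathcal G$ with $\mathbf u^{-1}\in\mathcal H^1$), reducing both sides modulo $\mathfrak a\mathcal H$ yields the claimed equivalence.

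\emph{Part (iv)} follows from Propositions~\ref{prp:prime-power-chain} and \ref{prp:composing-perceptivity}. The chain exists by Proposition~\ref{prp:prime-power-chain}. If each step is perceptive, iterating Proposition~\ref{prp:composing-perceptivity}(i) shows $\mathcal G=\mathcal G_0$ is $\mathcal G_r=\mathcal H$-perceptive, which is the "suffices" direction; conversely, if $\mathcal G$ is $\mathcal H$-perceptive then each $\mathcal G_i$ is $\mathcal H$-perceptive by Proposition~\ref{prp:composing-perceptivity}(ii), and since $[\mathcal G_{i+1}:\mathcal G_i]_{\OK}$ is a power of a single prime whereas $[\mathcal H:\mathcal G_{i+1}]_{\OK}$ is a product of powers of the remaining primes dividing $[\mathcal H:\mathcal G]_{\OK}$, these two ideals are comaximal, so Proposition~\ref{prp:composing-perceptivity}(iii) gives that $\mathcal G_i$ is $\mathcal G_{i+1}$-perceptive.

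\emph{Part (i)} carries the real content. By the chain of Proposition~\ref{prp:prime-power-chain} and the comaximality argument of part (iv), $\mathcal G_{i-1}$ is $\mathcal G_i$-perceptive for every $i$; fix $i$ with $[\mathcal G_i:\mathcal G_{i-1}]_{\OK}=\mathfrak p^e$, where $\mathfrak p$ is the chosen prime. The orders $\mathcal N$ with $\mathcal G_{i-1}\subseteq\mathcal N\subsetneq\mathcal G_i$ form a finite nonempty set (they are $\OK$-modules squeezed between two lattices), so I choose one that is inclusion-maximal. It is $\mathcal G_i$-perceptive by Proposition~\ref{prp:composing-perceptivity}(ii), its index $[\mathcal G_i:\mathcal N]_{\OK}=\mathfrak p^f$ is a nontrivial divisor of $\mathfrak p^e$, and by maximality no order lies strictly between $\mathcal N$ and $\mathcal G_i$; hence Proposition~\ref{prp:smallest-steps} forces either $f=1$ with $\mathcal G_i/\mathcal N\simeq\OK/\mathfrak p$, or $f=2$ with $\mathcal G_i/\mathcal N\simeq(\OK/\mathfrak p)^2$ and $\mathcal N/\mathfrak p\mathcal G_i$ a quadratic field. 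Applying Proposition~\ref{prp:perceptive-one-dim-step} (with exponent $1$) in the first case and Proposition~\ref{prp:perceptive-two-dim-step} in the second, together with the perceptivity of $\mathcal N$, forces $\#\mathcal G_i^1/\#\mathcal N^1$ to equal $\Nm(\mathfrak p)+1$ or $\Nm(\mathfrak p)^2+1$ respectively. Finally $\{\pm1\}\subseteq\mathcal G^1\subseteq\mathcal N^1\subseteq\mathcal G_i^1\subseteq\mathcal H^1$ is a nested chain of finite groups, so multiplicativity of the group index shows $\#\mathcal G_i^1/\#\mathcal N^1$ divides $\#\mathcal H^1/\#\mathcal G^1$, which in turn divides $\#\mathcal H^1/2$.

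\emph{Part (ii):} the identity $\Nm([\mathcal H:\mathcal G]_{\OK})=[\mathcal H:\mathcal G]_{\zet}$ is recalled in Subsection~\ref{sec:inddisc}; put $N:=[\mathcal H:\mathcal G]_{\zet}=\#(\mathcal H/\mathcal G)$ and take $\mathfrak a:=[\mathcal H:\mathcal G]_{\OK}$, so $\mathfrak a\mathcal H\subseteq\mathcal G$. By part (iii), perceptivity says $\bigcup_{\mathbf u\in\mathcal H^1}\mathbf u\mathcal G/\mathfrak a\mathcal H=\mathcal H/\mathfrak a\mathcal H$. Left multiplication by the unit $\mathbf u$ is an $\OK$-module automorphism of $\mathcal H$ taking $\mathcal G$ onto $\mathbf u\mathcal G\subseteq\mathcal H$, so $\#(\mathcal H/\mathbf u\mathcal G)=N$ and each $\mathbf u\mathcal G/\mathfrak a\mathcal H$ has $\#(\mathcal H/\mathfrak a\mathcal H)/N$ elements; moreover $\mathbf u_1\mathcal G/\mathfrak a\mathcal H=\mathbf u_2\mathcal G/\mathfrak a\mathcal H$ iff $\mathbf u_1\mathcal G=\mathbf u_2\mathcal G$ (both contain $\mathfrak a\mathcal H$) iff $\mathbf u_2^{-1}\mathbf u_1\in\mathcal G^1$, so there are exactly $\#\mathcal H^1/\#\mathcal G^1$ such sets, each containing the zero class. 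Subadditivity of cardinality on unions gives $\#(\mathcal H/\mathfrak a\mathcal H)\le(\#\mathcal H^1/\#\mathcal G^1)\cdot\#(\mathcal H/\mathfrak a\mathcal H)/N$, i.e.\ $N\le\#\mathcal H^1/\#\mathcal G^1$; since $N\ge2$ there are then at least two distinct such sets, which overlap in the zero class, making the bound strict: $N<\#\mathcal H^1/\#\mathcal G^1$. The final inequality $\#\mathcal H^1/\#\mathcal G^1\le\#\mathcal H^1/2$ is once more $\{\pm1\}\subseteq\mathcal G^1$. The one place I expect genuine difficulty is part (i): arranging to pass to an inclusion-maximal intermediate order so that the hypotheses of Proposition~\ref{prp:smallest-steps}, and hence of Propositions~\ref{prp:perceptive-one-dim-step}--\ref{prp:perceptive-two-dim-step}, apply on the nose, while keeping careful track that the norm-$1$ groups really do form a nested chain of subgroups so that index multiplicativity is legitimate.
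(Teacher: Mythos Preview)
Your proof is correct and follows essentially the same approach as the paper's. The only cosmetic differences are that in (ii) you quotient by $\mathfrak a\mathcal H$ with $\mathfrak a=[\mathcal H:\mathcal G]_{\OK}$ whereas the paper uses the right conductor $\mathcal H_0=\rcol{\mathcal G}{\mathcal H}$, and in (i) you locate the maximal proper suborder inside an intermediate $\mathcal G_i$ of the chain whereas the paper places it directly below $\mathcal H$; both choices lead to the same counting and divisibility arguments.
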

\begin{proof}
\begin{enumerate}
\item Combining Propositions~\ref{prp:prime-power-chain} and \ref{prp:smallest-steps}, we can find an order $\mathcal F$ with $\mathcal G\subseteq\mathcal F \subseteq\mathcal H$ and either $\mathcal H/\mathcal F\simeq\OK/\mathfrak p$ or $\mathcal H/\mathcal F\simeq(\OK/\mathfrak p)^2$ with $\mathcal F/\mathfrak p\mathcal H$ being a field. Then, we apply Proposition~\ref{prp:perceptive-one-dim-step} or Proposition~\ref{prp:perceptive-two-dim-step}. Finally, we note that $\set{\pm1}$ is a subgroup of $\mathcal G^1$, hence $2\mid \#\mathcal G^1$ and so $\frac{\#\mathcal H^1}{\#\mathcal G^1} \mid \frac{\#\mathcal H^1}{2}$.

\item We may denote $\mathcal H_0 := \rcol{\mathcal G}{\mathcal H}$ and view the action of $\mathcal H^1$ on $\mathcal H/\mathcal H_0$ as in Subsection~\ref{sec:module-conditions}. Whenever $\mathbf u_1\mathcal G^1 = \mathbf u_2\mathcal G^1$, we have $\mathbf u_1\mathcal G/\mathcal H_0 = \mathbf u_2\mathcal G/\mathcal H_0$, so we may view the union
\[
    \bigcup_{\mathbf u\in\mathcal H^1} \mathbf u\mathcal G/\mathcal H_0
\]
as indexed by left cosets $\mathbf u\mathcal G^1$ instead of the individual $\mathbf u$. Each $\mathbf u \mathcal G/\mathcal H_0$ has the same number of elements, so we bound
\[
    \#\zav{\bigcup_{\mathbf u~G^1} \mathbf u\mathcal G/\mathcal H_0} \leq \frac{\#\mathcal H^1}{\#\mathcal G^1} \cdot \#(\mathcal G/\mathcal H_0).
\]
Further, as long as $\frac{\#\mathcal H^1}{\#\mathcal G^1}>1$, the inequality is strict, since any two $\mathbf u \mathcal G/\mathcal H_0$ intersect in $0+\mathcal H_0$. Since $\mathcal G\subsetneq\mathcal H$, perceptivity requires that indeed $\frac{\#\mathcal H^1}{\#\mathcal G^1}>1$. Thus $\mathcal H$-perceptivity of $\mathcal G$ implies
\begin{align*}
    [\mathcal H:\mathcal G]_{\zet}\cdot \#(\mathcal G/\mathcal H_0) &= \#(\mathcal H/\mathcal H_0) = \#\zav{\bigcup_{\mathbf u~G^1} \mathbf u\mathcal G/\mathcal H_0} <\frac{\#\mathcal H^1}{\#\mathcal G^1} \cdot \#(\mathcal G/\mathcal H_0),
\end{align*}
whence the conclusion follows.
\item Notice that $\mathfrak a\mathcal H$ is a two-sided ideal of $\mathcal H$ which satisfies
\[
    \mathcal H(\mathfrak a\mathcal H) = \mathfrak a(\mathcal H\mathcal H) = \mathfrak a\mathcal H \subseteq\mathcal G,
\]
i.e. $\mathfrak a\mathcal H\subseteq\rcol{\mathcal G}{\mathcal H}$. Being a two-sided ideal (so especially a left ideal), the left action of $\mathcal H^1$ on $\mathcal H$ preserves $\mathfrak a\mathcal H$. So as we did with $\mathcal H/\rcol{\mathcal G}{\mathcal H}$, we may consider the left $\mathcal H^1$-action of $\mathcal H/\mathfrak a\mathcal H$ and check that all orbits of this action intersect $\mathcal G/\mathfrak a\mathcal H$ there.
\item This is just Proposition~\ref{prp:prime-power-chain} combined with Proposition~\ref{prp:composing-perceptivity}.
\qedhere
\end{enumerate}
\end{proof}

Now we may state the algorithm to find perceptive suborders.

\begin{algo}
\label{algo}
\null\par\noindent
Input: an $\OK$-order $\mathcal H$ in a definite quaternion algebra $\mathcal A$ over a number field $K$.
\par\noindent
Output: the set of all $\mathcal H$-perceptive suborders of $\mathcal H$.

\newcount\algcount
\def\algitem{\global\advance\algcount1\relax\item[(\the\algcount)]}
\newdimen\nestincrement\nestincrement1.75em
\let\nestin\relax\let\nestout\relax
\def\nestin{\begin{itemize}[leftmargin=\nestincrement, topsep=\smallskipamount, itemsep=\smallskipamount]}
\def\nestout{\end{itemize}}
\nestin
\algitem Compute $\mathcal H^1$ and determine all the prime ideals $\mathfrak p_1,\dots,\mathfrak p_r$ such that $\Nm(\mathfrak p_i)+1$ or $\Nm(\mathfrak p_i)^2+1$ divides $\frac{\#\mathcal H^1}2$.
\algitem Initialize $\Omega := \set{\mathcal H}$.
\algitem For each $i=1,\dots, r$, pick $\mathfrak p:=\mathfrak p_i$, $q:=\Nm(\mathfrak p)$ and do:
\nestin
\algitem Initialize $\Omega_{\text{new}} := \set{}$.
\algitem For each $\mathcal G\in\Omega$, do:
\nestin
\algitem Initialize $\Gamma:=\set{\mathcal G}$.
\algitem As long as $\Gamma$ is non-empty:
\nestin
\algitem Pick $\mathcal F\in \Gamma$, and update $\Gamma:=\Gamma\setminus\set{\mathcal F}$, $\Omega_{\text{new}}:=\Omega_{\text{new}}\cup\set{\mathcal F}$.
\hfuzz2pt
\algitem Compute $S_1 := \set{\text{suborders $\mathcal L\subset\mathcal F$ with $\mathcal F/\mathcal L\simeq\OK/\mathfrak p$}}$ by checking all three-dimensional subspaces of $\mathcal F/\mathfrak p\mathcal F$ containing $1$ to see whether they are closed under multiplication.
\algitem Compute $S_2 := \{\text{suborders $\mathcal L\subset\mathcal F$ where $\mathcal F/\mathcal L\simeq(\OK/\mathfrak p)^2$ and }\allowbreak\text{$\mathcal L/\mathfrak p\mathcal F$    is a field}\}$ by checking all two-dimensional subspaces of $\mathcal F/\mathfrak p\mathcal F$ containing $1$ to see whether they are fields.
\algitem For each $\mathcal L\in S_1\cup S_2$, if $[\mathcal G:\mathcal L]_{\zet}<\frac{\#\mathcal G^1}{\#\mathcal L^1}$, then we check whether $\mathcal L$ is $\mathcal G$-perceptive:
\nestin
\algitem If already $\mathcal L\in\Omega_{\text{new}}$, take no action.
\algitem Else, if $\mathcal G/\mathcal L\simeq \OK/\mathfrak p^e$ for some $e$, check perceptivity by checking whether $\frac{\#\mathcal G^1}{\#\mathcal L^1} = q^e+q^{e-1}$.
\algitem Else, if $\mathcal G/\mathcal L\simeq(\OK/\mathfrak p)^2$ and $\mathcal L/\mathfrak p\mathcal G$ is a field, check perceptivity by checking whether $\frac{\#\mathcal G^1}{\#\mathcal L^1} = q^2+1$.
\algitem Otherwise, find $e$ such that $\mathfrak p^e\mathcal G\subseteq\mathcal L$ and check perceptivity in $\mathcal G/\mathfrak p^e\mathcal G$.
\algitem In either case, if $\mathcal L$ is $\mathcal G$-perceptive and $\mathcal L\notin\Omega_{\text{new}}$ yet, then update $\Gamma := \Gamma\cup\set{\mathcal L}$.
\nestout
\nestout
\nestout
\algitem Update $\Omega := \Omega_{\text{new}}$.
\nestout
\algitem Return $\Omega$.
\nestout
\end{algo}

In a practical implementation, some further small optimizations may be taken. For example, within one iteration of the loop (5), we may also keep a set of orders that have already been found not to be $\mathcal G$-perceptive, so that especially the brute-force check of (15) is not performed unnecessarily. Similarly, we may along with each order $\mathcal L$ keep its $\mathcal L^1$ so that it is not computed several times. We have left these small details out of the statement of the algorithm above for the sake of conciseness.

\begin{prop}
The Algorithm~\ref{algo} is correct.
\end{prop}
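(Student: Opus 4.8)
The plan is to establish soundness (every order returned by Algorithm~\ref{algo} is $\mathcal H$-perceptive) and completeness (every $\mathcal H$-perceptive suborder of $\mathcal H$ is returned) in one stroke, by tracking the following invariant along the loop over primes in step~(3):

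\emph{after the pass for $\mathfrak p_i$, the set $\Omega$ is precisely the collection of $\mathcal H$-perceptive orders $\mathcal M\subseteq\mathcal H$ for which $[\mathcal H:\mathcal M]_{\OK}$ is supported on $\set{\mathfrak p_1,\dots,\mathfrak p_i}$.}

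Before the loop $\Omega=\set{\mathcal H}$, which is the base case, and granting the invariant the proposition follows: by Lemma~\ref{lem:search-tools}(i) every prime dividing the index of an $\mathcal H$-perceptive suborder of $\mathcal H$ lies among the $\mathfrak p_1,\dots,\mathfrak p_r$ chosen in step~(1), so the final $\Omega$ is exactly the set of $\mathcal H$-perceptive suborders, all of which are genuinely $\mathcal H$-perceptive.

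For the inductive step fix $\mathfrak p:=\mathfrak p_i$ and $q:=\Nm(\mathfrak p)$. I would first analyze a single execution of the inner block~(5) for a fixed $\mathcal G\in\Omega$ and show that afterwards $\Omega_{\text{new}}$ contains exactly the set $P(\mathcal G):=\set{\mathcal L\subseteq\mathcal G\mid [\mathcal G:\mathcal L]_{\OK}\text{ is a }\mathfrak p\text{-power and }\mathcal L\text{ is }\mathcal G\text{-perceptive}}$ (and, across all $\mathcal G$, nothing outside $\bigcup_{\mathcal G\in\Omega}P(\mathcal G)$). The block is a tree search rooted at $\mathcal G$ that, at each order $\mathcal F$, branches to every maximal suborder of $\mathcal F$ with $\mathfrak p$-power quotient; by Proposition~\ref{prp:smallest-steps} these are precisely the sets $S_1\cup S_2$ computed in steps~(10)--(11) (quotient $\OK/\mathfrak p$, resp.\ $(\OK/\mathfrak p)^2$ with $\mathcal L/\mathfrak p\mathcal F$ a field). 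Given $\mathcal L\in P(\mathcal G)$, choose a maximal chain $\mathcal L=\mathcal N_0\subsetneq\cdots\subsetneq\mathcal N_t=\mathcal G$ of orders, each maximal in the next (possible as $\mathcal G/\mathcal L$ is finite); by Proposition~\ref{prp:composing-perceptivity}(ii) each $\mathcal N_j\supseteq\mathcal L$ is $\mathcal G$-perceptive, so $\mathcal N_j\in P(\mathcal G)$, whence every edge of the chain survives the index pre-filter of step~(12) (which by Lemma~\ref{lem:search-tools}(ii) never discards a $\mathcal G$-perceptive order) and the $\mathcal G$-perceptivity test, and the search reaches $\mathcal L$. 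That test is itself correct: its three mutually exclusive and exhaustive cases in steps~(14)--(16) are decided by Proposition~\ref{prp:perceptive-one-dim-step}, Proposition~\ref{prp:perceptive-two-dim-step}, and — in the general case, after locating $e$ with $\mathfrak p^e\mathcal G\subseteq\mathcal L$ — the reduction of Lemma~\ref{lem:search-tools}(iii).

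To propagate the outer invariant: if $\mathcal G\in\Omega$ then $\mathcal G$ is $\mathcal H$-perceptive with index supported on $\set{\mathfrak p_1,\dots,\mathfrak p_{i-1}}$, so by Proposition~\ref{prp:composing-perceptivity}(i) each $\mathcal L\in P(\mathcal G)$ is $\mathcal H$-perceptive with index supported on $\set{\mathfrak p_1,\dots,\mathfrak p_i}$; this is the $\subseteq$ direction. Conversely, given an $\mathcal H$-perceptive $\mathcal L$ with index supported on $\set{\mathfrak p_1,\dots,\mathfrak p_i}$, use the local--global dictionary to form the order $\mathcal G$ with $\mathcal G_{\mathfrak p}=\mathcal H_{\mathfrak p}$ and $\mathcal G_{\mathfrak q}=\mathcal L_{\mathfrak q}$ for $\mathfrak q\neq\mathfrak p$; then $\mathcal L\subseteq\mathcal G\subseteq\mathcal H$ with $[\mathcal H:\mathcal G]_{\OK}$ supported on $\set{\mathfrak p_1,\dots,\mathfrak p_{i-1}}$ and $[\mathcal G:\mathcal L]_{\OK}$ a $\mathfrak p$-power. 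Proposition~\ref{prp:composing-perceptivity}(ii) gives $\mathcal G\in\Omega$ by induction, and since these two indices are coprime, Proposition~\ref{prp:composing-perceptivity}(iii) gives that $\mathcal L$ is $\mathcal G$-perceptive, i.e.\ $\mathcal L\in P(\mathcal G)\subseteq\Omega_{\text{new}}$.

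The step I expect to be the main obstacle is justifying the optimization in step~(13): once $\mathcal L\in\Omega_{\text{new}}$, the search never descends below $\mathcal L$ again, even when processing a different $\mathcal G'\in\Omega$ for which the subtree below $\mathcal L$ has not been examined with respect to $\mathcal G'$-perceptivity. The resolution is once more a local--global argument: if $\mathcal L$ has $\mathfrak p$-power index in two members $\mathcal G,\mathcal G'$ of $\Omega$, then because $[\mathcal H:\mathcal G]_{\OK}$ and $[\mathcal H:\mathcal G']_{\OK}$ are both coprime to $\mathfrak p$, the orders $\mathcal G$ and $\mathcal G'$ agree locally at $\mathfrak p$ (both are $\mathcal H_{\mathfrak p}$) and away from $\mathfrak p$ (both are $\mathcal L_{\mathfrak q}$), so $\mathcal G=\mathcal G'$; thus the ``$\mathfrak p$-cones'' below distinct members of $\Omega$ are disjoint and no $\mathcal G$-perceptive order is missed. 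Termination is then immediate from finiteness of $\mathcal H^1$, of the prime list, of each $S_1\cup S_2$, and — via Lemma~\ref{lem:search-tools}(ii) — of the collection of orders ever enqueued.
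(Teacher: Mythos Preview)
Your argument is correct and rests on the same toolkit as the paper (Propositions~\ref{prp:composing-perceptivity}, \ref{prp:smallest-steps}, \ref{prp:perceptive-one-dim-step}, \ref{prp:perceptive-two-dim-step} and Lemma~\ref{lem:search-tools}), but you organize it more tightly via an explicit loop invariant, whereas the paper treats soundness and completeness separately and, for completeness, simply tracks one fixed maximal chain from $\mathcal H$ down to the target order. Your disjointness-of-$\mathfrak p$-cones observation---that any $\mathcal L$ reached during the $\mathfrak p$-pass determines its ambient $\mathcal G\in\Omega$ uniquely, since $\mathcal G_{\mathfrak p}=\mathcal H_{\mathfrak p}$ and $\mathcal G_{\mathfrak q}=\mathcal L_{\mathfrak q}$ elsewhere---is a genuine addition: the paper does not spell out why the shortcut ``if $\mathcal L\in\Omega_{\text{new}}$, take no action'' cannot cause unexplored descendants of $\mathcal L$ to be skipped when $\mathcal L$ arrived via a different $\mathcal G'$, and your argument closes that small gap cleanly.

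One cosmetic point: your step references are uniformly off by one against the paper's numbering. The sets $S_1,S_2$ are computed in steps~(9)--(10), the index pre-filter is the guard in step~(11), the $\Omega_{\text{new}}$ shortcut is step~(12), and the three perceptivity tests are steps~(13)--(15).
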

\begin{proof}
Let $\mathcal G$ be a perceptive suborder of $\mathcal H$, let $[\mathcal H:\mathcal G]_{\OK} = \mathfrak p_1^{e_1}\cdots\mathfrak p_r^{e_r}$ and let
\[
    \mathcal G=:\mathcal G_0\subsetneq \cdots \subsetneq\mathcal G_r := \mathcal H
\]
be a chain of orders such that $[\mathcal G_{r+1-i}:\mathcal G_{r-i}]=\mathfrak p_i^{e_i}$.
To prove that $\mathcal G$ will be in the set returned by the algorithm, since $\mathcal G_r=\mathcal H$ is in the initial $\Omega$, it suffices prove that at the $i$-th iteration of the loop (3), after choosing $\mathcal G_{r+1-i}$ in (5), the order $\mathcal G_{r-i}$ will be found and added to $\Omega_{\text{new}}$ in this inner loop.

Between $\mathcal G_{r-i}$ and $\mathcal G_{r+1-i}$, we may construct a chain
\[
    \mathcal G_{r-i}:= \mathcal L_1\subsetneq\cdots\subsetneq \mathcal L_{\ell}:=\mathcal G_{r+1-i},
\]
where each pair of consecutive orders is as described in Proposition~\ref{prp:smallest-steps}. But then we see that starting with $\mathcal L_{\ell}=\mathcal G_{r+1-i}$, which is initially put in $\Gamma$, every time $\mathcal L_{j}$ is picked as $\mathcal F$ in (8), $\mathcal L_{j-1}$ is found in either (9) or (10), depending on whether $\mathcal L_{j-1}\subsetneq\mathcal L_j$ is as described by (i) or (ii) in Proposition~\ref{prp:smallest-steps}. Using Proposition~\ref{prp:composing-perceptivity}(ii) and (iii) repeatedly, we know $\mathcal G_{r-i}$ is $\mathcal G_{r+1-i}$-perceptive and so $\mathcal L_{j-1}\supseteq\mathcal G_{r-i}$ is $\mathcal G_{r+1-i}$-perceptive as well.
If step (12) takes place, that means $\mathcal H$-perceptivity of $\mathcal L_{j-1}$ has already been verified, so it is $\mathcal G_{r+1-i}$-perceptive as well by Proposition~\ref{prp:composing-perceptivity}. If (13) or (14) takes place, perceptivity is ascertained correctly due to Proposition~\ref{prp:perceptive-one-dim-step} or \ref{prp:perceptive-two-dim-step} respectively. Lastly, if (15) takes place, perceptivity is checked via Lemma~\ref{lem:search-tools}(iii).
So overall, $\mathcal L_{j-1}$ passes whichever test of perceptivity it is steered into, so it is added into $\Gamma$ in (16) and hence later into $\Omega_{\text{new}}$, provided it is not there already.

This finishes the proof that all perceptive suborders of $\mathcal H$ will be found by the algorithm. Conversely, any order added into $\Omega$ is checked to be a perceptive suborder of some order previously in $\Omega$ by one of (13), (14) or (15), so by induction and due to $\set{\mathcal H}$ being the initial value of $\Omega$, any order in the returned set is $\mathcal H$-perceptive by Proposition~\ref{prp:composing-perceptivity}.
\end{proof}

Running Algorithm~\ref{algo} on each of the $49$ maximal orders of class number $1$ from Theorem~\ref{thrm:kirschmer-list} a removing isomorphic copies, we obtain:
\begin{thm}
\label{thrm:our-list}
Up to isomorphism, there are $111$ orders in definite quaternion algebras over totally real number fields that are perceptive in some maximal order. The list is available from \orderlisturl.
\end{thm}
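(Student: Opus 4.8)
The statement is, in essence, the output of a finite and exhaustive computer search, so the proof consists of three things: reducing the problem to a provably finite and complete enumeration, invoking the correctness of Algorithm~\ref{algo} (already established above), and carrying out the computation together with a deduplication step up to isomorphism. I would proceed as follows. First, record the starting data: by Theorem~\ref{thrm:kirschmer-list} there are exactly $49$ maximal orders of class number $1$ in definite quaternion algebras over totally real number fields, occurring over $15$ base fields of degree at most $5$, each given explicitly (e.g.\ by the Kirschmer--Lorch tables, reproduced in the accompanying Magma code). Since in the present context a suborder that is \emph{perceptive in some maximal order} means one sitting perceptively inside one of these $49$ orders $\mathcal H$ (a maximal order is trivially perceptive in itself, so the relevant maximal orders are precisely those of class number $1$), it suffices to determine, for each such $\mathcal H$, the full set of $\mathcal H$-perceptive suborders and then take the union.

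Second, argue that this search is finite and is faithfully performed by Algorithm~\ref{algo}. By Lemma~\ref{lem:search-tools}(ii), any $\mathcal H$-perceptive $\mathcal G\subsetneq\mathcal H$ satisfies $[\mathcal H:\mathcal G]_{\zet}<\#\mathcal H^1/2$, so only finitely many sublattices can occur; by Lemma~\ref{lem:search-tools}(i) the admissible prime divisors $\mathfrak p$ of $[\mathcal H:\mathcal G]_{\OK}$ are restricted to those with $\Nm(\mathfrak p)+1$ or $\Nm(\mathfrak p)^2+1$ dividing $\#\mathcal H^1/2$. Combined with Propositions~\ref{prp:prime-power-chain} and \ref{prp:smallest-steps}, every perceptive suborder is reached from $\mathcal H$ by a finite chain of ``minimal'' steps of the two types of Proposition~\ref{prp:smallest-steps}, each of which strictly increases the index $[\mathcal H:\,\cdot\,]$ (bounded above by $\#\mathcal H^1/2$), so any descent terminates. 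This is exactly the structure exploited by Algorithm~\ref{algo}, whose correctness has been proven, so running it on each of the $49$ orders and unioning the outputs yields precisely the set of orders perceptive in some maximal order of class number $1$.

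Third, perform the deduplication. Two orders in this set may be isomorphic --- either because they occur inside different maximal orders, or because they differ only by a Galois automorphism of the base field (as already happens among the $49$ maximal orders themselves). I would collapse the set to isomorphism classes by pairwise testing of $\OK$-algebra isomorphism up to a field automorphism: first screening by invariants (base field, reduced discriminant $\discrd\mathcal G$, the ternary form $(\,\cdot\,,\nrd)$ on the trace-zero part, the cardinality $\#\mathcal G^1$), then confirming with an explicit isomorphism search (via Magma's isomorphism testing for quaternion orders). Counting the resulting classes gives $111$, and the explicit list together with the implementation is the one available from \orderlisturl.

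The main obstacle is not conceptual but one of verified bookkeeping: one must be confident that the brute-force subroutine in step (15) of the algorithm (testing $\bigcup_{\mathbf u\in\mathcal H^1}\mathbf u\mathcal G/\mathfrak a\mathcal H = \mathcal H/\mathfrak a\mathcal H$) is implemented correctly, that the enumeration of three- and two-dimensional subalgebras of each $\mathcal H/\mathfrak p\mathcal H$ misses no minimal step, and --- most delicately --- that the isomorphism testing neither splits one isomorphism class in two nor merges two distinct classes, since any such error changes the final count. It is precisely for this reason that the machine-checkable Magma implementation is provided alongside the paper.
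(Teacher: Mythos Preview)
Your proposal is correct and takes essentially the same approach as the paper, which simply states that Theorem~\ref{thrm:our-list} is obtained by running Algorithm~\ref{algo} on each of the $49$ maximal orders of class number~$1$ from Theorem~\ref{thrm:kirschmer-list} and removing isomorphic copies. You supply more of the surrounding justification (finiteness via Lemma~\ref{lem:search-tools}, the descent structure via Propositions~\ref{prp:prime-power-chain} and~\ref{prp:smallest-steps}, and an explicit account of the deduplication step), which the paper leaves implicit, but the underlying strategy is identical.
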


We used an implementation of Algorithm~\ref{algo} in Magma, which is also available at the page linked above.
\bigskip

In the following subsection, it will be useful to catalogue these orders based on the factorization of their indices in their respective maximal orders:
\begin{defn}
\label{def:kinds}
Let us say a suborder $\mathcal G$ of a maximal order $\mathcal H$ is \emph{of the kind $\mathfrak p_1^{e_1}\cdots \mathfrak p_a^{e_a}\mathfrak q_1^{f_1}\cdots\mathfrak q_b^{f_b}$}, if this is the factorization of $[\mathcal H:\mathcal G]_{\OK}$ into prime ideals with $\mathfrak p_1,\dots,\mathfrak p_a\mid \discrd\mathcal H$ and $\mathfrak q_1,\dots,\mathfrak q_b\nmid\discrd\mathcal H$. If $a=1$ or $b=1$, we will permit ourselves to omit the subscripts of $\mathfrak p$ and $\mathfrak q$ respectively.
\end{defn}
With this definition, we catalogue the orders whilst also checking their class numbers against the list of Theorem~\ref{thrm:kirschmer-list}.
\begin{prop}
\label{prp:kind-distribution}
Of the $111$ chosen representative orders $\mathcal G$ from Theorem~\ref{thrm:our-list} perceptive in a maximal order $\mathcal H$:
\begin{itemize}
\item $49$ are maximal,
\item $36$ are of the kind $\mathfrak q$,
\item $5$ are of the kind $\mathfrak p$,
\item $5$ are of the kind $\mathfrak q_1\mathfrak q_2$,
\item $11$ are of the kind $\mathfrak q^2$ and
\item $1$ is of each of the kinds $\mathfrak p\mathfrak q$, $\mathfrak p^2$, $\mathfrak q^3$, $\mathfrak p^3$, $\mathfrak q^4$.
\end{itemize}
Further, each of these orders has class number $1$. For those $\mathcal G$ of the kind $\mathfrak q^e$, the quotient $\mathcal H/\mathcal G$ is cyclic, that is $\mathcal H/\mathcal G\simeq\OK/\mathfrak q^e$; while all of those $\mathcal G$ of the kind $\mathfrak p^e$ have a linear poset of orders with $\mathcal H$.
\end{prop}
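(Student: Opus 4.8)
The plan is to reduce every assertion to a direct inspection of the explicit list of the $111$ orders produced by Algorithm~\ref{algo} (Theorem~\ref{thrm:our-list}), which is available electronically, supplemented by a handful of routine computations in Magma; beyond the facts already established earlier, nothing more than a finite check is needed. First I would establish the tally by kind: for each order $\mathcal G$ on the list, recorded together with a maximal order $\mathcal H$ in which it is perceptive, compute the index ideal $[\mathcal H:\mathcal G]_{\OK}$, factor it into primes of $\OK$, and for each prime factor test divisibility of $\discrd\mathcal H$. Since $\mathcal H$ is maximal we have $\discrd\mathcal H=\disc\mathcal A$ (\cite[Theorem 23.2.9]{voight}), so this test sorts the prime factors into the $\mathfrak p$- and $\mathfrak q$-parts of Definition~\ref{def:kinds} and thereby assigns a kind to $\mathcal G$. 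Summing over the list yields the stated distribution; the $49$ orders of empty kind are exactly the maximal ones, in agreement with Theorem~\ref{thrm:kirschmer-list}.

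Next, for the class numbers I would compute $\#\Cls\mathcal G$ for each of the $111$ orders using the standard algorithm (cf. \cite[Chapter 17]{voight}); in every case the answer is $1$. As a consistency check this places all $111$ orders inside the enumeration of Kirschmer and Lorch of the $154$ orders of class number $1$ in definite quaternion algebras over totally real number fields, the $49$ maximal ones accounting for all maximal orders there and the remaining $62$ sitting among the $105$ non-maximal ones.

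Finally, the two structural statements. If $\mathcal G$ is of kind $\mathfrak q^e$ then $[\mathcal H:\mathcal G]_{\OK}=\mathfrak q^e$ is a prime power, so Proposition~\ref{prp:at-most-two-dims}, applied to the perceptive pair $\mathcal G\subseteq\mathcal H$, already forces $\mathcal H/\mathcal G$ to split into at most two cyclic summands; it is therefore either $\OK/\mathfrak q^e$ or $\OK/\mathfrak q^a\oplus\OK/\mathfrak q^b$ with $a,b\ge 1$ and $a+b=e$. For $e=1$ only the cyclic case can occur, and for the finitely many orders on the list of kinds $\mathfrak q^2$, $\mathfrak q^3$, $\mathfrak q^4$ one verifies directly that the two-summand case never arises. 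Likewise, for $\mathcal G$ of kind $\mathfrak p^e$ (only $e\in\{1,2,3\}$ occur) one checks that the intermediate orders between $\mathcal G$ and $\mathcal H$ form a single chain: when $e=1$ this is automatic, since the intermediate $\OK$-modules — hence a fortiori the intermediate orders — are just the submodules of $\OK/\mathfrak p$, so none lies strictly in between; and for the single orders of kinds $\mathfrak p^2$ and $\mathfrak p^3$ it is a small finite enumeration, conveniently organised via the possible covering relations of Proposition~\ref{prp:smallest-steps}.

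The main difficulty, such as it is, is not conceptual but lies in correctly carrying out the computations at the required scale — Algorithm~\ref{algo} must be run over the $49$ maximal orders, living across $15$ number fields of degree up to $5$ — and in recognising that the last two implications (\uv{kind $\mathfrak q^e$} $\Rightarrow$ cyclic quotient, \uv{kind $\mathfrak p^e$} $\Rightarrow$ linear poset) are not forced by the general theory beyond the bound of Proposition~\ref{prp:at-most-two-dims}: they are genuine features of these particular $111$ orders and so must be read off the computed data rather than proved in the abstract.
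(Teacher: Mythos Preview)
Your proposal is correct and matches the paper's approach: the paper offers no explicit proof for this proposition, treating it as a report on the output of Algorithm~\ref{algo} together with a direct inspection of the computed list, which is exactly what you describe. Your use of Proposition~\ref{prp:at-most-two-dims} to narrow the possibilities for $\mathcal H/\mathcal G$ in the $\mathfrak q^e$ case before the finite check is a sensible addition, but the substance is the same finite verification the paper relies on.
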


\subsection{Sizes of orbit intersections}
\label{sec:jacobi}

In this subsection, we shall go through the catalogue of Proposition~\ref{prp:kind-distribution} and provide a formula for the number of quaternions of a given reduced norm in each of the orders found. Let recall the notations $r_{\mathcal H}(\alpha)$ and $\sigma_{\mathfrak D}(\alpha)$ from Subsection~\ref{sec:counting-factorizations} and the fact that we proved the formula
\(
    r_{\mathcal H}(\alpha)=\#\mathcal H^1 \cdot \sigma_{\mathfrak D}(\alpha)
\)
when $\mathcal H$ is a maximal order of class number $1$.

The basic principle for all the proofs to come is in essence a restatement of Proposition~\ref{prp:conductors}(iii):
\begin{lemma}
\label{lem:orbit-intersections}
Suppose that the pair of orders $\mathcal G\subseteq\mathcal H$ is perceptive and has a linear poset of orders. Then for $\mathbf q\in\mathcal G$ such that $\mathcal M$ is the largest intermediate order $\mathcal G\subseteq\mathcal M\subseteq\mathcal H$ satisfying $\mathbf q\in\rcol{\mathcal G}{\mathcal M}$, we have $\#(\mathcal H^1\mathbf q\cap \mathcal G) = \#\mathcal M^1$.
\end{lemma}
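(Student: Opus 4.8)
The plan is to turn the statement into a counting problem for $\mathcal H^1$ and then identify precisely which units send $\mathbf q$ into $\mathcal G$. Assuming, as we may, that $\mathbf q\neq 0$, each $\mathbf u\in\mathcal H^1$ is invertible in $\mathcal A$, so left multiplication $\mathbf u\mapsto\mathbf u\mathbf q$ is injective and $\#(\mathcal H^1\mathbf q\cap\mathcal G)=\#\set{\mathbf u\in\mathcal H^1\mid\mathbf u\mathbf q\in\mathcal G}$. It therefore suffices to show this last set equals $\mathcal M^1$. One inclusion is immediate from the definition of $\mathcal M$: since $\mathbf q\in\rcol{\mathcal G}{\mathcal M}$ means $\mathcal M\mathbf q\subseteq\mathcal G$, every $\mathbf u\in\mathcal M^1\subseteq\mathcal M$ satisfies $\mathbf u\mathbf q\in\mathcal G$, so $\mathcal M^1\subseteq\set{\mathbf u\in\mathcal H^1\mid\mathbf u\mathbf q\in\mathcal G}$.

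For the reverse inclusion, given $\mathbf u\in\mathcal H^1$ with $\mathbf u\mathbf q\in\mathcal G$, I would form $\mathcal L:=\mathcal G+\mathcal G\mathbf u$, which is an order by Lemma~\ref{lem:its-an-order}(ii) and sits between $\mathcal G$ and $\mathcal H$ (using $\mathbf u\in\mathcal H$). Since $\mathbf q\in\mathcal G$ and $\mathbf u\mathbf q\in\mathcal G$, one computes $\mathcal L\mathbf q=\mathcal G\mathbf q+\mathcal G(\mathbf u\mathbf q)\subseteq\mathcal G$, i.e. $\mathbf q\in\rcol{\mathcal G}{\mathcal L}$. Now the hypothesis that the poset of intermediate orders is linear enters: $\mathcal L$ and $\mathcal M$ are comparable, and were $\mathcal M\subsetneq\mathcal L$, then $\mathcal L$ would be a strictly larger intermediate order with $\mathbf q\in\rcol{\mathcal G}{\mathcal L}$, contradicting the choice of $\mathcal M$ as the largest such. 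Hence $\mathcal L\subseteq\mathcal M$, so $\mathbf u\in\mathcal L\subseteq\mathcal M$, and as $\nrd(\mathbf u)=1$ we get $\mathbf u\in\mathcal M^1$. Combining the two inclusions with the injectivity observed above yields $\#(\mathcal H^1\mathbf q\cap\mathcal G)=\#\mathcal M^1$.

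The genuinely load-bearing step, and the main obstacle such as it is, is this comparability argument: linearity of the poset is exactly what upgrades ``$\mathbf q\in\rcol{\mathcal G}{\mathcal L}$ for some intermediate order $\mathcal L\ni\mathbf u$'' to ``$\mathbf u\in\mathcal M$'', since without it the conductor-supporting orders could be mutually incomparable. The remaining ingredients are routine: that $\mathcal L=\mathcal G+\mathcal G\mathbf u$ is an order lying between $\mathcal G$ and $\mathcal H$, and the one-line inclusion $\mathcal L\mathbf q\subseteq\mathcal G$. (Equivalently, one can run the whole argument inside $\mathcal H/\mathcal H_0$ with $\mathcal H_0=\rcol{\mathcal G}{\mathcal H}$, where Proposition~\ref{prp:conductors}(iii) computes $(\mathbf u_1\mathcal G/\mathcal H_0)\cap(\mathbf u_2\mathcal G/\mathcal H_0)$ and its part~(i) packages precisely the comparability used above; this is the sense in which the lemma restates Proposition~\ref{prp:conductors}(iii).)
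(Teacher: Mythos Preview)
Your proof is correct and follows essentially the same approach as the paper: both identify $\set{\mathbf u\in\mathcal H^1\mid\mathbf u\mathbf q\in\mathcal G}$ with $\mathcal M^1$ by associating to each $\mathbf u$ the intermediate order $\mathcal G+\mathcal G\mathbf u$ and invoking linearity of the poset. The paper phrases this via Proposition~\ref{prp:conductors}(i) (linearity of the conductor poset), whereas you use linearity of the order poset directly, but these are the same argument; your final parenthetical makes the connection explicit.
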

\begin{proof}
    We know that for $\mathbf u\in\mathcal H^1$ and $\mathcal M:=\mathcal G+\mathbf u\mathcal G$, it holds that $\mathbf u\mathbf q\in \mathcal G$ if and only if $\mathbf q\in\rcol{\mathcal G}{\mathcal M}$. Thus belonging to the respective right conductors determines which units take $\mathbf q$ to an element of $\mathcal G$. But since the poset of orders is linear, the poset of the right conductors is also linear by Proposition~\ref{prp:conductors}(i), so the maximal one of them that contains $\mathbf q$ determines the order whose units land $\mathbf q$ back in $\mathcal G$.
\end{proof}

\begin{lemma}
\label{lem:orbit-and-conductor}
Let $\mathcal M$ be an $\mathcal H$-perceptive suborder and consider a left principal ideal $\mathcal M\mathbf a$ for some $\mathbf a\in\mathcal M$. Then for $\mathbf q\in\mathcal H$, the orbit $\mathcal H^1\mathbf q$ intersects $\mathcal M\mathbf a$ if and only if $\mathbf q\in\mathcal H\mathbf a$.
\end{lemma}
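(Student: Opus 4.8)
The plan is to prove the two directions of the biconditional separately, using only two facts. First, each $\mathbf u\in\mathcal H^1$ is invertible inside $\mathcal H$: from $\mathbf u\overline{\mathbf u}=\nrd(\mathbf u)=1$ we read off $\inv{\mathbf u}=\overline{\mathbf u}\in\mathcal H$. Second, the defining property of $\mathcal M$ being $\mathcal H$-perceptive is that for every $\mathbf r\in\mathcal H$ there exists $\mathbf u\in\mathcal H^1$ with $\mathbf u\mathbf r\in\mathcal M$.

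For the forward direction, suppose the orbit $\mathcal H^1\mathbf q$ meets $\mathcal M\mathbf a$, say $\mathbf v\mathbf q=\mathbf m\mathbf a$ for some $\mathbf v\in\mathcal H^1$ and $\mathbf m\in\mathcal M$. Multiplying on the left by $\inv{\mathbf v}\in\mathcal H$ gives $\mathbf q=(\inv{\mathbf v}\mathbf m)\mathbf a$; since $\inv{\mathbf v}\in\mathcal H$ and $\mathbf m\in\mathcal M\subseteq\mathcal H$, the product $\inv{\mathbf v}\mathbf m$ lies in $\mathcal H$, so $\mathbf q\in\mathcal H\mathbf a$.

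For the converse, suppose $\mathbf q=\mathbf r\mathbf a$ with $\mathbf r\in\mathcal H$. By $\mathcal H$-perceptivity of $\mathcal M$, choose $\mathbf u\in\mathcal H^1$ with $\mathbf u\mathbf r\in\mathcal M$. Then $\mathbf u\mathbf q=(\mathbf u\mathbf r)\mathbf a\in\mathcal M\mathbf a$, so $\mathcal H^1\mathbf q$ meets $\mathcal M\mathbf a$, as desired.

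I do not expect a genuine obstacle here: the statement is a direct bookkeeping consequence of perceptivity together with the fact that norm-one elements are two-sided units of $\mathcal H$. The only point to watch is that $\mathbf a$ is assumed to lie in $\mathcal M$ (hence in $\mathcal H$), which is precisely what keeps all the ambient products inside the relevant orders and makes $\mathcal M\mathbf a\subseteq\mathcal H\mathbf a$; but this is automatic from the hypotheses.
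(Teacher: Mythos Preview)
Your proof is correct and essentially identical to the paper's: both directions are handled exactly as you do, multiplying by $\inv{\mathbf u}$ for the forward implication and invoking perceptivity to land $\mathbf u\mathbf r$ in $\mathcal M$ for the converse. Your added remarks about why $\inv{\mathbf u}\in\mathcal H$ and why $\mathbf a\in\mathcal M$ matters are accurate but not needed beyond what the paper implicitly assumes.
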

\begin{proof}
In one direction, if $\mathbf u \mathbf q = \mathbf m\mathbf a$ for some $\mathbf u\in\mathcal H^1$, $\mathbf m\in\mathcal M$, then $\mathbf q=(\inv{\mathbf u}\mathbf m)\mathbf a \in\mathcal H\mathbf a$. In the other direction, if $\mathbf q = \mathbf h\mathbf a$  for some $\mathbf h\in\mathcal H$, then by perceptivity, we find a $\mathbf u\in\mathcal H^1$ such that $\mathbf u\mathbf h\in\mathcal M$, so then $\mathbf u\mathbf q = (\mathbf u\mathbf h)\mathbf a \in \mathcal M\mathbf a$.
\end{proof}

Combining the two lemmata, we may say: if $\mathcal G$, $\mathcal H$ is a perceptive pair with a linear poset of orders and $\rcol{\mathcal G}{\mathcal M} =\mathcal M\mathbf a_{\mathcal M}$ for each of the intermediate orders, then $\#(\mathcal H^1\mathbf q\cap \mathcal G) = \#\mathcal M^1$, where $\mathcal M$ is the largest intermediate order such that $\mathbf q\in\mathcal H\mathbf a_{\mathcal M}$. We may also notice in this situation that if $\mathcal M_1\subseteq\mathcal M_2$, then $\mathbf a_{\mathcal M_1}$ divides $\mathbf a_{\mathcal M_2}$ from the right as elements of $\mathcal H$.

In the following theorem and others like it, we of course take only totally positive $\alpha$. Since some ideas in the proofs will repeat themselves, we will present these proofs more thoroughly in the beginning and then gradually more shortly, referring to repetitions of ideas from previous proofs. We will also illustrate some of the theorems with examples of particular quadratic forms over $\zet$ that such orders correspond to.
\begin{thm}
\label{thrm:kind-q-formula}
Let $\mathcal G$ be a perceptive suborder of the kind $\mathfrak q$ in a maximal order $\mathcal H$ with class number $1$ and $\discrd \mathcal H = \mathfrak D = \mathfrak p_1\cdots\mathfrak p_k$. Then
\begin{align}
\nonumber
    r_{\mathcal G}(\alpha) &= 2\#\mathcal G^1\cdot \sum_{\mathfrak p_1,\dots,\mathfrak p_k\nmid \delta\OK\mid \alpha\OK}\Nm(\delta\OK) - \#\mathcal G^1 \sum_{\mathfrak q,\mathfrak p_1,\dots,\mathfrak p_k\nmid \delta\OK\mid \alpha\OK}\Nm(\delta\OK) =\\
\label{eq:kind-q}
    &= 2\#\mathcal G^1 \sigma_{\mathfrak D}(\alpha) - \#\mathcal G^1\sigma_{\mathfrak q\mathfrak D}(\alpha).
\end{align}
\end{thm}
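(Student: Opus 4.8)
The plan is to compare $r_{\mathcal G}(\alpha)$ with $r_{\mathcal H}(\alpha)=\#\mathcal H^1\,\sigma_{\mathfrak D}(\alpha)$ (Theorem~\ref{thrm:kind-max-formula}) by tracking, orbit by orbit, how each $\mathcal H^1$-orbit of reduced norm $\alpha$ meets $\mathcal G$. First I would record the structure forced by $\mathcal G$ being of the kind $\mathfrak q$: the index $[\mathcal H:\mathcal G]_{\OK}=\mathfrak q$ is a prime with $\mathfrak q\nmid\mathfrak D$ and $\mathcal H/\mathcal G\simeq\OK/\mathfrak q$, so by Proposition~\ref{prp:smallest-steps}(i) the only intermediate orders are $\mathcal G$ and $\mathcal H$ and the poset of orders is the (linear) chain $\mathcal G\subsetneq\mathcal H$; Proposition~\ref{prp:perceptive-one-dim-step} then gives $\#\mathcal H^1=(q+1)\#\mathcal G^1$ with $q:=\Nm(\mathfrak q)$. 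Next I would pin down the conductor $\mathcal H_0:=\rcol{\mathcal G}{\mathcal H}$: it is a left ideal of the PID $\mathcal H$, hence $\mathcal H_0=\mathcal H\mathbf b$; combining $[\mathcal G:\mathcal H_0]_{\OK}=[\mathcal H:\mathcal G]_{\OK}=\mathfrak q$ (Proposition~\ref{prp:conductors}(i)) with multiplicativity of indices and $[\mathcal H:\mathcal H\mathbf b]_{\OK}=\nrd(\mathbf b)^2$ (Proposition~\ref{prp:discs-and-indices}(ii)) gives $\nrd(\mathbf b)\OK=\mathfrak q$, and since $K$ has narrow class number $1$ I may rescale $\mathbf b$ by a square unit so that $\nrd(\mathbf b)=\pi$ is a totally positive generator of $\mathfrak q$.

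For the orbit count: $\mathcal A$ is a division algebra, so $\mathcal H^1$ acts freely on $\mathcal H\setminus\{0\}$; every orbit of an element of reduced norm $\alpha$ thus has exactly $\#\mathcal H^1$ elements, and there are $r_{\mathcal H}(\alpha)/\#\mathcal H^1=\sigma_{\mathfrak D}(\alpha)$ of them, each meeting $\mathcal G$ by perceptivity. Applying Lemma~\ref{lem:orbit-intersections} to the chain $\mathcal G\subsetneq\mathcal H$, for $\mathbf q\in\mathcal G$ the intersection $\mathcal H^1\mathbf q\cap\mathcal G$ has size $\#\mathcal H^1$ when $\mathbf q\in\mathcal H_0$ and $\#\mathcal G^1$ otherwise; and since $\mathcal H_0$ is a left $\mathcal H$-ideal, an orbit lies entirely inside $\mathcal H_0$ or entirely outside it. Writing $A$ for the number of orbits of reduced norm $\alpha$ contained in $\mathcal H_0$, I then get $r_{\mathcal G}(\alpha)=A\,\#\mathcal H^1+(\sigma_{\mathfrak D}(\alpha)-A)\,\#\mathcal G^1=\#\mathcal G^1\,(qA+\sigma_{\mathfrak D}(\alpha))$ using $\#\mathcal H^1=(q+1)\#\mathcal G^1$.

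To evaluate $A$ I would note (via Lemma~\ref{lem:orbit-and-conductor} applied to the trivially $\mathcal H$-perceptive order $\mathcal H$ and the ideal $\mathcal H\mathbf b=\mathcal H_0$, or simply because $\mathcal H_0$ is a left ideal) that the elements of $\mathcal H_0$ of reduced norm $\alpha$ are exactly the $\mathbf h\mathbf b$ with $\mathbf h\in\mathcal H$ and $\nrd(\mathbf h)=\alpha\pi^{-1}$; as $\mathbf b$ is invertible in $\mathcal A$, $\mathbf h\mapsto\mathbf h\mathbf b$ is injective, so the number of such elements is $r_{\mathcal H}(\alpha\pi^{-1})$, which is $0$ unless $\mathfrak q\mid\alpha\OK$ (in which case $\alpha\pi^{-1}\in\OK^+$), and since this set is a disjoint union of full $\mathcal H^1$-orbits, dividing by $\#\mathcal H^1$ yields $A=\sigma_{\mathfrak D}(\alpha\pi^{-1})$ under the convention $\sigma_{\mathfrak D}(\gamma)=0$ for $\gamma\notin\OK$. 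The last step is the multiplicative identity $qA=\sigma_{\mathfrak D}(\alpha)-\sigma_{\mathfrak q\mathfrak D}(\alpha)$: grouping the $\mathfrak D$-coprime divisors of $\alpha\OK$ by their $\mathfrak q$-part and writing $v$ for the $\mathfrak q$-adic valuation of $\alpha\OK$ gives $\sigma_{\mathfrak D}(\alpha)=(1+q+\dots+q^v)\,\sigma_{\mathfrak q\mathfrak D}(\alpha)$ and $\sigma_{\mathfrak D}(\alpha\pi^{-1})=(1+q+\dots+q^{v-1})\,\sigma_{\mathfrak q\mathfrak D}(\alpha)$ (empty sum $0$, matching the convention when $v=0$), so $qA=(q+\dots+q^v)\,\sigma_{\mathfrak q\mathfrak D}(\alpha)=\sigma_{\mathfrak D}(\alpha)-\sigma_{\mathfrak q\mathfrak D}(\alpha)$; substituting gives $r_{\mathcal G}(\alpha)=\#\mathcal G^1\,(2\sigma_{\mathfrak D}(\alpha)-\sigma_{\mathfrak q\mathfrak D}(\alpha))$, which is \eqref{eq:kind-q} once one observes that the two displayed sums in the statement are precisely $\sigma_{\mathfrak D}(\alpha)$ and $\sigma_{\mathfrak q\mathfrak D}(\alpha)$.

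\textbf{Main obstacle.} I expect the genuinely delicate point to be the clean identification of the conductor $\rcol{\mathcal G}{\mathcal H}$ as a principal left ideal $\mathcal H\mathbf b$ with $\nrd(\mathbf b)$ a totally positive prime generating $\mathfrak q$, together with keeping the $v=0$ (equivalently $\alpha\pi^{-1}\notin\OK$) boundary case consistent throughout; once those are in place, the remainder is bookkeeping with indices, free group actions, and the elementary divisor-sum identity.
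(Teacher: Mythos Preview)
Your proposal is correct and follows essentially the same approach as the paper: identify the conductor $\rcol{\mathcal G}{\mathcal H}=\mathcal H\mathbf b$ with $\nrd(\mathbf b)\OK=\mathfrak q$, use Lemma~\ref{lem:orbit-intersections} to split the $\sigma_{\mathfrak D}(\alpha)$ orbits into those inside $\mathcal H\mathbf b$ (contributing $\#\mathcal H^1$ each) and the rest (contributing $\#\mathcal G^1$ each), and then count the former. The only cosmetic difference is that the paper first reduces to $\alpha=\pi^e$ via the factorization $\alpha=\beta\pi^e$ and computes $r_{\mathcal G}(\pi^e)$ explicitly before multiplying back by $\sigma_{\mathfrak D}(\beta)$, whereas you handle general $\alpha$ in one stroke via the bijection $\mathbf h\mapsto\mathbf h\mathbf b$ and the divisor-sum identity $q\,\sigma_{\mathfrak D}(\alpha\pi^{-1})=\sigma_{\mathfrak D}(\alpha)-\sigma_{\mathfrak q\mathfrak D}(\alpha)$; this is a mild streamlining of the same argument.
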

\begin{proof}
We have $[\mathcal H:\mathcal G]_{\OK}=\mathfrak q$, hence $\mathcal H/\mathcal G\simeq\OK/\mathfrak q$, so $\mathcal G$, $\mathcal H$ trivially has a linear poset of orders. Further, perceptivity implies that $\#\mathcal H^1 = \#\mathcal G^1\cdot(\Nm(\mathfrak q)+1)$.

We have that $\rcol{\mathcal G}{\mathcal H}$ is a left ideal of $\mathcal H$, which is a principal ideal domain (being a maximal order of class number $1$), so $\rcol{\mathcal G}{\mathcal H} = \mathcal H\mathbf a $ for some $\mathbf a\in \rcol{\mathcal G}{\mathcal H}$. Further, we know that $[\mathcal H:\rcol{\mathcal G}{\mathcal H}]_{\OK} = \mathfrak q^2$ and $[\mathcal H:\mathcal H\mathbf a ]_{\OK} = \nrd(\mathbf a)^2\OK$, hence we must have $\nrd(\mathbf a)\OK = \mathfrak q$.
Additionally, $\rcol{\mathcal G}{\mathcal G} = \mathcal G = \mathcal G\cdot 1$ trivially. So applying the consequence of the two lemmata above, we see that
\[
    \#(\mathcal H^1\cap \mathcal G) = \begin{cases}
        \#\mathcal H^1, & \text{if $\mathbf q\in\mathcal H\mathbf a$,}\\
        \#\mathcal G^1, & \text{otherwise}
    \end{cases}
\]
for $\mathbf q\in\mathcal H$.

Denote $\pi:=\nrd(\mathbf a)\in\OK^+$, a generator of $\mathfrak q$, and $q:=\Nm(\pi)=\Nm(\mathfrak q)$.
We will consider an $\alpha\in\OK^+$ and factorize it as $\alpha=\beta\pi^e$ for some $e\geq0$ and $\beta\in\OK^+$ with $\pi\nmid\beta$. Let us count how many of the $\sigma_{\mathfrak D}(\alpha) = \sigma_{\mathfrak D}(\beta)\sigma_{\mathfrak D}(\pi^e)$ orbits with reduced norm $\alpha$ that exist in $\mathcal H$ are contained in $\mathcal H\mathbf a$.
By Proposition~\ref{prp:coprime-factorization}, we may consider quaternions of reduced norm $\alpha$ in $\mathcal H$ factorized in the form $\mathbf b\mathbf q$ with $\nrd(\mathbf b)=\beta$, $\nrd(\mathbf q)=\pi^e$. Due to the coprime reduced norms, we have $\mathbf b\mathbf q\in\mathcal H\mathbf a$ if and only if $\mathbf q\in\mathcal H\mathbf a$ (e.g. because $\mathbf b$ will be invertible is some $\mathcal H/\pi^a\mathcal H$ such that $\pi^a\mathcal H\subseteq\mathcal H\mathbf a$), so it suffices to count $r_{\mathcal G}(\pi^e)$ and then multiply from the left by $\sigma_{\mathfrak D}(\beta)$.

Now, orbits $\mathcal H^1\mathbf q$ of quaternions of reduced norm $\pi^e$ that lie in $\mathcal H\mathbf a$ are exactly of the form $\mathcal H\mathbf q_0\mathbf a$ for $\nrd(\mathbf q_0)=\pi^{e-1}$, so there are as many of them as orbits of reduced norm $\pi^{e-1}$, that is $(1+q+\cdots+q^{e-1})$  for $e\geq 1$ and $0$ otherwise. The remaining $q^e$ orbits then do not lie in $\mathcal H\mathbf a$. Thus we calculate for $e\geq 1$ that
\begin{align*}
    r_{\mathcal G}(\pi^e) &= (1+q+\cdots+q^{e-1})\#\mathcal H^1 + q^e\#\mathcal G^1 =\\&= \#\mathcal G^1\zav{(1+q+\cdots+q^{e-1})(q+1)+q^e} =\\&= \#\mathcal G^1\zav{ 2(1+q+\cdots+q^e)- 1} = 2\#\mathcal G^1\sigma_{\mathfrak D}(\pi^e) - \#\mathcal G^1\sigma_{\mathfrak q\mathfrak D}(\pi^e),
\end{align*}
and a posteriori we observe that this also happens to be true for $e=0$.
Hence multiplying by $\sigma_{\mathfrak D}(\beta)$ and using fact that $\sigma_{\mathfrak D}(\beta) = \sigma_{\mathfrak q\mathfrak D}(\beta)$ due to $\beta\notin\mathfrak q$, we obtain $r_{\mathcal G}(\alpha) = \#\mathcal G^1\cdot(2\sigma_{\mathfrak D}(\alpha) - \sigma_{\mathfrak q\mathfrak D}(\alpha))$ as desired.
\end{proof}

Note that just by rearranging the right-hand side of \eqref{eq:kind-q}, it may also be written as
\[
    r_{\mathcal G}(\alpha) = \#\mathcal G^1\sum_{\mathfrak p_1,\dots,\mathfrak p_k\nmid \delta\OK\mid \alpha\OK}\Nm(\delta\OK) + \#\mathcal G^1 \sum_{\substack{\mathfrak p_1,\dots,\mathfrak p_k\nmid \delta\OK\mid \alpha\OK\\\mathfrak q\mid \delta\OK}}\Nm(\delta\OK).
\]

\begin{example}
    Inside the Hurwitz order $\mathcal H:=\zet\oplus\zet\ii\oplus\zet\j\oplus\zet\frac{1+\ii+\j+\k}2$, a maximal order of class number $1$ and reduced discriminant $2\zet$, the suborder $\mathcal G:= \zet\oplus\zet11\ii\oplus\zet(\j-\ii)\oplus\zet\frac{1+7\ii+\j+\k}2$ is perceptive and of the kind $11\zet$, whilst having $\#\mathcal G^1=2$. So writing out the quadratic form $(\mathcal G,\nrd)$ and applying the proposition, we see that for a positive integer $n$, the equation
    \[
        t^2 + 121x^2+2y^2+ 13z^2 + tz - 22xy + 77xz - 6yz = n
    \]
    has exactly
    \(\displaystyle
        4\sum_{2\nmid d\mid n}d - 2\sum_{11,2\nmid d\mid n}d
    \)
    solutions.
\end{example}

\begin{thm}
\label{thrm:kind-p-formula}
Let $\mathcal G$ be a perceptive suborder of the kind $\mathfrak p$ in a maximal order $\mathcal H$ with class number $1$ and $\discrd \mathcal H = \mathfrak D = \mathfrak p\mathfrak p_2\cdots\mathfrak p_k$. Then
\begin{equation}
\label{eq:kind-p}
    r_{\mathcal G}(\alpha) = \#\mathcal G^1 \sum_{\mathfrak p^2,\mathfrak p_2,\dots,\mathfrak p_k\nmid \delta\OK\mid \alpha\OK}\Nm(\delta\OK).
\end{equation}
\end{thm}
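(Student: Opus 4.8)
The plan is to mirror the proof of Theorem~\ref{thrm:kind-q-formula} almost verbatim, the one genuine difference being that a kind-$\mathfrak p$ prime is \emph{ramified} in $\mathcal A$, so it behaves as a single prime of $\mathcal H$. First I would record the elementary data: since $\mathfrak p$ is prime, $[\mathcal H:\mathcal G]_{\OK}=\mathfrak p$ gives $\mathcal H/\mathcal G\simeq\OK/\mathfrak p$, so the pair $\mathcal G$, $\mathcal H$ trivially has a linear poset of orders (its only intermediate orders being $\mathcal G$ and $\mathcal H$). By perceptivity and Proposition~\ref{prp:perceptive-one-dim-step} with $e=1$, $\#\mathcal H^1=(q+1)\#\mathcal G^1$ where $q:=\Nm(\mathfrak p)$. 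The right conductor $\rcol{\mathcal G}{\mathcal H}$ is a left ideal of the principal ideal domain $\mathcal H$, hence $\rcol{\mathcal G}{\mathcal H}=\mathcal H\mathbf a$, and comparing $[\mathcal H:\rcol{\mathcal G}{\mathcal H}]_{\OK}=\mathfrak p^2$ (Proposition~\ref{prp:conductors}(i)) with $[\mathcal H:\mathcal H\mathbf a]_{\OK}=\nrd(\mathbf a)^2\OK$ (Proposition~\ref{prp:discs-and-indices}(ii)) forces $\nrd(\mathbf a)\OK=\mathfrak p$; put $\pi:=\nrd(\mathbf a)\in\OK^+$. Using $\rcol{\mathcal G}{\mathcal G}=\mathcal G\cdot1$ and the consequence of Lemmas~\ref{lem:orbit-intersections} and~\ref{lem:orbit-and-conductor}, every $\mathbf q\in\mathcal H$ satisfies $\#(\mathcal H^1\mathbf q\cap\mathcal G)=\#\mathcal H^1$ if $\mathbf q\in\mathcal H\mathbf a$ and $\#(\mathcal H^1\mathbf q\cap\mathcal G)=\#\mathcal G^1$ otherwise.

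Next I would count. There are $\sigma_{\mathfrak D}(\alpha)=r_{\mathcal H}(\alpha)/\#\mathcal H^1$ orbits of reduced norm $\alpha$ in $\mathcal H$ (by Theorem~\ref{thrm:kind-max-formula}, since $\mathcal H^1$ acts freely on nonzero quaternions), and they biject with the left ideals of $\mathcal H$ of reduced norm $\alpha\OK$; so $r_{\mathcal G}(\alpha)=N\cdot\#\mathcal H^1+(\sigma_{\mathfrak D}(\alpha)-N)\cdot\#\mathcal G^1$, where $N$ counts those ideals contained in $\mathcal H\mathbf a$. Here is the crux, and the only place $\mathfrak p\mid\discrd\mathcal H$ enters: $\mathcal A_{\mathfrak p}$ is the division algebra, so $\mathcal H_{\mathfrak p}$ has a unique maximal ideal $P$ (two-sided, $P^2=\pi\mathcal H_{\mathfrak p}$) whose powers $P^k$, of reduced norm $\mathfrak p^k$, are its only left ideals; and $(\mathcal H\mathbf a)_{\mathfrak p}=P$ while $(\mathcal H\mathbf a)_{\mathfrak q}=\mathcal H_{\mathfrak q}$ for $\mathfrak q\neq\mathfrak p$. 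Hence any left ideal $I$ of reduced norm $\alpha\OK$ has $I_{\mathfrak p}=P^{v_{\mathfrak p}(\alpha)}$, and by the local--global dictionary $I\subseteq\mathcal H\mathbf a$ iff $P^{v_{\mathfrak p}(\alpha)}\subseteq P$, i.e.\ iff $\mathfrak p\mid\alpha\OK$. So $N=\sigma_{\mathfrak D}(\alpha)$ when $\mathfrak p\mid\alpha\OK$ and $N=0$ otherwise --- the \uv{all or nothing} alternative replacing the proportional split of Theorem~\ref{thrm:kind-q-formula} --- giving $r_{\mathcal G}(\alpha)=(q+1)\#\mathcal G^1\,\sigma_{\mathfrak D}(\alpha)$ if $\mathfrak p\mid\alpha\OK$ and $r_{\mathcal G}(\alpha)=\#\mathcal G^1\,\sigma_{\mathfrak D}(\alpha)$ otherwise. (Alternatively, one can follow Theorem~\ref{thrm:kind-q-formula} even more closely, factoring $\alpha=\beta\pi^e$ with $\pi\nmid\beta$, invoking Proposition~\ref{prp:coprime-factorization}, and using the identity $\sigma_{\mathfrak D}(\pi^j)=1$ which holds precisely because $\mathfrak p\mid\mathfrak D$.)

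Finally I would match this against \eqref{eq:kind-p}. The sum $S(\alpha):=\sum_{\mathfrak p^2,\mathfrak p_2,\dots,\mathfrak p_k\nmid\delta\OK\mid\alpha\OK}\Nm(\delta\OK)$ differs from $\sigma_{\mathfrak D}(\alpha)$ only in that a divisor $\delta\OK$ may now carry $\mathfrak p$ to the first power; writing $\alpha=\beta\pi^e$ with $\pi\nmid\beta$, the $\mathfrak p$-free divisors in $S(\alpha)$ contribute $\sigma_{\mathfrak D}(\beta)=\sigma_{\mathfrak D}(\alpha)$, and (only when $e\geq1$) the divisors of the form $\mathfrak p\mathfrak e$ contribute $q\,\sigma_{\mathfrak D}(\beta)=q\,\sigma_{\mathfrak D}(\alpha)$, so $S(\alpha)=(q+1)\sigma_{\mathfrak D}(\alpha)$ when $\mathfrak p\mid\alpha\OK$ and $S(\alpha)=\sigma_{\mathfrak D}(\alpha)$ otherwise --- exactly the two cases above, so $\#\mathcal G^1\,S(\alpha)=r_{\mathcal G}(\alpha)$. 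I do not anticipate a genuine obstacle: the whole argument is a routine adaptation of Theorem~\ref{thrm:kind-q-formula}, and the only thing needing care is the local analysis at the ramified prime that collapses $\mathfrak p$ to a single prime of $\mathcal H$, so that $\sigma_{\mathfrak D}$ sees no $\mathfrak p$-divisors and the orbit count behaves in an all-or-nothing fashion.
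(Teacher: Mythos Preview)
Your proposal is correct and follows essentially the same route as the paper: set up the conductor $\rcol{\mathcal G}{\mathcal H}=\mathcal H\mathbf a$ with $\nrd(\mathbf a)\OK=\mathfrak p$, observe that orbit intersections have size $\#\mathcal H^1$ or $\#\mathcal G^1$ according to membership in $\mathcal H\mathbf a$, and then show this membership depends only on whether $\pi\mid\nrd(\mathbf q)$. The only cosmetic difference is that you establish the last point via the local structure of $\mathcal H_{\mathfrak p}$ as a valuation ring in the division algebra, whereas the paper simply invokes the fact (from Section~\ref{chap:factorizations}) that there is a unique $\mathcal H^1$-orbit of reduced norm $\pi^e$ for each $e$; these are two phrasings of the same observation.
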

\begin{proof}
The proof of this proposition follows mostly similarly to the previous one, so let us simply treat the few differences and leave out what would be merely a repetition.

The characterization of the sizes of intersections of orbits with $\mathcal G$ is analogous: we have $\rcol{\mathcal G}{\mathcal H}=\mathcal H\mathbf a$ with $\nrd(\mathbf a) = \pi$ being a generator of $\mathfrak p$. The discussion now turns rather trivial though, because for all $e$, there is only one orbit of quaternions of reduced norm $\pi^e$, hence $\mathbf q\in \mathcal H\mathbf a$ if and only if $\pi\mid\nrd(\mathbf q)$.

So, splitting the orbits based on the size of their intersections with $\mathcal G$ again, we just get
\begin{align*}
r_{\mathcal G}(\alpha) = \begin{cases}
    \#\mathcal H^1 \sigma_{\mathfrak D}(\alpha), & \text{if $\pi\mid\alpha$},\\
    \#\mathcal G^1 \sigma_{\mathfrak D}(\alpha), & \text{otherwise}.
\end{cases}
\end{align*}
Then we finish by noticing that this agrees with \eqref{eq:kind-p} simply by the virtue of $\#\mathcal H^1 = (\Nm(\mathfrak p)+1)\#\mathcal G^1$, the looser condition $\mathfrak p^2\nmid\delta\OK$ exactly adding a factor of $\Nm(\mathfrak p)+1$ for $\alpha$'s divisible by $\pi$ compared to the stricter $\mathfrak p\nmid\delta\OK$.
\end{proof}

\begin{example}
 Inside the Hurwitz order $\mathcal H:=\zet\oplus\zet\ii\oplus\zet\j\oplus\zet\frac{1+\ii+\j+\k}2$, a maximal order of class number $1$ and reduced discriminant $2\zet$, the Lipschitz order $\mathcal G:= \zet\oplus\zet\ii\oplus\zet\j\oplus\zet\k$ is perceptive and of the kind $2\zet$, having $\#\mathcal G^1=8$. Hence we see that
    \[
        t^2 + x^2 + y^2+ z^2= n
    \]
    has exactly
    \(\displaystyle
        8\sum_{4\nmid d\mid n}d
    \)
    solutions, i.e. we have recovered Jacobi's four-square theorem.
\end{example}

\begin{thm}
\label{thrm:kind-q2-formula}
    Let $\mathcal G$ be a perceptive suborder of class number $1$ in a maximal order $\mathcal H$ such that $\mathcal H/\mathcal G\simeq \OK/\mathfrak q^2$ and let $\mathcal H$ have $\mathfrak q\nmid\discrd \mathcal H = \mathfrak D = \mathfrak p_1\cdots\mathfrak p_k$. If $q:=\Nm(\mathfrak q)$ and $\alpha\in\mathfrak q^e$ but $\alpha\notin\mathfrak q^{e+1}$ for some $e\geq0$, then
\begin{equation}
\label{eq:kind-q2}
    r_{\mathcal G}(\alpha) = \#\mathcal G^1\sum_{\mathfrak q,\mathfrak p_1,\dots,\mathfrak p_k\nmid\delta\OK\mid\alpha\OK}\Nm(\delta\OK) \cdot \begin{cases}
        1, & \text{$e=0$,}\\
        2q, & \text{$e=1$,}\\
        2(q+\cdots+q^e)+q^e-q,
        &\text{$e\geq2$.}
    \end{cases}
\end{equation}
\end{thm}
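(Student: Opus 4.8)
The plan is to follow the template of the proof of Theorem~\ref{thrm:kind-q-formula}, inserting the unique intermediate order between $\mathcal G$ and $\mathcal H$. Since $\#\Cls\mathcal H=1$ forces $\#\NCl K=1$ (Proposition~\ref{prp:cls-implications}), write $\mathfrak q=\pi\OK$. As $\mathcal H/\mathcal G\simeq\OK/\mathfrak q^2$ is cyclic, its only proper nonzero submodule yields the only intermediate order $\mathcal M:=\mathcal G+\mathfrak q\mathcal H$ (an order by Lemma~\ref{lem:its-an-order}(i)), with $\mathcal H/\mathcal M\simeq\OK/\mathfrak q\simeq\mathcal M/\mathcal G$; thus $\mathcal G\subsetneq\mathcal M\subsetneq\mathcal H$ is a linear poset of orders. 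Perceptivity of $\mathcal G$ in $\mathcal H$ forces $\#\mathcal H^1=(q^2+q)\#\mathcal G^1$ by Proposition~\ref{prp:perceptive-one-dim-step} (with $e=2$); moreover $\mathcal M$ is $\mathcal H$-perceptive by Proposition~\ref{prp:composing-perceptivity}(ii), so Proposition~\ref{prp:perceptive-one-dim-step} again gives $\#\mathcal H^1=(q+1)\#\mathcal M^1$, hence $\#\mathcal M^1=q\,\#\mathcal G^1$.

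Next I would show the right conductors are principal and compute their reduced norms. Since $\mathcal H$ is a PID, $\rcol{\mathcal G}{\mathcal H}=\mathcal H\mathbf a_{\mathcal H}$, and Proposition~\ref{prp:conductors}(i) together with Proposition~\ref{prp:discs-and-indices}(ii) gives $[\mathcal H:\rcol{\mathcal G}{\mathcal H}]_{\OK}=\mathfrak q^4$, so $\nrd(\mathbf a_{\mathcal H})\OK=\mathfrak q^2$; rescaling by a totally positive unit (a square) we may take $\nrd(\mathbf a_{\mathcal H})=\pi^2$. For $\rcol{\mathcal G}{\mathcal M}$, which is a left $\mathcal M$-ideal, its left order is $\mathcal M$ or $\mathcal H$ by linearity of the poset; the latter is impossible because $[\mathcal H:\rcol{\mathcal G}{\mathcal M}]_{\OK}=\mathfrak q^3$ is not a square (Proposition~\ref{prp:discs-and-indices}(ii)), so the left order is $\mathcal M$. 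Now $\mathcal M$ is an $\mathcal H$-perceptive suborder of kind $\mathfrak q$, hence has class number $1$ by Proposition~\ref{prp:kind-distribution}; using $\overline{\mathcal M}=\mathcal M$ (for $\mathbf m$ in an order, $\overline{\mathbf m}=\trd(\mathbf m)-\mathbf m\in\mathcal M$), the lattice $\overline{\rcol{\mathcal G}{\mathcal M}}$ is a right $\mathcal M$-ideal with right order $\mathcal M$, so it is principal, whence $\rcol{\mathcal G}{\mathcal M}=\mathcal M\mathbf a_{\mathcal M}$. Proposition~\ref{prp:conductors}(i) gives $[\mathcal M:\rcol{\mathcal G}{\mathcal M}]_{\OK}=\mathfrak q^2$, so $\nrd(\mathbf a_{\mathcal M})\OK=\mathfrak q$ and we may take $\nrd(\mathbf a_{\mathcal M})=\pi$. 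Finally $\mathbf a_{\mathcal H}\in\rcol{\mathcal G}{\mathcal H}\subseteq\rcol{\mathcal G}{\mathcal M}=\mathcal M\mathbf a_{\mathcal M}$, so $\mathbf a_{\mathcal H}=\mathbf m\mathbf a_{\mathcal M}$ for some $\mathbf m\in\mathcal M$, and in particular $\mathcal H\mathbf a_{\mathcal H}\subseteq\mathcal H\mathbf a_{\mathcal M}$.

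With this in hand, the consequence of Lemmata~\ref{lem:orbit-intersections} and~\ref{lem:orbit-and-conductor} applies: for $\mathbf q\in\mathcal H$ we have $\#(\mathcal H^1\mathbf q\cap\mathcal G)=(q^2+q)\#\mathcal G^1$ if $\mathbf q\in\mathcal H\mathbf a_{\mathcal H}$, $=q\,\#\mathcal G^1$ if $\mathbf q\in\mathcal H\mathbf a_{\mathcal M}\setminus\mathcal H\mathbf a_{\mathcal H}$, and $=\#\mathcal G^1$ otherwise. I would then count, among the $\sigma_{\mathfrak D}(\alpha)$ orbits of reduced norm $\alpha$ in $\mathcal H$ (using $r_{\mathcal H}(\alpha)=\#\mathcal H^1\sigma_{\mathfrak D}(\alpha)$ from Theorem~\ref{thrm:kind-max-formula}), exactly as in Theorem~\ref{thrm:kind-q-formula}: an orbit has a representative in $\mathcal H\mathbf a_{\mathcal M}$ precisely when it is of the form $\mathcal H^1\mathbf q_0\mathbf a_{\mathcal M}$ with $\nrd(\mathbf q_0)=\alpha/\pi$, so such orbits biject with orbits of reduced norm $\alpha/\pi$ and there are $\sigma_{\mathfrak D}(\alpha/\pi)$ of them (zero unless $\mathfrak q\mid\alpha\OK$); likewise those with a representative in $\mathcal H\mathbf a_{\mathcal H}$ biject with orbits of reduced norm $\alpha/\pi^2$, giving $\sigma_{\mathfrak D}(\alpha/\pi^2)$ (zero unless $\mathfrak q^2\mid\alpha\OK$). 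Summing the contributions gives
\[
  r_{\mathcal G}(\alpha)=\#\mathcal G^1\Bigl(\sigma_{\mathfrak D}(\alpha)+(q-1)\,\sigma_{\mathfrak D}(\alpha/\pi)+q^2\,\sigma_{\mathfrak D}(\alpha/\pi^2)\Bigr).
\]

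To finish, write $\alpha=\beta\pi^e$ with $\pi\nmid\beta$; by multiplicativity of $\sigma_{\mathfrak D}$ and $\sigma_{\mathfrak q\mathfrak D}$, and since $\mathfrak q\nmid\mathfrak D$, one has $\sigma_{\mathfrak D}(\alpha/\pi^j)=\sigma_{\mathfrak D}(\beta)(1+q+\cdots+q^{e-j})$ (the sum empty when $e<j$) and $\sigma_{\mathfrak q\mathfrak D}(\alpha)=\sigma_{\mathfrak D}(\beta)$. Substituting and collapsing the geometric sums separately in the cases $e=0$, $e=1$, $e\ge 2$ yields exactly the three cases of \eqref{eq:kind-q2}; for $e\ge 2$ this is the identity $(1+\cdots+q^e)+(q^e-1)+(q^2+\cdots+q^e)=2(q+\cdots+q^e)+q^e-q$. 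The one genuinely new point compared with Theorem~\ref{thrm:kind-q-formula} — and the step I expect to be the main obstacle — is establishing that the intermediate right conductor $\rcol{\mathcal G}{\mathcal M}$ is a principal left ideal of $\mathcal M$ even though $\mathcal M$ is not a PID; this is precisely what the non-square-index argument for its left order, combined with the class number $1$ of $\mathcal M$ and the standard involution, is for.
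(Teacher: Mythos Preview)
Your proof is correct and follows essentially the same route as the paper's: identify the linear poset $\mathcal G\subsetneq\mathcal M\subsetneq\mathcal H$, show the right conductors are principal left ideals of the respective orders (the key non-obvious point being $\rcol{\mathcal G}{\mathcal M}=\mathcal M\mathbf a_{\mathcal M}$), read off the orbit-intersection sizes via Lemmata~\ref{lem:orbit-intersections} and~\ref{lem:orbit-and-conductor}, and then do the same weighted orbit count.

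One small imprecision: when you argue that the left order of $\rcol{\mathcal G}{\mathcal M}$ is ``$\mathcal M$ or $\mathcal H$ by linearity of the poset'', note that this left order need not lie inside $\mathcal H$ at all, so the linearity of the poset between $\mathcal G$ and $\mathcal H$ does not directly apply. The paper instead observes that $\mathcal M$ has index $\mathfrak q$ in a maximal order, so \emph{any} superorder $\mathcal L\supseteq\mathcal M$ satisfies $[\mathcal L:\mathcal M]_{\OK}\in\{\OK,\mathfrak q\}$ (since $\discrd\mathcal L$ lies between $\mathfrak D$ and $\discrd\mathcal M=\mathfrak q\mathfrak D$); your non-square-index contradiction then goes through unchanged. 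Also, you could invoke Proposition~\ref{prp:cls-implications} directly (from $\#\Cls\mathcal G=1$) rather than Proposition~\ref{prp:kind-distribution} to get $\#\Cls\mathcal M=1$, which is what the paper does and avoids appealing to the computational classification.
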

\begin{proof}
Due to the cyclic quotient, we have the linear poset of orders $\mathcal G\subset\mathcal M\subset\mathcal H$, where $\mathcal M := \mathcal G+\mathfrak q\mathcal H$, and perceptivity implies $\#\mathcal M^1 = q \#\mathcal G^1$ and $\#\mathcal H^1 = {q(q+1)\#\mathcal G^1}$. Let us start by identifying the conductors.

Let $\pi\in\OK^+$ be a generator of $\mathfrak q$.
Trivially we have $\rcol{\mathcal G}{\mathcal G} = \mathcal G$ and $\rcol{\mathcal G}{\mathcal H} = \mathcal H\mathbf a_2$, where considering indices and reduced norms yields $\nrd(\mathbf a_2)\OK = \mathfrak q^2$.
Next we claim that $\rcol{\mathcal G}{\mathcal M} = \mathcal M \mathbf a_1$ for some $\mathbf a_1\in \rcol{\mathcal G}{\mathcal M}$. For that, denote $\lord(\rcol{\mathcal G}{\mathcal M}) =: \mathcal L$, this must contain $\mathcal M$ since $\rcol{\mathcal G}{\mathcal M}$ is a left ideal of $\mathcal M$. By Proposition~\ref{prp:cls-implications}, we have $\#\Cls\mathcal L = 1$, so then $\rcol{\mathcal G}{\mathcal M} = \mathcal L \mathbf a_1$ for some $\mathbf a_1\in\rcol{\mathcal G}{\mathcal M}$. Since $\mathcal M$ is an index-$\mathfrak q$ suborder of a maximal order, either $\mathcal L=\mathcal M$ or $[\mathcal L:\mathcal M]_{\OK}=\mathfrak q$. But the latter would lead to
\[
\mathfrak q^2 = [\mathcal M:\rcol{\mathcal G}{\mathcal M}]_{\OK} = \inv{\mathfrak q} \cdot \nrd(\mathbf a_1)^2\OK,
\]
which is absurd, because the ideal on the left is a square and the one on the right is not. So we indeed have $\mathcal L=\mathcal M$ and $\rcol{\mathcal G}{\mathcal M}=\mathcal M\mathbf a_1$. Considering indices then gives $\nrd(\mathbf a_1) = \mathfrak q$.

Possibly multiplying by a unit from $\OK$ (here we use $\NCl K = 1$ due to Proposition~\ref{prp:cls-implications}), we may presume that $\nrd(\mathbf a_1)=\pi$, $\nrd(\mathbf a_2)=\pi^2$. As in the proof of Theorem~\ref{thrm:kind-q-formula}, we may consider quaternions of reduced norm $\alpha=\beta\pi^e$ as factorized into $\mathbf b\mathbf q$, $\nrd(\mathbf b)=\beta$, $\nrd(\mathbf q)=\pi^e$ and the choice of $\mathbf b$ is then irrelevant for whether the orbit of such a quaternion belongs to either of the two $\mathcal H\mathbf a_i$. So we only count $r_{\mathcal G}(\pi^e)$ and then multiply by $\sigma_{\mathfrak D}(\beta)$.

Now, counting orbits $\mathcal H^1\mathbf q$ of quaternions of reduced norm $\pi^e$ in $\mathcal H\mathbf a_i$ is equivalent to counting the orbits $\mathcal H^1\mathbf q\inv{\mathbf a_i}$ of reduced norm $\pi^{e-i}$ in $\mathcal H$, which yields $1+q+\cdots+q^{e-i}$ for $e\geq i$. Let us deal with small cases of $e$ first; if $e=0$, tautologically we obtain $r_{\mathcal G}(1)=\#\mathcal G^1$. Then, for $e=1$, of the $1+q$ orbits of reduced norm $\pi$, one lies in $\mathcal H\mathbf a_1$ (that is, $\mathcal H^1\mathbf a_1$ itself), none of them lies in $\mathcal H\mathbf a_2$ and the remaining $q$ are only  in $\mathcal H$. So we count
\[
    r_{\mathcal G}(\pi) = \#\mathcal M^1+ q\#\mathcal G^1 = \#\mathcal G^1(q+q)
\]
as desired for \eqref{eq:kind-q2}.

Next we deal with the general case $e\geq2$. Then $1+q+\cdots+q^{e-2}$ orbits lie in $\mathcal H\mathbf a_2$, leaving $(1+q+\cdots+q^{e-1})-(1+q+\cdots+q^{e-2}) = q^{e-1}$ of them in $\mathcal H\mathbf a_1$ and $(1+q+\cdots+q^{e}) - (1+q+\cdots+q^{e-1})=q^e$ merely in $\mathcal H$. Thus we calculate
\begin{align*}
    r_{\mathcal G}(\pi^e) &= (1+q+\cdots+q^{e-2})\#\mathcal H^1 + q^{e-1}\#\mathcal M^1 + q^e\#\mathcal G^1 = \\
    &= \#\mathcal G^1 \zav{(1+q+\cdots+q^{e-2})\cdot q(q+1)  + q^{e-1}\cdot q + q^e} =\\
    &= \#\mathcal G^1 \zav{q+2q^2+\cdots+2q^{e-1}+3q^e} =\\
    &= \#\mathcal G^1\zav{2(q+\cdots+q^e)+q^e-q}
\end{align*}
as desired. Multiplying by $\sigma_{\mathfrak D}(\beta) = \sigma_{\mathfrak q\mathfrak D}(\alpha)$ as justified above, we then obtain \eqref{eq:kind-q2}.
\end{proof}

\begin{example}
    Inside the maximal order $\mathcal H:=\zet\oplus\zet\frac{1+\ii}2\oplus\zet\j\oplus\zet\frac{\j+\k}2$ (which has class number $1$) of $\quatalg{-3,-1}\kve$, the suborder $\mathcal G:=\zet\oplus\zet2\ii\oplus\zet\zav{\frac{1-\ii}2+\j}\oplus\zet\frac{1+\ii+\j+\k}2$ is perceptive and of the kind $(2\zet)^2$, having $\#\mathcal G^1=2$. Hence for any positive integer $n$ with $2$-adic valuation $e$, the equation
    \[
        t^2+12x^2+2y^2+2z^2 +ty+tz-6xy+6xz = n
    \]
    has its number of solutions given exactly by $\displaystyle2\sum_{2,3\nmid d\mid n} d \cdot\begin{cases}
        1, & \text{$e=0$,}\\
        4, & \text{$e=1$,}\\
        2^{e+2}+2^e-6
        &\text{$e\geq2$.}
    \end{cases}$
\end{example}

    \begin{thm}
\label{thrm:kind-p2-formula}
    Let $\mathcal G$ be a perceptive suborder of the kind $\mathfrak p^2$ and class number $1$ in a maximal order $\mathcal H$ with a linear poset of orders $\mathcal G\subsetneq\mathcal M\subsetneq\mathcal H$ and $\discrd \mathcal H = \mathfrak D = \mathfrak p\mathfrak p_2\cdots\mathfrak p_k$. If $q:=\Nm(\mathfrak p)$ and $\alpha\in\mathfrak p^e$ but $\alpha\notin\mathfrak p^{e+1}$ for some $e\geq0$, then
\begin{equation}
\label{eq:kind-p2}
    r_{\mathcal G}(\alpha) = \#\mathcal G^1\sum_{\mathfrak p,\mathfrak p_2,\dots,\mathfrak p_k\nmid\delta\OK\mid\alpha\OK}\Nm(\delta\OK) \cdot \begin{cases}
        1, & \text{$e=0$,}\\
        q, & \text{$e=1$,}\\
        q^2+q, &\text{$e\geq2$.}
    \end{cases}
\end{equation}
\end{thm}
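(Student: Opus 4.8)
The plan is to follow the proof of Theorem~\ref{thrm:kind-q2-formula} almost verbatim, exploiting the fact that here $\mathfrak p$ divides $\discrd\mathcal H = \disc\mathcal A$, which makes the local picture at $\mathfrak p$ a chain and simplifies the orbit bookkeeping. Write the (linear, by hypothesis) poset of intermediate orders as $\mathcal G\subsetneq\mathcal M\subsetneq\mathcal H$, so that $[\mathcal M:\mathcal G]_{\OK} = [\mathcal H:\mathcal M]_{\OK} = \mathfrak p$ and $q = \Nm(\mathfrak p)$. By Proposition~\ref{prp:composing-perceptivity}(ii) the order $\mathcal M$ is $\mathcal H$-perceptive; since $\mathcal H/\mathcal M\simeq\OK/\mathfrak p$, Proposition~\ref{prp:perceptive-one-dim-step} gives $\#\mathcal H^1 = (q+1)\#\mathcal M^1$, and Proposition~\ref{prp:perceptivity-count} applied to the chain $\mathcal G\subsetneq\mathcal M\subsetneq\mathcal H$ (with the middle order perceptive, and equality since $\mathcal G$ itself is perceptive) gives $\#\mathcal H^1 = (q^2+q)\#\mathcal G^1$; dividing, $\#\mathcal M^1 = q\,\#\mathcal G^1$.

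Next I would identify the right conductors as principal left ideals, exactly as in Theorem~\ref{thrm:kind-q2-formula}. Trivially $\rcol{\mathcal G}{\mathcal G} = \mathcal G$. Since $\mathcal H$ is a PID, $\rcol{\mathcal G}{\mathcal H} = \mathcal H\mathbf a_2$, and Proposition~\ref{prp:conductors}(i) forces $[\mathcal G:\rcol{\mathcal G}{\mathcal H}]_{\OK} = \mathfrak p^2$, hence $\nrd(\mathbf a_2)^2\OK = [\mathcal H:\rcol{\mathcal G}{\mathcal H}]_{\OK} = \mathfrak p^4$, so $\nrd(\mathbf a_2)\OK = \mathfrak p^2$. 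For the left ideal $\rcol{\mathcal G}{\mathcal M}$ of $\mathcal M$, its left order $\mathcal L$ has class number $1$ by Proposition~\ref{prp:cls-implications}, hence $\rcol{\mathcal G}{\mathcal M} = \mathcal L\mathbf a_1$; since $\mathcal M$ has index $\mathfrak p$ in a maximal order, $[\mathcal L:\mathcal M]_{\OK}$ is $\OK$ or $\mathfrak p$, and in the latter case the relation $[\mathcal M:\rcol{\mathcal G}{\mathcal M}]_{\OK} = \mathfrak p^2 = \inv{\mathfrak p}\,\nrd(\mathbf a_1)^2\OK$ would force $\nrd(\mathbf a_1)^2\OK = \mathfrak p^3$, which is absurd; so $\mathcal L = \mathcal M$ and $\nrd(\mathbf a_1)\OK = \mathfrak p$. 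Using $\#\NCl K = 1$, rescale $\mathbf a_1,\mathbf a_2$ by units of $\OK$ so that $\nrd(\mathbf a_1) = \pi$ and $\nrd(\mathbf a_2) = \pi^2$ for a fixed totally positive generator $\pi$ of $\mathfrak p$. Since $\mathcal H\supseteq\mathcal H\mathbf a_1\supseteq\mathcal H\mathbf a_2$ are nested left ideals with reduced norms $\OK\supsetneq\mathfrak p\supsetneq\mathfrak p^2$, the consequence of Lemmas~\ref{lem:orbit-intersections} and \ref{lem:orbit-and-conductor} recorded before Theorem~\ref{thrm:kind-q-formula} tells us that $\#(\mathcal H^1\mathbf q\cap\mathcal G)$ equals $\#\mathcal G^1$, $\#\mathcal M^1$, or $\#\mathcal H^1$ according to which of $i = 0,1,2$ is largest with $\mathcal H\mathbf q\subseteq\mathcal H\mathbf a_i$.

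The step that distinguishes (and eases) this case is the translation of $\mathcal H\mathbf q\subseteq\mathcal H\mathbf a_i$. Because $\mathfrak p\mid\disc\mathcal A$, the completion $\mathcal H_{\mathfrak p}$ is the unique maximal order of a local division algebra and its left ideals are exactly the powers $P^j$ of the maximal two-sided ideal, with $\nrd(P^j) = \mathfrak p^j\mathcal O_{K,\mathfrak p}$ (Subsection~\ref{subsec:loc-comp}); hence $\mathcal H_{\mathfrak p}\mathbf q = P^{\,v_{\mathfrak p}(\nrd\mathbf q)}$. Checking $\mathcal H\mathbf q\subseteq\mathcal H\mathbf a_i$ prime by prime, the inclusion is automatic away from $\mathfrak p$ (where $\mathbf a_i$ is a unit of $\mathcal H_{\mathfrak p'}$) and at $\mathfrak p$ reads $P^{\,v_{\mathfrak p}(\nrd\mathbf q)}\subseteq P^i$, i.e. $v_{\mathfrak p}(\nrd\mathbf q)\geq i$. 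So $\#(\mathcal H^1\mathbf q\cap\mathcal G)$ is $\#\mathcal G^1$, $\#\mathcal M^1 = q\#\mathcal G^1$, or $\#\mathcal H^1 = (q^2+q)\#\mathcal G^1$ according as $v_{\mathfrak p}(\nrd\mathbf q)$ is $0$, $1$, or $\geq 2$. Finally, for $\alpha\in\OK^+$ every quaternion of reduced norm $\alpha$ shares the value $v_{\mathfrak p}(\nrd) = v_{\mathfrak p}(\alpha) = e$, while $\mathcal H$ contains exactly $\sigma_{\mathfrak D}(\alpha)$ orbits of reduced norm $\alpha$ (as $r_{\mathcal H}(\alpha) = \#\mathcal H^1\sigma_{\mathfrak D}(\alpha)$ by Theorem~\ref{thrm:kind-max-formula} and each orbit has $\#\mathcal H^1$ elements); summing the common intersection size over these orbits yields $r_{\mathcal G}(\alpha) = \#\mathcal G^1\sigma_{\mathfrak D}(\alpha)$, $q\,\#\mathcal G^1\sigma_{\mathfrak D}(\alpha)$, or $(q^2+q)\#\mathcal G^1\sigma_{\mathfrak D}(\alpha)$ for $e = 0$, $1$, $\geq 2$, which is exactly \eqref{eq:kind-p2} once one writes $\sigma_{\mathfrak D}(\alpha) = \sum_{\mathfrak p,\mathfrak p_2,\dots,\mathfrak p_k\nmid\delta\OK\mid\alpha\OK}\Nm(\delta\OK)$. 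The only genuine work is the conductor identification in the second paragraph, and it is essentially a transcription of the corresponding step in Theorem~\ref{thrm:kind-q2-formula}; the ramified prime $\mathfrak p$ introduces no new obstacle, and in fact pins the orbit structure down to the single invariant $v_{\mathfrak p}(\nrd)$.
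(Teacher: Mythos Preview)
Your proof is correct and follows essentially the same approach as the paper's: identify the conductors as principal left $\mathcal M$- and $\mathcal H$-ideals via the parity-of-exponents trick from Theorem~\ref{thrm:kind-q2-formula}, then use that $\mathfrak p\mid\discrd\mathcal H$ to see that membership in $\mathcal H\mathbf a_i$ depends only on $v_{\mathfrak p}(\nrd\mathbf q)$. The only cosmetic difference is that the paper first reduces to $\alpha=\pi^e$ via the factorization $\alpha=\beta\pi^e$ (as in Theorem~\ref{thrm:kind-p-formula}) and phrases the key fact as ``there is only one orbit of reduced norm $\pi^e$'', whereas you work with general $\alpha$ directly and invoke the local ideal structure of $\mathcal H_{\mathfrak p}$; these are two ways of saying the same thing.
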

\begin{proof}
Again, the proof is similar to that of Theorem~\ref{thrm:kind-q2-formula}, but simpler due to the nature of quaternions of reduced norm $\mathfrak p^e$, so we just highlight the differences.

The proof that $\rcol{\mathcal G}{\mathcal M}$ is a principal left ideal of $\mathcal M$ goes through in the same way as before, so then $\#(\mathcal H^1\mathbf q\cap \mathcal G)$ is determined by the belonging or non-belonging of $\mathbf q$ to some $\mathcal H\mathbf a_2$, $\mathcal H\mathbf a_1$ or just $\mathcal H$ with $\nrd(\mathbf a_i)\OK = \mathfrak p^2$. Since $\NCl K= 1$, we may choose a totally positive generator $\pi$ of $\mathfrak p$ and take $\nrd(\mathbf a_i)=\pi^i$. But since there is only one orbit of reduced norm $\pi^e$ for each $e\geq0$, we have $\mathbf q\in\mathcal H\mathbf a_i$ if and only if $\pi^i\mid\nrd(\mathbf q)$.

Thus we obtain
\[
    r_{\mathcal G}(\pi^e) = \left\{\begin{array}{ll}
        \#\mathcal G^1, & \text{$e=0$,}\\
        \#\mathcal M^1, & \text{$e=1$,}\\
        \#\mathcal H^1, & \text{$e\geq2$}
    \end{array}\right\} = \#\mathcal G^1\cdot\begin{cases}
        1, & \text{$e=0$,}\\
        q, & \text{$e=1$,}\\
        q(q+1), & \text{$e\geq2$}
    \end{cases}
\]
and multiplying by $\sigma_{\mathfrak D}(\beta)$ with $\beta$ from the factorization $\alpha=\beta\pi^e$, $\pi\nmid \beta$ yields \eqref{eq:kind-p2}.
\end{proof}

\begin{thm}
\label{thrm:kind-qq-formula}
Let $\mathcal G$ be a perceptive suborder of the kind $\mathfrak q_1\mathfrak q_2$ and class number $1$ in a maximal order $\mathcal H$ with $\discrd \mathcal H=\mathfrak D=\mathfrak p_1\cdots\mathfrak p_k$. Then
\begin{multline}
\label{eq:kind-qq}
r_{\mathcal G}(\alpha) = 4\#\mathcal G^1\sum_{\mathfrak p_1,\dots,\mathfrak p_k\nmid\delta\OK\mid\alpha\OK}\Nm(\delta\OK) - 2\#\mathcal G^1\sum_{\mathfrak q_1,\mathfrak p_1,\dots,\mathfrak p_k\nmid\delta\OK\mid\alpha\OK}\Nm(\delta\OK) -{}\\{}- 2\#\mathcal G^1\sum_{\mathfrak q_2,\mathfrak p_1,\dots,\mathfrak p_k\nmid\delta\OK\mid\alpha\OK}\Nm(\delta\OK) + \#\mathcal G^1\sum_{\mathfrak q_1,\mathfrak q_2,\mathfrak p_1,\dots,\mathfrak p_k\nmid\delta\OK\mid\alpha\OK}\Nm(\delta\OK) =\\
= 4\#\mathcal G^1\sigma_{\mathfrak D}(\alpha) - 2\#\mathcal G^1\sigma_{\mathfrak q_1\mathfrak D}(\alpha) - 2\#\mathcal G^1\sigma_{\mathfrak q_2\mathfrak D}(\alpha) + \#\mathcal G^1\sigma_{\mathfrak q_1\mathfrak q_2\mathfrak D}(\alpha).
\end{multline}
\end{thm}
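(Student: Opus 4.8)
The plan is to imitate the proof of Theorem~\ref{thrm:kind-q-formula} — reduce to a prime-power norm — but, since the poset of orders between $\mathcal G$ and $\mathcal H$ is now a \emph{diamond} rather than a chain, to construct and exploit that diamond directly, using that $\mathfrak q_1,\mathfrak q_2$ are comaximal. Concretely, I would first use the local--global dictionary to introduce intermediate orders $\mathcal N_1,\mathcal N_2$ with $\mathcal G\subseteq\mathcal N_i\subseteq\mathcal H$ by letting $\mathcal N_1$ agree with $\mathcal G$ at $\mathfrak q_1$ and with $\mathcal H$ at every other prime, and $\mathcal N_2$ agree with $\mathcal G$ at $\mathfrak q_2$ and with $\mathcal H$ elsewhere. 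Checking everything prime-by-prime gives $[\mathcal H:\mathcal N_i]_{\OK}=\mathfrak q_i$, $\mathcal N_1\cap\mathcal N_2=\mathcal G$ and $\mathcal N_1+\mathcal N_2=\mathcal H$. By Proposition~\ref{prp:composing-perceptivity}(ii) both $\mathcal N_i$ are $\mathcal H$-perceptive, hence of kind $\mathfrak q_i$ and of class number $1$ by Proposition~\ref{prp:kind-distribution}; by Proposition~\ref{prp:composing-perceptivity}(iii) and comaximality, $\mathcal G$ is $\mathcal N_i$-perceptive, so Proposition~\ref{prp:perceptive-one-dim-step} applied to the pairs $(\mathcal G,\mathcal N_i)$ and $(\mathcal N_i,\mathcal H)$ yields $\#\mathcal N_1^1=(\Nm\mathfrak q_2+1)\#\mathcal G^1$, $\#\mathcal N_2^1=(\Nm\mathfrak q_1+1)\#\mathcal G^1$ and $\#\mathcal H^1=(\Nm\mathfrak q_1+1)(\Nm\mathfrak q_2+1)\#\mathcal G^1$.

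Next I would set up the associated \emph{diamond of right conductors} $\mathcal G=\rcol{\mathcal G}{\mathcal G}\supsetneq\rcol{\mathcal G}{\mathcal N_i}\supsetneq\rcol{\mathcal G}{\mathcal H}$. The key identities, all verified locally, are $\rcol{\mathcal G}{\mathcal N_1}\cap\rcol{\mathcal G}{\mathcal N_2}=\rcol{\mathcal G}{\mathcal N_1+\mathcal N_2}=\rcol{\mathcal G}{\mathcal H}$ and the indices $[\mathcal G:\rcol{\mathcal G}{\mathcal N_i}]_{\OK}=\mathfrak q_{3-i}$, $[\mathcal G:\rcol{\mathcal G}{\mathcal H}]_{\OK}=\mathfrak q_1\mathfrak q_2$; computing left orders locally shows $\lord(\rcol{\mathcal G}{\mathcal N_i})=\mathcal N_i$, so class number $1$ makes these conductors principal left ideals $\rcol{\mathcal G}{\mathcal N_1}=\mathcal N_1\mathbf c_1$, $\rcol{\mathcal G}{\mathcal N_2}=\mathcal N_2\mathbf c_2$, $\rcol{\mathcal G}{\mathcal H}=\mathcal H\mathbf a$, and using $\NCl K=1$ to adjust generators we may take $\nrd(\mathbf c_1)=\pi_2$, $\nrd(\mathbf c_2)=\pi_1$, $\nrd(\mathbf a)=\pi_1\pi_2$ with $\pi_i\in\OK^+$ generating $\mathfrak q_i$, and $\mathcal H\mathbf a=\mathcal H\mathbf c_1\cap\mathcal H\mathbf c_2$ by comparing reduced norms of $\mathcal H$-ideals. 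Then, exactly as in the proof of Proposition~\ref{prp:conductors}, for $\mathbf q\in\mathcal G$ and $\mathbf u\in\mathcal H^1$ one has $\mathbf u\mathbf q\in\mathcal G\iff\mathbf q\in\rcol{\mathcal G}{\mathcal G+\mathcal G\mathbf u}$, and because the four conductors form the diamond above this gives: if $\mathcal M\in\{\mathcal G,\mathcal N_1,\mathcal N_2,\mathcal H\}$ is the largest order with $\mathbf q\in\rcol{\mathcal G}{\mathcal M}$, then $\#(\mathcal H^1\mathbf q\cap\mathcal G)=\#\mathcal M^1$. Finally, Lemma~\ref{lem:orbit-and-conductor} applied to the perceptive orders $\mathcal N_i$ and $\mathcal H$ translates "$\mathcal H^1\mathbf q$ meets $\rcol{\mathcal G}{\mathcal N_1}$, resp. $\rcol{\mathcal G}{\mathcal N_2}$, resp. $\rcol{\mathcal G}{\mathcal H}$" into "$\mathbf q\in\mathcal H\mathbf c_1$, resp. $\mathcal H\mathbf c_2$, resp. $\mathcal H\mathbf a$", and $\mathcal H\mathbf a=\mathcal H\mathbf c_1\cap\mathcal H\mathbf c_2$ shows that the level of an orbit is well defined.

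For the count I would factor $\alpha=\beta\pi_1^{e_1}\pi_2^{e_2}$ with $\pi_i\nmid\beta$; by Proposition~\ref{prp:coprime-factorization} the $\beta$-part of a quaternion is irrelevant to membership in the ideals above, so $r_{\mathcal G}(\alpha)=\sigma_{\mathfrak D}(\beta)\,r_{\mathcal G}(\pi_1^{e_1}\pi_2^{e_2})$. Writing $s_i:=\sigma_{\mathfrak D}(\pi_i^{e_i})$ and $t_i:=\sigma_{\mathfrak D}(\pi_i^{e_i-1})$ (read as $0$ when $e_i=0$), the bijection $\mathcal H^1\mathbf q_0\mapsto\mathcal H^1\mathbf q_0\mathbf b$ together with Theorem~\ref{thrm:kind-max-formula} shows that among the $s_1s_2$ orbits of norm $\pi_1^{e_1}\pi_2^{e_2}$ in $\mathcal H$, exactly $s_1t_2$ meet $\rcol{\mathcal G}{\mathcal N_1}$, $t_1s_2$ meet $\rcol{\mathcal G}{\mathcal N_2}$, and $t_1t_2$ meet $\rcol{\mathcal G}{\mathcal H}$; hence $t_1t_2$ have level $\mathcal H$, $(s_1-t_1)t_2$ level $\mathcal N_1$, $t_1(s_2-t_2)$ level $\mathcal N_2$, and $(s_1-t_1)(s_2-t_2)$ level $\mathcal G$. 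Summing $\#\mathcal M^1$ over the orbits and substituting the cardinalities from the first step, the algebra collapses (via $t_i(\Nm\mathfrak q_i+1)+(s_i-t_i)=2s_i-1$) to $r_{\mathcal G}(\pi_1^{e_1}\pi_2^{e_2})=\#\mathcal G^1(2s_1-1)(2s_2-1)$. Multiplying by $\sigma_{\mathfrak D}(\beta)$ and using multiplicativity of $\sigma$ together with $\sigma_{\mathfrak q_i\mathfrak D}(\pi_i^{e_i})=1$ and $\sigma_{\mathfrak q_i\mathfrak D}=\sigma_{\mathfrak D}$ on the parts of $\alpha$ coprime to $\mathfrak q_i$, the product $(2\sigma_{\mathfrak D}(\pi_1^{e_1})-1)(2\sigma_{\mathfrak D}(\pi_2^{e_2})-1)$ becomes $4\sigma_{\mathfrak D}(\alpha)-2\sigma_{\mathfrak q_1\mathfrak D}(\alpha)-2\sigma_{\mathfrak q_2\mathfrak D}(\alpha)+\sigma_{\mathfrak q_1\mathfrak q_2\mathfrak D}(\alpha)$, which is \eqref{eq:kind-qq}.

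The main obstacle I expect is the second step: Proposition~\ref{prp:conductors} is proved in the text only for a linear poset of orders, so the diamond structure of the four conductors — in particular the identity $\rcol{\mathcal G}{\mathcal N_1}\cap\rcol{\mathcal G}{\mathcal N_2}=\rcol{\mathcal G}{\mathcal H}$, the index formulas, and the principality of $\rcol{\mathcal G}{\mathcal N_i}$ together with the identification $\lord(\rcol{\mathcal G}{\mathcal N_i})=\mathcal N_i$ — has to be re-established here. I would do this by reducing everything to the kind-$\mathfrak q$ local situations of Proposition~\ref{prp:perceptive-one-dim-step} at $\mathfrak q_1$ and $\mathfrak q_2$ and to the trivial picture $\mathcal G=\mathcal N_1=\mathcal N_2=\mathcal H$ at all other primes; once the diamond is in place, the remainder is routine bookkeeping with sums of divisors.
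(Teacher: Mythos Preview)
Your proposal is correct and follows essentially the same route as the paper's own proof: build the diamond of intermediate orders (the paper writes them as $\mathcal M_i=\mathcal G+\mathfrak q_{3-i}\mathcal H$ rather than via the local--global dictionary, so your $\mathcal N_i$ is the paper's $\mathcal M_{3-i}$), establish the opposite diamond of right conductors with the correct indices, show each conductor is a principal left ideal of its respective order with the expected reduced norm, translate orbit intersection sizes into membership in $\mathcal H\mathbf c_1$, $\mathcal H\mathbf c_2$, $\mathcal H\mathbf a$, and then do the divisor-sum bookkeeping to reach $\#\mathcal G^1(2s_1-1)(2s_2-1)$. The only minor differences are technical: the paper gets the conductor indices from the cyclicity of $\mathcal H/\mathcal G$ (a generator $\mathbf z$ with $\mathcal G+\mathcal G\mathbf z=\mathcal G+\mathbf z\mathcal G=\mathcal H$, as in Proposition~\ref{prp:conductors}(i)), proves $\lord(\rcol{\mathcal G}{\mathcal M_i})=\mathcal M_i$ by the parity-of-exponents trick from Theorem~\ref{thrm:kind-q2-formula}, and uses Proposition~\ref{prp:cls-implications} directly (rather than Proposition~\ref{prp:kind-distribution}) for the needed class numbers; your local verification achieves the same ends.
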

\begin{proof}
First, let us investigate the poset of orders of $\mathcal G$, $\mathcal H$. Since $[\mathcal H:\mathcal G]_{\OK}=\mathfrak q_1\mathfrak q_2$, the only quotient module $\mathcal H/\mathcal G$ for such an index is the cyclic module $\OK/\mathfrak q_1\mathfrak q_2$, which decomposes as $\OK/\mathfrak q_1\times \OK/\mathfrak q_2$. Thus there are only two intermediate modules (corresponding to $\OK/\mathfrak q_1\times\set0$ and $\set0\times\OK/\mathfrak q_2$), which are also modules because we express them as $\mathcal M_1 := \mathcal G + \mathfrak q_2 \mathcal H$, $\mathcal M_2 := \mathcal G + \mathfrak q_1\mathcal H$; this is so that $[\mathcal M_i:\mathcal G]_{\OK}=\mathfrak q_i$.

Since both pairs $\mathcal G$, $\mathcal M_1$ and $\mathcal G$, $\mathcal M_2$ have a linear poset of orders, it follows that $[\mathcal G:\rcol{\mathcal G}{\mathcal M_i}]_{\OK} = [\mathcal M_i:\mathcal G]_{\OK}$. Further, $(1,1)$ in $\OK/\mathfrak q_1\times\OK/\mathfrak q_2$ is not contained in either of the submodules $\OK/\mathfrak q_1\times\set0$ or $\set0\times\OK/\mathfrak q_2$, so this corresponds to some $\mathbf z\in\mathcal H$ such that $\mathcal H=\mathcal G+\mathcal G\mathbf z = \mathcal G+\mathbf z\mathcal G$, with which one easily shows that $[\mathcal G:\rcol{\mathcal G}{\mathcal H}]_{\OK} = [\mathcal H:\mathcal G]_{\OK}$ just as in the proof of Proposition~\ref{prp:conductors}(i). As in that proof, we also have that $\mathcal M\mapsto \rcol{\mathcal G}{\mathcal M}$ reverses inclusions, so thanks to the indices, it also preserves strict inclusions. Finally, due to $[\mathcal G:\rcol{\mathcal G}{\mathcal M_i}]=\mathfrak q_i$ being comaximal, the two conductors $\rcol{\mathcal G}{\mathcal M_i}$ must be incomparable in inclusion. So altogether, we have shown that the collection of conductors $\set{\rcol{\mathcal G}{\mathcal M}\mid\mathcal G\subseteq\mathcal M\subseteq\mathcal H}$ forms an opposite poset to the poset of orders. In other words, we have established the conclusion of Proposition~\ref{prp:conductors}(i) for the poset of intermediate orders between $\mathcal G$ and $\mathcal H$. Since parts (ii) and (iii) of that Proposition only relied on the conclusion of (i) and not other conditions of the Proposition directly, we may now use (ii) and (iii) in our situation.

Now, arguing with indices $[\mathcal M_i:\rcol{\mathcal G}{\mathcal M_i}]_{\OK} = \mathfrak q_i^2$ and parity of exponents as in the proof of Theorem~\ref{thrm:kind-q2-formula}, we obtain $\rcol{\mathcal G}{\mathcal M_i} = \mathcal M_i\mathbf a_i$ with some $\nrd(\mathbf a_i)\OK = \mathfrak q_i\OK$. Further, we obtain $\rcol{\mathcal G}{\mathcal H} = \mathcal H\mathbf a$ for some $\mathbf a$ of reduced norm $\nrd(\mathbf a)\OK = \mathfrak q_1\mathfrak q_2$. If we choose totally positive generators $\pi_1$, $\pi_2$ of $\mathfrak q_1$, $\mathfrak q_2$ respectively, then we may without loss of generality take $\nrd(\mathbf a_i)=\pi_i$, $\nrd(\mathbf a)=\pi_1\pi_2$. Additionally, let us denote $q_i:=\Nm(\mathfrak q_i)$.

The size of $\mathcal H^1\mathbf q\cap\mathcal G$ is now determined by which of the conductors
\[
\rcol{\mathcal G}{\mathcal G} = \mathcal G,
\quad
\rcol{\mathcal G}{\mathcal M_1} = \mathcal M_1\mathbf a_1,
\quad
\rcol{\mathcal G}{\mathcal M_2} = \mathcal M_2\mathbf a_2,
\quad
\rcol{\mathcal G}{\mathcal H} = \mathcal H\mathbf a,
\]
the orbit intersects, hence we obtain
\[
    \#(\mathcal H^1\mathbf q\cap\mathcal G) = \begin{cases}
        \#\mathcal H^1, & \text{if $\mathbf q\in\mathcal H\mathbf a$,}\\
        \#\mathcal M_1^1, & \text{if $\mathbf q\in\mathcal H\mathbf a_1\setminus\mathcal H\mathbf a$,}\\
        \#\mathcal M_2^1, & \text{if $\mathbf q\in\mathcal H\mathbf a_2\setminus\mathcal H\mathbf a$,}\\
        \#\mathcal G^1, & \text{if $\mathbf q\in\mathcal H\setminus(H\mathbf a_1\cup H\mathbf a_2)$.}
    \end{cases}
\]
All of these conditions are unchanged when multiplying from the left by some $\mathbf b$ with $\nrd(\mathbf b)\notin\mathfrak q_1,\mathfrak q_2$, so as we have seen a number of times so far, it suffices that we count $r_{\mathcal G}(\pi_1^{e_1}\pi_2^{e_2})$ and then multiply from the left by the number of orbits of some reduced norm $\beta\notin\mathfrak q_1,\mathfrak q_2$ taken from a factorization $\alpha=\beta\pi_1^{e_1}\pi_2^{e_2}$, i.e. by $\sigma_{\mathfrak D}(\beta)$.

Now, considering quaternions of reduced norm $\pi_1^{e_1}\pi_2^{e_2}$ in $\mathcal H$, we see that $\mathcal H\mathbf a$ contains exactly $\sigma_{\mathfrak D}(\pi_1^{e_1-1}\pi_2^{e_2-1})$ of their orbits. Similarly, $\mathcal H\mathbf a_1$ contains $\sigma_{\mathfrak D}(\pi_1^{e_1-1}\pi_2^{e_2})$, so
\[
  \sigma_{\mathfrak D}(\pi_1^{e_1-1})\sigma_{\mathfrak D}\pi_2^{e_2}) -
  \sigma_{\mathfrak D}(\pi_1^{e_1-1})\sigma_{\mathfrak D}\pi_2^{e_2-1}) =   \sigma_{\mathfrak D}(\pi_1^{e_1-1})\cdot q_2^{e_2}
\]
are in $\mathcal H\mathbf a_1 \setminus\mathcal H\mathbf a$. Similarly, $\mathcal H\mathbf a_2 \setminus\mathcal H\mathbf a$ contains $q_1^{e_1}\sigma_{\mathfrak D}(\pi_2^{e_2-1})$ orbits. Subtracting from $\sigma_{\mathfrak D}(\pi_1^{e_1}\pi_2^{e_2})$ all orbits accounted for so far, we get $q_1^{e_1}q_2^{e_2}$ orbits that only lie in $\mathcal H$. Note that all this counting is sensible even for $e_1=0$ or $e_2=0$ if we interpret
\[
\sigma_{\mathfrak D}(\pi_i^{e_i}) = 1+q_i+\cdots+q_i^{e_i} = \frac{q_i^{e_i+1}-1}{q_i-1}
\]
and hence $\sigma_{\mathfrak D}(\pi_i^{0-1}) = \frac{q_i^{-1+1}-1}{q_i-1} = 0$, which we do.

Weighing this and considering that $\#\mathcal M_i^1 = (q_i+1)\#\mathcal G^1$ and $\#\mathcal H^1={(q_1+1)}\allowbreak(q_2+1)\#\mathcal G^1$, we obtain
\begin{align*}
r_{\mathcal G}(\pi_1^{e_1}\pi_2^{e_2}) &= \sigma_{\mathfrak D}(\pi_1^{e_1-1}\pi_2^{e_2-1})\#\mathcal H^1 + \sigma_{\mathfrak D}(\pi_1^{e_1-1}) q_2^{e_2}\#\mathcal M_1^1 + q_1^{e_1}\sigma_{\mathfrak D}(\pi_2^{e_2-1})\#\mathcal M_2^1 +{}\\&\hskip4em{}+ q_1^{e_1}q_2^{e_2}\#\mathcal G^1 =\\
&= \#\mathcal G^1\Bigl(
    \sigma_{\mathfrak D}(\pi_1^{e_1-1}\pi_2^{e_2-1})(q_1+1)(q_2+1) + \sigma_{\mathfrak D}(\pi_1^{e_1-1}) q_2^{e_2}(q_1+1) +{}\\&\hskip4em{}+ q_1^{e_1}\sigma_{\mathfrak D}(\pi_2^{e_2-1})(q_2+1) + q_1^{e_1}q_2^{e_2}\Bigr) =\\
&= \#\mathcal G^1\zav{\sigma_{\mathfrak D}(\pi_1^{e_1-1})(q_1+1) + q_1^{e_1}}\zav{\sigma_{\mathfrak D}(\pi_2^{e_2-1})(q_2+1) + q_2^{e_2}} =\\
&= \#\mathcal G^1\zav{2\sigma_{\mathfrak D}(\pi_1^{e_1})-1}\zav{2\sigma_{\mathfrak D}(\pi_2^{e_2})-1}.
\end{align*}
If we now interpret the solitary $-1$'s in the two parentheses as $\sigma_{\mathfrak q_i\mathfrak D}(\pi_i^{e_i})$ and multiply out, we get an expression corresponding to \eqref{eq:kind-qq} if $\alpha=\pi_1^{e_1}\pi_2^{e_2}$.
Multiplying by $\sigma_{\mathfrak D}(\beta) $ for $\alpha=\beta\pi_1^{e_1}\pi_2^{e_2}$ then yields the Proposition in its full statement.
\end{proof}

\begin{thm}
\label{thrm:kind-pq-formula}
Let $\mathcal G$ be a perceptive suborder of the kind $\mathfrak p\mathfrak q$ and class number $1$ in a maximal order $\mathcal H$ with $\discrd \mathcal H=\mathfrak D=\mathfrak p\mathfrak p_2\cdots\mathfrak p_k$. Then
\begin{equation}
\label{eq:kind-pq}
r_{\mathcal G}(\alpha) = 2\#\mathcal G^1\cdot \sum_{\mathfrak p^2,\mathfrak p_2,\dots,\mathfrak p_k\nmid \delta\OK\mid \alpha\OK}\Nm(\delta\OK) - \#\mathcal G^1 \sum_{\mathfrak q,\mathfrak p^2,\mathfrak p_2,\dots,\mathfrak p_k\nmid \delta\OK\mid \alpha\OK}\Nm(\delta\OK).
\end{equation}
\end{thm}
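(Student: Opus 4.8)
The plan is to reuse the machinery of the proof of Theorem~\ref{thrm:kind-qq-formula}: the order $\mathcal G$ again sits inside $\mathcal H$ with index a product of two \emph{distinct} primes, the only new feature being that one of them, $\mathfrak p$, now divides $\discrd\mathcal H$. Writing $p:=\Nm(\mathfrak p)$ and $q:=\Nm(\mathfrak q)$, this has the effect that there is a \emph{unique} $\mathcal H^1$-orbit of quaternions of reduced norm $\mathfrak p^a$ for every $a\ge0$ (equivalently $\sigma_{\mathfrak D}(\pi^a)=1$ for a generator $\pi$ of $\mathfrak p$), whereas there remain $1+q+\cdots+q^b$ orbits of reduced norm $\mathfrak q^b$; this asymmetry is the source of the relaxed divisibility condition $\mathfrak p^2\nmid\delta\OK$ appearing in \eqref{eq:kind-pq}.

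First I would record the poset of intermediate orders. Since $[\mathcal H:\mathcal G]_{\OK}=\mathfrak p\mathfrak q$, necessarily $\mathcal H/\mathcal G\simeq\OK/\mathfrak p\mathfrak q\simeq\OK/\mathfrak p\times\OK/\mathfrak q$, and the two proper submodules are realised by the orders $\mathcal M_1:=\mathcal G+\mathfrak q\mathcal H$ and $\mathcal M_2:=\mathcal G+\mathfrak p\mathcal H$, with $[\mathcal M_1:\mathcal G]_{\OK}=\mathfrak p$ and $[\mathcal M_2:\mathcal G]_{\OK}=\mathfrak q$. These are incomparable, so the poset of orders between $\mathcal G$ and $\mathcal H$ is not linear; nevertheless, just as in the proof of Theorem~\ref{thrm:kind-qq-formula} one checks the conclusion of Proposition~\ref{prp:conductors}(i) directly --- namely that the right conductors $\rcol{\mathcal G}{\mathcal G}=\mathcal G$, $\rcol{\mathcal G}{\mathcal M_1}$, $\rcol{\mathcal G}{\mathcal M_2}$, $\rcol{\mathcal G}{\mathcal H}$ form the opposite poset and satisfy $[\mathcal G:\rcol{\mathcal G}{\mathcal M}]_{\OK}=[\mathcal M:\mathcal G]_{\OK}$ (with the comaximality of $\mathfrak p$ and $\mathfrak q$ forcing the two middle conductors to be incomparable). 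Hence parts (ii)--(iii) of Proposition~\ref{prp:conductors}, and with them Lemmata~\ref{lem:orbit-intersections} and \ref{lem:orbit-and-conductor}, remain available. By Proposition~\ref{prp:composing-perceptivity}(ii) each $\mathcal M_i$ is $\mathcal H$-perceptive, and since $\mathfrak p$, $\mathfrak q$ are comaximal $\mathcal G$ is $\mathcal M_i$-perceptive by Proposition~\ref{prp:composing-perceptivity}(iii); Proposition~\ref{prp:perceptive-one-dim-step} then gives $\#\mathcal M_1^1=(p+1)\#\mathcal G^1$, $\#\mathcal M_2^1=(q+1)\#\mathcal G^1$ and $\#\mathcal H^1=(p+1)(q+1)\#\mathcal G^1$.

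Next I would make all conductors principal, exactly as in the proofs of Theorems~\ref{thrm:kind-q2-formula} and \ref{thrm:kind-qq-formula}: $\mathcal H$ being a maximal order of class number $1$ gives $\rcol{\mathcal G}{\mathcal H}=\mathcal H\mathbf a$ with $\nrd(\mathbf a)\OK=\mathfrak p\mathfrak q$, and for each $\mathcal M_i$ the left order of $\rcol{\mathcal G}{\mathcal M_i}$ is an over-order of $\mathcal G$, hence of class number $1$ by Proposition~\ref{prp:cls-implications}, so $\rcol{\mathcal G}{\mathcal M_i}$ is a principal left ideal; comparing the square ideal $[\mathcal M_i:\rcol{\mathcal G}{\mathcal M_i}]_{\OK}=[\mathcal M_i:\mathcal G]_{\OK}^2$ with the only possible over-order indices ($\OK$ or a single prime) rules out that its left order is strictly larger, so $\rcol{\mathcal G}{\mathcal M_i}=\mathcal M_i\mathbf a_i$ with $\nrd(\mathbf a_i)\OK=[\mathcal M_i:\mathcal G]_{\OK}$. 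Fixing totally positive generators $\pi$ of $\mathfrak p$ and $\rho$ of $\mathfrak q$ (possible since $\NCl K=1$), I may assume $\nrd(\mathbf a_1)=\pi$, $\nrd(\mathbf a_2)=\rho$, $\nrd(\mathbf a)=\pi\rho$. Lemmata~\ref{lem:orbit-intersections} and \ref{lem:orbit-and-conductor} then yield, for $\mathbf q\in\mathcal H$,
\[
\#(\mathcal H^1\mathbf q\cap\mathcal G)=\begin{cases}\#\mathcal H^1,&\mathbf q\in\mathcal H\mathbf a,\\\#\mathcal M_1^1,&\mathbf q\in\mathcal H\mathbf a_1\setminus\mathcal H\mathbf a,\\\#\mathcal M_2^1,&\mathbf q\in\mathcal H\mathbf a_2\setminus\mathcal H\mathbf a,\\\#\mathcal G^1,&\mathbf q\in\mathcal H\setminus(\mathcal H\mathbf a_1\cup\mathcal H\mathbf a_2),\end{cases}
\]
the four cases being genuinely disjoint because $\mathcal H\mathbf a_1\cap\mathcal H\mathbf a_2=\mathcal H\mathbf a$ (the left-hand side contains $\mathcal H\mathbf a$ and both are left ideals of index $(\mathfrak p\mathfrak q)^2$).

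Finally I would carry out the count. Every membership condition above is unchanged under left multiplication of $\mathbf q$ by a quaternion of reduced norm coprime to $\mathfrak p\mathfrak q$, so writing $\alpha=\beta\pi^{e_1}\rho^{e_2}$ with $\pi,\rho\nmid\beta$ and invoking Proposition~\ref{prp:coprime-factorization} it suffices to compute $r_{\mathcal G}(\pi^{e_1}\rho^{e_2})$ and multiply by $\sigma_{\mathfrak D}(\beta)$. Of the $\sigma_{\mathfrak D}(\pi^{e_1}\rho^{e_2})=\sigma_{\mathfrak D}(\pi^{e_1})\sigma_{\mathfrak D}(\rho^{e_2})=\sigma_{\mathfrak D}(\rho^{e_2})$ orbits of reduced norm $\pi^{e_1}\rho^{e_2}$ in $\mathcal H$ (using $\sigma_{\mathfrak D}(\pi^{e_1})=1$ as $\mathfrak p\mid\mathfrak D$), those lying in $\mathcal H\mathbf a_i$ are exactly the $\mathcal H^1\mathbf q_0\mathbf a_i$ with $\nrd(\mathbf q_0)=\pi^{e_1}\rho^{e_2}/\nrd(\mathbf a_i)$, hence number $\sigma_{\mathfrak D}(\pi^{e_1-1}\rho^{e_2})$ and $\sigma_{\mathfrak D}(\rho^{e_2-1})$ respectively, while those in $\mathcal H\mathbf a$ number $\sigma_{\mathfrak D}(\pi^{e_1-1}\rho^{e_2-1})$, with the convention $\sigma_{\mathfrak D}(\pi^{-1})=\sigma_{\mathfrak D}(\rho^{-1})=0$. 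Writing $x:=\sigma_{\mathfrak D}(\pi^{e_1-1})\in\{0,1\}$ and weighting each orbit by the appropriate group size, a short manipulation collapses the four-term sum to
\[
r_{\mathcal G}(\pi^{e_1}\rho^{e_2})=\#\mathcal G^1\,(xp+1)\zav{2\sigma_{\mathfrak D}(\rho^{e_2})-1};
\]
multiplying by $\sigma_{\mathfrak D}(\beta)$ and observing that $xp+1$ equals $p+1$ precisely when $\mathfrak p\mid\alpha\OK$, one checks that $(xp+1)\sigma_{\mathfrak D}(\alpha)$ and $(xp+1)\sigma_{\mathfrak q\mathfrak D}(\alpha)$ are exactly the divisor sums over $\mathfrak p^2,\mathfrak p_2,\dots,\mathfrak p_k$ (respectively with the additional condition $\mathfrak q\nmid\delta\OK$) appearing on the right-hand side of \eqref{eq:kind-pq}, which finishes the proof. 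The only genuinely delicate point, as already in Theorem~\ref{thrm:kind-qq-formula}, is the verification that the right conductors form a poset opposite to the non-linear poset of intermediate orders, so that the colon-lattice lemmata of this subsection apply; everything downstream of that is bookkeeping distinguishing the cases $e_1=0$ and $e_1\ge1$.
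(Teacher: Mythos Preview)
Your proof is correct and follows essentially the same route as the paper's own argument: you run the machinery of Theorem~\ref{thrm:kind-qq-formula} verbatim and then, as in Theorem~\ref{thrm:kind-p-formula}, exploit that $\mathfrak p\mid\mathfrak D$ collapses $\sigma_{\mathfrak D}(\pi^{e_1})$ to $1$, so that the $\mathfrak p$-contribution reduces to the dichotomy $x\in\{0,1\}$ and the factor $xp+1$ is interpreted as relaxing $\mathfrak p\nmid\delta\OK$ to $\mathfrak p^2\nmid\delta\OK$. The paper compresses all of this into a two-line proof by reference to those two earlier theorems; your write-up is simply the unpacked version.
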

\begin{proof}
We argue in the same fashion as in Theorem~\ref{thrm:kind-qq-formula}, but just like in Theorem~\ref{thrm:kind-p-formula} the prime $\mathfrak p\mid\mathfrak D$ only contributes a factor of $\Nm(\mathfrak p)+1$ to those $\alpha$'s contained in $\mathfrak p$, which then may be interpreted as loosening a condition $\mathfrak p\nmid \delta\OK$ to $\mathfrak p^2\nmid\delta\OK$.
\end{proof}
\begin{example}
    Inside the Hurwitz order, we found a perceptive suborder $\mathcal G:=\zet\oplus\zet3\ii\oplus\zet(\ii+\j)\oplus\zet(\ii+\k)$ of the kind $(2\zet)(3\zet)$, having $\#\mathcal G^1=2$. Hence we see that for a positive integer $n$, the equation
    \[
        t^2 + (3x+y+z)^2+y^2+z^2 = n
    \]
    has exactly $\displaystyle 4\sum_{4\nmid d\mid n}d - 2\sum_{3,4\nmid d\mid n}d$ solutions.
\end{example}

Reviewing Proposition~\ref{prp:kind-distribution}, the only orders not covered by our results so far are
\[
    \mathcal G_{\mathfrak p^3} = \zet\oplus\zet2\ii\oplus\zet2\j\oplus\zet(\ii+\k)
\]
inside the Hurwitz order in the algebra $\quatalg{-1,-1}\kve$ and
\begin{align*}
    \mathcal G_{\mathfrak q^3} &=\OK\oplus\OK2\ii\oplus\OK\frac{\ii+\j}{\sqrt2}\oplus\OK\frac{(1+\sqrt2)+(\sqrt2-1)\ii+\j+\k}2,\\
    \mathcal G_{\mathfrak q^4} &= \OK\oplus\OK2\sqrt2\ii\oplus\OK\zav{2\ii+\frac{\ii+\j}{\sqrt2}}\oplus\OK\frac{(1+\sqrt2)+(3+\sqrt2)\ii+\j+\k}2,
\end{align*}
both inside the so-called \emph{cubian} order \[\KK = \OK\oplus\OK\frac1{\sqrt2}(1+\ii)\oplus\OK\frac1{\sqrt2}(1+\j)\oplus\OK\frac{1+\ii+\j+\k}2\] in the algebra $\quatalg{-1,-1}K$ over $K=\kve(\sqrt2)$. Note that cubians were used by Deutsch \cite{deutsch2} to prove a theorem on sums of four squares over $\kve(\sqrt2)$ originally due to Cohn.

Let us first focus on $\mathcal G:=\mathcal G_{\mathfrak p^3}$ inside the Hurwitz order. Its maximal order is the Hurwitz order $\mathcal H=\zet\oplus\zet\ii\oplus\zet\j\oplus\zet\frac{1+\ii+\j+\k}2$ and with it, it has a linear poset of orders consisting of the Lipschitz order $\mathcal L=\zet\oplus\zet\ii\oplus\zet\j\oplus\zet\k$ and the order $\mathcal M=\zet\oplus2\ii\oplus\zet\j\oplus\zet(\ii+\k)$. To mimic the proof of Theorem~\ref{thrm:kind-p2-formula}, let us show that the right conductor of each of these orders in $\mathcal G$ is a left principal ideal of its respective order. For $\mathcal H$ and $\mathcal L$, we can do this in the same way as in previous theorems, since these orders are a maximal order and an index-$2$ suborder of a maximal order respectively. For $\mathcal M$, we do it explicitly, claiming that
\[
    \rcol{\mathcal G}{\mathcal M} = \mathcal M(\ii+\k).
\]
On one hand, since $\mathcal M = \mathcal G+\mathcal G\j$ and $\j(\ii+\k) = -\k+\ii\in\mathcal G$, we see that $\mathcal M(\ii+\k)\subseteq\mathcal G$, hence $\ii+\k\in\rcol{\mathcal G}{\mathcal M}$ and so $\mathcal M(\ii+\k)\subseteq\rcol{\mathcal G}{\mathcal M}$ because the latter is a left ideal of $\mathcal M$. On the other hand we know that
\[
[\mathcal M:\rcol{\mathcal G}{\mathcal M}]_{\zet} = [\mathcal M:\mathcal G]_{\zet}^2 = 4\zet,
\]
so since $[\mathcal M:\mathcal M(\ii+\k)]_{\zet} = \nrd(\ii+\k)^2\zet = 4\zet$, equality must occur in $\mathcal M(\ii+\k)\subseteq\rcol{\mathcal G}{\mathcal M}$.

Now the rest of the discussion of orbit intersections goes exactly as in Theorem~\ref{thrm:kind-p2-formula}: the intersection size of an orbit depends on which of the conductors it intersects, that in turn depends on which corresponding left ideal of $\mathcal H$ it lies in, but because there is only one orbit of irreducible quaternions of reduced norm $2^e$ in $\mathcal H$ for all $e\geq0$, this only depend on the $2$-adic valuation of the reduced norm. Depending on this, the intersection sizes may be either
\[
    \#\mathcal G^1 = 2,\qquad\#\mathcal M^1 = 4,\qquad\#\mathcal L^1=8\qquad\text{or}\qquad\#\mathcal H^1=24.
\]
Thus we obtain:
\begin{thm}
\label{thrm:kind-p3-formula}
Let $\mathcal G=\mathcal G_{\mathfrak p^3}$ be as above and let $n\in\zet^+$ have $2$-adic valuation $e$. Then
\begin{equation}
\label{eq:kind-p3}
r_{\mathcal G}(n) = 2\sum_{2\nmid d\mid n} d \cdot\begin{cases}
    1, & \text{$e=0$,}\\
    2, & \text{$e=1$,}\\
    4, & \text{$e=2$,}\\
    12, & \text{$e\geq3$.}
\end{cases}
\end{equation}
\end{thm}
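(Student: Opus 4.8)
The plan is to run the argument of the proof of Theorem~\ref{thrm:kind-p2-formula} with one extra link in the chain, namely $\mathcal G\subsetneq\mathcal M\subsetneq\mathcal L\subsetneq\mathcal H$ as exhibited above, and then feed in the orbit count supplied by Theorem~\ref{thrm:kind-max-formula}. We have already verified that each right conductor $\rcol{\mathcal G}{\mathcal G}=\mathcal G$, $\rcol{\mathcal G}{\mathcal M}=\mathcal M(\i+\k)$, $\rcol{\mathcal G}{\mathcal L}$, $\rcol{\mathcal G}{\mathcal H}$ is a principal left ideal of its respective order, so the first step is to read off the reduced norm of a generator of each: combining the index relation of Proposition~\ref{prp:conductors}(i) (applicable since the poset of orders is linear) with Proposition~\ref{prp:discs-and-indices}(ii) forces $[\mathcal N:\rcol{\mathcal G}{\mathcal N}]_{\OK}=[\mathcal N:\mathcal G]_{\OK}^{2}$ for each intermediate order $\mathcal N$, so the generators have reduced norms generating $\zet$, $2\zet$, $4\zet$, $8\zet$; as $\zet$ has narrow class number $1$ we may take them to equal $1$, $2$, $4$, $8$, i.e.\ to have $2$-adic valuations $0,1,2,3$.

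The second step is to see that $\#(\mathcal H^1\mathbf q\cap\mathcal G)$ depends on $\mathbf q$ only through $e:=v_{2}(\nrd(\mathbf q))$. As distilled just before Theorem~\ref{thrm:kind-q-formula}, Lemmata~\ref{lem:orbit-intersections} and \ref{lem:orbit-and-conductor} give $\#(\mathcal H^1\mathbf q\cap\mathcal G)=\#\mathcal N^1$, where $\mathcal N$ is the largest order in the chain such that $\mathbf q$ lies in the left $\mathcal H$-ideal generated by the corresponding conductor generator. Now $2\zet\mid\discrd\mathcal H$, so by Proposition~\ref{prp:irredcounts} and the proof of Theorem~\ref{thrm:kind-max-formula} there is a single $\mathcal H^1$-orbit of reduced norm $2^{e}$ for every $e\ge0$; writing $\mathbf q=\mathbf b\mathbf c$ with $\nrd(\mathbf b)$ odd and $\nrd(\mathbf c)=2^{e}$ (Proposition~\ref{prp:coprime-factorization}), the factor $\mathbf b$ is invertible modulo every power of $2$ (as in the proof of Theorem~\ref{thrm:kind-q-formula}) and hence irrelevant to membership in these ideals, so $\mathbf q$ lies in the ideal attached to $\mathcal N$ if and only if $v_{2}(\nrd(\mathbf a_{\mathcal N}))\le e$. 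In view of the valuations $0,1,2,3$ the intersection size is thus $\#\mathcal G^1=2$ for $e=0$, $\#\mathcal M^1=4$ for $e=1$, $\#\mathcal L^1=8$ for $e=2$, and $\#\mathcal H^1=24$ for $e\ge3$ (here $\mathcal M^1=\set{\pm1,\pm\j}$, and the other three are the standard unit counts of the Lipschitz and Hurwitz orders).

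Finally, since $\mathcal H$ is maximal of class number $1$ with $\discrd\mathcal H=\mathfrak D=2\zet$, Theorem~\ref{thrm:kind-max-formula} yields $r_{\mathcal H}(n)=\#\mathcal H^1\cdot\sigma_{\mathfrak D}(n)$, so there are exactly $\sigma_{\mathfrak D}(n)=\sum_{2\nmid d\mid n}d$ distinct $\mathcal H^1$-orbits of reduced norm $n$, each meeting $\mathcal G$ in a set of the common size found above; summing over orbits gives $r_{\mathcal G}(n)=\sigma_{\mathfrak D}(n)\cdot c_{e}$ with $c_{e}\in\{2,4,8,24\}$, which is \eqref{eq:kind-p3} once the factor $\#\mathcal G^1=2$ is pulled out. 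The only input here beyond routine bookkeeping is the explicit identification $\rcol{\mathcal G}{\mathcal M}=\mathcal M(\i+\k)$ for the middle, non-maximal order $\mathcal M$, which has already been done above; so the remaining work is just the reduction to $v_{2}(n)$ and checking the four intersection sizes (in particular $\#\mathcal M^1=4$).
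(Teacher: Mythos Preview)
Your proof is correct and follows essentially the same approach as the paper: you extend the argument of Theorem~\ref{thrm:kind-p2-formula} along the linear chain $\mathcal G\subsetneq\mathcal M\subsetneq\mathcal L\subsetneq\mathcal H$, use the principality of the right conductors (with the explicit identification $\rcol{\mathcal G}{\mathcal M}=\mathcal M(\i+\k)$ supplied in the text) to reduce orbit--intersection sizes to the $2$-adic valuation of $\nrd(\mathbf q)$, and then read off the four sizes $2,4,8,24$ from the unit groups. Your write-up is slightly more explicit in deducing the generators' reduced norms via Proposition~\ref{prp:conductors}(i) and in naming $\mathcal M^1=\set{\pm1,\pm\j}$, but the substance is the same.
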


Now we focus on the two suborders of the cubians. Note that $\mathcal G_{\mathfrak q^4}$ is a suborder of $\mathcal G_{\mathfrak q^3}$ and that $\KK/\mathcal G_{\mathfrak q^4}$ is a cyclic module, so the poset of orders is just
\[
    \underbrace{\mathcal G_{\mathfrak q^4}}_{=:\mathcal M_4} \subsetneq \underbrace{\mathcal G_{\mathfrak q^4}+\mathfrak q^3\KK = \mathcal G_{\mathfrak q^3}}_{=:\mathcal M_3} \subsetneq \underbrace{\mathcal G_{\mathfrak q^4}+\mathfrak q^2\KK}_{=:\mathcal M_2} \subsetneq \underbrace{\mathcal G_{\mathfrak q^4}+\mathfrak q\KK}_{=:\mathcal M_1}\subsetneq\underbrace{\KK}_{=:\mathcal M_0},
\]
where $\mathfrak q = \sqrt2\OK = (2-\sqrt2)\OK$. Explicitly, the cyclic module $\mathcal M_0/\mathcal M_4$ is generated by (the class of) $\frac{1+\ii}{\sqrt2}$. Denoting $\mathbf a:=\frac{(1-\sqrt2)+\ii+(\sqrt2-1)\j+\k}2\in\mathcal M_4$, we calculate explicitly that
\[
    \frac{1+\ii}{\sqrt2}\cdot\mathbf a \in \mathcal M_1,
\]
hence $\mathcal M_0\mathbf a\subseteq\mathcal M_1$.
Since $\sqrt2\mathcal M_i\subseteq\mathcal M_{i+1}$, we also analogously obtain \[(\sqrt2)^i\OK\frac{1+\ii}{\sqrt2}\mathbf a \subseteq \mathcal M_{i+1}.\] Then since $\sqrt2\frac{1+\ii}{\sqrt2}\mathbf a\in\mathcal M_2$, we get $\mathcal M_0\mathbf a^2 \subseteq\mathcal M_1\mathbf a =\mathcal M_2+\mathcal M_2\sqrt2\frac{1+\ii}{\sqrt2}\mathbf a \subseteq\mathcal M_2$ etc. -- in general, $\mathcal M_i\mathbf a^j\subseteq \mathcal M_{i+j}$ for all $i$ and $j$ that makes sense.
Then since $\mathcal M_i\mathbf a^j \subseteq\rcol{\mathcal M_{i+j}}{\mathcal M_i}$ and
\[
[\mathcal M_i:\rcol{\mathcal M_{i+j}}{\mathcal M_i}]_{\OK} = (\sqrt2)^{2j}\OK = \nrd(\mathbf a^j)^2\OK,
\]
we get that $\rcol{\mathcal M_{i+j}}{\mathcal M_i} = \mathcal M_i\mathbf a^j$.

Thus when investigating the size of the intersection for each orbit, we will have ${\#(\mathcal H^1\mathbf q\cap \mathcal M_i)} = \#\mathcal M_{i-j}^1$ if and only if $j$ is the largest such that $\mathbf q\in\mathcal H\mathbf a^j$. In other words, for both $\mathcal G_{\mathfrak q^3}=\mathcal M_3$ and $\mathcal G_{\mathfrak q^4}=\mathcal M_4$, we obtain a calculation akin to that of Theorem~\ref{thrm:kind-q2-formula}.
Within this calculation, we only need to find $r_{\mathcal M_i}((2-\sqrt2)^e)$ (here we chose $\pi:=2-\sqrt2$ as a totally positive generator of $\mathfrak q=\sqrt2\OK$) and subsequently multiply by $\sigma_{\mathfrak q\mathfrak D}(\alpha) = \sigma_{\sqrt2\OK}(\alpha)$. When counting $r_{\mathcal M_i}((2-\sqrt2)^e)$, we get the term corresponding to $\mathcal H\mathbf a^j$ contributing
\[
\#\mathcal M_{i-j}^1\zav{\sigma_{\mathfrak D}((2-\sqrt2)^{e-j})-\sigma_{\mathfrak D}((2-\sqrt2)^{e-j-1})} = \#\mathcal M_i^1 \cdot 2^j\cdot 2^{e-j} = 2^e\#\mathcal M_{i}^1
\]
for all $j<i$ and then
\begin{multline*}
\#\mathcal M_0^1 \sigma_{\mathfrak D}((2-\sqrt2)^{e-i}) = \#\mathcal M_i^1 (2^{i}+2^{i-1})(1+2+\cdots+2^{e-i}) =\\= \#\mathcal M_i^1\zav{2(2^{i-1}+\cdots+2^e) -2^{i-1}-2^e},
\end{multline*}
with the contribution from $\mathcal H\mathbf a^j$ only happening if $e\geq j$. Thus, for the small cases, we get $\#\mathcal M_i$ times $1\cdot 2^0$, $2\cdot 2^1$, $\dots$, $i2^{i-1}$, until finally for the general case $e\geq i$ we get
\begin{multline*}
\#\mathcal M_i^i\cdot\zav{ 2(2^{i-1}+\cdots+2^e) -2^{i-1}-2^e + i2^e } =\\= \#\mathcal M_i^i\cdot\zav{ 2(2^{i-1}+\cdots+2^e) + (i-1)2^e -2^{i-1} }
\end{multline*}

Hence, specializing these calculations to $\mathcal M_3 = \mathcal G_{\mathfrak q^3}$ and $\mathcal M_4 = \mathcal G_{\mathfrak q^4}$, which have $\#\mathcal G_{\mathfrak q^3}^1=4$ and $\#\mathcal G_{\mathfrak q^4}^1 = 2$ respectively, we obtain the last of the Jacobi-like formulas:

\begin{thm}
\label{thrm:kind-q3-formula}
Let $\mathcal G=\mathcal G_{\mathfrak q^3}$ be as above and let $\alpha\in\OK^+$ satisfy $\alpha\in(\sqrt2)^e\OK$ but $\alpha\notin(\sqrt2)^{e+1}\OK$. Then
\begin{equation}
\label{eq:kind-q3}
r_{\mathcal G}(\alpha) = 4\sum_{\sqrt2\OK\nmid\delta\OK\mid\alpha\OK}\Nm(\delta\OK) \cdot\begin{cases}
    1, & e=0,\\
    2\cdot 2, & e=1,\\
    3\cdot 2^2, & e=2,\\
    2(2^2+\cdots+2^e) + 2\cdot 2^e - 2^2, & e\geq3.
\end{cases}
\end{equation}
\end{thm}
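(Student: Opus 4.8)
The plan is to specialize the orbit-counting computation already carried out for the cubian order $\KK$ to the case $i=3$, i.e.\ to $\mathcal G=\mathcal G_{\mathfrak q^3}=\mathcal M_3$. At this point we already have the linear poset of intermediate orders $\mathcal M_3\subsetneq\mathcal M_2\subsetneq\mathcal M_1\subsetneq\mathcal M_0=\KK$, the fact that $\KK$ is a maximal order of class number $1$ over the narrow-class-number-one field $\kve(\sqrt2)$ with $\mathfrak D:=\discrd\KK=\OK$, the explicit $\mathbf a$ with $\nrd(\mathbf a)=\pi:=2-\sqrt2$ a totally positive generator of $\mathfrak q=\sqrt2\OK$, and the identification $\rcol{\mathcal G}{\mathcal M_{3-j}}=\mathcal M_{3-j}\mathbf a^{\,j}$ for $j=0,1,2,3$, so that $\mathbf a_{\mathcal M_{3-j}}=\mathbf a^{\,j}$ in the notation introduced after Lemma~\ref{lem:orbit-and-conductor}.

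First I would, exactly as in the proofs of Theorems~\ref{thrm:kind-q-formula} and~\ref{thrm:kind-q2-formula}, reduce to reduced norms that are powers of $\pi$: writing $\alpha=\beta\pi^e$ with $\pi\nmid\beta$ and factoring a quaternion of reduced norm $\alpha$ in $\KK$ (Proposition~\ref{prp:coprime-factorization}) as a product whose two factors have reduced norms $\beta$ and $\pi^e$, the membership of its $\KK^1$-orbit in any $\KK\mathbf a^{\,j}$ depends only on the $\pi^e$-factor, so $r_{\mathcal G}(\alpha)=\sigma_{\OK}(\beta)\,r_{\mathcal G}(\pi^e)$, with $\sigma_{\OK}(\beta)=\sigma_{\mathfrak q\mathfrak D}(\alpha)=\sum_{\sqrt2\OK\nmid\delta\OK\mid\alpha\OK}\Nm(\delta\OK)$ since $\pi\nmid\beta$.

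To evaluate $r_{\mathcal G}(\pi^e)$ I would use the combined consequence of Lemmas~\ref{lem:orbit-intersections} and~\ref{lem:orbit-and-conductor}: for a $\KK^1$-orbit $O$ of reduced norm $\pi^e$ meeting $\mathcal G$ one has $\#(O\cap\mathcal G)=\#\mathcal M_{3-j_0}^1$, where $j_0\in\{0,1,2,3\}$ is the largest $j$ with $O\subseteq\KK\mathbf a^{\,j}$. By Theorem~\ref{thrm:kind-max-formula} (with $q:=\Nm(\mathfrak q)=2$) there are $\sigma_{\OK}(\pi^{m})=1+q+\cdots+q^{m}$ orbits of reduced norm $\pi^{m}$ in $\KK$, and those contained in $\KK\mathbf a^{\,j}$ are exactly the $\KK^1\mathbf c_0\mathbf a^{\,j}$ with $\nrd(\mathbf c_0)=\pi^{e-j}$, so there are $\sigma_{\OK}(\pi^{e-j})$ of them; hence the number with $j_0=j$ is $\sigma_{\OK}(\pi^{e-j})-\sigma_{\OK}(\pi^{e-j-1})$ for $j<3$ and $\sigma_{\OK}(\pi^{e-3})$ for $j=3$ (reading $\sigma_{\OK}(\pi^{m})=0$ for $m<0$). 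For the weights, $\#\mathcal M_3^1=\#\mathcal G_{\mathfrak q^3}^1=4$ is given, and since each $\mathcal M_i$ is $\KK$-perceptive (Proposition~\ref{prp:composing-perceptivity}(ii)) with cyclic quotient $\KK/\mathcal M_i\simeq\OK/\mathfrak q^{i}$, Proposition~\ref{prp:perceptive-one-dim-step} gives $\#\KK^1/\#\mathcal M_i^1=q^{i}+q^{i-1}$, so $\#\mathcal M_0^1=48$, $\#\mathcal M_1^1=16$, $\#\mathcal M_2^1=8$. Summing $\sum_{j}(\#\{\text{orbits with }j_0=j\})\cdot\#\mathcal M_{3-j}^1$ then yields $r_{\mathcal G}(1)=4$, $r_{\mathcal G}(\pi)=16$, $r_{\mathcal G}(\pi^2)=48$, and $r_{\mathcal G}(\pi^e)=4\bigl(2(2^2+\cdots+2^e)+2\cdot2^e-2^2\bigr)$ for $e\ge3$; multiplying by $\sigma_{\mathfrak q\mathfrak D}(\alpha)$ is precisely~\eqref{eq:kind-q3}.

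I do not expect a real obstacle: all the structural input — especially the principality of the conductors $\rcol{\mathcal G}{\mathcal M_1}$ and $\rcol{\mathcal G}{\mathcal M_2}$ of the non-maximal intermediate orders, which leaned on Proposition~\ref{prp:cls-implications} — is already in place in the discussion preceding the statement. The only delicate points are the small-$e$ cases, where the geometric series $\sigma_{\OK}(\pi^{m})$ truncate at $m=0$, and the arithmetic collapsing the four-term sum into the displayed closed forms; the asymmetry between the branches $e\le2$ and $e\ge3$ comes from $\#\mathcal M_{3-j}^1=q^{\,j}\#\mathcal M_3^1$ holding for $j<3$ while $\#\mathcal M_0^1=(q^{3}+q^{2})\#\mathcal M_3^1$ for the top conductor.
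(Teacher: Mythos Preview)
Your proposal is correct and follows essentially the same route as the paper: reduce to $r_{\mathcal G}(\pi^e)$ via Proposition~\ref{prp:coprime-factorization}, stratify the $\KK^1$-orbits by the largest $j$ with $O\subseteq\KK\mathbf a^{\,j}$ using the already-established $\rcol{\mathcal G}{\mathcal M_{3-j}}=\mathcal M_{3-j}\mathbf a^{\,j}$, weight each stratum by $\#\mathcal M_{3-j}^1$, and sum. The only cosmetic difference is that you compute the explicit values $\#\mathcal M_i^1=48,16,8,4$ via Proposition~\ref{prp:perceptive-one-dim-step}, while the paper encodes the same information as $\#\mathcal M_{i-j}^1=2^{\,j}\#\mathcal M_i^1$ for $j<i$ and $\#\mathcal M_0^1=(2^i+2^{i-1})\#\mathcal M_i^1$ before specializing to $i=3$.
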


\begin{thm}
\label{thrm:kind-q4-formula}
Let $\mathcal G=\mathcal G_{\mathfrak q^4}$ be as above and let $\alpha\in\OK^+$ satisfy $\alpha\in(\sqrt2)^e\OK$ but $\alpha\notin(\sqrt2)^{e+1}\OK$. Then
\begin{equation}
\label{eq:kind-q4}
r_{\mathcal G}(\alpha) = 2\sum_{\sqrt2\OK\nmid\delta\OK\mid\alpha\OK}\Nm(\delta\OK) \cdot\begin{cases}
    1, & e=0,\\
    2\cdot2, & e=1,\\
    3\cdot2^2, & e=2,\\
    4\cdot2^3, & e=3,\\
    2(2^3+\cdots+2^e) + 3\cdot 2^e - 2^3, & e\geq4.
\end{cases}
\end{equation}
\end{thm}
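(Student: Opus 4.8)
The plan is to obtain Theorem~\ref{thrm:kind-q4-formula} as the $i=4$ instance of the orbit-counting argument already carried out in the discussion that precedes (and proves) Theorem~\ref{thrm:kind-q3-formula}: only the numerical inputs change. Recall from that discussion the linear poset of orders
\[
\mathcal G_{\mathfrak q^4}=\mathcal M_4\subsetneq\mathcal M_3\subsetneq\mathcal M_2\subsetneq\mathcal M_1\subsetneq\mathcal M_0=\KK,
\]
with each successive quotient isomorphic to $\OK/\mathfrak q$ for $\mathfrak q=\sqrt2\OK=(2-\sqrt2)\OK$, and the element $\mathbf a=\frac{(1-\sqrt2)+\i+(\sqrt2-1)\j+\k}{2}\in\mathcal M_4$, of reduced norm $\pi:=2-\sqrt2$, a totally positive generator of $\mathfrak q$. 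The key structural fact established there is that $\rcol{\mathcal M_4}{\mathcal M_k}=\mathcal M_k\mathbf a^{\,4-k}$ for $k=0,1,\dots,4$; this rests on the explicit inclusion $\frac{1+\i}{\sqrt2}\mathbf a\in\mathcal M_1$ (which gives $\mathcal M_i\mathbf a^{\,j}\subseteq\mathcal M_{i+j}$ in general), together with the index identity $[\mathcal M_k:\mathcal M_k\mathbf a^{\,4-k}]_{\OK}=\nrd(\mathbf a)^{2(4-k)}\OK=(\sqrt2)^{2(4-k)}\OK=[\mathcal M_k:\rcol{\mathcal M_4}{\mathcal M_k}]_{\OK}$, the latter equality coming from Proposition~\ref{prp:conductors}(i).

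First I would record the data. The cardinalities along the chain are $\#\mathcal M_4^1=\#\mathcal G_{\mathfrak q^4}^1=2$, $\#\mathcal M_3^1=4$, $\#\mathcal M_2^1=8$, $\#\mathcal M_1^1=16$ and $\#\mathcal M_0^1=\#\KK^1=48$ -- consistent with perceptivity of each step (the ratios being $\Nm(\mathfrak q)=2$ for the three cyclic steps and $\Nm(\mathfrak q)+1=3$ into the maximal order $\mathcal M_0$). Moreover $\KK$ is a maximal order in $\quatalg{-1,-1}K$ with $K=\kve(\sqrt2)$, an algebra unramified at every finite prime, so $\discrd\KK=\mathfrak D=\OK$ and hence $\sigma_{\mathfrak q\mathfrak D}=\sigma_{\mathfrak q}=\sigma_{\sqrt2\OK}$ is exactly the divisor sum appearing in \eqref{eq:kind-q4}.

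Then I would run the count, exactly as for the $\mathfrak q^2$ case of Theorem~\ref{thrm:kind-q2-formula}. Write $\alpha=\beta\pi^e$ with $\sqrt2\nmid\beta$; by Proposition~\ref{prp:coprime-factorization} every quaternion of reduced norm $\alpha$ factors as $\mathbf b\mathbf q$ with $\nrd(\mathbf b)=\beta$ and $\nrd(\mathbf q)=\pi^e$, and since $\mathbf b$ is invertible modulo a high enough power of $\pi$, membership of the orbit $\KK^1(\mathbf b\mathbf q)$ in any $\KK\mathbf a^{\,j}$ depends only on $\mathbf q$; hence $r_{\mathcal G}(\alpha)=\sigma_{\mathfrak D}(\beta)\cdot r_{\mathcal G}(\pi^e)=\sigma_{\mathfrak q}(\alpha)\cdot r_{\mathcal G}(\pi^e)$. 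By Lemmata~\ref{lem:orbit-intersections} and \ref{lem:orbit-and-conductor} applied to the conductors above, an orbit of reduced norm $\pi^e$ lying in $\KK\mathbf a^{\,j}\setminus\KK\mathbf a^{\,j+1}$ meets $\mathcal M_4$ in $\#\mathcal M_{4-j}^1$ elements (with $\#\mathcal M_0^1$ once $j\ge4$), and the number of such orbits equals $\sigma_{\mathfrak D}(\pi^{e-j})-\sigma_{\mathfrak D}(\pi^{e-j-1})=2^{\,e-j}$ for $0\le j<\min(e,4)$, the remaining $\sigma_{\mathfrak D}(\pi^{e-4})$ orbits (present only when $e\ge4$) lying entirely in $\KK\mathbf a^{\,4}$. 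Summing $\sum_j 2^{\,e-j}\#\mathcal M_{4-j}^1$ together with the $\mathcal M_0$-contribution and inserting the cardinalities above yields $r_{\mathcal G}(\pi^e)=\#\mathcal M_4^1$ times $1,\ 2\cdot2,\ 3\cdot2^2,\ 4\cdot2^3$ for $e=0,1,2,3$ respectively, and $\#\mathcal M_4^1\bigl(2(2^3+\cdots+2^e)+3\cdot2^e-2^3\bigr)$ for $e\ge4$; multiplying by $\sigma_{\mathfrak q}(\alpha)$ and using $\#\mathcal M_4^1=2$ gives precisely \eqref{eq:kind-q4}.

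The only step carrying real content is the identification $\rcol{\mathcal M_4}{\mathcal M_k}=\mathcal M_k\mathbf a^{\,4-k}$ and, upstream of it, the explicit verification $\frac{1+\i}{\sqrt2}\mathbf a\in\mathcal M_1$ inside the cubians -- but this has already been settled in the lead-in to Theorem~\ref{thrm:kind-q3-formula}, so here it is simply invoked. Everything else is the summation of finite geometric series and the bookkeeping of the small cases $e\le3$, with no new idea beyond those in the proofs of Theorems~\ref{thrm:kind-q2-formula} and \ref{thrm:kind-q3-formula}.
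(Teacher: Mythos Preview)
Your proposal is correct and follows essentially the same approach as the paper: the discussion preceding Theorems~\ref{thrm:kind-q3-formula} and \ref{thrm:kind-q4-formula} carries out precisely this orbit-counting for $\mathcal M_i$ with general $i$ and then specializes to $i=3,4$, using the same conductor identification $\rcol{\mathcal M_4}{\mathcal M_k}=\mathcal M_k\mathbf a^{4-k}$ and the same factorization $\alpha=\beta\pi^e$. The only imprecision is that your stated range $0\le j<\min(e,4)$ omits the single orbit in $\KK\mathbf a^{\,e}$ when $e<4$ (contributing $\#\mathcal M_{4-e}^1$), but since you explicitly flag the small cases $e\le3$ as separate bookkeeping and quote the correct values $1,\,2\cdot2,\,3\cdot2^2,\,4\cdot2^3$, this is a wording slip rather than a gap.
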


Let us remark that in these results, we sometimes relied on certain \uv{happy coincidences} observed on concrete data obtained algorithmically: namely, that all the perceptive suborders from Theorem~\ref{thrm:our-list} happened to have class number $1$, that those of them that were of the kinds $\mathfrak p^a$ and $\mathfrak q^b$ happened to have linear posets of orders, and that the relevant right conductors $\rcol{\mathcal G}{\mathcal M}$ were principal, of the form $\mathcal M\mathbf a$ (which is just equivalent to $\lord{\rcol{\mathcal G}{\mathcal M}}=\mathcal M$, since then we could leverage $\#\Cls\mathcal M \leq\#\Cls\mathcal G = 1$). One avenue of further research might thus be to investigate whether these are indeed coincidences, or rather if they are provable consequences of perceptivity combined with the maximal order having class number $1$. As we alluded to in Remark~\ref{rmrk:nonlinear-poset} as well as in the proof of Theorem~\ref{thrm:kind-qq-formula}, the condition of having a linear poset of orders does not seem to be tightly necessary in our endeavor, because as long as the conductors $\rcol{\mathcal G}{\mathcal M}$ form an opposite poset to the intermediate orders $\mathcal G\subseteq\mathcal M\subseteq\mathcal H$ and satisfy the condition on indices (Proposition~\ref{prp:conductors}(i)), the rest of the subsequent theory can be carried out without major alterations. However, we have not been able to establish Proposition~\ref{prp:conductors}(i) in a more general situation -- the problem is when an order would be covered by the union of its proper suborders.

\section{A~quaternionic proof of G\"otzky's four-square theorem}
\label{chap:gotzky}

In this short section, we will illustrate that perceptivity of a suborder may not be necessary for a Hurwitz-like method -- examining the norm form of a quaternion order through a maximal superorder with class number $1$ -- to succeed. This will be achieved by proving an analogue of Jacobi's four-square theorem in $\kve(\sqrt5)$ originally due to G\"otzky. Throughout this entire section, let us fix $K:=\kve(\sqrt5)$ and its ring of integers $\OK = \zet[\phi]$, where $\phi = \frac{1+\sqrt5}2$ is the golden ratio. Note that $\OK$ has narrow class number $1$.

\begin{thm}[G\"otzky]
\label{thrm:gotzky}
For any $\alpha\in\OK^+$, the equation $\alpha= t^2+x^2+y^2+z^2$ has exactly
\[
    8\sum_{\delta\OK\mid\alpha\OK}\Nm(\delta\OK)-4\sum_{2\OK\mid\delta\OK\mid\alpha\OK}\Nm(\delta\OK)+8\sum_{4\OK\mid\delta\OK\mid\alpha\OK}\Nm(\delta\OK)
\]
solutions $t,x,y,z\in\OK$. In particular, the quadratic form $t^2+x^2+y^2+z^2$ is universal over $K$.
\end{thm}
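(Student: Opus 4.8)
The plan is to carry out the orbit-intersection analysis of Section~\ref{chap:orbits} while accepting that the Lipschitz order $\mathcal{L}:=\OK\oplus\OK\i\oplus\OK\j\oplus\OK\k$ fails to be perceptive in any maximal order of $\mathcal{A}:=\quatalg{-1,-1}{K}$, and compensating for this by counting directly which unit orbits actually meet $\mathcal{L}$.

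First I would fix the ambient maximal order. The algebra $\mathcal{A}$ is totally definite, and a brief local check --- it splits at $\mathfrak p:=2\OK$, because $K_{\mathfrak p}=\kve_2(\sqrt5)$ is the unramified quadratic extension of $\kve_2$, and it is unramified at every odd prime by tameness --- shows $\disc\mathcal{A}=\OK$. Hence (cf.\ Theorem~\ref{thrm:kirschmer-list}) $\mathcal{A}$ contains a maximal order $\mathcal{H}$ of class number $1$, the \emph{icosian ring}, with $\#\mathcal{H}^1=120$ and $\discrd\mathcal{H}=\OK$, so Theorem~\ref{thrm:kind-max-formula} gives $r_{\mathcal{H}}(\alpha)=120\sum_{\delta\OK\mid\alpha\OK}\Nm(\delta\OK)$. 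Placing $\mathcal{L}$ inside $\mathcal{H}$, Proposition~\ref{prp:discs-and-indices}(i) yields $[\mathcal{H}:\mathcal{L}]_{\OK}=\mathfrak p^2$; since $\mathcal{L}$ is locally at $\mathfrak p$ an Eichler order of level $\mathfrak p^2$, the quotient $\mathcal{H}/\mathcal{L}\simeq\OK/\mathfrak p^2$ is cyclic, so the pair $\mathcal{L},\mathcal{H}$ has a linear poset of orders, namely $\mathcal{L}\subsetneq\mathcal{H}_0\subsetneq\mathcal{H}$ with $\mathcal{H}_0=\OK\oplus\OK\i\oplus\OK\j\oplus\OK\frac{1+\i+\j+\k}2$ the Hurwitz-like order. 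One has $\#\mathcal{H}_0^1=24$, so $\mathcal{H}_0$ is $\mathcal{H}$-perceptive by Proposition~\ref{prp:perceptive-one-dim-step} (here $120/24=\Nm(\mathfrak p)+1$), whereas $\mathcal{L}$ is perceptive in neither $\mathcal{H}_0$ nor $\mathcal{H}$.

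Next I would run the conductor machinery of Subsection~\ref{sec:jacobi}. Because the poset is linear, Proposition~\ref{prp:conductors} applies, and since $\#\Cls\mathcal{H}=\#\Cls\mathcal{H}_0=1$ (the latter by Proposition~\ref{prp:cls-implications}) the right conductors are principal left ideals of their respective orders: $\rcol{\mathcal{L}}{\mathcal{H}_0}=\mathcal{H}_0\mathbf a_1$ and $\rcol{\mathcal{L}}{\mathcal{H}}=\mathcal{H}\mathbf a_2$, where $\nrd(\mathbf a_1)\OK=\mathfrak p$, $\nrd(\mathbf a_2)\OK=\mathfrak p^2$, $\mathbf a_1$ right-divides $\mathbf a_2$ in $\mathcal{H}$, and (as $\NCl K=1$) we may take $\nrd(\mathbf a_1)=2$, $\nrd(\mathbf a_2)=4$. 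Lemmata~\ref{lem:orbit-intersections} and~\ref{lem:orbit-and-conductor} then evaluate the contribution of an orbit $\mathcal{H}^1\mathbf q$ with $\nrd(\mathbf q)=\alpha$: it is $\#\mathcal{H}^1=120$ if $\mathbf q\in\mathcal{H}\mathbf a_2$ (the whole orbit lies in $\mathcal{H}\mathbf a_2\subseteq\mathcal{L}$), it is $\#\mathcal{H}_0^1=24$ if $\mathbf q\in\mathcal{H}\mathbf a_1\setminus\mathcal{H}\mathbf a_2$, and if $\mathbf q\notin\mathcal{H}\mathbf a_1$ it is $\#\mathcal{L}^1=8$ when the orbit meets $\mathcal{L}$ at all and $0$ otherwise. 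Counting left ideals exactly as in the proof of Theorem~\ref{thrm:kind-max-formula}, the number of orbits of reduced norm $\alpha$ lying in $\mathcal{H}\mathbf a_i$ equals the number of left $\mathcal{H}$-ideals of reduced norm $\alpha\OK\mathfrak p^{-i}$, that is $\sum_{\delta\OK\mid\alpha\OK\mathfrak p^{-i}}\Nm(\delta\OK)$ (vacuously $0$ when $\mathfrak p^i\nmid\alpha\OK$).

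The heart of the matter --- and what I expect to be the main obstacle --- is the remaining case: deciding, for $\mathbf q\notin\mathcal{H}\mathbf a_1$, whether the orbit $\mathcal{H}^1\mathbf q$ meets $\mathcal{L}$. The claim to prove is that it does precisely when $\mathfrak p\nmid\nrd(\mathbf q)\OK$, so that every orbit of reduced norm prime to $\mathfrak p$ contributes $8$ and every orbit of reduced norm divisible by $\mathfrak p$ but not already counted by $\mathbf a_1$ contributes nothing. This is a purely local assertion at $\mathfrak p$, since $\mathcal{L}_{\mathfrak q}=\mathcal{H}_{\mathfrak q}$ for $\mathfrak q\neq\mathfrak p$: over $K_{\mathfrak p}$ one has $\mathcal{H}_{\mathfrak p}\cong\Mat_2(\mathcal O_{K,\mathfrak p})$ with $\mathcal{L}_{\mathfrak p}$ the Eichler order of level $\mathfrak p^2$, the reduction $\mathcal{H}^1\to\mathrm{SL}_2(\OK/\mathfrak p)=\mathrm{SL}_2(\mathbb{F}_4)$ is surjective (the binary icosahedral group has $A_5$ as a quotient), and a finite computation --- the condition depends only on $\mathbf q$ modulo $\mathfrak p^2$ --- shows that left multiplication by these $120$ units can move a matrix of unit determinant into $\mathcal{L}_{\mathfrak p}$ but cannot do so for a matrix whose determinant has positive $\mathfrak p$-valuation unless the left ideal it generates is already contained in $\mathcal{H}\mathbf a_1$. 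Note that because $\mathcal{L}$ is non-perceptive the orbit-intersection lemmata only give the \emph{size} of $\mathcal{H}^1\mathbf q\cap\mathcal{L}$ once it is known to be non-empty, so this local analysis --- rather than any formal manipulation --- is where the real content lies. Granting it, summing the contributions $120,24,8,0$ over the orbits and simplifying using $\Nm(\mathfrak p)=4$ reproduces the three-term divisor sum of the statement; in particular $r_{\mathcal{L}}(\alpha)\geq 8>0$ for every $\alpha\in\OK^+$, so $t^2+x^2+y^2+z^2$ is universal over $K$.
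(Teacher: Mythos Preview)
Your overall strategy coincides with the paper's: work inside the icosian order $\II$, use the linear poset $\mathcal L\subsetneq\mathcal H_0\subsetneq\II$ and its right conductors $\rcol{\mathcal L}{\mathcal H_0}=\mathcal H_0(1+\i)$, $\rcol{\mathcal L}{\II}=2\II$ to read off the intersection sizes $120,24,8,0$, and then establish that an orbit with $\mathbf q\notin\II(1+\i)$ meets $\mathcal L$ if and only if $2\nmid\nrd(\mathbf q)$ via the surjection $\II^1\twoheadrightarrow\mathrm{SL}_2(\mathbb F_4)$. The paper proves exactly this, giving the surjectivity by a counting argument (Lemma~\ref{lem:units-on-odd-lines}) and the dichotomy by an explicit analysis in $\II/2\II$ (Lemma~\ref{lem:which-orbits-hit}).

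There is, however, a genuine error in your setup. You assert that $\II/\mathcal L\simeq\OK/\mathfrak p^2$ is cyclic and that $\mathcal L$ is locally at $\mathfrak p$ an Eichler order of level $\mathfrak p^2$. This is false: since $2\II\subseteq\mathcal L$ (one checks $2\h,\,2\i\h\in\mathcal L$ directly), the quotient $\II/\mathcal L$ is annihilated by $\mathfrak p$ and hence $\II/\mathcal L\simeq(\OK/\mathfrak p)^2$. Correspondingly $\mathcal L/2\II$ is the two-dimensional subalgebra $\mathbb F_4[1+\i]\simeq\mathbb F_4[\epsilon]/(\epsilon^2)$ of $\Mat_2(\mathbb F_4)$, not a field and not the image of an Eichler order of level $\mathfrak p^2$. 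Your deduction of linearity of the poset therefore collapses; the paper instead proves linearity by classifying the three-dimensional subalgebras of $\II/2\II$ containing $\mathcal L/2\II$ via Lemma~\ref{lem:threedim-subalg}, finding exactly one. A secondary consequence is that the ``finite computation'' you defer to lives in $\II/2\II$ (of size $256$), not in $\II/4\II$; this is what makes the paper's Lemmata~\ref{lem:units-on-odd-lines} and~\ref{lem:which-orbits-hit} tractable by hand. Once linearity is established correctly, the remainder of your argument goes through and matches the paper's.
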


G\"otzky originally derived this result through an analytic approach \cite{gotzky}, although Kirmse had already studied sums of four squares in $\kve(\sqrt5)$ using quaternions before that \cite{kirmse}. More recently, Deutsch used quaternions and geometry of numbers to prove universality of $t^2+x^2+y^2+z^2$ over $K$, but did not extract the full formula for the number of representations \cite{deutsch-on-gotzky}. Here, we will provide a quaternionic proof of the full theorem. The analytic point of view of Götzky's theorem was also recently examined by Thompson \cite{thompson}.

Of course, G\"otzky's theorem is easily restated as a formula for $r_{\mathcal G}$ with the order $\mathcal G:= \OK\oplus\OK\ii\oplus\OK\j\oplus\OK\k$ in $\quatalg{-1,-1}K$. Within the framework established in Section~\ref{chap:orbits}, the role of the maximal superorder will be played by the \emph{icosian} order
\[
    \II := \OK\oplus\OK\ii\oplus\OK\h\oplus\OK\ii\h,
\]
where $\h = \frac12(\phi+(\phi-1)\ii+\j)$. This is a maximal order in $\quatalg{-1,-1}K$, it has reduced discriminant $\OK$, class number $1$ and its group $\II^1$, the so-called \emph{binary icosahedral group}, has $120$ elements. Both the group $\II^1$ and the icosian order have many remarkable properties, for which we refer the reader to \cite[§8.2]{conway-sloane}.

Let us verify that $\mathcal G$, $\II$ has a linear poset of orders, identifying the poset in the process. We have $[\II:\mathcal G]_{\OK} = 4\OK$ and $2\II\subseteq\mathcal G$; further, in $\OK$, the rational prime $2$ is inert, so $\II/2\II$ is a four-dimensional algebra over the four-element field $k:=\OK/2\OK$ in which $\mathcal G/2\II$ is present as a two-dimensional subalgebra spanned by (the residue classes of) $1$ and $\ii$. Additionally, since $2\OK\nmid \discrd \II$, we have $\II/2\II\simeq\Mat_2(k)$ by Lemma~\ref{lem:goodquotient}.

Suppose $\mathcal H$ is an order with $\mathcal G\subsetneq\mathcal H\subsetneq\mathcal \II$, then $\mathcal H/2\II$ will manifest as a three-dimensional subalgebra of $\II/2\II$ containing $\mathcal G/2\II$. We may thus take the basis of $\mathcal H/2\II$ to be $1$, $\ii$, $\mathbf q$ for some $\mathbf q\in \mathcal H$. Then by Lemma~\ref{lem:threedim-subalg}, $(\ii+a)(\mathbf q+b)= 0$ in $\II/2\II$ for some $a,b\in k$, and $\nrd(\ii+a)\equiv \nrd(\mathbf q+b)\equiv 0\pmod{2\OK}$. The only elements of reduced norm divisible by $2$ in the two-dimensional subalgebra $\mathcal G/2\II$ are scalar multiples of $1+\ii$, which forces $a=1$. All non-trivial ideals in $\Mat_2(k)$ are two-dimensional, the set of elements that annihilate $1+\ii$ from the right forms a non-trivial right ideal and it contains the ideal $\overline{(1+\ii)}\II/2\II$, so these two must coincide because they are both two-dimensional. Thus we see that $\mathbf q+b$ must be chosen from $\overline{(1+\ii)}\II/2\II$. But this two-dimensional ideal intersects the two-dimensional subalgebra $\mathcal G/2\II$ in a one-dimensional subspace, so together, they span just a three-dimensional subspace. Thus any choice of $\mathbf q+b$ from $\overline{(1+\ii)}\II/2\II \setminus \mathcal G/2\II$ in fact gives the same three-dimensional subspace.

This means there may be at most one three-dimensional algebra $\mathcal H/2\II$ of the desired properties, and we easily see that $k\oplus k\ii\oplus k\j$ is a such an algebra, since $\ii\j\equiv \j\ii\equiv (\phi-1)+\phi\ii\pmod{2\II}$. This corresponds to the order
\[
    \mathcal H = \OK\oplus\OK\ii\oplus\OK\j\oplus\OK\frac{1+\ii+\j+\k}2,
\]
essentially an analogue over $\OK$ of the Hurwitz order over $\zet$. Thus we have identified the full poset of orders between $\mathcal G$ and $\II$, this poset being $\mathcal G\subset\mathcal H\subset\II$.

Straightforwardly, one calculates that $\#\mathcal H^1=24$ and $\#\mathcal G^1 = 8$. Alongside $\#\II^1=120$, applying Proposition~\ref{prp:perceptivity-count}, we obtain that $\mathcal H$ is $\II$-perceptive but $\mathcal G$ is not.

Next, we may wish to examine the right conductors arising in this poset. $\rcol{\mathcal G}{\II}$ must have $[\mathcal G:\rcol{\mathcal G}{\II}]_{\OK}=4\OK$ by Proposition~\ref{prp:conductors}, but since $2\II\subseteq\mathcal G$, it follows that $2\II\subseteq\rcol{\mathcal G}{\II}$, so just by considering indices, we obtain $\rcol{\mathcal G}{\II}=2\II$. Next we show $\rcol{\mathcal G}{\mathcal H} = \mathcal H(1+\ii)$. On one hand, we must have $[\mathcal H:\rcol{\mathcal G}{\mathcal H}]_{\OK} = 4\OK$, on the other, we see $\mathcal H = \mathcal G+\mathcal G\frac{1+\ii+\j+\k}2$, so
\[
    \frac{1+\ii+\j+\k}2\cdot (1+\ii) = \ii+\j\in\mathcal G
\]
implies $1+\ii\in\rcol{\mathcal G}{\mathcal H}$ and so $\mathcal H(1+\ii)\subseteq\rcol{\mathcal G}{\mathcal H}$. Due to indices, we then get $\rcol{\mathcal G}{\mathcal H}=\mathcal H(1+\ii)$.

\begin{lemma}
    \label{lem:units-on-odd-lines}
    For each $\tilde{\mathbf u}\in \II/2\II$ with $\nrd(\tilde{\mathbf u})\equiv 1\pmod{2\OK}$, there is a $\mathbf u\in\II^1$ such that $\mathbf u\equiv \tilde{\mathbf u}\pmod{2\II}$. As a consequence, for any $\mathbf q\in\II$ with $2\nmid\nrd(\mathbf q)$, the set $\II^1\mathbf q\cap \mathcal G$ is non-empty.
\end{lemma}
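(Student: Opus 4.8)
The plan is to reduce everything modulo $2\II$, using the two facts recorded above: $\II/2\II\simeq\Mat_2(k)$ with $k:=\OK/2\OK\cong\mathbb{F}_4$, and $2\II\subseteq\mathcal G$. For $\mathbf x\in\II$ write $\tilde{\mathbf x}$ for its class in $\II/2\II$. Since $\nrd(\mathbf x+2\mathbf z)=\nrd(\mathbf x)+2\trd(\mathbf x\overline{\mathbf z})+4\nrd(\mathbf z)\equiv\nrd(\mathbf x)\pmod{2\OK}$, the reduced norm descends to a map $\II/2\II\to k$ which under the isomorphism above is the determinant on $\Mat_2(k)$; in particular the reduction homomorphism $\rho\colon\II^1\to\II/2\II$ takes values in the subgroup corresponding to $\mathrm{SL}_2(k)$, which has order $60$.

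For the first assertion I would show $\ker\rho=\{\pm1\}$; since $\#\II^1=120$ this gives $\#\rho(\II^1)=60=\#\mathrm{SL}_2(k)$, so $\rho$ maps onto the set of $\tilde{\mathbf u}$ of reduced norm $\bar1$, which is exactly the claim. That $\pm1\in\ker\rho$ is clear, as $-1-1\in2\II$. Conversely, if $\mathbf u=1+2\mathbf w\in\II^1$ with $\mathbf w\in\II$, then $\tfrac12\trd(\mathbf u)=1+\trd(\mathbf w)\in\OK$; moreover $\mathcal A=\quatalg{-1,-1}{K}$ is definite, so in each of the two real embeddings $\mathbf u$ becomes a unit-norm Hamilton quaternion and $|\sigma_i(\trd(\mathbf u))|\le2$. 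As the only elements of $\OK$ all of whose conjugates lie in $[-1,1]$ are $0$ and $\pm1$ (because $\Nm(\beta)\in\mathbb{Z}\cap[-1,1]$ forces $\beta$ to be $0$ or a unit, and among the units $\pm\phi^n$ only $\pm1$ qualify), we get $\trd(\mathbf u)\in\{0,\pm2\}$. If $\trd(\mathbf u)=\pm2$ the quadratic relation gives $(\mathbf u\mp1)^2=0$, hence $\mathbf u=\pm1$ since $\mathcal A$ is a division algebra; if $\trd(\mathbf u)=0$ then $\mathbf u^2=-1$, so $(1+2\mathbf w)^2=-1$ yields $2(\mathbf w+\mathbf w^2)=-1$, impossible because the left side lies in $2\II$ while $1\notin2\II$. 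Thus $\ker\rho=\{\pm1\}$.

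For the consequence, take $\mathbf q\in\II$ with $2\nmid\nrd(\mathbf q)$, so $\tilde{\mathbf q}$ has nonzero reduced norm in $k$ and is invertible in $\II/2\II$. It suffices to find $\mathbf u\in\II^1$ with $\widetilde{\mathbf u\mathbf q}\in\mathcal G/2\II$: then $\mathbf u\mathbf q-\mathbf g\in2\II\subseteq\mathcal G$ for some $\mathbf g\in\mathcal G$, so $\mathbf u\mathbf q\in\mathcal G$ and $\II^1\mathbf q\cap\mathcal G\neq\emptyset$. Now $\mathcal G/2\II=k\oplus k\tilde{\i}$ with $\tilde{\i}^2=\tilde1$ (as $-1\equiv1\pmod{2\OK}$), and $\nrd(a+b\tilde{\i})=a^2+b^2=(a+b)^2$ in $k$; hence $a+b\tilde{\i}$ is invertible exactly when $a+b\neq0$, and the reduced norms of the invertible elements of $\mathcal G/2\II$ run over $\{c^2:c\in k^\times\}=k^\times$ (Frobenius is bijective on $\mathbb{F}_4$). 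Since $\mathrm{SL}_2(k)$ is normal in $\mathrm{GL}_2(k)$ with quotient $k^\times$ detected by the determinant, and $\rho(\II^1)=\mathrm{SL}_2(k)$ by the first part, the product set $\rho(\II^1)\cdot(\mathcal G/2\II)^\times$ equals the set of all classes of reduced norm in $k^\times$, i.e. all of $\mathrm{GL}_2(k)$. Writing the invertible class $\tilde{\mathbf q}$ as $\tilde{\mathbf q}=s\,\tilde{\mathbf g}$ with $s\in\mathrm{SL}_2(k)$ and $\mathbf g\in\mathcal G$, the first part provides $\mathbf u\in\II^1$ with $\tilde{\mathbf u}=s^{-1}$, and then $\widetilde{\mathbf u\mathbf q}=\tilde{\mathbf g}\in\mathcal G/2\II$, as required.

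The main obstacle is pinning down $\ker\rho$ in the first assertion; the decisive inputs there are the coarse fact $\trd(\mathbf u)\in2\OK$ together with the trace bound coming from definiteness of $\mathcal A$. Once it is known that $\rho$ surjects onto $\mathrm{SL}_2(k)$, the second assertion is a short counting argument in $\mathrm{GL}_2(\mathbb{F}_4)$ whose only real ingredient is that the norm form of the local algebra $\mathcal G/2\II$ (a ring of dual numbers over $\mathbb{F}_4$) is surjective onto $k^\times$.
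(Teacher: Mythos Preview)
Your proof is correct and follows the same overall architecture as the paper's: show that the reduction $\II^1\to\II/2\II$ has kernel $\{\pm1\}$, deduce by counting that its image is exactly the set of norm-$1$ classes, and then use this surjectivity to land $\mathbf u\mathbf q$ in $\mathcal G/2\II$. The execution differs in a few places worth noting. For the kernel, the paper bounds $\nrd(\mathbf u_1-\mathbf u_2)$ between $0$ and $4$ via definiteness and derives a contradiction from $4\mid\nrd(\mathbf u_1-\mathbf u_2)$; you instead bound $\trd(\mathbf u)$ and finish via the minimal polynomial in a division algebra, which is arguably cleaner. For the target count, the paper counts ``odd lines'' in $\II/2\II$ to get $60$ norm-$1$ classes, whereas you invoke $|\mathrm{SL}_2(\mathbb F_4)|=60$ directly---the same number by a shorter route. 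For the consequence, the paper makes the explicit choice $\tilde{\mathbf u}$ on the line of $\overline{\mathbf q}$ so that $\mathbf u\mathbf q$ lands on the line of $1\in\mathcal G/2\II$; your argument via the coset decomposition $\mathrm{GL}_2(k)=\mathrm{SL}_2(k)\cdot(\mathcal G/2\II)^\times$ is a more structural packaging of the same idea (the paper's choice corresponds to taking $\tilde{\mathbf g}$ to be the unique scalar with $\tilde{\mathbf g}^2=\nrd(\tilde{\mathbf q})$). Both arguments ultimately rest on definiteness for the kernel and on the $\Mat_2(\mathbb F_4)$ structure for the rest.
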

\begin{proof}
First, we claim that for $\mathbf u_1,\mathbf u_2\in \II^1$, we have $\mathbf u_1\equiv \mathbf u_2\pmod{2\II}$ if and only if $\mathbf u_1=\pm\mathbf u_2$. The \uv{if} part is obvious, so to prove the \uv{only if} part, let us presume that $\mathbf u_1\neq\pm\mathbf u_2$ and show that $\mathbf u_1\nequiv \mathbf u_2\pmod{2\II}$.

Since $\mathbf u_1+\mathbf u_2\neq 0$, we have
\[
    0\prec \nrd(\mathbf u_1+\mathbf u_2) = 1+\trd(\mathbf u_1\overline{\mathbf u_2})+1,
\]
hence $\trd(\mathbf u_1\overline{\mathbf u_2})\succ -2$. Using this bound, we then have
\[
    0\prec\nrd(\mathbf u_1-\mathbf u_2) = 2-\trd(\mathbf u_1\overline{\mathbf u_2}) \prec 4
\]
and taking the field norm yields $0<\Nm(\nrd(\mathbf u_1-\mathbf u_2)) < 16$.
Now if it were the case that $\mathbf u_1\equiv \mathbf u_2\pmod{2\II}$, it would imply $4\mid \nrd(\mathbf u_1-\mathbf u_2)$, hence $16\mid \Nm(\nrd(\mathbf u_1-\mathbf u_2))$, which is a contradiction with the previous bound.

Now, since $\#\II^1 = 120$, these units must occupy $60$ distinct residue classes in $\II/2\II\simeq\Mat_2(\OK/2\OK)$. This algebra has $256$ elements, namely the zero element and $255$ non-zero ones, which we may group in $255/3=85$ lines (one-dimensional $\OK/2\OK$-subspaces) they generate. On a line, either all four elements have reduced norm $0\in\OK/2\OK$, or they are non-zero, and then $\nrd$ takes all four values from $\OK/2\OK$ (this is because all elements are squares in this finite field). Let us call the former an \emph{even line} and the latter an \emph{odd line}.

Non-zero elements from even lines must generate a non-trivial left ideal, which is then two-dimensional, so it has $15$ non-zero elements or equivalently $5$ even lines. Different left ideals have trivial intersections, and since $\II/2\II\simeq\Mat_2(\OK/2\OK)$, there are $\Nm(2)+1 = 5$ of these non-trivial ideals. Thus, we count that there are $5\cdot 5=25$ even lines in all of $\II/2\II$, leaving $85-25 = 60$ odd lines.

Now, we know units from $\II^1$ occupy $60$ distinct residue classes. Each such class has reduced norm $1$, and on each odd line, there is only one such residue class. So we see that there are only $60$ classes with $\nrd(\tilde{\mathbf u})=1$, hence each must be represented by exactly two units from $\II^1$.

To prove that $\II^1\mathbf q\cap \mathcal G\neq\emptyset$ for $2\nmid\nrd(\mathbf q)$, we just take the line of $\overline{\mathbf q}+2\II$, which must be an odd line, find its element of reduced norm $1$ and represent it by a $\mathbf u\in\II^1$. This then ensures that $\mathbf u\mathbf q$ lies on the same line as $\overline{\mathbf q}\mathbf q = \nrd(\mathbf q)$, that is, the line of $1$, which lies in $\mathcal G/2\II$.
\end{proof}

\begin{lemma}
\label{lem:which-orbits-hit}
    For $\mathbf q\in\II$ with $2\mid\nrd(\mathbf q)$, the set $\II^1\mathbf q\cap \mathcal G$ is non-empty if and only if $\mathbf q\in\II(1+\ii)$.
\end{lemma}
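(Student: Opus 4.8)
The plan is to prove the two implications separately, after one preliminary observation that makes everything readable modulo $2\II$. First, $2\II$ is contained both in $\mathcal G$ (as already noted) and in $\II(1+\i)$: indeed $2(1+\i)^{-1}=2\cdot\overline{(1+\i)}/\nrd(1+\i)=1-\i\in\OK\oplus\OK\i\subseteq\II$, and since $2$ is central this gives $2\II=\II\cdot 2\subseteq\II(1+\i)$. Hence both $\mathcal G$ and $\II(1+\i)$ are unions of cosets of $2\II$, so membership of any $\mathbf q\in\II$ in either of them is detected by the image $\overline{\mathbf q}$ in $\II/2\II$; recall from Lemma~\ref{lem:goodquotient} that $\II/2\II\simeq\Mat_2(k)$ with $k:=\OK/2\OK$ the field of four elements, and that $\mathcal G/2\II$ is the two-dimensional $k$-subalgebra generated by $\overline 1$ and $\overline{\i}$.

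The implication ``$\Leftarrow$'' is immediate. If $\mathbf q=\mathbf h(1+\i)$ with $\mathbf h\in\II$, then $\II$-perceptivity of $\mathcal H$ supplies a $\mathbf u\in\II^1$ with $\mathbf u\mathbf h\in\mathcal H$, and then $\mathbf u\mathbf q=(\mathbf u\mathbf h)(1+\i)\in\mathcal H(1+\i)=\rcol{\mathcal G}{\mathcal H}\subseteq\mathcal G$, so $\II^1\mathbf q\cap\mathcal G\neq\emptyset$. (Equivalently, this is Lemma~\ref{lem:orbit-and-conductor} applied to the $\II$-perceptive suborder $\mathcal H$ and the principal left ideal $\mathcal H(1+\i)$.)

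For ``$\Rightarrow$'', suppose $\mathbf u\mathbf q\in\mathcal G$ for some $\mathbf u\in\II^1$ and set $\mathbf q':=\mathbf u\mathbf q$, so $\nrd(\mathbf q')=\nrd(\mathbf q)\in 2\OK$. The crux is the structure of $\mathcal G/2\II$: since $\overline{\i}^{\,2}=\overline{-1}=\overline 1$ in $k$, the element $\nu:=\overline{1+\i}=\overline{\i}+\overline 1$ is a nonzero nilpotent, $\nu^2=0$, so $\mathcal G/2\II=k\oplus k\nu$ is the ring of dual numbers over $k$, and on it the reduced norm is $\nrd(\overline a+\overline b\nu)=\overline a^{\,2}$ (because $\trd\nu=\nrd\nu=0$, as $\trd(1+\i)=\nrd(1+\i)=2$; alternatively, $\overline a I+\overline b\nu$ is conjugate over $k$ to an upper-triangular matrix with both diagonal entries $\overline a$). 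Now $\nrd(\mathbf q')\in 2\OK$ forces $\nrd(\overline{\mathbf q'})=0$ in $k$; writing $\overline{\mathbf q'}=\overline a+\overline b\nu$ with $a,b\in\OK$ we get $\overline a^{\,2}=0$, hence $\overline a=0$ and $\overline{\mathbf q'}=\overline b\nu=\overline{b(1+\i)}$. Therefore $\mathbf q'-b(1+\i)\in 2\II\subseteq\II(1+\i)$, and since $b(1+\i)\in\II(1+\i)$ we conclude $\mathbf q'\in\II(1+\i)$. Finally $\mathbf q=\mathbf u^{-1}\mathbf q'=\overline{\mathbf u}\mathbf q'\in\II(1+\i)$, because $\II(1+\i)$ is a left ideal and $\overline{\mathbf u}\in\II$.

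I expect the only delicate point to be the structural identification of $\mathcal G/2\II$ with the dual numbers together with the observation that the reduced norm there is a perfect square $\overline a^{\,2}$; this is exactly what pins an element of $\mathcal G$ of norm divisible by $2$ down, modulo $2\II$, to the one-dimensional ideal $k\nu=\overline{\OK(1+\i)}$, i.e.\ into $\II(1+\i)$. Everything else is formal bookkeeping around the fact that $2\II$ lies inside both $\mathcal G$ and $\II(1+\i)$. Note also that the hypothesis $2\mid\nrd(\mathbf q)$ is essential and corresponds to precisely the case left open by Lemma~\ref{lem:units-on-odd-lines}, since every element of $\II(1+\i)$ has reduced norm divisible by $2\OK$.
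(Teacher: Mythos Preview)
Your proof is correct. For the forward implication ``$\Rightarrow$'' you do essentially what the paper does: both arguments identify the elements of $\mathcal G/2\II$ with reduced norm zero as the line $k(1+\i)$, your dual-numbers computation $\nrd(c+d\nu)=c^2$ being a pleasant repackaging of the paper's $\nrd(x+y\i)=(x+y)^2$.

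For ``$\Leftarrow$'' your argument is genuinely different and shorter. You exploit two facts already established before the lemma, namely that $\mathcal H$ is $\II$-perceptive and that $\rcol{\mathcal G}{\mathcal H}=\mathcal H(1+\i)$, so that perceptivity of $\mathcal H$ in $\II$ carries $\mathbf q=\mathbf h(1+\i)$ directly into $\mathcal H(1+\i)\subseteq\mathcal G$; this is exactly the mechanism of Lemma~\ref{lem:orbit-and-conductor}. The paper instead works by hand inside $\II/2\II\simeq\Mat_2(k)$: it locates a two-dimensional subalgebra $B\subset\II/2\II$ that is a quadratic field extension of $k$, shows multiplication $B\to(\II/2\II)\mathbf q$ is bijective, and then invokes Lemma~\ref{lem:units-on-odd-lines} to lift a suitable element of $B$ to a unit of $\II^1$. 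Your route is cleaner and makes better use of the general machinery; the paper's route is more self-contained in that it explicitly exhibits where the required unit comes from inside $\II/2\II$ without appealing back to perceptivity of $\mathcal H$.
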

\begin{proof}
Let us denote $k:=\OK/2\OK$.
First, suppose that $\II^1\mathbf q\cap \mathcal G$ is non-empty, then we may without loss of generality presume $\mathbf q$ already lies in $\mathcal G^1$. If $\mathbf q\in 2\II = (\II(1+\ii))(1+\ii)$, then the conclusion holds, so we may presume $\mathbf q+2\II$ is non-zero in $\II/2\II$. In $\II/2\II$, the residue class of $\mathbf q$ then lies in the subalgebra spanned by $1$ and $\ii$. The norm form in this two-dimensional subalgebra is
\[
    \nrd(x+y\ii) = x^2+y^2 = (x+y)^2
\]
for $x,y\in k$, so the only elements with reduced norm zero are located on the line generated by $1+\ii$. This means that in $\II/2\II$, the left ideals generated by $\mathbf q$ and $1+\ii$ intersect non-trivially, so they must in fact coincide, hence $\mathbf q+2\II\in(\II/2\II)(1+\ii)$. Lifting back from $\II/2\II$ to $\II$, which we may do due to $2\II\subset\II(1+\ii)$, we obtain $\mathbf q\in \II(1+\ii)$.

Second, suppose that $\mathbf q\in \II(1+\ii)$. If actually $\mathbf q\in 2\II$, then trivially $\mathbf q\in\mathcal G$, so we may presume $\mathbf q\notin2\II$, which then means $(\II/2\II)\mathbf q = (\II/2\II)(1+\ii)$ and thus $\mathbf a\mathbf q \equiv 1+\ii\pmod{2\II}$ for some $\mathbf a\in\II$. We will show that $\mathbf a$ may in fact be chosen from $\II^1$.

Notice that the polynomial $x^2+x+\phi$ has no root in $k$, so after homogenizing, $x^2+xy+\phi y^2$ is only zero if $x\equiv y\equiv 0$. In view of the isomorphism $\II/2\II\simeq\Mat_2(k)$, we have the matrix $\mathbf b:=\mtrx{1&\phi\\1&0}$ with $\trd(\mathbf b)=1$ and $\nrd(\mathbf b) = \phi$, so
\[
    \nrd(x+y\mathbf b) = x^2+xy+\phi y^2
\]
for $x,y\in k$. This implies that $1$ and $\mathbf b$ span a two-dimensional subalgebra $B$ of $\II/2\II$ that is a quadratic field extension of $k$. In particular, each of its non-zero elements has a scalar multiple that may be represented by a unit from $\II^1$ by Lemma~\ref{lem:units-on-odd-lines}

Let us prescribe a $k$-linear map
\begin{align*}
    \mu: B & \to (\II/2\II)\mathbf q\\
    \mathbf x &\mapsto \mathbf x\mathbf q.
\end{align*}
Since all non-zero elements of $B$ have non-zero reduced norms, they are invertible in $\II/2\II$, so $\mathbf x\mathbf q \equiv 0$ would imply $\mathbf q\equiv0$ for $\mathbf x\in B\setminus\set0$, which is absurd, and thus $\ker\mu = 0$. Now both $B$ and $(\II/2\II)\mathbf q$ are two-dimensional spaces, so injectivity of $\mu$ implies its surjectivity. Hence we may take $1+\ii\equiv \mathbf a\mathbf q\pmod{2\II}$ for some $\mathbf a\in B$. But now, possibly after taking a scalar multiple, $\mathbf a$ is represented by some $\mathbf u\in\II^1$. Thus we get $\mathbf u\mathbf q \in k(1+\ii)\subseteq \mathcal G/2\II$, hence $\mathbf u\mathbf q\in \mathcal G$.
\end{proof}

\begin{prop}
\label{prp:intersection-sizes}
For $\mathbf q\in\II$, we have
\[
    \#(\II^1\mathbf q \cap \mathcal G) = \begin{cases}
        8, & \text{if $2\nmid\nrd(\mathbf q)$},\\
        0, & \text{if $2\mid\nrd(\mathbf q)$ but $\mathbf q\notin\II(1+\ii)$},\\
        24, & \text{if $\mathbf q\in\II(1+\ii)$ but $\mathbf q\notin2\II$},\\
        120, & \text{if $\mathbf q\in2\II$.}
    \end{cases}
\]
\end{prop}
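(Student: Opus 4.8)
The plan is to read off the four values from the conductor stratification underlying Lemma~\ref{lem:orbit-intersections}, with the orbits that altogether miss $\mathcal G$ accounting for the value $0$ (such orbits exist precisely because the pair $\mathcal G$, $\II$ is \emph{not} perceptive). Throughout one may assume $\mathbf q\neq 0$, so $\mathbf q$ is invertible and $\mathbf u\mapsto\mathbf u\mathbf q$ is injective; hence
\[
    \#(\II^1\mathbf q\cap\mathcal G)=\#\{\mathbf u\in\II^1:\mathbf u\mathbf q\in\mathcal G\},
\]
a quantity depending only on the orbit $\II^1\mathbf q$. Note that $\nrd(\mathbf q)$, and membership of $\mathbf q$ in the two-sided ideal $2\II$ and in the left ideal $\II(1+\i)$, are all constant along the orbit, and that the four listed conditions partition the nonzero elements of $\II$.

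First I would dispose of the two extreme cases. If $2\mid\nrd(\mathbf q)$ and $\mathbf q\notin\II(1+\i)$, then Lemma~\ref{lem:which-orbits-hit} gives $\II^1\mathbf q\cap\mathcal G=\emptyset$, so the count is $0$. If $\mathbf q\in2\II$, then $\II^1\mathbf q\subseteq\II^1\cdot 2\II=2\II\subseteq\mathcal G$, so the whole orbit lies in $\mathcal G$ and the count is $\#\II^1=120$.

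For the remaining cases the orbit does meet $\mathcal G$ — by Lemma~\ref{lem:units-on-odd-lines} when $2\nmid\nrd(\mathbf q)$, and by Lemma~\ref{lem:which-orbits-hit} when $2\mid\nrd(\mathbf q)$ and $\mathbf q\in\II(1+\i)$ — so after multiplying $\mathbf q$ by a suitable unit I may assume $\mathbf q\in\mathcal G$; and when moreover $\mathbf q\in\II(1+\i)\setminus 2\II$ I would apply Lemma~\ref{lem:orbit-and-conductor} to the $\II$-perceptive order $\mathcal H$ and the element $\mathbf a:=1+\i$ to relocate the representative inside $\mathcal H(1+\i)=\rcol{\mathcal G}{\mathcal H}$ (still outside $2\II$, by orbit-invariance). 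Recall that $\mathcal G\subset\mathcal H\subset\II$ is the full poset of orders and that the conductors form the reversed chain $2\II=\rcol{\mathcal G}{\II}\subsetneq\mathcal H(1+\i)=\rcol{\mathcal G}{\mathcal H}\subsetneq\mathcal G=\rcol{\mathcal G}{\mathcal G}$ by Proposition~\ref{prp:conductors}(i). Now, for any $\mathbf q\in\mathcal G$ and $\mathbf u\in\II^1$ one has the elementary equivalence $\mathbf u\mathbf q\in\mathcal G\iff\mathbf q\in\rcol{\mathcal G}{\mathcal G+\mathcal G\mathbf u}$, and $\mathcal G+\mathcal G\mathbf u$ is the smallest order containing $\mathcal G$ and $\mathbf u$; since the poset of orders, hence of conductors, is linear, this yields — exactly as in the proof of Lemma~\ref{lem:orbit-intersections}, and without invoking perceptivity of $\mathcal G$, $\II$ — that $\#\{\mathbf u\in\II^1:\mathbf u\mathbf q\in\mathcal G\}=\#\mathcal M^1$, where $\mathcal M$ is the largest of $\mathcal G$, $\mathcal H$, $\II$ whose conductor contains $\mathbf q$. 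If $2\nmid\nrd(\mathbf q)$, a representative in $\mathcal G$ has odd reduced norm and so lies in neither $\mathcal H(1+\i)$ nor $2\II$ (both consisting of elements of even reduced norm), whence $\mathcal M=\mathcal G$ and the count is $\#\mathcal G^1=8$. If $\mathbf q\in\II(1+\i)\setminus2\II$, our representative lies in $\rcol{\mathcal G}{\mathcal H}\setminus2\II$, so $\mathcal M=\mathcal H$ and the count is $\#\mathcal H^1=24$.

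I expect the main obstacle to be organisational rather than computational: one must notice that the stratification of Lemma~\ref{lem:orbit-intersections} rests only on the linearity of the poset of orders, so it survives the failure of perceptivity (which is fully absorbed into Lemma~\ref{lem:which-orbits-hit}), and then carefully reconcile the intrinsic, orbit-invariant description of the four cases (via $\nrd(\mathbf q)$, $2\II$ and $\II(1+\i)$) with the conductor-theoretic description, which applies only after one has moved the orbit representative into $\mathcal G$ and, in the delicate middle case, into $\rcol{\mathcal G}{\mathcal H}$ — a relocation supplied exactly by Lemmas~\ref{lem:units-on-odd-lines}, \ref{lem:which-orbits-hit} and \ref{lem:orbit-and-conductor}. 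Beyond the already-established data $\#\II^1=120$, $\#\mathcal H^1=24$, $\#\mathcal G^1=8$ and the three conductors, no further explicit calculation should be required.
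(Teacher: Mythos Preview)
Your proof is correct and follows essentially the same approach as the paper's: the two extreme cases are handled directly (the $0$ case via Lemma~\ref{lem:which-orbits-hit}, the $120$ case trivially), and the two middle cases are read off from the conductor stratification after relocating the orbit representative into the appropriate conductor using Lemmas~\ref{lem:units-on-odd-lines}, \ref{lem:which-orbits-hit}, and \ref{lem:orbit-and-conductor}. You are in fact slightly more careful than the paper: you explicitly note that the argument underlying Lemma~\ref{lem:orbit-intersections} uses only the linearity of the poset of orders and not the perceptivity hypothesis (which fails here for $\mathcal G$, $\II$), and you explicitly move the representative into $\mathcal G$ before invoking the conductor description, whereas the paper's proof leaves both of these points implicit.
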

\begin{proof}
The case when $\mathbf q\in2\II$ is obvious and the case of $2\mid\nrd(\mathbf q)$ but $\mathbf q\notin\II(1+\ii)$ stems from Lemma~\ref{lem:which-orbits-hit}. When $2\nmid \nrd(\mathbf q)$, then surely $\mathbf q\notin\mathcal H(1+\ii) = \rcol{\mathcal G}{\mathcal H}$, so by Lemma~\ref{lem:orbit-intersections}, we obtain $\#(\II^1\mathbf q\cap \mathcal G) = \#\mathcal G^1 = 8$.

For the case of $\mathbf q\in\II(1+\ii)$ but $\mathbf q\notin2\II$, note that since $\rcol{\mathcal G}{\mathcal H}=\mathcal H(1+\ii)$, the fact that $\mathbf q\in\II(1+\ii)$ implies that $\II^1\mathbf q$ intersects $\rcol{\mathcal G}{\mathcal H}$ by Lemma~\ref{lem:orbit-and-conductor}, whence the conclusion follows by Lemma~\ref{lem:orbit-intersections} due to $\mathbf q\notin2\II = \rcol{\mathcal G}{\II}$.
\end{proof}

With this, we are ready to prove G\"otzky's theorem:
\begin{proof}[Proof of Theorem~\ref{thrm:gotzky}]
Recall that $\II$ has reduced discriminant $\OK$, hence $r_{\II}(\alpha) = 120 \sigma_{\OK}(\alpha)$.
Let us consider an $\alpha\in\OK^+$ and derive a formula for $r_{\mathcal G}(\alpha)$. For this, let us write $\alpha=2^e\beta$ for some $e\geq0$ and $2\nmid \beta\in\OK^+$; this is valid because $2$ is a prime element in $\OK$. Any quaternion in $\II$ of reduced norm $\alpha$ may then be written as $\mathbf b\mathbf q$, where $\nrd(\mathbf b) = \beta$ and $\nrd(\mathbf q) = 2^e$. Then $\mathbf b$ is invertible in $\II/2\II$; we may further notice that each of the four conditions in Proposition~\ref{prp:intersection-sizes} may be recognized by just looking at residue classes in $\II/2\II$ and that each is unchanged when multiplying by an invertible element from the left. Hence $\#(\II^1\mathbf b\mathbf q\cap\mathcal G) = \#(\II^1\mathbf q\cap\mathcal G)$. So, to count elements of reduced norm $\alpha$ in $\mathcal G$, it suffices to count those of reduced norm $2^e$ and multiply the result by $\sigma_{\OK}(\beta)$.

First let us deal with the cases of small $e$. If $e=0$, there is only one orbit and it intersects $\mathcal G$ in $8$ elements, which is consistent with the desired formula. If $e=1$, there are $5$ orbits, but only of them intersects $\mathcal G$, namely in $24$ elements. We express this as
\[
    8(1+4) - 4\cdot 4,
\]
so it is again consistent with the desired formula.

Now, we may presume $e\geq2$. In $\II$ there are $1+4+\cdots+4^{e-1}+4^e$ orbits quaternions of reduced norm $2^e$ with respect to the action of $\II^1$ acting by multiplication from the left. Of these, $1+4+\cdots+4^{e-2}$ lie in $2\II$, a further
\[
    \zav{1+4+\cdots+4^{e-1}} - \zav{1+4+\cdots+4^{e-2}} = 4^{e-1}
\]
lie in $\II(1+\ii)\setminus2\II$ while the remaining $4^e$ lie outside of $\II(1+\ii)$. By Proposition~\ref{prp:intersection-sizes}, the orbits in these three groups contribute $120$, $24$ and $0$ quaternions each.

So with these weights, we get the total number of these quaternions in $\mathcal G$ as
\begin{align*}
    r_{\mathcal G}(2^e) &= 120\zav{1+4+\cdots+4^{e-2}} + 24\cdot 4^{e-1} =\\
    &= 120\cdot \frac{4^{e-1}-1}{4-1} + 24\cdot 4^{e-1} = 40\cdot 4^{e-1}-40+24\cdot 4^{e-1} =\\
    &= 64\cdot 4^{e-1}-40 = 4^{e+2}-40.
\end{align*}
On the other hand,
\begin{multline*}
    8\sum_{\delta\OK\mid2^e\OK}\Nm(\delta\OK)-4\sum_{2\OK\mid\delta\OK\mid2^e\OK}\Nm(\delta\OK)+8\sum_{4\OK\mid\delta\OK\mid2^e\OK}\Nm(\delta\OK) =\\
    = 8\zav{1+4+\cdots+4^e} - 4\zav{4+\cdots+4^e} + 8\zav{4^2+\cdots+4^e} = \\
    = 8\cdot\frac{4^{e+1}-1}{4-1} - 4\cdot\frac{4^{e+1}-4}{4-1} + 8\cdot\frac{4^{e+1}-16}{4-1} = \frac{12\cdot4^{e+1}-8+16-128}3 = 4^{e+2}-40,
\end{multline*}
so indeed the two quantities agree. Thus we have shown that the formula of the theorem holds for $\alpha=2^e$. But then by $\beta$ being coprime to $2$, multiplying both quantities by $\displaystyle\sigma_{\OK}(\alpha) = \sum_{\delta\OK\mid\beta\OK}\Nm(\delta\OK)$ finishes the proof.
\end{proof}

To conclude, let us remark that this proof suggests that a Hurwitz-like technique on a pair of orders $\mathcal G\subseteq\mathcal H$ may be performed even with some weaker properties of the $\mathcal H^1$-action (with $\mathcal H=\II$ there) on $\mathcal H$, compared to perceptivity. If we denote $\mathfrak a$ some ideal such that $\mathfrak a\mathcal H\subseteq\mathcal G$, it seems hard to imagine that any Hurwitz-like technique could succeed without at least the orbits of elements invertible in $\mathcal H/\mathfrak a\mathcal H$ intersecting $\mathcal G$, but it is unclear where the exact limits are or what a tighter condition for the success of a Hurwitz-like method might look like.

\def\bibfont{\hfuzz=2pt}

\end{document}